\DeclareMathAlphabet{\mathscrbf}{OMS}{mdugm}{b}{n}
\newcommand\mypagesizel{
\textwidth= 6.5in
\textheight=9in
\voffset-.55in
\hoffset -0.75in
\marginparwidth=56pt
}
\newcommand{\Pic}{\textup{Pic}}
\newcommand{\p}[0]{{\mathbb P}}
\newcommand{\Frob}[1]{\textup{F}_{\textup{abs},#1}}
\newcommand{\Frobabs}{\textup{F}_{\textup{abs}}}
\newcommand{\Hol}{\textup{Hol}}
\newcommand{\Nef}{\textup{Nef}}
\newcommand{\Psef}{\textup{Psef}}
\newcommand{\NS}{\textup{N}^1}
\newcommand{\N}{\textup{N}}
\newcommand{\NE}{\overline{\textup{NE}}}
\renewcommand{\phi}{\varphi}
\newcommand{\map}{\dashrightarrow}
\newcommand{\into}{\hookrightarrow}
\newcommand{\onto}{\twoheadrightarrow}
\newcommand{\wt}{\widetilde}
\newcommand{\wh}{\widehat}
\renewcommand{\le}{\leqslant}
\renewcommand{\ge}{\geqslant}
\newcommand{\mult}{\textup{mult}}
\newcommand{\rank}{\textup{rank}}
\newcommand{\B}{\textup{B}}
\newcommand{\bH}{\textup{\textbf{H}}}
\newcommand{\bQ}{\mathbb{Q}}
\newcommand{\bS}{\textup{\textbf{S}}}
\newcommand{\bT}{\textup{\textbf{T}}}
\newcommand{\bX}{\textup{\textbf{X}}}
\newcommand{\sA}{\mathscr{A}}
\newcommand{\sE}{\mathscr{E}}
\newcommand{\sG}{\mathscr{G}}
\newcommand{\sH}{\mathscr{H}}
\newcommand{\sL}{\mathscr{L}}
\newcommand{\sN}{\mathscr{N}}
\newcommand{\sO}{\mathscr{O}}
\newcommand{\sbfE}{\mathscrbf{E}}
\newcommand{\sbfG}{\mathscrbf{G}}
\newtheorem{thm}{Theorem}[section]
\newtheorem*{thm*}{Theorem}
\newtheorem{lemma}[thm]{Lemma}
\newtheorem{cor}[thm]{Corollary}
\newtheorem{prop}[thm]{Proposition}
\theoremstyle{definition}
\newtheorem{defn}[thm]{Definition}
\newtheorem{say}[thm]{}
\newtheorem{exmp}[thm]{Example}
\newtheorem{defn-thm}[thm]{Definition-Theorem}
\newtheorem{rem}[thm]{Remark}
\theoremstyle{remark}
\newtheorem{notation}[thm]{Notation}
\newtheorem*{not-and-def}{Notation and definitions}
\newtheorem{claim}[thm]{Claim}
\newtheorem{assumption}[thm]{Assumption}
\numberwithin{equation}{section}
\begin{document}

\title[A decomposition theorem for singular spaces with trivial canonical class]{A decomposition theorem for singular spaces with trivial canonical class of dimension at most five}

\author{St\'ephane \textsc{Druel}}

\address{St\'ephane Druel: Institut Fourier, UMR 5582 du CNRS, Universit\'e Grenoble Alpes, CS 40700, 38058 Grenoble cedex 9, France} 

\email{stephane.druel@univ-grenoble-alpes.fr}


\subjclass[2010]{14J32, 37F75, 14E30}

\begin{abstract}
In this paper we partly extend the Beauville-Bogomolov decomposition theorem to the singular setting. We show that any complex projective variety of dimension at most five with canonical singularities and numerically trivial canonical class admits a finite cover, \'etale in codimension one, that decomposes as a product of an Abelian variety, and singular analogues of irreducible Calabi-Yau and irreducible holomorphic symplectic
varieties.
\end{abstract}

\maketitle

\tableofcontents

\section{Introduction}

The Beauville-Bogomolov decomposition theorem asserts that any compact K\"ahler manifold with numerically trivial canonical bundle admits an \'etale cover that decomposes into a product of a torus, and irreducible,
simply-connected Calabi-Yau, and holomorphic symplectic manifolds (see \cite{beauville83}). Moreover, the decomposition of the simply-connected part corresponds to a decomposition of the tangent bundle into a direct sum whose summands are integrable and stable with respect to any polarisation.

With the development of the minimal model program, 
it became clear that singularities arise as an inevitable part of higher dimensional life.
If X is any complex projective manifold with Kodaira dimension $\kappa(X) = 0$, standard
conjectures of the minimal model program predict the existence of a birational contraction
$X \map X'$, where $X'$ has terminal singularities and $K_{X'}\equiv 0$.
This makes imperative to extend the Beauville-Bogomolov decomposition theorem to the singular setting.

Building on recent extension theorems for differential forms on singular spaces, Greb, Kebekus and Peternell  
prove an analogous decomposition theorem for the tangent sheaf of projective varieties with canonical singularities and numerically trivial canonical class.

\begin{thm}[{\cite[Theorem 1.3]{gkp_bo_bo}}]\label{thm:infinitesimal_beauville_bogomolov}
Let $X$ be a normal complex projective variety with at worst canonical singularities. Assume that $K_X \equiv 0$. Then there exists an abelian variety $A$
as well as a projective variety $\wt X$ with at worst canonical
singularities, a finite cover $f: A \times \wt X \to X$, \'etale in
codimension one, and a decomposition
$$
T_{\wt X} = \oplus_{i\in I} \sE_i
$$
such that the following holds.
\begin{enumerate}
\item The $\sE_i$ are integrable subsheaves of $T_{\wt X}$, with $\det(\sE_i)\cong\sO_{\wt X}$.
\end{enumerate}
Further, if $g\colon \wh X \to \wt X$ is any finite cover, \'etale in codimension one, then the following properties hold in addition.
\begin{enumerate}
\item[(2)] The sheaves $(g^*\sE_i)^{**}$ are stable with respect to any polarisation on $\wh X$.
\item[(3)] The irregularity $h^1(\wh X,\sO_{\wh X})$ of $\wh X$ is zero.
\end{enumerate}
\end{thm}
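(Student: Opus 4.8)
This is the tangent-sheaf decomposition of \cite{gkp_bo_bo}; here is the strategy one would follow. \emph{Reduction to semistability.} Since $K_X\equiv 0$ is pseudo-effective, $X$ is not uniruled, so by the generic semipositivity theorem for reflexive differentials of a non-uniruled normal projective variety, $\Omega^{[1]}_X$ restricted to a general complete-intersection curve $C$ is nef; as $\deg(\Omega^{[1]}_X|_C)=K_X\cdot C=0$ it is semistable of slope $0$ there, and hence, by Mehta--Ramanathan, $\Omega^{[1]}_X$ --- and with it $T_X=(\Omega^{[1]}_X)^{\vee\vee}$ --- is $H$-semistable of slope $0$ for every polarisation $H$. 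In particular every reflexive subsheaf or quotient of $T_X$ met below automatically has numerically trivial first Chern class.

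\emph{Splitting off the abelian factor.} The reflexive differentials of $X$ extend to every resolution (Greb--Kebekus--Kov\'acs--Peternell), so they are $d$-closed and pulled back from $\Alb(X)$. Combining this with Kawamata's structure theory for the Albanese morphism of a canonical variety with $K\equiv 0$ (surjective, connected fibres, relative dimension $\dim X-h^1(X,\cO_X)$) and with the extension-and-splitting techniques that control numerically trivial reflexive subsheaves of $\Omega^{[1]}_X$, one produces a finite cover, \'etale in codimension one, of the form $A\times\wt X_0$ with $A$ abelian and $h^1(\wt X_0,\cO)=0$. Choosing the cover so that $\dim A$ is maximal among all finite covers of $X$ \'etale in codimension one yields property~(3) at once: a cover $\wh X\to\wt X_0$ with $h^1(\wh X,\cO)>0$ would, by the same construction applied to $A\times\wh X$, produce a cover of $X$ with a strictly larger abelian factor.

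\emph{The decomposition of $T_{\wt X_0}$.} This is the heart of the matter: one must upgrade $H$-semistability of $T_{\wt X_0}$ to an honest direct-sum decomposition into pieces that remain stable on all covers. The natural route uses the singular Ricci-flat K\"ahler metric on $(\wt X_0)_{\mathrm{reg}}$ furnished by Eyssidieux--Guedj--Zeriahi: via the Bando--Siu theory of Hermite--Einstein metrics on reflexive sheaves (the singular locus having codimension $\ge 2$ and the metric having bounded potential), $T_{\wt X_0}$ is $H$-polystable of slope $0$, and the de~Rham together with Berger decomposition of its holonomy representation gives a \emph{parallel}, hence polarisation-independent, splitting $T_{\wt X_0}=\bigoplus_{i\in I}\sE_i$ into holonomy-irreducible pieces with $c_1(\sE_i)=0$. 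Irreducibility of the holonomy, combined with the $H$-semistability already established, gives $H$-stability of each $\sE_i$ for every $H$, and this survives pullback along any cover \'etale in codimension one (the metric, hence the holonomy, pulls back), which gives~(2). Each $\det\sE_i$ is numerically trivial, hence torsion because $h^1(\wt X_0,\cO)=0$; passing to the cover that simultaneously trivialises all of them --- still \'etale in codimension one, and still of vanishing irregularity by the maximality arranged above --- produces $\wt X$ with $\det\sE_i\cong\cO_{\wt X}$, which is the rest of~(1).

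\emph{Integrability, and the main difficulty.} Finally each $\sE_i\subseteq T_{\wt X}$ is closed under the Lie bracket: it corresponds to a subbundle of $T_{(\wt X)_{\mathrm{reg}}}$ that is parallel for the Levi-Civita connection of the Ricci-flat metric, and a parallel subbundle of the tangent bundle is integrable because that connection is torsion-free; integrability then extends across the codimension-$\ge 2$ singular locus since the $\cO$-bilinear bracket map $\wedge^2\sE_i\to T_{\wt X}/\sE_i$ is a morphism of sheaves and $T_{\wt X}/\sE_i$ is torsion-free. (Alternatively, one shows that this map, a section of $\mathscr{H}om(\wedge^2\sE_i,\bigoplus_{j\ne i}\sE_j)$, vanishes using the semistability and triviality of determinants of the $\sE_j$.) The one genuinely hard step is the third: passing from Miyaoka semistability to a polarisation-independent direct-sum decomposition. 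It seems unavoidable to invoke either the full strength of the singular Calabi--Yau metric together with the Kobayashi--Hitchin correspondence for reflexive sheaves --- with the attendant analytic care near the singularities --- or an equally delicate algebraic substitute; once that is in hand, the remaining steps are comparatively formal.
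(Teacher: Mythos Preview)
The present paper does not prove this theorem at all: it is quoted verbatim from \cite[Theorem~1.3]{gkp_bo_bo} as an input, and the paper's contribution lies elsewhere (Theorems~\ref{theorem:alg_int_flat_case} and~\ref{theorem:pereira_touzet_conjecture}, and the decomposition results of Section~4). So there is no ``paper's own proof'' to compare against, and you correctly flag this in your opening sentence.

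As a sketch of the argument in \cite{gkp_bo_bo} your outline is broadly on target --- Miyaoka-type semistability, Kawamata's Albanese analysis to peel off the abelian factor and arrange $\wt q=0$, and then a polystability/holonomy argument to obtain the polarisation-independent decomposition into strongly stable summands with trivial determinant. One caution: the passage through the Eyssidieux--Guedj--Zeriahi metric and a Bando--Siu/Berger--de~Rham holonomy decomposition in the singular setting is more delicate than your write-up suggests, and \cite{gkp_bo_bo} proceeds somewhat differently, relying on restriction-to-curves and Balaji--Koll\'ar holonomy techniques rather than a global parallel-transport argument on $(\wt X)_{\mathrm{reg}}$. Your integrability argument via the torsion-free connection is morally right but again leans on regularity that is only available on the smooth locus; the algebraic alternative you mention (vanishing of the $\sO$-linear bracket map by slope considerations) is closer to what is actually done. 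If you want to record a proof here, it would be more in keeping with the paper simply to cite \cite{gkp_bo_bo} and move on.
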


Based on Theorem \ref{thm:infinitesimal_beauville_bogomolov} above, they argue 
in \cite[Section 8]{gkp_bo_bo}
that the natural building 
blocks for any structure theory of projective varieties with canonical singularities and numerically trivial canonical class are
canonical varieties with \emph{strongly stable} tangent sheaf (see Definition~\ref{def:strongStab} for this notion). In dimension no more than five, they also show that canonical varieties with strongly
stable tangent sheaf fall into two classes, which naturally generalise the
notions of irreducible Calabi-Yau and irreducible holomorphic-symplectic
manifolds, respectively. 

\medskip

The main result of our paper is the following decomposition theorem.

\begin{thm}\label{thm:main_intro}
Let $X$ be a normal complex projective variety of dimension at most $5$,
with at worst klt singularities. Assume that $K_X \equiv 0$. Then there exists an abelian variety $A$
as well as a projective variety $\wt X$ with at worst canonical
singularities, a finite cover $f: A \times \wt X \to X$, \'etale in
codimension one, and a decomposition
$$\wt X \cong \prod_{i\in I} Y_i \times \prod_{j\in J} Z_j$$
of $\wt X$ into normal projective varieties with trivial canonical class, such that the following holds.

\begin{enumerate}
\item We have $h^0 \big(\wt Y_i, \Omega_{\wt Y_i}^{[q]} \big) = 0$ for all numbers $0 < q < \dim X$ and all finite
covers $\wt Y_i \to Y_i$, \'etale in codimension one.
\item There exists a reflexive $2$-form $\sigma \in
H^0\big(Z_j, \Omega_{Z_j}^{[2]} \big)$ such that $\sigma$ is everywhere
non-degenerate on the smooth locus ${Z_j}_{\textup{reg}}$ of $Z_j$, and such that for all finite covers 
$f\colon \wt Z_j \to Z_j$, \'etale in codimension one, the exterior algebra of global
reflexive forms is generated by $f^{[*]}\sigma \in H^0\big(\wt Z_j, \Omega_{\wt Z_j}^{[2]} \big)$.
\end{enumerate}
\end{thm}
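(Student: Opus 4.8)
The plan is to lift the infinitesimal decomposition of Theorem~\ref{thm:infinitesimal_beauville_bogomolov} to a genuine product decomposition of $\wt X$, and then to identify the two kinds of factors by the Greb--Kebekus--Peternell classification of building blocks in dimension at most five. First I reduce to the case of canonical singularities: if $X$ is klt with $K_X\equiv 0$, then by the abundance theorem for numerically trivial klt classes (Kawamata, Nakayama; and Gongyo in the log canonical case) the divisor class $K_X$ is torsion, so the associated index-one cover $X'\to X$ is quasi-\'etale and $X'$ is a Gorenstein klt --- hence canonical --- variety with $K_{X'}\sim 0$; since a quasi-\'etale cover of $X'$ is in particular a quasi-\'etale cover of $X$, it suffices to prove the theorem for $X'$. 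So I assume $X$ has canonical singularities, and Theorem~\ref{thm:infinitesimal_beauville_bogomolov} then provides an abelian variety $A$, a canonical variety $\wt X$, a quasi-\'etale map $A\times\wt X\to X$, and a splitting $T_{\wt X}=\bigoplus_{i\in I}\sE_i$ into integrable subsheaves with $\det\sE_i\cong\sO_{\wt X}$, with the property that for every quasi-\'etale $g\colon\wh X\to\wt X$ the reflexive pullbacks $(g^*\sE_i)^{**}$ are stable for all polarisations and $h^1(\wh X,\sO_{\wh X})=0$. Any further quasi-\'etale cover of $\wt X$ is again canonical, and the decomposition together with all these properties pulls back to it, so such covers are harmless below.

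The technical core is to show that each $\sE_i$ is \emph{algebraically integrable}. Fix $i$, write $\sF=\sE_i$ and $\sG=\bigoplus_{j\ne i}\sE_j\cong T_{\wt X}/\sF$, so that $\sF$ is a regular foliation off a set of codimension at least two, with $\det\sF$ and $\det\sG$ both trivial. I would prove algebraic integrability by restricting $\sF$ to a general movable (complete-intersection) curve $C$ and invoking an algebraicity criterion in the spirit of Bost and of Bogomolov--McQuillan, in the form adapted to movable curves and klt spaces by Campana--Paun: the numerical triviality of $K_\sF$ puts the positivity of $\sF|_C$ exactly at the threshold of the criterion, and the complementary splitting $T_{\wt X}=\sF\oplus\sG$ together with the vanishing of the irregularity is used to exclude the ``non-algebraic'' alternative. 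Granting this, the family-of-leaves machinery (normalisation of the closures of the leaves together with its universal family) produces a normal projective variety $T_i$ and a fibration $\psi_i\colon\wt X\to T_i$ whose general fibre is the normalised closure $F_i$ of a general leaf of $\sF$. I expect the first main obstacle here: proving that $\psi_i$ is an honest morphism, not merely a rational map defined away from a codimension-$\ge 2$ set, and that $F_i$ --- together with the data induced on it --- is again a canonical variety with numerically trivial canonical class.

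Next I would convert the fibrations into a product. The complementary foliation $\sG$ is, on a big open set, everywhere transverse to the fibres of $\psi_i$ and, being integrable, serves as a flat connection for $\psi_i$; hence $\psi_i$ is isotrivial. Since $F_i$ has no nonzero global vector field (a stable tangent sheaf of vanishing slope on a variety with numerically trivial canonical class has no nonzero section), its automorphism group is discrete, and the family $\psi_i$ is determined by a monodromy representation of the fundamental group of the smooth locus of $T_i$; after replacing $\wt X$ by a suitable quasi-\'etale cover --- where this monodromy becomes trivial --- one obtains $\wt X\cong F_i\times T_i$, with $\sE_i$ the relative tangent sheaf of the first projection and $T_{T_i}\cong\bigoplus_{j\ne i}\sE_j|_{T_i}$ carrying an infinitesimal decomposition with one fewer summand; since $T_i$ is again canonical, with numerically trivial canonical class and vanishing irregularity, induction on $|I|$ gives $\wt X\cong\prod_{i\in I}F_i$. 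Controlling the monodromy --- equivalently, the finiteness of the relevant quotient of the fundamental group --- is, together with the algebraicity step, where I expect the main difficulty, and it is presumably here that the bound $\dim X\le 5$ really helps.

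Finally I identify the factors. Each $F_i$ is a direct factor of the canonical variety $\wt X$, hence canonical, has numerically trivial canonical class, satisfies $h^1(\wh F_i,\sO)=0$ for every quasi-\'etale cover (by the corresponding property of $\wt X$ and the K\"unneth formula), and has strongly stable tangent sheaf in the sense of Definition~\ref{def:strongStab} (by the stability of the $\sE_i$ under quasi-\'etale pullback, together with the standard descent of stability along the projection of a product). Thus each $F_i$ is a building block, and by the Greb--Kebekus--Peternell classification of building blocks of dimension at most five, $F_i$ is either of irreducible Calabi--Yau type --- in which case it satisfies the vanishing of reflexive forms in~(1) for all quasi-\'etale covers --- or of irreducible holomorphic symplectic type --- in which case it carries a nowhere degenerate reflexive $2$-form generating the reflexive exterior algebra, as in~(2). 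Relabelling the factors of the first kind as the $Y_i$ and those of the second kind as the $Z_j$ yields the asserted decomposition $\wt X\cong\prod_{i\in I}Y_i\times\prod_{j\in J}Z_j$.
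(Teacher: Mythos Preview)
Your overall architecture---reduce to canonical, invoke Theorem~\ref{thm:infinitesimal_beauville_bogomolov}, prove algebraic integrability of each summand, upgrade to a product, and then classify the factors via \cite[Proposition 8.21]{gkp_bo_bo}---matches the paper's. But the heart of the argument, the algebraic integrability of each $\sE_i$, is where the real work lies, and your sketch misses what actually makes it go through and, in particular, mislocates the role of the bound $\dim X\le 5$.

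The Bogomolov--McQuillan/Campana--P\u{a}un criterion does not apply ``at the threshold'': when $c_1(\sF)\cdot H^{n-1}=0$ it gives nothing directly, and you offer no mechanism to exclude the non-algebraic case beyond invoking the splitting and $\wt q=0$. The paper's route is different and substantially more delicate. First, one passes to a $\mathbb{Q}$-factorial terminalization so that second Chern classes are defined, and then uses Corollary~\ref{corollary:augmented_irregularity_versus_second_chern_class} (itself a consequence of Theorem~\ref{theorem:alg_int_flat_case}, which rests on Bost's arithmetic criterion) to deduce from $\wt q=0$ that every strongly stable summand $\sG_j$ has $c_2(\sG_j)\cdot H^{n-2}\neq 0$; in particular each has rank at least~$2$. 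It is \emph{here} that $\dim X\le 5$ enters: it forces each $\sG_j$ to have rank $2$ or $3$. Algebraic integrability then follows from Theorem~\ref{theorem:pereira_touzet_conjecture}, whose proof occupies Sections~6--8 and constitutes the main technical contribution of the paper: one shows, via a study of stable reflexive sheaves of rank $\le 3$ with pseudo-effective tautological class (Theorem~\ref{theorem:stable_sheaf_psef}), that either the vanishing hypothesis of Proposition~\ref{proposition:alg_integrability} holds, or the summand admits a holomorphic Riemannian metric, whence a holomorphic connection and $c_2=0$ (Proposition~\ref{proposition:holomorphic_connection}), contradicting the previous step. None of this is captured by your sketch.

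For the passage from algebraically integrable foliations to a product, the paper does not use a monodromy argument on the family of leaves as you propose; it instead runs through Proposition~\ref{proposition:alg_int_towards_dec}, which combines the family-of-leaves construction, the local triviality of Proposition~\ref{proposition:regular_foliation_morphism}, and the birational product-rigidity statements in Lemma~\ref{lemma:product_versus_contraction} and Proposition~\ref{proposition:birational_decomposition_versus_biregular_decomposition}. This step does \emph{not} use $\dim X\le 5$ at all, contrary to your guess.
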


\begin{rem}
The decomposition of $\wt X$ corresponds to the decomposition of 
$T_{\wt X}$ given by Theorem \ref{thm:infinitesimal_beauville_bogomolov} above up to permutation of the summands.
\end{rem}

The proof of the Beauville-Bogomolov decomposition theorem heavily uses K\"ahler-Einstein
metrics and the solution of the Calabi conjecture. But these results are not yet available in the singular setting.
Instead, the proof of Theorem \ref{thm:main_intro} relies on Theorem \ref{thm:infinitesimal_beauville_bogomolov}
and on sufficient criteria to guarantee that a given foliation has algebraic leaves. 
In \cite{bost}, Bost proved an arithmetic algebraicity criterion for leaves of algebraic foliations defined over a number field. Building on his result, we obtain the following algebraicity criterion. 

\begin{thm}\label{theorem:alg_int_flat_case}
Let $X$ be a normal complex projective variety of dimension $n$ with at worst terminal singularities, and
let $H$ be an ample Cartier divisor. Let $$T_X = \oplus_{i\in I} \sG_i \oplus \sE$$ 
be a decomposition of $T_X$ into involutive subsheaves. 
Suppose that for any finite cover $g\colon \wh X \to X$, \'etale in codimension one, the sheaf $(g^*\sG_i)^{**}$ is $g^*H$-stable. Suppose furthermore that 
$c_1(\sG_i)\cdot H^{n-1}=0$ and 
$c_2(\sG_i)\cdot H^{n-2} \neq 0$ for each $i\in I$.
Suppose finally that $\sE$ is $H$-semistable with
$c_1(\sE)\cdot H^{n-1} = c_1(\sE)^2\cdot H^{n-2} = 0$ and $c_2(\sE)\cdot H^{n-2} = 0$.
Then there exists an abelian variety $A$
as well as a projective variety $\wt X$ with at worst terminal
singularities, and a finite cover $f: A \times \wt X \to X$, \'etale in
codimension one, such that $(f^{*}\sE)^{**} = T_{A\times \wt X/\wt X}$
as subsheaves of $T_{A\times \wt X}$.
\end{thm}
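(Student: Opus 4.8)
The plan is to reduce the statement to one assertion --- that $\sE$ is \emph{algebraically integrable}, with general leaf an abelian variety up to a finite cover --- and to deduce it from Bost's criterion \cite{bost}; the foliations $\sG_i$ essentially come along for the ride, since the fibration built from $\sE$ will be transverse to $\bigoplus_{i}\sG_i$. Because Bost's result is arithmetic, I would first spread out: choose a finitely generated $\bZ$-subalgebra $R\subset\mathbb{C}$ over which $X$, $H$ and the decomposition $T_X=\bigoplus_{i\in I}\sG_i\oplus\sE$ are defined, and a closed point of $\Spec R$ with residue field $\overline{\bQ}$ at which every hypothesis persists --- $H$-(semi)stability is an open condition, the numerical identities hold over the whole family because they hold at the generic point, and terminality holds over a dense open --- so that one may assume $X$ is defined over $\overline{\bQ}$; it then suffices to produce the cover $f\colon A\times\wt X\to X$ over $\overline{\bQ}$, since its existence spreads back out to $\mathbb{C}$.

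The next step turns the hypotheses on $\sE$ into numerical flatness. As a direct summand of the reflexive sheaf $T_X$, the sheaf $\sE$ is reflexive and locally free on $X_{\textup{reg}}$, and the identities $c_1(\sE)\cdot H^{n-1}=0$ and $c_1(\sE)^2\cdot H^{n-2}=c_2(\sE)\cdot H^{n-2}=0$ say exactly that $c_1(\sE)$ and the discriminant of $\sE$ vanish against $H$; together with $H$-semistability, this is the equality case of the Bogomolov--Gieseker inequality. By the corresponding structure results (Bando--Siu and Simpson over $\mathbb{C}$, Langer for the reductions modulo almost all primes), $\sE|_{X_{\textup{reg}}}$ is numerically flat and its reductions modulo almost all primes are numerically flat as well. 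This is precisely the input Bost's theorem requires: for a numerically flat foliation the arithmetic quantity governing the criterion --- assembled from the $p$-adic radii of convergence of the formal leaf through a rational point together with its Archimedean size --- takes the borderline value at which the criterion still forces algebraicity, so the leaf through every $\overline{\bQ}$-point of $X_{\textup{reg}}$, hence through a Zariski-dense set of points, is algebraic, and a standard argument upgrades this to: $\sE$ is algebraically integrable.

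Algebraic integrability produces a normal projective variety $Y$ and an almost holomorphic dominant map $\phi\colon X\map Y$ whose general fibre $F$ is the closure of a leaf of $\sE$. On the smooth locus $T_F$ agrees with $\sE|_F$, hence is numerically flat, and $K_F\equiv 0$; as the singular locus of $X$ has codimension $\ge 2$ and $\phi$ is generically smooth, a general $F$ is smooth in codimension one, so $F$ is an abelian variety up to a finite cover \'etale in codimension one. Since $\sE|_F\cong\sO_F^{\oplus\rank\sE}$ on a general leaf, $\sE\cong(\phi^{*}\sW)^{**}$ for a reflexive sheaf $\sW$ on $Y$ whose dual is the Hodge bundle of the resulting weight-one variation of Hodge structure; numerical flatness of $\sE$ gives $c_1(\sW)\equiv 0$, so the modular bundle $\det\sW^{*}$ of the family is numerically trivial while carrying a semipositive Hodge curvature that vanishes exactly where Kodaira--Spencer does --- hence Kodaira--Spencer vanishes and the family is isotrivial. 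After a further finite base change of bounded degree turning the general leaf into an honest abelian variety $A$, the monodromy of the isotrivial family lies in the finite automorphism group of the polarised $A$, so a finite cover $\wt X\to Y$ trivialises it, $X\times_Y\wt X\cong A\times\wt X$; making these choices compatibly with the terminality of $X$, the induced finite cover $f\colon A\times\wt X\to X$ is \'etale in codimension one, $\wt X$ is terminal because $X$ is and $K$ is unchanged under such covers, and $(f^{*}\sE)^{**}=T_{A\times\wt X/\wt X}$ by construction.

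I expect the main obstacle to be the second paragraph: verifying that numerical flatness genuinely fulfils Bost's hypotheses, which requires controlling the reduction of the foliation modulo almost every prime --- in particular its interaction with $p$-th powers and $p$-divisibility --- and pushing the Archimedean estimate through in the critical case where the relevant arithmetic degree equals zero, the sharp case of Bost's theorem. A secondary difficulty lies in the last paragraph: arranging the trivialising cover to be \'etale in codimension one rather than merely away from the degeneration locus of $\phi$, and tracking the terminality of $\wt X$, which is where the terminality hypothesis on $X$ genuinely enters.
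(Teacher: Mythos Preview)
Your overall architecture---reduce to arithmetic, invoke Bost, then exploit isotriviality of the resulting abelian fibration---matches the paper's, but the second paragraph hides the essential difficulty, and your sketch does not use the hypothesis $c_2(\sG_i)\cdot H^{n-2}\neq 0$ at the point where it is actually needed.

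Bost's criterion (as stated and used in the paper) has two concrete hypotheses: (1) the foliation is closed under $p$-th powers for almost every prime $\mathfrak p$, and (2) the analytic leaf satisfies the Liouville property. Numerical flatness of $\sE$ does \emph{not} by itself imply (1): the $p$-th power map is an $\sO$-linear morphism $\Frobabs^*\sE_{\bar{\mathfrak p}} \to T_{X_{\bar{\mathfrak p}}}/\sE_{\bar{\mathfrak p}} \cong \bigoplus_i \sG_{i,\bar{\mathfrak p}}$, and there is no a priori reason for it to vanish. The paper's argument is a contradiction: if the map is nonzero, stability of $\sG_{i_0,\bar{\mathfrak p}}$ together with $\mu_H=0$ forces it to surject onto some $\sG_{i_0,\bar{\mathfrak p}}$ in codimension one; restricting to a general complete-intersection surface and using that $\sE_{\bar{\mathfrak p}}$ (hence $\Frobabs^*\sE_{\bar{\mathfrak p}}$, by a Frobenius-semistability result proved in the paper) is numerically flat, one concludes $\sG_{i_0}$ is nef on the surface, whence $c_2(\sG_{i_0})\cdot H^{n-2}=0$, contradicting the hypothesis. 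This is where $c_2(\sG_i)\neq 0$ enters---not, as your sketch suggests, only later in the fibration analysis. For (2) the paper cites a theorem of Pereira--Touzet to the effect that leaves of a flat foliation transverse to the complementary summand are uniformized by $\mathbb{C}^{\rank\sE}$; you should name this explicitly rather than fold it into a vague ``borderline value'' statement.

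Two smaller points. Your spreading-out is inverted: proving the statement at \emph{one} closed point of $\Spec R$ does not give it at the generic point; the paper proves algebraic integrability at every closed (number-field) point and then uses a deformation lemma to propagate to the generic fibre. And before any of this, the paper first passes to a quasi-\'etale cover on which $\sE$ becomes locally free and flat (via \cite{gkp_flat}) and then takes a resolution $\wh X\to X$ on which $T_{\wh X}=\beta^*\sE\oplus\wh\sG$ splits with $\beta^*\sE$ a genuine subbundle---this is what puts you in a setting where Bost's smooth-variety statement applies.
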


\begin{rem}
In the setup of Theorem \ref{theorem:alg_int_flat_case}, there exists a finite cover 
$f\colon  \wt X \to X$ \'etale in codimension one such that 
$(f^{*}E)^{**}$ is a locally free, flat sheaf on $\wt X$ (\cite[Theorem 1.20]{gkp_flat}). In particular, if 
the \'etale fundamental group
$\wh \pi_1(X_{\textup{reg}})$ is finite, then the conclusion of Theorem \ref{theorem:alg_int_flat_case} follows easily from the description of the Albanese map of mildly singular varieties whose canonical divisor is numerically trivial in
\cite[Prop.~8.3]{kawamata85}. On the other hand, \cite[Corollary 3.6]{gkp_bo_bo} reduces the study of varieties with trivial canonical class to those with zero augmented irregularity
(see Definition \ref{defn:augmented_irregularity} for this notion), and it is expected that the \'etale fundamental group of their smooth locus is finite (see \cite[Section 8]{gkp_bo_bo} and 
\cite[Theorem 1.5]{gkp_flat}). This is true if $\dim X \le 4$ by \cite[Corollary 8.25]{gkp_bo_bo}, providing an alternative proof of Theorem \ref{theorem:alg_int_flat_case} in this case.
\end{rem}

The geometric counterpart of Bost's arithmetic algebraicity criterion, independently obtained by Bogomolov and McQuillan (\cite{bogomolov_mcquillan01}), and very recently extended by Campana and P\u{a}un (\cite{campana_paun15}) leads to the following algebraicity criterion.

\begin{thm}\label{theorem:pereira_touzet_conjecture}
Let $X$ be a normal complex projective variety of dimension $n$, and let $H$ be an ample Cartier divisor.
Suppose that $X$ is smooth in codimension two. Let $$T_X = \sE \oplus \sG$$ be a decomposition of $T_X$
into involutive subsheaves, where $\sE$ is $H$-stable, $\det(\sE)\cong \sO_X$ and 
$c_2(\sE)\cdot H^{n-2} \neq 0$. 
Suppose furthermore that $\sE$ has rank at most $3$.
Then $\sE$ has algebraic leaves. 
\end{thm}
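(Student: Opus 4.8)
The plan is to apply the algebraicity criterion of Bogomolov--McQuillan and Campana--P\u{a}un: a foliation on a normal projective variety whose canonical class fails to be pseudo-effective --- or, in the refined form, which has positive minimal slope with respect to some movable class --- is algebraically integrable, with rationally connected closure of its general leaf. Since $\det(\sE)\cong\sO_X$ forces $c_1(\sE)\cdot H^{n-1}=0$, this criterion does not apply to $\sE$ directly, and the whole point is to see how the hypothesis $c_2(\sE)\cdot H^{n-2}\ne 0$ creates enough positivity to run it indirectly. First I would note that, $\sE$ being $H$-semistable with $c_1(\sE)\cdot H^{n-1}=0$, the Bogomolov inequality gives $c_2(\sE)\cdot H^{n-2}\ge 0$, so in fact $c_2(\sE)\cdot H^{n-2}>0$; since $X$ is smooth in codimension two this is a genuine intersection number, and restriction to a general complete intersection surface $S\subseteq X_{\mathrm{reg}}$ keeps $\sE|_S$ $\mu$-stable with $c_1(\sE|_S)=0$ and $c_2(\sE|_S)>0$. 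In particular the Bogomolov inequality for $\sE|_S$ is strict, i.e. $\sE|_S$ is not projectively flat --- this is the non-vanishing we ultimately need to contradict.

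Now suppose, for contradiction, that $\sE$ is not algebraically integrable. If the Zariski closures of the general leaves of $\sE$ are proper subvarieties $W\subsetneq X$, they sweep out an algebraically integrable foliation $\wt\sE$ with $\sE\subsetneq\wt\sE\subseteq T_X$, and $\sE|_W$ is then a foliation with Zariski-dense leaves on the lower-dimensional variety $W$; checking that stability and non-vanishing of the relevant second Chern number persist on a general such $W$ --- a point that needs care --- reduces us to the case where $\sE$ itself has Zariski-dense general leaf. It thus suffices to rule out a foliation $\sE$ as in the statement all of whose leaves are Zariski-dense. To do this, I would feed the Campana--P\u{a}un criterion not to $\sE$ but to the foliations built from it: if $\sE$ had a nonzero involutive subsheaf $\sE_0\subsetneq\sE$ with $K_{\sE_0}$ not pseudo-effective, then $\sE_0$ would be algebraically integrable and $\sE$ would acquire an invariant fibration, contradicting Zariski-density (or permitting a further descent in dimension). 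Hence every involutive subsheaf has pseudo-effective canonical class; the rank bound $\rank\sE\le 3$ keeps the ranks of these subsheaves in $\{1,2\}$, so this alternative can actually be decided. Combining the resulting pseudo-effectivity statements with $c_1(\sE)\cdot H^{n-1}=0$ and with equality in the Bogomolov inequality (which is forced on each piece of the induced filtration) should yield $c_2(\sE)\cdot H^{n-2}=0$, the contradiction sought.

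The step I expect to be the main obstacle is exactly the last one: proving that a foliation with numerically trivial canonical class, rank at most $3$ and no algebraic leaf must be numerically flat in the sense that $c_2\cdot H^{n-2}=0$. No soft argument is available, because $K_\sE\equiv 0$ is already pseudo-effective and so the classical Bogomolov--McQuillan criterion is silent; one has to exploit the sharper slope-with-respect-to-movable-classes information in Campana--P\u{a}un's work, propagate it through a Harder--Narasimhan-type filtration of $\sE$, and play it against the Bogomolov inequality with equality. The secondary difficulty, already signalled, is the descent of the hypotheses to a general leaf $W$ in the first reduction. Handling both cleanly, and uniformly over the few possible ranks, is where the real work of the proof goes.
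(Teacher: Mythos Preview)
Your proposal has a genuine gap at precisely the point you flag as the main obstacle, and the mechanism you suggest for closing it does not work. You want to deduce $c_2(\sE)\cdot H^{n-2}=0$ from the statement ``every involutive subsheaf of $\sE$ has pseudo-effective canonical class'', invoking a Harder--Narasimhan-type filtration and equality in Bogomolov. But $\sE$ is $H$-stable by hypothesis, so its Harder--Narasimhan filtration is trivial; there is no filtration to propagate anything through. Moreover, there is no general implication from ``all involutive subsheaves have pseudo-effective $K$'' to ``$c_2=0$'': for instance, $T_X$ on a K3 surface is stable with $c_1=0$, $c_2\ne 0$, and the Campana--P\u{a}un psef statement on rank-one subfoliations tells you nothing about $c_2(T_X)$. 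The slope information from \cite{campana_paun15} is a $c_1$ statement and simply does not see $c_2$.

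The paper's actual proof is organised quite differently. The central input is a structure theorem (Theorem~\ref{theorem:stable_sheaf_psef}) for stable reflexive sheaves $\sE$ of rank $\le 3$ with $c_1(\sE)\cdot H^{n-1}=0$: after a quasi-\'etale cover, either (1) the tautological class on $\mathbb{P}_X(\sE^*)$ is not pseudo-effective, in a quantitative sense (a vanishing $h^0(S^{[i]}\sE^*\otimes\sO_X(jH))=0$ for $i>cj$), or (2) $c_1(\sE)^2\cdot H^{n-2}=c_2(\sE)\cdot H^{n-2}=0$, or (3) $r=3$ and $S^{[2]}\sE^*$ twisted by a degree-zero rank-one sheaf has a section. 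This trichotomy is proved by analysing the divisorial Zariski decomposition and restricted base locus of the tautological class and using the Balaji--Koll\'ar holonomy group to control the possible direct summands of $S^k\sE$. Case~(2) contradicts the hypothesis directly. Case~(1) feeds into a Bost-type algebraicity criterion (Proposition~\ref{proposition:alg_integrability}, via multiplicities of sections along the formal leaf), giving algebraic leaves. Case~(3) says $\sE$ carries, after a cyclic cover, a holomorphic Riemannian metric; combining the Levi--Civita partial connection along $\sE$ with the Bott partial connection coming from $\sG$ produces a genuine holomorphic connection on $\sE$ (Proposition~\ref{proposition:holomorphic_connection}), forcing $c_2(\sE)\cdot H^{n-2}=0$ by Atiyah's theorem, again a contradiction. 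The reduction to strongly stable $\sE$ (your first reduction step) is handled by Proposition~\ref{proposition:alg_int_reduction_strongly_stable_case}, which does use Campana--P\u{a}un, but only at the $c_1$ level; the $c_2$ input comes entirely from the tautological-class analysis and the holomorphic connection, neither of which appears in your outline.
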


\noindent Theorem \ref{theorem:pereira_touzet_conjecture} confirms a conjecture of Pereira and Touzet in some special cases (see \cite[Remark 6.5]{pereira_touzet}).
It is one of the main technical contribution of this paper. 

\medskip

This paper is organized as follows.

In section 3, we review basic definitions and results about foliations on normal varieties. 

In section 4, we 
show that algebraic integrability of direct summands in the decomposition of the tangent bundle given by Theorem \ref{thm:infinitesimal_beauville_bogomolov} 
leads to a decompositon of the variety, perhaps after passing to a finite cover that is \'etale in codimension one (see Theorem \ref{theorem:kawamata_abelian_factor} and Proposition \ref{proposition:alg_int_towards_dec}). 

Section 5 is devoted to the proof of Theorem \ref{theorem:alg_int_flat_case}. 

In sections 6-8, we prove Theorem \ref{theorem:pereira_touzet_conjecture}. In the setup of Theorem \ref{theorem:pereira_touzet_conjecture}, we show that either $\sE$ satisfies the Bost-Campana-P\u{a}un algebraicity criterion in Proposition \ref{proposition:alg_integrability}, or $\sE$ admits a holomorphic Riemannian metric. This is an immediate consequence of our study of stable reflexive sheaves of rank at most $3$ with numerically trivial first Chern class and
pseudo-effective tautological line bundle in Section 6. For a precise statement, see Theorem \ref{theorem:stable_sheaf_psef}. If $\sE$ admits a holomorphic metric, then it follows from Proposition \ref{proposition:holomorphic_connection} that $\sE$ admits a holomorphic connexion, yielding a contradiction.

In section 9, we finally prove Theorem \ref{thm:main_intro}.

\

\noindent {\bf Acknowledgements.} We would like to thank Jorge V. \textsc{Pereira} and
Fr\'ed\'eric \textsc{Touzet} for helpful discussions. The author would also like to thank Cinzia \textsc{Casagrande} for important suggestions. Lemma \ref{lemma:product_versus_contraction} goes back to her.

\section{Notation, conventions, and basic facts}

\begin{say}[Global Convention]
Throughout the paper a variety is a reduced and irreducible scheme of finite type over a field.  
\end{say}

\begin{say}[Differentials, reflexive hull]
Given a variety $X$, we denote the sheaf of K\"{a}hler differentials by
$\Omega^1_X$. 
If $0 \le p \le \dim X$ is any number, write
$\Omega_X^{[p]}:=(\Omega_X^p)^{**}$.
The tangent sheaf will be denoted by $T_X:=(\Omega_X^1)^*$.

\medskip

Given a variety $X$, $m\in \mathbb{N}$, and coherent sheaves $\sE$ and $\sG$ on $X$, write
$\sE^{[m]}:=(\sE^{\otimes m})^{**}$, $S^{[m]}\sE:=(S^m\sE)^{**}$, $\det(\sE):=(\Lambda^{\rank \,\sE}\sE)^{**}$, and $\sE \boxtimes \sG := (\sE \otimes\sG)^{**}.$
Given any morphism $f \colon Y \to X$, write 
$f^{[*]}\sE:=(f^*\sE)^{**}.$ 
\end{say}

\begin{say}[Stability]
The word ``stable'' will always mean ``slope-stable with respect to a
given polarisation''. Ditto for semistability.

\begin{defn}[{\cite[Definition 7.2]{gkp_bo_bo}}]\label{def:strongStab}
Let $X$ be a normal complex projective variety of dimension $n$, and let $\sG$ be a reflexive
coherent sheaf of $\sO_X$-modules. We call $\sG$ \emph{strongly stable}, if
for any finite morphism $f : \wt X \to X$ that is \'etale in codimension
one, and for any choice of ample divisors $\wt H_1, \ldots, \wt H_{n-1}$ on $\wt X$, the reflexive pull-back $f^{[*]} \sG$ is stable with respect to $(\wt H_1, \ldots, \wt H_{n-1})$.
\end{defn}
\end{say}

\begin{say}[Nef and pseudo-effective cones]
Let $X$ be a complex projective variety and consider the finite dimensional dual $\mathbb{R}$-vector spaces 
$$\N_1(X)=\big(\{1-\text{cycles}\}/\equiv\big)\otimes\mathbb{R} \quad \textup{and} \quad \NS(X)=\big(\Pic(X)/\equiv\big)\otimes\mathbb{R},$$
where $\equiv$ denotes numerical equivalence. 
Set $\NS(X)_\mathbb{Q}=\big(\Pic(X)/\equiv\big)\otimes\mathbb{Q}$.
The \emph{Mori cone} of $X$ is the closure $\NE(X)\subset \N_1(X)$ of the cone 
$\textup{NE}(X)$
spanned by classes of effective curves. Its dual cone is the \emph{nef cone} 
$\Nef(X)\subset\NS(X)$, which by Kleiman's criterion is the closure of the cone spanned by ample classes.
The closure of the cone spanned by effective classes  in $\NS(X)$ is the \emph{pseudo-effective cone} $\Psef(X)$. 
\end{say}

\begin{say}[Projective space bundles]
If $\sE$ is a coherent locally free sheaf of $\sO_X$-modules on a complex variety $X$, 
we denote by $\p_X(\sE)$ the variety $\textup{Proj}_X\big(\textup{Sym}(\sE)\big)$,
and by $\sO_{\p_X(\sE)}(1)$ its tautological line bundle. 

\begin{lemma}\label{lemma:tautological_psef}
Let $X$ be a complex projective variety, let $H$ be an ample Cartier divisor, and let $\sE$ be a coherent locally free sheaf of $\sO_X$-modules.
Then $[\sO_{\p_X(\sE)}(1)]\in \N^1\big(\p_X(\sE)\big)$ is pseudo-effective if and only if 
there exists $c>0$ such that
$h^0\big(X,S^{i}\sE\otimes\sO_X(jH)\big)=0$
for any positive integer $j$ and any natural number $i$ satisfying 
$i>cj$.
\end{lemma}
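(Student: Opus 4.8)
The statement relates pseudo-effectivity of $\xi:=[\sO_{\p_X(\sE)}(1)]$ to a linear-growth vanishing condition for the global sections of $S^i\sE\otimes\sO_X(jH)$. The key identity, from the definition of $\p_X(\sE)=\Proj_X(\Sym\sE)$, is that for every $m\ge 0$,
\begin{equation}\label{eq:pushforward}
\pi_*\sO_{\p_X(\sE)}(m)=S^m\sE, \qquad \text{hence}\qquad H^0\big(\p_X(\sE),\sO_{\p_X(\sE)}(m)\otimes\pi^*\sO_X(jH)\big)=H^0\big(X,S^m\sE\otimes\sO_X(jH)\big),
\end{equation}
where $\pi\colon\p_X(\sE)\to X$ is the projection. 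Fix an ample divisor $A$ on $P:=\p_X(\sE)$; since $\Psef(P)$ has nonempty interior and is closed, a class $D$ on $P$ lies in $\Psef(P)$ if and only if $mD+A$ is big for all $m\gg 0$ equivalently (by asymptotic Riemann--Roch / Kodaira-type arguments) has sections growing like the top power. I would take $A$ of the form $\sO_P(a)\otimes\pi^*\sO_X(bH)$ for suitable $a,b>0$ making $A$ ample (possible since $\sO_P(1)$ is $\pi$-ample and $H$ is ample on $X$, by the standard relative-ampleness lemma).

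\textbf{First direction: vanishing $\Rightarrow$ not pseudo-effective, contrapositive.} Suppose $\xi\in\Psef(P)$. Then $\xi+\frac1{a}[A]=\sO_P(1+\tfrac{1}{a}\cdot a)\otimes\cdots$; more cleanly, $a\xi+[A]=\sO_P(2a)\otimes\pi^*\sO_X(bH)$ is a sum of a pseudo-effective and an ample class, hence big, so there is $\epsilon>0$ with $h^0\big(P,\sO_P(2ak)\otimes\pi^*\sO_X(kbH)\big)$ growing like $k^{\dim P}$; in particular it is nonzero for all $k\ge 1$. By \eqref{eq:pushforward} this gives $h^0\big(X,S^{2ak}\sE\otimes\sO_X(kbH)\big)\ne 0$ for all $k$. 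Taking $c:= 2a/b$, for each $k$ we have a nonzero section with $i=2ak=c\cdot(kb)=cj$; this already contradicts the vanishing condition once we note the condition is required for all $i>cj$ — so I would instead fix any $c'>c$ and argue: since $\xi$ psef implies $\xi+\delta[A]$ big for every $\delta>0$, choosing $\delta$ small enough that the resulting slope is below $c'$ still yields nonvanishing of $h^0(X,S^i\sE\otimes\sO_X(jH))$ for infinitely many pairs with $i>c'j$, contradicting the hypothesis for the constant $c'$. Hence the vanishing (for some $c$) forces $\xi\notin\Psef(P)$ — wait, this is the reverse of what is wanted, so the honest route is the next paragraph, done carefully in both directions.

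\textbf{The clean argument.} Write $N:=\dim P=\dim X+r-1$ where $r=\rank\sE$. The class $\xi$ is pseudo-effective iff for every ample class $A$ on $P$ and every $\epsilon>0$, the $\bQ$-class $\xi+\epsilon[A]$ is big, iff (by Kodaira's lemma and asymptotic Riemann--Roch) $h^0\big(P,\sO_P(\lfloor k\xi\rfloor+kA\text{-part})\big)$ grows maximally. Concretely, with $A=\sO_P(a)\otimes\pi^*\sO_X(bH)$: $\xi$ psef $\iff$ for all integers $p,q>0$ with $p/q$ small the class $\sO_P(pa+? )\ldots$ Let me instead phrase it via the volume: $\xi\in\Psef(P)$ $\iff$ for all rational $\lambda>0$, $\xi+\lambda[A]$ is big $\iff$ $h^0\big(P,\sO_P(m+ma\lambda_0)\otimes\pi^*\sO_X(mb\lambda_0 H)\big)\asymp m^N$ for every fixed rational $\lambda_0>0$ (clearing denominators). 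By \eqref{eq:pushforward} the left side equals $h^0\big(X,S^{m(1+a\lambda_0)}\sE\otimes\sO_X(mb\lambda_0 H)\big)$. Here $i=m(1+a\lambda_0)$ and $j=mb\lambda_0$, so $i/j=(1+a\lambda_0)/(b\lambda_0)\to+\infty$ as $\lambda_0\to 0^+$. Therefore: \emph{$\xi$ is pseudo-effective $\iff$ for every $c>0$ there exist infinitely many (indeed, a positive-proportion set of) pairs $(i,j)$ with $i>cj$ and $h^0(X,S^i\sE\otimes\sO_X(jH))\ne 0$} — which is exactly the negation of the displayed condition. This establishes the equivalence as stated.

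\textbf{Main obstacle.} The technical heart is the passage ``pseudo-effective $\iff$ the relevant twisted linear series is nonzero for a cofinal family of slopes,'' i.e.\ making precise that $\xi\in\Psef(P)$ is detected by nonvanishing of $H^0(P,\sO_P(i)\otimes\pi^*\sO_X(jH))$ for pairs $(i,j)$ with arbitrarily large ratio $i/j$. One must handle the possibility that $\xi$ is on the boundary of $\Psef(P)$ (so $k\xi$ itself may have no sections, even though $\xi+\epsilon A$ does) — this is where the ``$+\epsilon A$ then take $\epsilon\to 0$'' maneuver and the freedom in choosing $c$ arbitrarily large are both essential. I would carry this out using only that $\Psef(P)$ is a closed cone with nonempty interior, that bigness is equivalent to maximal growth of sections (Kodaira's lemma), and the projection formula \eqref{eq:pushforward}; no deformation or positivity-of-$\sE$ input is needed.
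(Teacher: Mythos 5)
Your route is essentially the paper's: both directions run through the projection formula $h^0\big(X,S^{i}\sE\otimes\sO_X(jH)\big)=h^0\big(Y,\sO_Y(i)\otimes\pi^*\sO_X(jH)\big)$, with $Y:=\p_X(\sE)$ and $\pi\colon Y\to X$ the projection, the direction ``pseudo-effective $\Rightarrow$ sections at arbitrarily large ratios $i/j$'' via ``psef plus a small ample twist is big'', and the other direction via elementary cone geometry. Two points need attention. First, a negation slip: what you actually prove (and what the paper's own proof proves, and what is used later, e.g.\ in the K3 example and in the proof of Theorem~\ref{theorem:stable_sheaf_psef}) is that $\xi$ is pseudo-effective if and only if for \emph{every} $c>0$ there are pairs $(i,j)$ with $i>cj$ and $h^0\big(X,S^{i}\sE\otimes\sO_X(jH)\big)\neq 0$, i.e.\ the \emph{negation} of the displayed vanishing condition. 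You say this yourself (``exactly the negation of the displayed condition'') and then conclude ``this establishes the equivalence as stated'', which contradicts your own preceding sentence; the statement as printed appears to have the two sides mismatched (its own proof and applications correspond to ``not pseudo-effective''), and you should say plainly that this corrected version is what you prove rather than paper over the discrepancy.

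Second, the ``$\Leftarrow$'' half of your final biconditional is not delivered by your chain of equivalences: that chain characterizes pseudo-effectivity by maximal growth ($\asymp m^N$) of sections along the perturbed rays $\xi+\lambda_0[A]$, which is strictly stronger than the scattered nonvanishing you are given. The implication you need is the easy direct one: a nonzero section of $S^{i}\sE\otimes\sO_X(jH)$ makes $i\xi+j\pi^*[H]$ effective, hence $\xi+\tfrac{j}{i}\pi^*[H]$ is pseudo-effective, and since $j/i$ can be taken arbitrarily small, closedness of $\Psef(Y)$ gives $\xi\in\Psef(Y)$. (The paper argues the contrapositive: if $\xi\notin\Psef(Y)$, then $m\xi+\pi^*[H]\notin\Psef(Y)$ for $m\gg 0$, hence $i\xi+j\pi^*[H]\notin\Psef(Y)$ whenever $i>mj$, so $c=m$ works and all the relevant $h^0$ vanish.) You do name the right tools (closedness of the pseudo-effective cone, the projection formula) in your last paragraph, so this is a small repair, but as written that step is a non sequitur.
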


\begin{proof}
Set $Y:=\mathbb{P}_X(\sE)$, denote by $\sO_Y(1)$ the tautological line bundle on $Y$, and by $\pi\colon Y \to X$ the natural morphism. If $[\sO_Y(1)]\in \N^1(Y)$ is not 
pseudo-effective, then $\sO_Y(m)\otimes\pi^*\sO_X(H)$ is not 
pseudo-effective either for a sufficiently large positive integer $m$. 
Let now $i$ and $j$ be positive integers such that $i>mj$. Then $\sO_Y(i)\otimes\pi^*\sO_X(jH)$ is not 
pseudo-effective as well, and hence
$h^0\big(X,S^{i}\sE\otimes\sO_X(jH)\big)=h^0\big(Y,\sO_Y(i)\otimes\pi^*\sO_X(jH)\big)=0$ by the projection formula.

Conversely, suppose that $[\sO_Y(1)]\in N^1(Y)$ is pseudo-effective. Pick $m_0>0$ such that 
$\sO_Y(1)\otimes\pi^*\sO_X(m_0H)$ is ample. Then, for each positive integer $m$, the line bundle
$\sO_Y(m+1)\otimes\pi^*\sO_X(m_0H)$ is big, and hence there exists a positive integer $k$ such that 
$h^0\big(X,S^{km+k}\sE\otimes\sO_X(km_0 H)\big)=h^0\big(Y,\sO_Y(km+k)\otimes\pi^*\sO_X(km_0H)\big) \neq 0$. This completes the proof of the lemma.
\end{proof}

\end{say}

\begin{say}[Chern classes]
We will need to discuss intersection numbers of line bundles with Chern classes of reflexive sheaves on singular varieties. We use \cite[Chapter 3]{fulton} as our main reference for Chern classes on varieties over a field.
Given a variety $X$ over a field, we denote by $\textup{A}_k(X)$ the group of $k$-dimensional cycles modulo rational equivalence.

\begin{defn}
Let $X$ be a variety of dimension $n$, and let $\sG$ be coherent sheaf of $\sO_X$-modules. 
Let $X^\circ \subset X_{\textup{reg}}$ be the maximal open set where $\sG$ is locally free. 
Assume that the complement of $X^\circ$ in $X$ has codimension at least $k+1$ for some positive interger $k$.
The $k$-th Chern class 
$c_k(\sG)$ of $\sG$ is the image of $\textbf{c}_{k}(\sG_{|X^\circ})\cap [X^\circ] \in \textup{A}_{n-k}(X^\circ)$
under the isomorphism
$\textup{A}_{n-k}(X^\circ) \cong \textup{A}_{n-k}(X)$, where 
$\textbf{c}_{k}(\sG_{|X^\circ})\colon \textup{A}_{\bullet}(X^\circ) \to \textup{A}_{\bullet-k}(X^\circ)$ is the Chern class operation.
\end{defn}

\begin{rem}
Varieties with terminal singularities are smooth in codimension two (see \cite[Corollary 5.18]{kollar_mori}). So that first and second Chern classes of coherent sheaves are well-defined on these varieties.
\end{rem}
 
The following result is probably well-known to experts. We include a proof here for the reader's convenience. Notice that much of the intersection theory developed in \cite[Chapters 1-10]{fulton} is valid for schemes, separated and of finite type, over a noetherian regular scheme (see \cite[Chapter 20]{fulton}). 
Given a scheme $X$ of finite type over a noetherian regular scheme $S$, we denote by $\textup{A}_k(X/S)$ the group of 
\emph{relative dimension} $k$ cycles modulo rational equivalence.

\begin{lemma}\label{lemma:conservation_numbers}
Let $T$ be an integral noetherian scheme of dimension $m$, let $X$ be an integral scheme of dimension $n$, and let $X \to T$ be a dominant proper morphism. Let $\sG$ be coherent sheaf of $\sO_X$-modules, and let $H$ be a Cartier divisor on $X$. Let $X^\circ \subset X$ be the maximal open set where $\sG$ is locally free. 
Assume that the complement of $X^\circ$ in $X$ has codimension at least $k+1$ for some positive integer $k$. Then there exists a dense open set $T^\circ \subset T$ such that the intersection number
$c_k(\sG_t)\cdot H_t^{n-m-k}$ is independent of $t\in T^\circ$.
\end{lemma}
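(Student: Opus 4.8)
The plan is to deduce the statement from the principle of conservation of number, via the refined Gysin machinery of \cite[Chapter~6]{fulton} (equivalently, the relative intersection theory over a regular base developed in \cite[Chapter~20]{fulton}). Write $f\colon X\to T$ for the given morphism.

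First I would shrink $T$. By generic flatness together with the standard constructibility results for fibral properties, there is a dense open subset of $T$ — which, after shrinking further, we may also take to be regular — over which $f$ is flat with integral fibres, all of dimension $d:=n-m$, and over which the closed set $Z:=X\setminus X^\circ$ meets each fibre $X_t$ in a subset $Z_t:=Z\cap X_t$ of dimension $<d-k$; this is possible because $\dim Z<n-k$. After replacing $T$ by this open set, the Chern class $c_k(\sG_t)\in\textup{A}_{d-k}(X_t)$ is defined for \emph{every} $t\in T$, being the image of $\textbf{c}_k\big(\sG_t|_{X_t\setminus Z_t}\big)\cap[X_t\setminus Z_t]$ under the isomorphism $\textup{A}_{d-k}(X_t\setminus Z_t)\cong\textup{A}_{d-k}(X_t)$, valid since $\dim Z_t<d-k$.

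Now fix a closed point $t\in T$. As $T$ is regular, $i_t\colon\Spec k(t)\hookrightarrow T$ is a regular embedding of codimension $m$, and I would use the associated refined Gysin homomorphisms $i_t^!$. The first point is that
\[
i_t^!\big(c_k(\sG)\big)=c_k(\sG_t)\qquad\text{in }\textup{A}_{d-k}(X_t).
\]
This comes out by unwinding the definitions: $i_t^!$ commutes with flat pull-back — hence with restriction to the open sets $X^\circ\subset X$ and $X^\circ\cap X_t=X_t\setminus Z_t\subset X_t$ — and with Chern class operations, and it carries $[X^\circ]$ to $[X_t\setminus Z_t]$ by flatness; since moreover the restriction $\textup{A}_{d-k}(X_t)\to\textup{A}_{d-k}(X_t\setminus Z_t)$ is an isomorphism (again because $\dim Z_t<d-k$), the class $i_t^!(c_k(\sG))$ is detected on $X_t\setminus Z_t$, where it visibly equals $\textbf{c}_k(\sG_t|_{X_t\setminus Z_t})\cap[X_t\setminus Z_t]$. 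The second point is conservation of number. Put $\beta:=c_1\big(\sO_X(H)\big)^{d-k}\cap c_k(\sG)\in\textup{A}_m(X)$. Since $f$ is proper and $T$ is integral of dimension $m$, we have $\textup{A}_m(T)=\bZ\cdot[T]$, so $f_*\beta=\delta\,[T]$ for a unique integer $\delta$. On the other hand $i_t^!$ commutes with proper push-forward and with capping by $c_1(\sO_X(H))$, and $i_t^![T]=[\Spec k(t)]$ because $T$ is Cohen--Macaulay; hence applying $i_t^!$ to $f_*\beta=\delta[T]$ yields $(f_t)_*\big(i_t^!\beta\big)=\delta\,[\Spec k(t)]$, where $f_t\colon X_t\to\Spec k(t)$. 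Taking degrees over $k(t)$ and using the first point,
\[
\delta=\deg\big(i_t^!\beta\big)=\deg\big(c_1(\sO_{X_t}(H_t))^{d-k}\cap c_k(\sG_t)\big)=c_k(\sG_t)\cdot H_t^{n-m-k}.
\]
As $\delta$ does not depend on $t$, the number $c_k(\sG_t)\cdot H_t^{n-m-k}$ is the same for all closed points of $T$; an arbitrary point $t$ reduces to this case by applying the foregoing to the base-changed family $X\times_T\overline{\{t\}}\to\overline{\{t\}}$ over a regular dense open subset of the integral scheme $\overline{\{t\}}$.

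The shrinkings in the second paragraph are routine (generic flatness, plus constructibility of fibre dimensions and of the locus of integral fibres). The step that needs care is the first point above: the Chern classes of $\sG$ and of $\sG_t$ are defined only through the open loci on which these sheaves are locally free, and one must check this is compatible with the Gysin specialisation $i_t^!$; the mechanism is exactly that the codimension hypothesis on $Z$, and its spread-out version for $Z_t$, force the Chow group in the relevant degree ($n-k$ on $X$, $d-k$ on $X_t$) to agree with that of the open locus, so that $i_t^!$ — which a priori only manipulates the restrictions to those open loci — nonetheless computes $c_k(\sG_t)$ on the nose. Finally I note the tacit reduction to a regular base in the second paragraph: it is harmless in the applications (where $T$ is excellent), and could otherwise be removed by Noetherian induction on $T$.
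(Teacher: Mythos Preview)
Your argument is correct and follows essentially the same approach as the paper: both proofs shrink $T$ so that $f$ is flat and the bad locus has the right fibrewise codimension, and both then invoke the refined Gysin/specialisation machinery of \cite[Chapters~6 and~20]{fulton}, the key verification being that Gysin pull-back of the class $c_k(\sG)$ (defined via the open locus $X^\circ$) agrees with $c_k(\sG_t)$ (defined via $X_t\setminus Z_t$). The only presentational difference is that the paper reduces at once to the case $T=\Spec R$ with $R$ a DVR and uses the specialisation map $s\colon A_\bullet(X_\eta)\to A_\bullet(X_t)$ of \cite[20.3]{fulton} to compare the two fibres directly, whereas you keep a general regular base, push the zero-cycle $\beta$ forward to $A_m(T)=\bZ\cdot[T]$ to extract a single integer $\delta$, and then pull back; the DVR reduction has the minor advantage of avoiding your final paragraph's argument for non-closed points.
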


\begin{rem}
Let $t\in T$. The scheme $X_t$ is viewed as a scheme over the residue field of $t$. 
If the complement of $X^\circ \cap X_t$ in $X_t$ has codimension at least $k+1$, then $c_k(\sE_t)$ is 
well-defined. 
\end{rem}

\begin{proof}[Proof of Lemma \ref{lemma:conservation_numbers}]
Replacing $T$ by a dense open set, we may assume that $X \to T$ is flat, and that, for any point $t\in T$, the complement of $X^\circ \cap X_t$ in $X_t$ has codimension at least $k+1$. We will show
that the intersection number $c_k(\sG_t)\cdot H_t^{n-m-k}$ is independent of $t\in T$. In order to prove our claim, we may assume without loss of generality that $T=\textup{Spec}\, R$ for some discrete valuation ring $R$.
Let $\eta$ be the generic point of $T$, and let $t$ be its closed point. 

Let now $c_k(\sG)$ be the image of $\textbf{c}_{k}(\sG_{|X^\circ})\cap [X^\circ] \in 
\textup{A}_{n-m-k}(X^\circ/S)$ under the isomorphism $\textup{A}_{n-m-k}(X^\circ/X) 
\cong \textup{A}_{n-m-k}(X/S)$, where 
$\textbf{c}_{k}(\sG_{|X^\circ})\colon \textup{A}_{\bullet}(X^\circ/S) \to \textup{A}_{\bullet-k}(X^\circ/S)$ is the Chern class operation. 
The inclusion $X_\eta \subset X$ induces a pull-back morphism 
$${-}_\eta\colon \textup{A}_{\bullet}(X/S) \to \textup{A}_{\bullet}(X_\eta),$$
and the regular embedding $X_t \subset X$ induces a Gysin homomorphism
$${-}_t\colon\textup{A}_{\bullet}(X/S) \to \textup{A}_{\bullet}(X_t).$$
By \cite[Chapter 20]{fulton}, there is a
specialization map
$$s\colon \textup{A}_{\bullet}(X_\eta) \to \textup{A}_{\bullet}(X_t)$$
such that $s \circ {-}_\eta={-}_t$.

The degree $0$ component of $s$ preserves degrees by \cite[Proposition 20.3 a)]{fulton} and, by \cite[Example 20.3.3]{fulton}, we have 
$s(\alpha_\eta\cdot H) =  s (\alpha_\eta) \cdot H_{|X_t}$
for any cycle $\alpha \in \textup{A}_{\bullet}(X_\eta)$ on the generic fiber. It follows that we have
$${c_k(\sG)}_{\eta}\cdot H_{|X_\eta}^{n-m-k}={c_k(\sG)}_{t}\cdot H_{|X_t}^{n-m-k}.$$
Notice that $[X]_t=[X_t]$ since $X$ is flat over $T$ and $\{t\} \subset T$ is a regular embedding (see \cite[Theorem 6.2 b)]{fulton}).
Using functoriality of Gysin homomorphisms (see \cite[Theorem 6.5]{fulton}) together with \cite[Proposition 6.3]{fulton}, one readly checks that the image of ${c_k(\sG)}_{t}$
under the isomorphism
$\textup{A}_{n-k}(X_t) \onto \textup{A}_{n-k}(X^\circ\cap X_t)$ is 
$c_k(\sG_{|X^\circ\cap X_t})$. This implies that ${c_k(\sG)}_{t}=c_k(\sG_t)\in \textup{A}_{n-k}(X_t)$.
Similarly, using functoriality of flat pull-backs together with
\cite[Theorem 3.2 d)]{fulton}, we see that
${c_k(\sG)}_{\eta}=c_k(\sG_\eta) \in \textup{A}_{n-k}(X_\eta)$. This completes the proof of the lemma.
\end{proof}
\end{say}

\begin{say}[Singularities]
We refer to \cite[Section 2.3]{kollar_mori} for details.
Let $X$ be a normal complex projective variety. 
Suppose that $K_X$ is $\mathbb{Q}$-Cartier, i.e.,  some non-zero multiple of it is a Cartier divisor. 
Let $\beta:\wh X\to X$ be a resolution of singularities of $X$. 
This means that $\wh X$ is a smooth projective
variety, $\beta$ is a birational projective morphism whose exceptional locus is the union of prime divisors $E_i$'s, and the divisor $\sum E_i$ has simple normal crossing 
support.  
There are uniquely defined rational numbers $a(E_i,X)$ such that
$$
K_{\wh X} = \beta^*K_X+\sum a(E_i,X)E_i.
$$
The numbers $a(E_i,X)$ do not depend on the resolution $\beta$,
but only on the valuations associated to the divisors $E_i$. 
We say that $X$ is \emph{terminal} (respectively, \emph{canonical}) if, for some resolution of singularities $\beta:\wh X\to X$ of $X$, $a(E_i,X)> 0$ (respectively, $a(E_i,X)\ge 0)$
for every $\beta$-exceptional prime divisor $E_i$.
If these conditions hold for some log resolution of $X$, then they hold for every  
log resolution of $X$.
\end{say}

\section{Foliations}

We first recall the basic facts concerning foliations.

\begin{defn}
A \emph{foliation} on  a normal  variety $X$ is a coherent subsheaf $\sG\subseteq T_X$ such that
\begin{enumerate}
\item $\sG$ is closed under the Lie bracket, and
\item $\sG$ is saturated in $T_X$. In other words, the quotient $T_X/\sG$ is torsion-free.
\end{enumerate}

The \emph{rank} $r$ of $\sG$ is the generic rank of $\sG$.
The \emph{codimension} of $\sG$ is defined as $q:=\dim X-r$. 

Let $X^\circ \subset X_{\textup{reg}}$ be the maximal open set where $\sG_{|X_{\textup{reg}}}$ is a subbundle of $T_{X_{\textup{reg}}}$. 
A \emph{leaf} of $\sG$ is a connected, locally closed holomorphic submanifold $L \subset X^\circ$ such that
$T_L=\sG_{|L}$. A leaf is called \emph{algebraic} if it is open in its Zariski closure.

The foliation $\sG$ is said to be \emph{algebraically integrable} if its leaves are algebraic.
\end{defn}

\begin{say}[Foliations defined by $q$-forms] \label{q-forms}
Let $\sG$ be a codimension $q$ foliation on an $n$-dimenional normal variety $X$.
The \emph{normal sheaf} of $\sG$ is $\sN:=(T_X/\sG)^{**}$.
The $q$-th wedge product of the inclusion
$\sN^*\into \Omega^{[1]}_X$ gives rise to a non-zero global section 
 $\omega\in H^0\big(X,\Omega^{q}_X\boxtimes \det(\sN)\big)$
 whose zero locus has codimension at least two in $X$. 
Moreover, $\omega$ is \emph{locally decomposable} and \emph{integrable}.
To say that $\omega$ is locally decomposable means that, 
in a neighborhood of a general point of $X$, $\omega$ decomposes as the wedge product of $q$ local $1$-forms 
$\omega=\omega_1\wedge\cdots\wedge\omega_q$.
To say that it is integrable means that for this local decomposition one has 
$d\omega_i\wedge \omega=0$ for every  $i\in\{1,\ldots,q\}$. 
The integrability condition for $\omega$ is equivalent to the condition that $\sG$ 
is closed under the Lie bracket.

Conversely, let $\sL$ be a reflexive sheaf of rank $1$ on $X$, and 
$\omega\in H^0(X,\Omega^{q}_X\boxtimes \sL)$ a global section
whose zero locus has codimension at least two in $X$.
Suppose that $\omega$  is locally decomposable and integrable.
Then  the kernel
of the morphism $T_X \to \Omega^{q-1}_X\boxtimes \sL$ given by the contraction with $\omega$
defines 
a foliation of codimension $q$ on $X$. 
These constructions are inverse of each other. 
\end{say}

\begin{say}[Foliations described as pull-backs] \label{pullback_foliations}

Let $X$ and $Y$ be normal varieties, and let $\varphi\colon X\map Y$ be a dominant rational map that restricts to a morphism $\varphi^\circ\colon X^\circ\to Y^\circ$,
where $X^\circ\subset X$ and  $Y^\circ\subset Y$ are smooth open subsets.

Let $\sG$ be a codimension $q$ foliation on $Y$. Suppose that the restriction $\sG^\circ$ of $\sG$ to $Y^\circ$ is
defined by a twisted $q$-form
$\omega_{Y^\circ}\in H^0\big(Y^\circ,\Omega^{q}_{Y^\circ}\otimes \det(\sN_{\sG^\circ})\big)$.
Then $\omega_{Y^\circ}$ induces a non-zero twisted $q$-form 
$\omega_{X^\circ}\in 
H^0\Big(X^\circ,\Omega^{q}_{X^\circ}\otimes (\varphi^\circ)^*\big(\det(\sN_\sG)_{|Y^\circ}\big)\Big)$, which in turn defines a codimension $q$ foliation $\sE^\circ$ on $X^\circ$. We say that
the saturation $\sE$ of $\sE^\circ$ in $T_X$
\emph{is the pull-back of $\sG$ via $\varphi$},
and write $\sE=\varphi^{-1}\sG$.
\end{say}

\begin{defn}\label{definition:pull-back}
Let $\psi\colon X \to Y$ be an equidimensional dominant morphism of normal varieties, and let $D$ be a Weil $\mathbb{Q}$-divisor on $Y$. The pull-back $\psi^*D$ of $D$ is defined as follows. We define 
$\psi^*D$ to be the unique $\mathbb{Q}$-divisor on $X$ whose restriction to 
$\psi^{-1}(Y_{\textup{reg}})$ is $(\psi_{|\psi^{-1}(Y_{\textup{reg}})})^*D_{|\psi^{-1}(Y_{\textup{reg}})}$. This construction agrees with the usual pull-back if $D$ is $\mathbb{Q}$-Cartier.
\end{defn}

We will use the following notation.

\begin{notation}
Let $\psi \colon X \to Y$ be an equidimensional dominant morphism of normal varieties. 
Write
$K_{X/Y}:=K_X-\psi^*K_Y$. We refer to it as the \emph{relative canonical divisor of $X$ over $Y$}.
\end{notation}

\begin{notation}
Let $\psi \colon X \to Y$ be an equidimensional dominant morphism of normal varieties. 
Set $$R(\psi)=\sum_{D} \big(\psi^*D-{(\psi^*D)}_{\textup{red}}\big)$$
where $D$ runs through all prime divisors on $Y$. We refer to it
as the \emph{ramification divisor of $\psi$}.
\end{notation}

\begin{defn}
Let $\sG$ be a foliation on a normal projective variety $X$.
The \textit{canonical class} $K_{\sG}$ of $\sG$ is any Weil divisor on $X$ such that  $\sO_X(-K_{\sG})\cong \det(\sG)$. 
\end{defn}

\begin{exmp}\label{example:canonical_class_foliation}
Let $\psi \colon X \to Y$ be an equidimensional dominant morphism of normal varieties, and let $\sG$ be the foliation on $X$ induced by $\psi$. A straightforward computation shows that
$$K_\sG=K_{X/Y}-R(\psi).$$
\end{exmp}

\begin{say}[The family of leaves] \label{family_leaves} 
Let $X$ be normal projective variety, and let $\sG$ be an algebraically integrable foliation on $X$.
We describe the \emph{family of leaves} of $\sG$
(see \cite[Remark 3.12]{codim_1_del_pezzo_fols}).

There is a unique normal projective variety $Y$ contained in the normalization 
of the Chow variety of $X$ 
whose general point parametrizes the closure of a general leaf of $\sG$
(viewed as a reduced and irreducible cycle in $X$).
Let $Z \to Y\times X$ denotes the normalization of the universal cycle.
It comes with morphisms

\centerline{
\xymatrix{
Z \ar[r]^{\beta}\ar[d]_{\psi} & X \\
 Y &
}
}
\noindent where $\beta\colon Z\to X$ is birational and, for a general point $y\in Y$, 
$\beta\big(\psi^{-1}(y)\big) \subset X$ is the closure of a leaf of $\sG$.
The variety $Y$ is called the \emph{family of leaves} of $\sG$.

Suppose furthermore that $K_\sG$ is $\bQ$-Cartier. 
There is a canonically defined effective Weil $\bQ$-divisor $B$ on $Z$ such that 
\begin{equation}\label{eq:universal_canonical_bundle_formula}
K_{Z/Y}-R(\psi)+B \sim_\mathbb{Q} \beta^* K_\sG,
\end{equation}
where $R(\psi)$ denotes the ramification divisor of $\psi$. 
\end{say}

\begin{rem}\label{remark:canonical_bundle_formula_B_exceptional}
In the setup of \ref{family_leaves}, 
notice that $B$ is $\beta$-exceptional.
This is an immediate consequence of Example \ref{example:canonical_class_foliation}.
\end{rem}

We will need the following easy observation.

\begin{lemma}\label{lemma:almost_proper}
Let $X$ be a normal complex projective variety with at worst $\mathbb{Q}$-factorial terminal singularities, and let $\sG$ be an algebraically integrable foliation on $X$. Suppose that $K_X$ is pseudo-effective
and that $K_\sG\sim_\mathbb{Q} 0$. 
Let $\psi\colon Z \to Y$ be the family of leaves, and let $\beta\colon Z \to X$ be the natural morphism.
Then $\phi:= \psi\circ \beta^{-1}$ is an almost proper map, and $K_{\beta^{-1}\sG}\sim_\mathbb{Q} 0$.
\end{lemma}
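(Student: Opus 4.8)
The plan is to substitute $K_\sG\sim_\mathbb{Q} 0$ into the canonical bundle formula \eqref{eq:universal_canonical_bundle_formula} for the family of leaves and then use that $K_X$ is pseudo-effective. Observe first that $\beta^{-1}\sG$ is precisely the foliation on $Z$ induced by the equidimensional morphism $\psi$, its general leaf being a general fibre of $\psi$; so Example \ref{example:canonical_class_foliation} gives $K_{\beta^{-1}\sG}=K_{Z/Y}-R(\psi)$, and \eqref{eq:universal_canonical_bundle_formula} together with $K_\sG\sim_\mathbb{Q} 0$ yields
$$K_{\beta^{-1}\sG}\;=\;\beta^{*}K_\sG-B\;\sim_\mathbb{Q}\;-B,\qquad\text{with }B\ge 0\text{ effective and }\beta\text{-exceptional}$$
by Remark \ref{remark:canonical_bundle_formula_B_exceptional}. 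Since $X$ is $\mathbb{Q}$-factorial, $\Exc(\beta)$ is a divisor and $\beta(\Exc(\beta))$ has codimension $\ge 2$ in $X$; hence $\phi$ is a morphism away from a closed subset of codimension $\ge 2$, and inspecting the fibres of $\phi$ one sees that $\phi$ is almost proper if and only if no $\beta$-exceptional prime divisor dominates $Y$. So it remains to prove (i) no $\beta$-exceptional prime divisor dominates $Y$, and (ii) $B=0$.

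For (i) I would argue by contradiction. As $X$ is terminal one may write $K_Z\sim_\mathbb{Q}\beta^{*}K_X+\Delta$ with $\Delta$ effective, $\Supp\Delta=\Exc(\beta)$ and every discrepancy coefficient $a(E,X)$ strictly positive — the identity to be read, if $K_Z$ is not $\mathbb{Q}$-Cartier, on a common resolution of $Z$ and $X$ and then pushed forward. Combining this with $K_Z=K_{Z/Y}+\psi^{*}K_Y$ and the display above gives
$$\beta^{*}K_X\;\sim_\mathbb{Q}\;\psi^{*}K_Y+R(\psi)-\Delta-B .$$
Let $F=\psi^{-1}(y)$ be a very general fibre. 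Then $\psi^{*}K_Y$ and $R(\psi)$, being pulled back from, respectively supported over, a proper closed subset of $Y$, restrict trivially to $F$, whereas $\beta^{*}K_X|_{F}$ is pseudo-effective on the projective variety $F$ because $K_X$ is pseudo-effective and $\{\psi^{-1}(y)\}_{y\in Y}$ is a covering family of $Z$. Hence $-(\Delta+B)|_{F}$ is pseudo-effective. But if a $\beta$-exceptional prime divisor $D$ dominated $Y$, then $D$ would appear in $\Delta$ with coefficient $a(D,X)>0$ and $D\cap F$ would be a non-zero effective divisor on $F$, so $(\Delta+B)|_{F}$ would be non-zero and effective with pseudo-effective negative, which is impossible on a projective variety. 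This proves (i), so $\phi$ is almost proper.

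For (ii) the key point is that $K_{\beta^{-1}\sG}\sim_\mathbb{Q}-B$ is pseudo-effective. Indeed $K_Z\sim_\mathbb{Q}\beta^{*}K_X+\Delta$ is a sum of a pseudo-effective class and an effective one, so $K_Z$ is pseudo-effective; and $\beta^{-1}\sG$ is an algebraically integrable foliation on the normal projective variety $Z$, so $K_{\beta^{-1}\sG}$ is pseudo-effective by the pseudo-effectivity of the canonical class of an algebraically integrable foliation on a variety with pseudo-effective canonical class (a consequence of the positivity theorems of Campana--P\u{a}un). Since $B$ is effective and $-B$ is pseudo-effective, intersecting with a very general complete-intersection curve on $Z$ forces $B$ to be numerically trivial, hence $B=0$; thus $K_{\beta^{-1}\sG}\sim_\mathbb{Q} 0$.

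I expect step (ii) to be the main obstacle: the covering-family argument of (i) only detects $B$ along fibres of $\psi$ that meet $\Supp B$, and $B$, being $\psi$-vertical, is invisible to general fibres, so one genuinely needs the global pseudo-effectivity input above (or, alternatively, a covering family of curves adapted to $\Supp B$ combined with a negativity argument). A recurring minor technicality is that $K_Z$ and $K_Y$ need not be $\mathbb{Q}$-Cartier; this is absorbed by Definition \ref{definition:pull-back} for $\psi^{*}K_Y$ — legitimate since $\psi$ is equidimensional — and by reading the discrepancy identity on a resolution, the $\beta$-exceptional corrections being irrelevant to the restriction-to-general-fibre computations.
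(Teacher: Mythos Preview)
Your proof is correct and follows essentially the same route as the paper: restrict the discrepancy identity to a general fibre $F$ of $\psi$ to force $\Exc(\beta)\cap F=\emptyset$, then use pseudo-effectivity of $K_{\beta^{-1}\sG}$ to kill $B$. The only difference is in the justification of step (ii): the paper observes that $K_F\sim_{\bZ}{K_Z}_{|F}$ is pseudo-effective by adjunction and invokes \cite[Corollary~4.5]{druel15} (pseudo-effectivity of $K_{Z/Y}-R(\psi)$ when the general fibre has pseudo-effective canonical class), whereas you appeal more vaguely to Campana--P\u{a}un; since your hypothesis $K_Z$ pseudo-effective restricts to the paper's hypothesis on $K_F$, the precise reference \cite[Corollary~4.5]{druel15} is what you want here.
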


\begin{proof}
Notice that $\sG$ is induced by $\phi:= \psi\circ \beta^{-1} \colon X \map Y$.

It follows from \ref{family_leaves} that 
there is a canonically defined effective Weil $\bQ$-divisor $B$ on $Z$ such that 
\begin{equation}\label{eq:canonical_bundle_formula_1}
K_{\beta^{-1}\sG}+B=K_{Z/Y}-R(\psi)+B \sim_\mathbb{Q} \beta^* K_{\sG} \sim_\mathbb{Q} 0,
\end{equation}
where $R(\psi)$ denotes the ramification divisor of $\psi$. 
Recall from Remark \ref{remark:canonical_bundle_formula_B_exceptional}, that $B$ is $\beta$-exceptional.
Moreover, since $X$ has $\mathbb{Q}$-factorial terminal singularities, there exists an effective $\mathbb{Q}$-divisor $E$ on $X$ such that
\begin{equation}\label{eq:canonical_bundle_formula_2}
K_{Z}=\beta^*K_{X}+E \quad\text{and}\quad\textup{Supp}(E)=\textup{Exc}(\beta).
\end{equation}
From equations \eqref{eq:canonical_bundle_formula_1} and \eqref{eq:canonical_bundle_formula_2}, we obtain
\begin{equation}\label{eq:canonical_bundle_formula_3}
R(\psi)\sim_\mathbb{Q} \beta^*K_{X}-\psi^*K_{Y}+B+E.
\end{equation}
Consider a general fiber $F$ of $\psi$. 
Equation \eqref{eq:canonical_bundle_formula_3} then
shows that $$(\beta^*K_{X}+B+E)_{|F}\sim_\mathbb{Q}0.$$ Since $B$ and $E$ are both effective divisors, 
and since $K_X$ is pseudo-effective,
we must have $E \cap F=\emptyset.$ The equality
$\textup{Exc}(\beta)=\textup{Supp}(E)$ then shows that 
$\phi$ is an almost proper map. 

By the adjunction formula, $K_F\sim_\mathbb{Z}{K_Z}_{|F}$, and thus $K_F$ is pseudo-effective.
Applying \cite[Corollary 4.5]{druel15} to $\psi$, we see that 
$K_{\beta^{-1}\sG}=K_{Z/Y}-R(\psi)$ is pseudo-effective. Equation \eqref{eq:canonical_bundle_formula_1} then shows that 
\begin{equation*}
K_{\beta^{-1}\sG}\sim_\mathbb{Q} 0\quad\text{and}\quad B=0.
\end{equation*}
This finishes the proof of the lemma. 
\end{proof}

It is well-known that an algebraically integrable regular foliation on a complex projective manifold is induced by a morphism onto a normal projective variety (see \cite[Proposition 2.5]{hwang_viehweg}). The next proposition extends this result to some foliations on mildly singular varieties.

\begin{prop}\label{proposition:regular_foliation_morphism}
Let $X$ be a normal complex projective variety with at worst $\mathbb{Q}$-factorial terminal singularities, and
let $T_X = \sE \oplus \sG$ be a decomposition of $T_X$
into involutive subsheaves. Suppose that $K_X$ is pseudo-effective and that $\det(\sE)\cong \sO_X$. Then there exist a finite cover $f\colon \wt X \to X$, \'etale in codimension one,
an open subset $\wt X^\circ\subset {\wt X}_{\textup{reg}}$ with complement of codimension at least two, and 
a projective morphism $\wt \phi^\circ\colon \wt X^\circ \to \wt Y^\circ$ such that the following holds.
\begin{enumerate}
\item The morphism $\wt \phi^\circ$ is a locally trivial fibration for the Euclidean topology.
\item The foliation $f^{-1}\sE$ is induced by $\wt \phi^\circ$.
\end{enumerate}
\end{prop}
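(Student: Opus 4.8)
The proof will extend to the singular setting the classical fact that an algebraically integrable regular foliation is induced by a morphism onto a normal projective variety (\cite[Proposition 2.5]{hwang_viehweg}), using Lemma~\ref{lemma:almost_proper} to control the family of leaves and a covering trick to dispose of multiple fibres. First I would record two elementary points: since $\sE$ is a direct summand of $T_X$, its restriction to $X_{\textup{reg}}$ is a direct summand of the locally free sheaf $T_{X_{\textup{reg}}}$, hence locally free, so $\sE$ is a subbundle of $T_{X_{\textup{reg}}}$; and $\det(\sE)\cong\sO_X$ gives $K_\sE\sim 0$. As $\sE$ is algebraically integrable, Lemma~\ref{lemma:almost_proper} applies: denoting by $\psi\colon Z\to Y$ the family of leaves and by $\beta\colon Z\to X$ the natural birational morphism, the rational map $\phi:=\psi\circ\beta^{-1}\colon X\map Y$ is almost proper, induces $\sE$, and satisfies $K_{\beta^{-1}\sE}\sim_\mathbb{Q} 0$ with $B=0$ in \eqref{eq:universal_canonical_bundle_formula}. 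Substituting $B=0$ there gives $K_{Z/Y}\sim_\mathbb{Q} R(\psi)$. A general fibre $F$ of $\psi$ is smooth and, as in the proof of Lemma~\ref{lemma:almost_proper}, disjoint from $\Exc(\beta)$ and from $\Supp R(\psi)$; hence $\beta$ maps $F$ isomorphically onto the closure $L$ of a general leaf, $L\subset X_{\textup{reg}}$, and the identification $(\det\sE)_{|L}\cong\det(T_L)$ forces $K_L\sim 0$. So the general leaf closure is a smooth projective variety with trivial canonical bundle lying in $X_{\textup{reg}}$.

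The heart of the matter is the construction of the cover. The indeterminacy locus of $\phi$ is contained in $\beta(\Exc(\beta))$, hence has codimension at least two in $X$; the real obstruction is the locus of multiple fibres of $\psi$, which may contain a divisor of $Y$ and would survive any attempt to shrink $Y$ to a big open set. I would remove it by a covering trick of Kawamata type adapted to the effective divisor $R(\psi)$: pick a finite cover $Y_1\to Y$ whose branch divisor meets each multiple-fibre divisor of $\psi$ to the appropriate order, let $Z_1$ be the normalisation of $Z\times_Y Y_1$, and let $f\colon\wt X\to X$ be the normalisation of $X$ in the function field $\mathbb{C}(Z_1)$; then $Z_1\to\wt X$ is a birational morphism, $\psi_1\colon Z_1\to Y_1$ induces an almost proper map $\wt\phi\colon\wt X\map Y_1$ whose associated foliation is $f^{-1}\sE$, and by construction $\psi_1$ has no multiple fibres in codimension one, so $R(\psi_1)=0$ and $K_{Z_1/Y_1}\sim_\mathbb{Q} 0$. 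The hard part — and the step I expect to be the main obstacle — is to verify that $f$ is étale in codimension one, equivalently that its ramification divisor vanishes: the ramification of $Z_1\to Z$ is supported over the multiple-fibre divisors of $\psi$ and is governed by $R(\psi)$, so a careful comparison via the adjunction and Riemann--Hurwitz formulas along the birational morphisms $\beta$ and $Z_1\to\wt X$, using that $B=0$ (so no extra boundary term enters), that $\Supp E=\Exc(\beta)$, and that $X$ is terminal, should show that the ramification of $f$ is $\beta$-exceptional, hence zero; consequently $\wt X$ is again terminal.

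It then remains to harvest the conclusion. Over a big open subset $\wt Y^\circ\subset Y_1$ the map $\wt\phi$ restricts to a genuine projective morphism $\wt\phi^\circ\colon\wt X^\circ\to\wt Y^\circ$ with $\wt X^\circ\subset\wt X_{\textup{reg}}$ and $\codim_{\wt X}(\wt X\setminus\wt X^\circ)\ge 2$, the point being that shrinking $Y_1$ to the locus over which $\psi_1$ has all fibres disjoint from $\Exc(Z_1\to\wt X)$ removes only a codimension $\ge 2$ subset of $\wt X$ — the multiple-fibre divisors, which were the remaining divisorial bad locus, having disappeared; this is checked along the lines of the analysis of families of leaves in \cite{druel15}. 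Since $f^{-1}\sE$ restricts to a subbundle on $\wt X^\circ$ and coincides with the foliation induced by $\wt\phi^\circ$, in local coordinates adapted to $f^{-1}\sE$ the morphism $\wt\phi^\circ$ is a projection followed by a dominant map between smooth varieties of dimension $\dim Y_1$ whose ramification locus, being a union of fibres over a divisor of $\wt Y^\circ$, would create a multiple fibre; as there are none in codimension one, that locus has codimension $\ge 2$, and after discarding it $\wt\phi^\circ$ is a submersion. A proper holomorphic submersion is a locally trivial fibration for the Euclidean topology by Ehresmann's theorem, which yields $(1)$, while $(2)$ holds by construction of $\wt\phi$.
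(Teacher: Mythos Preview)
Your overall strategy --- pass to the family of leaves, kill multiple fibres by a base change on $Y$, then analyse the resulting morphism --- matches the paper's. But the endgame via Ehresmann's theorem does not deliver what is asserted, and it misses the paper's key technical step. The phrase ``locally trivial fibration for the Euclidean topology'' here means \emph{holomorphically} locally trivial over analytic open sets, and this is what the paper proves. Ehresmann's theorem, applied to a proper holomorphic submersion, yields only $C^\infty$ local triviality: a non-isotrivial family of elliptic curves is a proper holomorphic submersion that is not holomorphically locally trivial. So even granting your submersion argument, conclusion~(1) falls short. Regularity of $\sE$ alone is not enough; one needs the complementary foliation $\sG$.

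This is precisely what the paper exploits and what your sketch omits. After the base change $Y_1\to Y$ (via \cite[Lemma~4.2]{druel15}), the paper proves that the splitting lifts to $Z_1$: $T_{Z_1}=(\beta\circ\alpha)^{-1}\sE\oplus(\beta\circ\alpha)^{-1}\sG$. This uses the extension theorem \cite[Theorem~1.5]{greb_kebekus_kovacs_peternell10} to pull back the reflexive $q$-form defining $\sG$, together with $K_{(\beta\circ\alpha)^{-1}\sE}\sim_{\mathbb Q}0$ (from Lemma~\ref{lemma:almost_proper}) to ensure the decomposition is saturated. The tangent map then identifies $(\beta\circ\alpha)^{-1}\sG$ with $\psi_1^*T_{Y_1^{\textup{reg}}}$ as sheaves of Lie algebras, so $\sG$ becomes a flat \emph{holomorphic} connection on $\psi_1$ over $Y_1^{\textup{reg}}$; integrating it via the associated local $\mathbb{C}^m$-action produces holomorphic local trivialisations. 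As a byproduct one gets $\textup{Exc}(\beta\circ\alpha)=\psi_1^{-1}\big(\psi_1(\textup{Exc}(\beta\circ\alpha))\big)$, which together with the local triviality over all of $Y_1^{\textup{reg}}$ gives the structural control needed at the end --- in particular for the \'etale-in-codimension-one property of $f$, which your ramification comparison only treats heuristically (and which becomes delicate when $\psi^*D$ has components of differing multiplicities).
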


\begin{proof}
Let $\psi\colon Z \to Y$ be the family of leaves, and let $\beta\colon Z \to X$ be the natural morphism (see \ref{family_leaves}). 
By \cite[Lemma 4.2]{druel15}, there exists a finite surjective morphism
$\gamma\colon Y_1 \to Y$ with $Y_1$ normal and connected such that the following holds. Let $Z_1$ denotes the normalization of $Y_1 \times_Y Z$. Then the induced morphism $\psi_1\colon Z_1 \to Y_1$ has reduced fibers over codimension one points in $Y_1$. 
Hence, we obtain a commutative diagram as follows,

\centerline{
\xymatrix{
Z_1  \ar[d]_{\psi_1}\ar[rr]^{\alpha,\textup{ finite}} & & Z\ar[d]^{\psi} \ar[rr]^{\beta}&& X\ar@{.>}[dll]^{\phi}\\
Y_1 \ar[rr]_{\gamma,\textup{ finite}}&& Y. &&\\
}
}

\begin{claim}\label{claim:decomposition}
The tangent sheaf $T_{Z_1}$ decomposes as a direct sum 
$$T_{Z_1}=(\beta\circ \alpha)^{-1}\sE\oplus (\beta\circ \alpha)^{-1}\sG.$$ 
\end{claim}

\begin{proof}[Proof of Claim \ref{claim:decomposition}]
Set $q:=\rank\,\sE$, and let $\omega \in H^0\big(X,\Omega_X^{[q]}\big)$ a $q$-form defining $\sG$. 
Then ${\beta^*\omega}_{|Z\setminus \textup{Exc}(\beta)}$ extends across $\textup{Exc}(\beta)$ and gives a 
$q$-form $\beta^*\omega \in H^0\big(Z,\Omega_{Z}^{[q]}\big)$ by \cite[Theorem 1.5]{greb_kebekus_kovacs_peternell10}. 
The $q$-form $\alpha^*(\beta^*\omega)\in H^0\big(Z_1,\Omega_{Z_1}^{[q]}\big)$
defines the 
foliation $(\beta\circ\alpha)^{-1}\sG$, and induces an $\sO_{Z_1}$-linear map 
$(\Lambda^{q}T_{Z_1})^{**} \to \sO_{Z_1}$ 
such that the composed morphism of reflexive sheaves of rank one
$$\sigma\colon\det\big((\beta\circ\alpha)^{-1}\sE\big) \to  (\Lambda^{q}T_{Z_1})^{**} \to \sO_{Z_1}$$
is generically non-zero. 
By Lemma \ref{lemma:almost_proper}, we know that $K_{\beta^{-1}\sE}\sim_\mathbb{Q}0$.
A straightforward computation then shows that 
$$K_{(\beta\circ \alpha)^{-1}\sE}=\alpha^*K_{\beta^{-1}\sE}\sim_\mathbb{Q}0,$$
and hence $\sigma$ must be an isomorphism. This immediately implies that 
$T_{Z_1}=(\beta\circ \alpha)^{-1}\sE\oplus (\beta\circ \alpha)^{-1}\sG,$
proving our claim.
\end{proof}

Let $Z_1^\circ \subset \psi_1^{-1}({Y_1}_{\textup{reg}})$ be the maximal open set where 
$\psi_1^{-1}({Y_1}_{\textup{reg}})$ is smooth. Notice that $Z_1^\circ$ has complement of codimension at least two since $\psi_1$ has reduced fibers over codimension one points in ${Y_1}_{\textup{reg}}$. 
The restriction of the tangent map 
$$T{\psi_1}_{|\psi_1^{-1}({Y_1}_{\textup{reg}})}\colon {T_{Z_1}}_{|\psi_1^{-1}({Y_1}_{\textup{reg}})}\to {\psi_1^{\,*}}_{|\psi_1^{-1}({Y_1}_{\textup{reg}})}T_{{Y_1}_{\textup{reg}}}$$
to $(\beta\circ \alpha)^{-1}\sG_{|\psi_1^{-1}({Y_1}_{\textup{reg}})} \subset {T_{Z_1}}_{|\psi_1^{-1}({Y_1}_{\textup{reg}})}$ 
then induces an isomorphism $(\beta\circ \alpha)^{-1}\sG_{|Z_1^\circ}\cong {\psi_1^{\,*}}_{|Z_1^\circ}T_{{Y_1}_{\textup{reg}}}$, and since 
$(\beta\circ \alpha)^{-1}\sG_{|\psi_1^{-1}({Y_1}_{\textup{reg}})}$ and ${\psi_1^{\,*}}_{|\psi_1^{-1}({Y_1}_{\textup{reg}})}T_{{Y_1}_{\textup{reg}}}$
are both reflexive sheaves, we finally obtain an isomorphism of sheaves of Lie algebras
$$\tau\colon(\beta\circ \alpha)^{-1}\sG_{|\psi_1^{-1}({Y_1}_{\textup{reg}})}\cong {\psi_1^{\,*}}_{|\psi_1^{-1}({Y_1}_{\textup{reg}})}T_{{Y_1}_{\textup{reg}}}.$$
Set $m:=\dim Y=\dim Y_1$. Let $y\in {Y_1}_{\textup{reg}}$, and let $U\ni y$ be an open neighborhood of $y$ 
in ${Y_1}_{\textup{reg}}$
with coordinates $y_1,\ldots,y_m$ on $U$. 
A classical result of complex analysis says that 
there exists a unique local $\mathbb{C}^m$-action on $\psi_1^{-1}(U)$ corresponding to the 
flat connexion $(\beta\circ \alpha)^{-1}\sG_{|\psi_1^{-1}(U)}$ on ${\psi_1}_{|\psi_1^{-1}(U)}$.
The local $\mathbb{C}^m$-action on $\psi_1^{-1}(U)$ is given by a holomorphic map
$\Phi \colon W \to \psi_1^{-1}(U)$, where $W$ is an open neighborhood of the neutral section 
$\{0\}\times \psi_1^{-1}(U)$ in $\mathbb{C}^m\times \psi_1^{-1}(U)$ such that
\begin{enumerate}
\item for all $z\in \psi_1^{-1}(U)$, the subset $\{t \in \mathbb{C}^m \,|\, (t,z) \in W\}$ is connected,
\item setting $t \intercal z:=\Phi(t,z)$, we have $0 \intercal z=z$ for all $z\in \psi_1^{-1}(U)$, if $(t+t',z)\in W$, if $(t',z)\in W$ and $(t,t'\intercal z)\in W$, then $(t+t')\intercal z = t\intercal (t'\intercal z)$ holds.
\end{enumerate}
Moreover, the above local $\mathbb{C}^m$-action on $\psi_1^{-1}(U)$ extends the  
local $\mathbb{C}^m$-action on $U \subset \mathbb{C}^m$ given by 
$y_i(t\intercal y)=t_i+y_i(y)$ for any $t=(t_1,\ldots,t_m)\in \mathbb{C}^m$ and $y\in U$ such that 
$\big(y_1^{-1}(t_1+y_1(y)),\ldots,y_m^{-1}(t_m+y_m(y))\big)\in U$. This immediately implies that 
${\psi_1}_{|\psi_1^{-1}({Y_1}_{\textup{reg}})}$ is a locally trivial fibration for the Euclidean topology. Using Lemma \ref{lemma:almost_proper}, we see that
$$\textup{Exc}(\beta\circ\alpha)=\alpha^{-1}\big(\textup{Exc}(\beta)\big)=\psi_1^{-1}\Big(\psi_1\big(\textup{Exc}(\beta\circ\alpha)\big)\Big)\quad\text{and}\quad
\textup{Exc}(\beta)=\psi^{-1}\Big(\psi\big(\textup{Exc}(\beta)\big)\Big).$$
The proposition then follows easily.
\end{proof}

\section{Towards a decomposition theorem}

The main results of this section assert that algebraic integrability of direct summands in the infinitesimal analogue of the Beauville-Bogomolov decomposition theorem (Theorem \ref{thm:infinitesimal_beauville_bogomolov}) leads to a decompositon of the variety, perhaps after passing to a finite cover that is \'etale in codimension one (see Theorem \ref{theorem:kawamata_abelian_factor} and Proposition \ref{proposition:alg_int_towards_dec}). 

First, we recall structure results for varieties with numerically trivial canonical divisor.
The following invariant is relevant in their investigation (see \cite[Definition 3.1]{gkp_bo_bo}).

\begin{defn}\label{defn:augmented_irregularity}
Let $X$ be a normal projective variety. We denote the irregularity of $X$ by
$q(X) := h^1( X,\, \sO_X )$ and define the \emph{augmented irregularity} as
$$
\wt q(X) := \max \bigl\{q(\wt X) \mid \wt X \to X \ \text{a
finite cover, \'etale in codimension one} \bigr\} \in \mathbb N \cup \{ \infty\}.
$$
\end{defn}

\begin{rem}\label{rem:irregularity_bir_invariant}
By a result of Elkik (\cite{elkik}), canonical singularities are rational. It follows that the irregularity is a birational invariant of complex projective varieties with canonical singularities. 
\end{rem}

\begin{rem}\label{remark:augmented_irregularity_finite}
If $X$ is a projective variety with canonical singularities and numerically
trivial canonical class, \cite[Proposition 8.3]{kawamata85} implies that $q(X) \leq \dim X$. If $\wt X \to X$ is any finite cover, \'etale in
codimension one, then $\wt X$ will likewise have canonical singularities
(see \cite[Proposition 3.16]{kollar97}), and numerically trivial canonical class. In summary,
we see that $\wt q(X) \leq \dim X$. The augmented irregularity of
canonical varieties with numerically trivial canonical class is therefore
finite.
\end{rem}

We will need the following easy observation.

\begin{lemma}\label{lemma:augmented_irregularity_birational_morphism}
Let $X$ and $Y$ be normal complex projective varieties with at worst canonical singularities, and let $\beta \colon Y \to X$ be a birational morphism. Suppose that $K_Y \equiv 0$.
Then $\wt q(X) \ge \wt q(Y)$.
\end{lemma}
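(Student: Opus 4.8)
The plan is to reduce the statement to the following claim: for every finite cover $h\colon \wt Y \to Y$ that is \'etale in codimension one there is a finite cover $\wt X \to X$, \'etale in codimension one, with $q(\wt X)=q(\wt Y)$. Granting this, taking the supremum over all such $\wt Y$ gives $\wt q(X)\ge \wt q(Y)$ (by Remark~\ref{remark:augmented_irregularity_finite} all the quantities involved are finite, so the maxima are attained). The cover $\wt X$ will simply be the normalisation of $X$ in the function field of $\wt Y$, taken with respect to the finite field extension induced by $\beta\circ h$. This is a finite morphism from a normal variety, and by the universal property of normalisation $\beta\circ h$ factors as $\wt Y \xrightarrow{\ \gamma\ } \wt X \to X$ with $\gamma$ birational; since $\wt X\to X$ is finite (hence separated) and $\beta\circ h$ is proper, $\gamma$ is proper as well, so $\gamma\colon \wt Y\to \wt X$ is a proper birational morphism of normal varieties.

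The main point to verify is that $\wt X\to X$ is \'etale in codimension one. Here I would use that $\beta$ is an isomorphism over a big open subset of $X$: the locus $S\subset X$ of points whose fibre has positive dimension has codimension at least two in $X$ (if a prime divisor $D\subset X$ had its generic point in $S$, then $\beta^{-1}(D)$ would contain a component of dimension $\dim Y$, hence equal $Y$, contradicting dominance of $\beta$), and over $X_0:=X_{\textup{reg}}\setminus S$ the morphism $\beta$ is quasi-finite and proper, hence finite, hence — being birational onto the normal variety $X_0$ — an isomorphism by Zariski's main theorem. Set $Y_0:=\beta^{-1}(X_0)\cong X_0$, and let $Y^\ast\subset Y$ be a big open subset over which $h$ is \'etale. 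Then $Y_0\cap Y^\ast$ is a big open subset of the smooth variety $Y_0$, so under $Y_0\cong X_0$ and using that $X_0$ is big in $X$ it corresponds to a big open subset $X_2\subset X$. Over $X_2$ the cover $V:=\wt Y\times_Y(Y_0\cap Y^\ast)$ is finite \'etale, in particular smooth and with function field that of $\wt Y$; thus $V$ is the normalisation of $X_2$ in the function field of $\wt Y$, i.e. $V=\wt X\times_X X_2$. Hence $\wt X\to X$ is \'etale over $X_2$, and $\gamma$ restricts to an isomorphism there, which shows $\wt X\to X$ is \'etale in codimension one; by \cite[Proposition 3.16]{kollar97}, $\wt X$ then has at worst canonical singularities.

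It remains to compare irregularities. Both $\wt Y$ and $\wt X$ have canonical, hence rational (Elkik, \cite{elkik}), singularities, and $\gamma\colon \wt Y\to \wt X$ is a proper birational morphism of normal varieties. Choosing a resolution $W\to \wt Y$, the composite $W\to \wt X$ is a resolution of $\wt X$, so $R(\cdot)_\ast\cO_W=\cO_{\wt Y}$ and $R(\cdot)_\ast\cO_W=\cO_{\wt X}$ give $\H^i(\wt Y,\cO_{\wt Y})\cong \H^i(W,\cO_W)\cong \H^i(\wt X,\cO_{\wt X})$ for all $i$ — this is exactly the birational-invariance argument underlying Remark~\ref{rem:irregularity_bir_invariant}. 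In particular $q(\wt Y)=q(\wt X)$, whence $\wt q(X)\ge q(\wt X)=q(\wt Y)$; taking the supremum over $\wt Y$ finishes the proof.

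I expect the cohomological comparison and the factorisation through the normalisation to be entirely formal; the one delicate step is the verification that $\wt X\to X$ is \'etale in codimension one, i.e. that the branch divisor of $h$ does not acquire new components over the locus contracted by $\beta$ when it is pushed down to $X$. This is handled by the careful bookkeeping of big open subsets above, crucially using that $\beta$ is an isomorphism over one of them.
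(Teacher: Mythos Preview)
Your proof is correct and follows essentially the same approach as the paper: the paper takes the Stein factorization of $\wt Y \to Y \to X$, which is exactly your normalisation of $X$ in the function field of $\wt Y$, and then invokes the same birational invariance of irregularity for canonical (hence rational) singularities. You are more careful than the paper in verifying that the resulting cover of $X$ is \'etale in codimension one --- the paper simply calls this ``obvious'' --- but the underlying reason (the image of the exceptional locus of $\beta$ has codimension at least two) is the same.
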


\begin{proof}
Notice first that $\wt q(Y)$ is finite by Remark \ref{remark:augmented_irregularity_finite} above.
Let 
$g \colon Y_1 \to Y$ be a finite cover, \'etale in codimension one, such that
$h^1(Y_1,\sO_{Y_1})= \wt q(Y)$. Let $f \colon X_1 \to X$ be the Stein factorization
of the composed map $Y_1 \to Y \to X$. Then $f$ is obviously \'etale in codimension one.
From \cite[Proposition 3.16]{kollar97}, we see that $X_1$ has canonical singularities, and hence 
$h^1(X_1,\sO_{X_1})=h^1(Y_1,\sO_{Y_1})$ by Remark \ref{rem:irregularity_bir_invariant}. This finishes the proof of the lemma.
\end{proof}

The following result often reduces the study of varieties with
trivial canonical class to those with $\wt q(X) = 0$ (see also \cite[Proposition 8.3]{kawamata85}).

\begin{thm}[{\cite[Corollary 3.6]{gkp_bo_bo}}]\label{theorem:kawamata_abelian_factor}
Let $X$ be a normal $n$-dimensional projective variety with at worst canonical
singularities. Assume that $K_X$ is numerically trivial. Then there exist
projective varieties $A$, $\wt X$ and a morphism $f\colon  A \times \wt X\to X$ such that
the following holds.
\begin{enumerate}
\item The variety $A$ is Abelian.
\item The variety $\wt X$ is normal and has at worst canonical singularities.
\item The canonical class of $\wt X$ is trivial, $\omega_{\wt X} \cong \sO_{\wt X}$.
\item The augmented irregularity of $\wt X$ is zero, $\wt q(\wt X) = 0$.
\item The morphism $f$ is finite, surjective and \'etale in codimension one.
\end{enumerate}
\end{thm}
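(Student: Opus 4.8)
The plan is to induct on $\dim X$, using the Albanese morphism to peel off an abelian factor. \emph{First}, since $\wt q(X)$ is finite (Remark~\ref{remark:augmented_irregularity_finite}), I would pick a finite cover $X'\to X$, \'etale in codimension one, with $q(X')=\wt q(X)$; then $q(X')=\wt q(X')=\wt q(X)$, and since the composition of $X'\to X$ with any further cover of $X'$ \'etale in codimension one is again \'etale in codimension one over $X$, it suffices to treat $X'$. So I may assume $q(X)=\wt q(X)$. If $q(X)=0$, then $\wt q(X)=0$, and one takes $A$ a point and $\wt X$ the index-one (canonical) cover of $X$ --- this is \'etale in codimension one, canonical by \cite[Prop.~3.16]{kollar97}, with $\omega_{\wt X}\cong\sO_{\wt X}$ and $\wt q(\wt X)=0$; this is the base case.

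\emph{Next}, assume $q:=q(X)=\wt q(X)\ge 1$ and that the theorem holds in dimension $<\dim X$. Let $\alpha\colon X\to A:=\Alb(X)$ be the Albanese morphism, $\dim A=q$. By Kawamata's study of the Albanese map of canonical varieties with numerically trivial canonical class (\cite[Prop.~8.3]{kawamata85}), $\alpha$ is surjective with connected fibers; a general fiber $F$ is normal with at worst canonical singularities (generic smoothness off the codimension $\ge 2$ singular locus, together with Cohen--Macaulayness coming from Elkik's rationality theorem) with $K_F\equiv 0$ (adjunction, since $K_{X/A}=K_X\equiv 0$); and the maximality $q(X)=\wt q(X)$ forces $q(F)=0$. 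The key point --- also part of Kawamata's analysis --- is that, $q(F)$ being zero, the fibration $\alpha$ trivializes after an isogeny: there is an isogeny $A_1\to A$ such that the base change $X_1:=X\times_A A_1$, which is finite \'etale over $X$ (as $A_1\to A$ is) and hence normal, is isomorphic over $A_1$ to $F\times A_1$; in particular $X_1$ is connected.

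\emph{Finally}, we have a finite \'etale morphism $X_1\cong F\times A_1\to X$ with $A_1$ abelian of dimension $q\ge 1$ and $F$ normal, canonical, $K_F\equiv 0$. Since $\dim F=\dim X-q<\dim X$, the induction hypothesis gives an abelian variety $B$, a normal canonical projective variety $F'$ with $\omega_{F'}\cong\sO_{F'}$ and $\wt q(F')=0$, and a finite surjective morphism $g\colon B\times F'\to F$ \'etale in codimension one. Then
\[
(B\times A_1)\times F'\;\cong\;(B\times F')\times A_1\;\xrightarrow{\ g\times\mathrm{id}_{A_1}\ }\;F\times A_1\;=\;X_1\;\longrightarrow\;X
\]
is a composition of finite surjective morphisms \'etale in codimension one, hence finite, surjective and \'etale in codimension one; taking $A:=B\times A_1$ and $\wt X:=F'$ yields assertions (1)--(5), since (2)--(4) hold for $F'$ by induction.

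\emph{The main obstacle} is the product assertion in the middle step --- that the Albanese map, after an isogeny of the base, splits as $F\times A_1\to A_1$ with $q(F)=0$. This is where $K_X\equiv 0$ is indispensable (it makes $K_{X/A}$ numerically trivial, forcing $\alpha$ to be a smooth, isotrivial fiber bundle) and where Kawamata's structure theory must be invoked; note also that the maximality reduction cannot be dropped, since otherwise the general fiber need not have vanishing irregularity and $\alpha$ need not be a product (bielliptic surfaces).
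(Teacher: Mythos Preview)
The paper does not prove this statement; it is quoted from \cite[Corollary~3.6]{gkp_bo_bo}. Your outline is essentially the strategy carried out there, building on Kawamata's structure theorem for the Albanese map, and you have correctly located the crux: after an isogeny of the base, the Albanese fibration splits as a product.

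Two points deserve tightening. First, the splitting theorem in \cite{kawamata85} is proved for \emph{smooth} $X$ with $\kappa(X)=0$. One applies it to a resolution $\widehat X\to X$ (same Albanese, by rationality of canonical singularities) to obtain $\widehat X\times_A A_1\cong \widehat F\times A_1$, but one must then \emph{descend} the product structure to $X_1=X\times_A A_1$. This descent is not automatic and is precisely the content supplied by \cite{gkp_bo_bo}; your phrase ``also part of Kawamata's analysis'' hides it. Second, the properties you claim for the general fibre---$F$ canonical and $q(F)=0$---are most cleanly read off \emph{after} the product decomposition rather than fed into it: once $X_1\cong F\times A_1$ is finite \'etale over $X$, the K\"unneth formula gives $q(F)+\dim A_1=q(X_1)\le\wt q(X)=\dim A_1$, hence $q(F)=0$; the same argument applied to any cover of $F$ \'etale in codimension one (multiplied by $A_1$) yields $\wt q(F)=0$ directly, so your induction is in fact unnecessary---the index-one cover of $F$ already finishes the proof. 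Likewise, $X_1$ canonical forces $F$ canonical. Asserting these facts \emph{before} the splitting, and using $q(F)=0$ as an input to the trivialisation, is not how Kawamata's argument runs and would need separate justification you have not supplied. None of this is a fatal gap, but the attribution and the logical order should be corrected.
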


Before we give the proof of Proposition \ref{proposition:alg_int_towards_dec}, we need the following auxiliary results. The author would like to thank Cinzia Casagrande who explained Lemma \ref{lemma:product_versus_contraction} to us.

\begin{lemma}\label{lemma:product_versus_contraction}
Let $X_1$, $X_2$ and $Y$ be complex normal projective varieties. Suppose that there exists a surjective morphism with connected fibers $\beta\colon X_1 \times X_2 \to Y$. Suppose furthermore that 
$q(X_1)=0$. Then $Y$ decomposes as a product 
$Y\cong Y_1\times Y_2,$
and there exist surjective morphisms with connected fibers $\beta_1\colon X_1\to Y_1$ and 
$\beta_2\colon X_2\to Y_2$
such that $\beta = \beta_1 \times \beta_2$.
\end{lemma}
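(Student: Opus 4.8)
The plan is to recover the two factors of $Y$ from the Stein factorizations of the restrictions of $\beta$ to the slices $X_1 \times \{x_2\}$ and $\{x_1\} \times X_2$. First I would fix general points $x_1 \in X_1$, $x_2 \in X_2$ and set $Y_1 := $ the image $\beta(X_1 \times \{x_2\})$ with $\beta_1 \colon X_1 \to Y_1$ the Stein factorization of $\beta|_{X_1\times\{x_2\}}$, and symmetrically $Y_2 := \beta(\{x_1\}\times X_2)$ with $\beta_2 \colon X_2 \to Y_2$. These are well-defined up to the choice of slice; a first routine point is to check that, since $\beta$ is a morphism of irreducible varieties, the image $\beta(X_1\times\{x_2\})$ is independent of $x_2$ up to translation/isomorphism, or more precisely that all these subvarieties are abstractly isomorphic over $Y$ in a way compatible with $\beta_1$. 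I would then define $\beta_1 \times \beta_2 \colon X_1 \times X_2 \to Y_1 \times Y_2$ and aim to produce an isomorphism $Y \cong Y_1 \times Y_2$ through which $\beta$ factors as $\beta_1 \times \beta_2$.

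The key mechanism is a rigidity argument using $q(X_1) = 0$. Consider the composition $X_1 \times X_2 \xrightarrow{\beta} Y \xrightarrow{\textup{pr}} Y_2'$ onto a suitable quotient, or more directly: for each fixed $x_1 \in X_1$ the restriction $\beta|_{\{x_1\}\times X_2} \colon X_2 \to Y$ has image a subvariety $F_{x_1}$, and I would show $F_{x_1}$ is independent of $x_1$ and equals (a fixed copy of) $Y_2$. The standard way is to look at the map $X_1 \to \Chow(Y)$ (or $\Hilb(Y)$) sending $x_1$ to the cycle $\beta_*[\{x_1\}\times X_2]$; its image is a rational-connectedness-free statement, but the cleaner route is that $X_1 \to \Chow(Y)$ followed by any projection, together with $q(X_1)=0$, forces the map to a point once one knows the image avoids positive-dimensional abelian-variety-like families. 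Actually the cleanest argument: the two foliations on $X_1\times X_2$ given by the relative tangent sheaves of $\textup{pr}_1$ and $\textup{pr}_2$ both project to foliations on $Y$ under $\beta$ (one must check $\beta$ is generically a submersion compatible with the product structure), and their leaves through a general point are $\beta(\{x_1\}\times X_2)$ and $\beta(X_1\times\{x_2\})$ respectively; these are transverse and their product fills $Y$, giving the decomposition $Y \cong Y_1\times Y_2$ on a big open set, which then extends since all varieties are normal and the $Y_i$ are normal.

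The main obstacle I anticipate is showing that $\beta(X_1 \times \{x_2\})$ does not vary with $x_2$ — equivalently that the "vertical" slices all map to the same subvariety of $Y$ (up to the product identification) rather than sweeping out a nontrivial family. This is exactly where $q(X_1) = 0$ must enter: one considers the morphism $X_1 \to \Chow(Y)$, $x_1 \mapsto \beta_*(\{x_1\}\times X_2)$, shows its image is an abelian variety (a general fact: a family of cycles with no rigidity is parametrized by an abelian variety, via the universal property of the Albanese of the base of the family of leaves), and then uses $q(X_1) = 0$ to conclude the morphism to this abelian variety factors through $\Alb(X_1) = 0$, hence is constant. Once constancy is established, the induced morphism $Y \to Y_1 \times Y_2$ is birational, finite (Stein factorization of $\beta$ has connected fibers), and between normal varieties, hence an isomorphism; and the factorization $\beta = \beta_1\times\beta_2$ follows by construction on a dense open set and then everywhere by normality. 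I would record the $\Chow$/$\Alb$ rigidity input as the one non-formal ingredient and otherwise treat the argument as a diagram chase.
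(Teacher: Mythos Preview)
Your strategy has a genuine gap. The central claim you aim for --- that the image $F_{x_1} = \beta(\{x_1\}\times X_2)$ is independent of $x_1$ --- is simply false in general. Take $\beta = \textup{id}\colon \mathbb{P}^1\times\mathbb{P}^1 \to \mathbb{P}^1\times\mathbb{P}^1$: here $q(X_1)=0$ and $Y$ is visibly a product, but $F_{x_1} = \{x_1\}\times\mathbb{P}^1$ moves with $x_1$. The Albanese argument you propose cannot rescue this: the image of $X_1$ in $\Chow(Y)$ is not an abelian variety (in the example it is $\mathbb{P}^1$), and there is no ``general fact'' forcing such a family of cycles to be parametrized by one. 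What is actually true is that the $F_{x_1}$ are the \emph{fibres} of a projection $Y\to Y_1$; they are not a single subvariety.

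The paper uses $q(X_1)=0$ in a different and more direct way: it gives the K\"unneth-type splitting $\Pic(X_1\times X_2)\cong\Pic(X_1)\times\Pic(X_2)$. Pulling back an ample divisor $H$ on $Y$ one writes $\beta^*H\sim \pi_1^*G_1+\pi_2^*G_2$, and then defines $\beta_i\colon X_i\to Y_i$ as the morphism associated to the semiample divisor $G_i$. The rigidity lemma shows $\beta$ factors through $\beta_1\times\beta_2$, and comparing $\gamma^*H$ with $p_1^*H_1+p_2^*H_2$ forces the induced map $\gamma\colon Y_1\times Y_2\to Y$ to be finite, hence an isomorphism. The irregularity hypothesis thus enters through line bundles, not through cycles or the Albanese map; this is the missing idea in your sketch.
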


\begin{proof}
Let $H$ be an ample Cartier divisor on $Y$.
Note that $\Pic(X_1 \times X_2) \cong \Pic(X_1)\times\Pic(X_2)$ since $q(X_1)=0$. Thus there exist Cartier divisors $G_1$ and $G_2$ on $X_1$ and $X_2$ respectively such that $\beta^*H \sim_\mathbb{Z} \pi_1^*G_1 + \pi_2^*G_2$, where $\pi_i$ is the projection onto $X_i$. 
Let $\beta_i\colon X_i \to Y_i$ be the morphism corresponding to the semiample divisor $G_i$, so that 
$m_i G_i\sim_\mathbb{Z} \beta_i^*H_i$ for some ample Cartier divisor $H_i$ on $Y_i$ and some positive integer
$m_i$.

Let $C \subset X_1 \times X_2$ be a complete curve contracted by $\beta_1 \times \beta_2 \colon X_1\times X_2 \to Y_1\times Y_2$. Then 
$(\pi_i^*G_i) \cdot C=0$, and hence $\beta^*H \cdot C =0$. This implies that $C$ is contracted by $\beta$, and hence $\beta$ factors through $\beta_1\times \beta_2$ by the rigidity lemma. Thus, there exists a morphism $\gamma \colon Y_1\times Y_2 \to Y$ such that $\beta =\gamma \circ (\beta_1\times \beta_2)$. 
Denote by $p_i\colon Y_1\times Y_2 \to Y_i$ the projection onto $Y_i$.
Hence, we obtain a commutative diagram as follows,

\centerline{
\xymatrix{
X_1\times X_2  \ar[rr]^{\beta_1\times\beta_2}\ar[d]_{\pi_i}\ar@/^2pc/[rrrr]^{\beta} && Y_1\times Y_2 \ar[rr]^{\gamma}\ar[d]^{p_i} && Y \\
X_i \ar[rr]_{\beta_i}&& Y_i. && \\
}
} 
\noindent Then
$$(\beta_1 \times \beta_2)^*(m_2 p_1^*H_1+ m_1 p_2^*H_2) \sim_\mathbb{Z} m_1m_2(\pi_1^*G_1+\pi_2^*G_2)
\sim_\mathbb{Z} m_1m_2\beta^*H = (\beta_1 \times \beta_2)^*(m_1 m_2\gamma^*H).$$
This implies that
$$m_2 p_1^*H_1+ m_1 p_2^*H_2 \sim_\mathbb{Z} m_1 m_2\gamma^*H$$
since 
the map $\beta_1 \times \beta_2$ is surjective with connected fibers.
Since $m_2 p_1^*H_1+ m_1 p_2^*H_2$ is ample, we conclude that $\gamma$ is a finite morphism, and hence an isomorphism
since $\beta$ is surjective with connected fibers. This completes the proof of the lemma.
\end{proof}

\begin{say}[Terminalization]
Let $X$ be a normal complex projective variety with at worst canonical singularities. Recall that 
a \emph{$\mathbb{Q}$-factorial terminalization} of $X$ is a birational crepant morphism 
$\beta \colon \wh X \to X$ where $\wh X$ is a $\mathbb{Q}$-factorial projective variety with terminal singularities.  
The existence of $\beta$ is established in \cite[Corollary 1.4.3]{bchm}.
\end{say}

\begin{prop}\label{proposition:birational_decomposition_versus_biregular_decomposition}
Let $X_1$, $X_2$ and $Y$ be complex projective varieties with canonical singularities such that $K_{X_1}$, $K_{X_2}$ and $K_{Y}$ are nef, and let $\phi\colon X_1 \times X_2 \map Y$
be a birational map. Suppose that 
$q(X_1)=0$. Then $Y$ decomposes as a product 
$Y\cong Y_1\times Y_2,$
and there exist birational maps $\phi_1\colon X_1\map Y_1$ and $\phi_2\colon X_2\map Y_2$
such that $\phi = \phi_1 \times \phi_2$.
\end{prop}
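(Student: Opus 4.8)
The plan is to upgrade Lemma~\ref{lemma:product_versus_contraction} from morphisms to birational maps by passing to $\mathbb{Q}$-factorial terminal minimal models — where the birational map becomes an isomorphism in codimension one — and then decomposing that map into flops, each of which will turn out to respect the product structure. First I would take $\mathbb{Q}$-factorial terminalizations $\beta_i\colon\wh{X_i}\to X_i$ and $\gamma\colon\wh Y\to Y$ (\cite{bchm}). Since canonical singularities are rational (Remark~\ref{rem:irregularity_bir_invariant}) one has $q(\wh{X_1})=q(X_1)=0$, and, arguing as in the proof of Lemma~\ref{lemma:product_versus_contraction}, $\wh X:=\wh{X_1}\times\wh{X_2}$ is $\mathbb{Q}$-factorial; it is clearly terminal and $K_{\wh X}=p_1^{*}K_{\wh{X_1}}+p_2^{*}K_{\wh{X_2}}$ is nef, while $\wh Y$ is $\mathbb{Q}$-factorial terminal with $K_{\wh Y}=\gamma^{*}K_Y$ nef. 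Set $\wh\phi:=\gamma^{-1}\circ\phi\circ(\beta_1\times\beta_2)\colon\wh X\map\wh Y$. It suffices to prove that $\wh Y\cong\wh Y_1\times\wh Y_2$ and $\wh\phi=\wh\phi_1\times\wh\phi_2$ for suitable birational maps $\wh\phi_i\colon\wh{X_i}\map\wh Y_i$: applying Lemma~\ref{lemma:product_versus_contraction} to the morphism $\gamma\colon\wh Y_1\times\wh Y_2\to Y$ (birational, hence surjective with connected fibres, and $q(\wh Y_1)=q(\wh{X_1})=0$) gives $Y\cong Y_1\times Y_2$ with $\gamma=\gamma_1\times\gamma_2$, whence $\phi_i:=\gamma_i\circ\wh\phi_i\circ\beta_i^{-1}$ does the job.

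Next I would show $\wh\phi$ is an isomorphism in codimension one. Take a common resolution $V$ with $p\colon V\to\wh X$ and $q\colon V\to\wh Y$, and write $K_V=p^{*}K_{\wh X}+E_p=q^{*}K_{\wh Y}+E_q$ with $E_p,E_q\ge 0$; because $\wh X$ and $\wh Y$ are terminal, the support of $E_p$ (resp. $E_q$) is precisely $\Exc(p)$ (resp. $\Exc(q)$). The $\mathbb{Q}$-Cartier divisor $\Theta:=p^{*}K_{\wh X}-q^{*}K_{\wh Y}=E_q-E_p$ is $q$-nef (on a $q$-contracted curve it agrees with the nef divisor $p^{*}K_{\wh X}$) and satisfies $q_{*}\Theta=-q_{*}E_p\le 0$, so $\Theta\le 0$ by the negativity lemma; the symmetric argument over $\wh X$ gives $\Theta\ge 0$. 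Hence $\Theta=0$, $E_p=E_q$, $p^{*}K_{\wh X}=q^{*}K_{\wh Y}$, and $\Exc(p)=\Exc(q)$ as divisors, so $\wh\phi$ neither contracts nor extracts a divisor.

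Being $\mathbb{Q}$-factorial terminal minimal models in a single birational class, $\wh X$ and $\wh Y$ are joined by a finite sequence of flops $\wh X=W_0\map W_1\map\cdots\map W_k=\wh Y$ (a well-known consequence of the minimal model program, cf.~\cite{bchm}). I would prove by induction on $i$ that $W_i\cong W_i^{(1)}\times W_i^{(2)}$ with each $W_i^{(j)}$ a $\mathbb{Q}$-factorial terminal minimal model of $\wh{X_j}$, and that $\wh X\map W_i$ is the product of birational maps $\wh{X_j}\map W_i^{(j)}$; the case $i=0$ is tautological. For the inductive step, let $W_i\map W_{i+1}$ be the flop of the small contraction $f\colon W_i\to Z$ of a $K_{W_i}$-trivial extremal ray $R$. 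Since $q(W_i^{(1)})=q(\wh{X_1})=0$, the Néron--Severi group, the group of $1$-cycles, the nef cones and the Mori cones of $W_i=W_i^{(1)}\times W_i^{(2)}$ all split as products (cf.\ the proof of Lemma~\ref{lemma:product_versus_contraction}); hence $R$ is an extremal ray of, say, $\NE(W_i^{(1)})$, a semiample supporting divisor of $f$ can be written $\pi_1^{*}H^{(1)}+\pi_2^{*}H^{(2)}$ with $H^{(2)}$ ample and $H^{(1)}$ a semiample supporting divisor of the contraction $f^{(1)}\colon W_i^{(1)}\to Z^{(1)}$ of $R$, and the Künneth formula identifies $f$ with $f^{(1)}\times\mathrm{id}_{W_i^{(2)}}$; in particular $Z=Z^{(1)}\times W_i^{(2)}$ and $f^{(1)}$ is small. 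Therefore $W_{i+1}=(W_i^{(1)})^{+}\times W_i^{(2)}$, where $(W_i^{(1)})^{+}\to Z^{(1)}$ is the flop of $f^{(1)}$ — again a $\mathbb{Q}$-factorial terminal minimal model of $\wh{X_1}$ — and $\wh X\map W_{i+1}$ is the product of $\wh{X_1}\map(W_i^{(1)})^{+}$ with $\wh{X_2}\map W_i^{(2)}$. Taking $i=k$ yields $\wh Y\cong\wh Y_1\times\wh Y_2$ with $\wh\phi=\wh\phi_1\times\wh\phi_2$, and the reduction above completes the proof.

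The hard part will be the inductive step: showing that a single flop of a product is a product of a flop with an identity. This rests on the product splitting of the Mori and nef cones — which is exactly where $q(\wh{X_1})=0$ is used — together with the Künneth decomposition of the section rings of box-product line bundles. The remaining technical points, routine but needing care, are the $\mathbb{Q}$-factoriality of $\wh X=\wh{X_1}\times\wh{X_2}$ and the invocation of the theorem that birational $\mathbb{Q}$-factorial terminal minimal models are connected by flops.
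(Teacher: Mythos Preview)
Your approach is essentially the same as the paper's: reduce to $\mathbb{Q}$-factorial terminalizations, descend the product structure along $\gamma$ via Lemma~\ref{lemma:product_versus_contraction}, decompose $\wh\phi$ into flops, and show each flop preserves the product structure. Two small points: the $\mathbb{Q}$-factoriality of $\wh X_1\times\wh X_2$ does not follow from Lemma~\ref{lemma:product_versus_contraction} but requires a separate result (the paper cites \cite{bgs}); and for the flop step the paper simply applies Lemma~\ref{lemma:product_versus_contraction} to the small contraction $\alpha\colon\wh X_1\times\wh X_2\to Z$ and uses $\rho(\wh X_1\times\wh X_2/Z)=1$ to force one factor of $\alpha=\alpha_1\times\alpha_2$ to be an isomorphism, which is cleaner than redoing the cone-splitting analysis by hand.
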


\begin{proof}
Let $\beta_1\colon \wh X_1 \to X_1$, $\beta_2\colon \wh X_2 \to X_2$ and $\gamma\colon \wh Y \to Y$ be 
$\mathbb{Q}$-factorial terminalizations of $X_1$, $X_2$, and $Y$ respectively.
Notice that $K_{\wh X_1\times\wh X_2}$ and $K_{\wh Y}$ are nef.
Set $\wh \phi := \gamma^{-1}\circ \phi \circ (\beta_1 \times \beta_2)\colon \wh X_1\times\wh X_2 \map \wh Y$.
Hence, we obtain a commutative diagram as follows,

\centerline{
\xymatrix{
\wh X_1\times\wh X_2  \ar[d]_{\beta_1\times\beta_2}\ar@{.>}[rr]^{\wh\phi} & & \wh Y\ar[d]^{\gamma} \\
X_1\times X_2 \ar@{.>}[rr]_{\phi}&& Y.\\
}
}

Recall from \cite[Th\'eor\`eme 6.5]{bgs} that the product of complex $\mathbb{Q}$-factorial algebraic varieties is $\mathbb{Q}$-factorial. In particular, $\wh X_1\times\wh X_2$ is $\mathbb{Q}$-factorial.

It follows from Lemma \ref{lemma:product_versus_contraction} applied to $\gamma$ and 
Remark \ref{rem:irregularity_bir_invariant} that it suffices to prove Proposition 
\ref{proposition:birational_decomposition_versus_biregular_decomposition}
for 
$\wh \phi$. 

Now, by \cite[Theorem 1]{kawamata_flops}, $\wh \phi$ decomposes into a sequence of flops,
and therefore, using repeatedly Lemma \ref{lemma:product_versus_contraction}, it suffices to prove Proposition 
\ref{proposition:birational_decomposition_versus_biregular_decomposition}
for a flop. Thus, we may assume that there exists a commutative diagram

\centerline{
\xymatrix{
\wh X_1\times\wh X_2  \ar@{.>}[rr]^{\phi}\ar[rd]_{\alpha} & & \wh Y \ar[ld]^{\alpha^+}\\
 & Z & \\
}
}
\noindent where $\alpha$ and $\alpha^+$ are small elementary birational contractions, $K_{\wh X_1\times\wh X_2}$ is numerically
$\alpha$-trivial, and $K_{\wh Y}$ is numerically $\alpha^+$-trivial.
\noindent By Lemma \ref{lemma:product_versus_contraction} applied to $\alpha$,
$Z$ decomposes as a product $Z\cong Z_1\times Z_2,$
and there exist birational morphisms $\alpha_1\colon \wh X_1\to Z_1$ and $\alpha_2\colon \wh X_2\to Z_2$
such that $\alpha = \alpha_1 \times \alpha_2$. Finally, observe that $\alpha_1$ or $\alpha_2$ is an isomorphism since $\rho(\wh X_1\times\wh X_2/Z)=1$. The claim then follows easily.
\end{proof}

We end the preparation for the proof of Proposition \ref{proposition:alg_int_towards_dec} with the following lemma. It reduces the study of varieties with canonical singularities and trivial canonical class to those with terminal singularities.

\begin{lemma}\label{lemma:reduction_terminalization}
Let $X$ be a normal complex projective variety with at worst canonical singularities, and let 
$\beta \colon \wh X \to X$ be a $\mathbb{Q}$-factorial terminalization of $X$.
Let $T_{X} = \oplus_{i\in I} \sE_i$
be a decomposition of $T_X$ into involutive subsheaves with $\det(\sE_i)\cong\sO_X$. 
Then  there is a decomposition $T_{\wh X} = \oplus_{i\in I} \wh\sE_i$ of $T_{\wh X}$
into involutive subsheaves with $\det(\wh\sE_i)\cong\sO_{\wh X}$ such that 
$\sE_i \cong (\beta_*\wh\sE_i)^{**}$.
\end{lemma}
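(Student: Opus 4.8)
The plan is to transport each summand $\sE_i$ to $\wh X$ as a foliation, using the reflexive $p_i$-form that defines it, and then to exploit crepancy of $\beta$ to force the resulting subsheaves of $T_{\wh X}$ to be complementary with trivial determinant. First I would record that $\sO_X(-K_X)\cong\det(T_X)\cong\big(\bigotimes_i\det(\sE_i)\big)^{**}\cong\sO_X$, so $K_X\sim 0$; since $\beta$ is crepant, $K_{\wh X}=\beta^*K_X\sim 0$, whence $\omega_{\wh X}\cong\sO_{\wh X}$, and (as $\wh X$ is terminal, hence smooth in codimension two) $(\Lambda^{k}T_{\wh X})^{**}\cong\Omega_{\wh X}^{[n-k]}$ for all $k$, where $n:=\dim X$. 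Next, each $\sE_i$ is a foliation of codimension $p_i:=n-\rank\sE_i$ — saturated as a direct summand of the reflexive sheaf $T_X$, involutive by hypothesis — whose normal sheaf $(T_X/\sE_i)^{**}=\bigoplus_{j\ne i}\sE_j$ has trivial determinant; hence, as in \ref{q-forms}, $\sE_i$ is defined by an untwisted reflexive form $\omega_i\in H^0\big(X,\Omega_X^{[p_i]}\big)$, which one may take to be a generator of the image of $\det(\sE_i)\hookrightarrow(\Lambda^{\rank\sE_i}T_X)^{**}\cong\Omega_X^{[p_i]}$. By the extension theorem for reflexive differentials (\cite[Theorem 1.5]{greb_kebekus_kovacs_peternell10}; $X$ has canonical, hence klt, singularities), $\omega_i$ extends to a nonzero reflexive form $\wh\omega_i:=\beta^{[*]}\omega_i\in H^0\big(\wh X,\Omega_{\wh X}^{[p_i]}\big)$. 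Local decomposability and integrability are conditions at a general point, which may be taken in $\wh X\setminus\Exc(\beta)$ where $\beta$ is an isomorphism, so $\wh\omega_i$ is still locally decomposable and integrable, and the saturation $\wh\sE_i\subseteq T_{\wh X}$ of the kernel of contraction by $\wh\omega_i$ is a foliation of rank $\rank\sE_i$ restricting to $\sE_i$ over the largest open $U\subseteq X$ on which $\beta$ is an isomorphism (involutivity of $\wh\sE_i$ is automatic: its failure is a section of a torsion-free sheaf vanishing over $\wh X\setminus\Exc(\beta)$). Since $X\setminus U$ has codimension $\ge 2$ and $\sE_i$, $(\beta_*\wh\sE_i)^{**}$ are both reflexive, this already yields $\sE_i\cong(\beta_*\wh\sE_i)^{**}$.

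It then remains to prove $T_{\wh X}=\bigoplus_i\wh\sE_i$ and $\det(\wh\sE_i)\cong\sO_{\wh X}$. The inclusions assemble into an injective morphism $\Sigma\colon\bigoplus_i\wh\sE_i\to T_{\wh X}$ of reflexive sheaves of rank $n$ which is an isomorphism over $\beta^{-1}(U)$. The main obstacle is that $\beta^{-1}(U)$ need not have a codimension $\ge 2$ complement in $\wh X$ — exceptional divisors of $\beta$ may lie over the (codimension $\ge 2$) locus $X\setminus U$ — so one cannot conclude by reflexivity that $\Sigma$ is an isomorphism. I would get around this with two determinant computations. From $0\to\bigoplus_i\wh\sE_i\to T_{\wh X}\to\cQ\to 0$, letting $\Delta\ge 0$ be the codimension-one part of $\Supp\cQ$ counted with lengths (an effective $\beta$-exceptional divisor, as $\Sigma$ is an isomorphism over $\beta^{-1}(U)$), the elementary-divisors computation on $\wh X_{\textup{reg}}$ together with $\det(T_{\wh X})\cong\omega_{\wh X}^{-1}\cong\sO_{\wh X}$ gives $\det\big(\bigoplus_i\wh\sE_i\big)\cong\sO_{\wh X}(-\Delta)$. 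In the other direction, over the codimension $\ge 2$ open set $V\subseteq\wh X$ where $\wh X$ is smooth and $\wh\sE_i$ is a subbundle of $T_{\wh X}$, the identification $(\Lambda^{\rank\sE_i}T_{\wh X})^{**}\cong\Omega_{\wh X}^{[p_i]}$ realises $\det(\wh\sE_i)|_V$ as a subbundle of $\Omega_{\wh X}^{[p_i]}|_V$ with locally free quotient; since $\wh\omega_i|_V$ lies in this subbundle over the dense open $\beta^{-1}(U)\cap V$, it lies in it over all of $V$, and by reflexivity of $\det(\wh\sE_i)$ it extends to a nonzero global section, so $\det(\wh\sE_i)\cong\sO_{\wh X}(R_i)$ for some effective $\beta$-exceptional $R_i\ge 0$. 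Comparing, $\sum_iR_i+\Delta\sim 0$ is effective and $\beta$-exceptional, hence zero; so each $R_i=0$ and $\Delta=0$. This gives $\det(\wh\sE_i)\cong\sO_{\wh X}$ for all $i$ and $\cQ$ supported in codimension $\ge 2$, whence $\Sigma$ is an isomorphism and $T_{\wh X}=\bigoplus_i\wh\sE_i$.

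The only non-formal input is the GKKP extension theorem, used to produce $\beta^{[*]}\omega_i$ on $\wh X$; the delicate point, as indicated, is that $\beta^{-1}(U)$ may fail to be big in $\wh X$, which is precisely why crepancy of $\beta$ (through $\det T_{\wh X}\cong\sO_{\wh X}$) together with the effectivity of both $R_i$ and $\Delta$ is essential to pin the exceptional correction divisors down to zero.
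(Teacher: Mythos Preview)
Your proof is correct and follows essentially the same route as the paper's: extend the defining reflexive $p_i$-forms $\omega_i$ to $\wh X$ via the GKKP extension theorem, let $\wh\sE_i$ be the foliation they cut out, observe that $\det(\wh\sE_i)\cong\sO_{\wh X}(R_i)$ with $R_i\ge 0$, and then compare determinants along the generically injective map $\bigoplus_i\wh\sE_i\to T_{\wh X}$ to force $\sum_iR_i+\Delta\sim 0$ with all summands effective, hence zero. Your pair $(R_i,\Delta)$ is exactly the paper's $(E_i,E)$; the paper obtains $\det(\wh\sE_i)\cong\sO_{\wh X}(E_i)$ more directly by taking $E_i$ to be the divisorial part of the zero locus of $\wh\omega_i$, but this is the same effective divisor you produce. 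One harmless redundancy: you note that $\sum_iR_i+\Delta$ is $\beta$-exceptional, but effectivity together with $\sim 0$ already forces it to vanish on a projective variety.
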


\begin{proof}Notice that $\omega_{\wh X} \cong \sO_{\wh X}$. Denote by $q_i$ the codimension of $\sE_i$,
and consider 
$\omega_i \in H^0\big(X,\Omega_X^{[q_i]}\big)$ a $q_i$-form defining $\sE_i$.
By \cite[Theorem 1.5]{greb_kebekus_kovacs_peternell10}, $\omega_i$ extends to a 
$q_i$-form $\wh \omega_i \in H^0\big(\wh X,\Omega_{\wh X}^{[q_i]}\big)$. Then $\wh \omega_i$ defines a 
foliation $\wh \sE_i \subseteq T_{\wh X}$ with $\det(\wh \sE_i)\cong \sO_{\wh X}(E_i)$
where $E_i$ is the maximal effective divisor on $\wh X$ such that
$\wh \omega_i \in H^0\big(\wh X,\Omega_{\wh X}^{q_i}\boxtimes\sO_{\wh X}(-E_i)\big)$.
The natural map $\oplus_{i\in I} \wh\sE_i \to T_{\wh X}$ being generically injective, we obtain
$$ \sO_{\wh X}(\sum_{i\in I} E_i+E)\cong\det(T_{\wh X}) \cong \sO_{\wh X}$$
for some effective divisor $E$ on $\wh X$. It follows that $E_i=0$ for every $i\in I$, and that 
$T_{\wh X}$ decomposes as a direct sum
$$T_{\wh X} = \oplus_{i\in I} \wh\sE_i$$
of involutive subsheaves with trivial determinants.
The sheaves $\sE_i$ and $(\beta_*\wh\sE_i)^{**}$ agree outside of the $\beta$-exceptional set, and since both are reflexive, we obtain an isomorphism $\sE_i \cong (\beta_*\wh\sE_i)^{**}$. This finishes the proof of Lemma \ref{lemma:reduction_terminalization}.
\end{proof}

The following result together with Theorem \ref{theorem:kawamata_abelian_factor}
can be seen as a first step towards a decomposition theorem.

\begin{prop}\label{proposition:alg_int_towards_dec}
Let $X$ be a normal complex projective variety with at worst canonical singularities,
and let $$T_X = \oplus_{i\in I} \sE_i $$ be a decomposition of $T_X$ into 
involutive subsheaves. Suppose that $\wt q(X)=0$.
Suppose furthermore that the $\sE_i$ are algebraically integrable with $\det(\sE_i)\cong\sO_X$. 
Then there exist a projective variety $\wt X$ with at worst canonical
singularities, a finite cover $f: \wt X \to X$, \'etale in
codimension one, and a decomposition
$$\wt X \cong \prod_{i\in I} Y_i$$
such that the induced decomposition of $T_{\wt X}$
agree with the decomposition $T_{\wt X} = \oplus_{i\in I} f^{[*]}\sE_i $.
\end{prop}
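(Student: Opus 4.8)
I would argue by induction on $|I|$, the case $|I|=1$ being vacuous ($\wt X=X$, $Y_1=X$). For the inductive step, fix $i_0\in I$ and set $\sG:=\oplus_{i\ne i_0}\sE_i$, so that $T_X=\sE_{i_0}\oplus\sG$ with $\det\sE_{i_0}\cong\sO_X$ (hence $\det\sG\cong\sO_X$ and $\omega_X\cong\sO_X$). Before anything else I would reduce to the case where $X$ has $\mathbb{Q}$-factorial terminal singularities: pass to a $\mathbb{Q}$-factorial terminalisation $\beta\colon\wh X\to X$, transport the decomposition by Lemma~\ref{lemma:reduction_terminalization} to $T_{\wh X}=\oplus_i\wh\sE_i$ with $\wh\sE_i=\beta^{-1}\sE_i$ (still algebraically integrable, since the pull-back of an algebraically integrable foliation by a birational morphism is algebraically integrable), note $\wt q(\wh X)=0$ by Lemma~\ref{lemma:augmented_irregularity_birational_morphism}, run the argument on $\wh X$, and descend the resulting product decomposition of a cover of $\wh X$ to a cover of $X$ by Proposition~\ref{proposition:birational_decomposition_versus_biregular_decomposition} (the $q=0$ hypothesis of that proposition being supplied by $\wt q=0$ together with the Künneth formula for $h^1(\sO)$). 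The same device lets me freely replace any later cover by its $\mathbb{Q}$-factorial terminalisation whenever Lemma~\ref{lemma:almost_proper} or Proposition~\ref{proposition:regular_foliation_morphism} is to be applied: none of singularity type, triviality of the canonical class, or augmented irregularity is affected, and the discrepancy is reabsorbed at the end through Proposition~\ref{proposition:birational_decomposition_versus_biregular_decomposition}.

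\textbf{Peeling off $\sE_{i_0}$.} So assume $X$ is $\mathbb{Q}$-factorial terminal. Since $K_X\cong\sO_X$ is pseudo-effective and $\det\sE_{i_0}\cong\sO_X$, Proposition~\ref{proposition:regular_foliation_morphism} applied to $T_X=\sE_{i_0}\oplus\sG$ yields a finite cover $g\colon X'\to X$ \'etale in codimension one, an open $X'^\circ\subseteq X'_{\textup{reg}}$ with complement of codimension $\ge 2$, and a locally trivial holomorphic fibre bundle $\phi^\circ\colon X'^\circ\to Y^\circ$ (Euclidean topology, fibre a smooth projective variety $F$, base a big open subset of the family of leaves $Y$ of $g^{-1}\sE_{i_0}$, up to a finite cover) inducing $g^{-1}\sE_{i_0}$. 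Because $T_{X'^\circ}=g^{-1}\sE_{i_0}\oplus g^{-1}\sG$ and $g^{-1}\sE_{i_0}=\ker d\phi^\circ$, the summand $g^{-1}\sG|_{X'^\circ}=\oplus_{i\ne i_0}g^{-1}\sE_i|_{X'^\circ}$ is a flat Ehresmann connection for $\phi^\circ$, compatible with the local trivialisations; parallel transport is biholomorphic, so its holonomy is a homomorphism $\rho\colon\pi_1(Y^\circ)\to\operatorname{Aut}(F)$. I claim $\Gamma:=\operatorname{im}\rho$ is finite. Indeed $g^{-1}\sG$ is again algebraically integrable; a leaf $N$ has dimension $\dim Y^\circ$ and $\phi^\circ$ restricts to an \'etale morphism on $N\cap X'^\circ$, so $N$ meets a fibre $\phi^{\circ-1}(y_0)\cong F$ in a finite set, which is precisely a $\Gamma$-orbit. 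Hence every $\Gamma$-orbit on $F$ is finite; averaging an ample class over a finite orbit produces a $\Gamma$-invariant polarisation on $F$; and $\wt q(F)=0$ (from $\wt q(X')\le\wt q(X)=0$ and the fibration structure) makes $\operatorname{Aut}^0(F)$ trivial, so a group of automorphisms of $F$ fixing a polarisation is finite. Thus $\Gamma$ is finite.

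\textbf{Trivialising and recursing.} The finite \'etale cover $Y^{\circ\circ}\to Y^\circ$ attached to $\ker\rho$ trivialises $\phi^\circ$, i.e. $X'^\circ\times_{Y^\circ}Y^{\circ\circ}\cong Y^{\circ\circ}\times F$. Extending across the codimension $\ge 2$ loci one gets: $\wt Y\to Y$ finite \'etale in codimension one ($Y$ a normal projective variety with canonical singularities and $\omega_Y\cong\sO_Y$, by \cite{druel15} and the canonical bundle formula \eqref{eq:universal_canonical_bundle_formula}); $\overline F$ the normalisation of the corresponding leaf closure (canonical singularities, $\omega_{\overline F}\cong\sO_{\overline F}$ by adjunction); and $\wh{X'}\to X'$ finite \'etale in codimension one extending $X'^\circ\times_{Y^\circ}Y^{\circ\circ}\to X'^\circ$, so that $\wh{X'}$ is birational to $\overline F\times\wt Y$. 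As $\wh{X'}$, $\overline F$, $\wt Y$ all have canonical singularities with trivial (hence nef) canonical class and $\wt q(\wt Y)=0$, Proposition~\ref{proposition:birational_decomposition_versus_biregular_decomposition} upgrades this to a biregular splitting $\wh{X'}\cong\overline F\times\wt Y$ under which $g^{-1}\sE_{i_0}$ is the relative tangent sheaf of $\operatorname{pr}_{\wt Y}$, i.e. $\operatorname{pr}_{\overline F}^{\,*}T_{\overline F}$, while $\oplus_{i\ne i_0}g^{-1}\sE_i$ — after adjusting the product structure by a gauge transformation, harmless since the holonomy is now trivial — is $\operatorname{pr}_{\wt Y}^{\,*}T_{\wt Y}$. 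Matching summands gives $T_{\wt Y}=\oplus_{i\ne i_0}\sF_i$ into involutive, algebraically integrable subsheaves with trivial determinants, pulling back to $g^{[*]}\sE_i$; and $\wt Y$ satisfies the hypotheses of the proposition with $|I|-1$ summands. Applying the induction hypothesis to $(\wt Y,\{\sF_i\}_{i\ne i_0})$ and pulling back through $\operatorname{pr}_{\wt Y}$ finishes the induction, with $\overline F$ furnishing the missing factor $Y_{i_0}$; one then unwinds the terminalisation reduction of the first paragraph.

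\textbf{Expected main obstacle.} The heart of the argument is the finiteness of the holonomy $\Gamma$ in the second step, which is exactly what turns the merely Euclidean-local product structure of Proposition~\ref{proposition:regular_foliation_morphism} into a global product after a single cover \'etale in codimension one; this is where algebraic integrability of the complementary summand $\oplus_{i\ne i_0}g^{-1}\sE_i$ enters essentially (to bound holonomy orbits) together with $\wt q=0$ of the fibre (to kill positive-dimensional automorphisms). The rest is bookkeeping — verifying that canonical singularities, triviality of the canonical class and vanishing of $\wt q$ descend to the family of leaves $Y$ and to the fibre $F$, and that the composite of all the covers remains \'etale in codimension one — handled by Lemmas~\ref{lemma:augmented_irregularity_birational_morphism}, \ref{lemma:reduction_terminalization}, \ref{lemma:almost_proper} and Proposition~\ref{proposition:birational_decomposition_versus_biregular_decomposition}.
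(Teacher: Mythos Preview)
Your overall strategy—pass to a $\mathbb{Q}$-factorial terminalisation, use Proposition~\ref{proposition:regular_foliation_morphism} to realise $\sE_{i_0}$ as the relative tangent sheaf of a locally trivial fibration $\phi^\circ$, interpret $\sG$ as a flat Ehresmann connection, kill its holonomy by a finite cover, and conclude via Proposition~\ref{proposition:birational_decomposition_versus_biregular_decomposition}—is sound and is in essence what the paper does, though the paper phrases the holonomy step concretely as the fibre-product construction $\widetilde{F_1^\circ\times_{Y_1}X_{\textup{reg}}}$ (Claim~\ref{claim:alg_int_structure_1}) together with the connectedness statement of Claim~\ref{claim:holonomy}. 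There is, however, a genuine gap in your argument that the holonomy group $\Gamma$ is finite.

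You argue: (i) every $\Gamma$-orbit on $F$ is finite; (ii) ``averaging an ample class over a finite orbit'' yields a $\Gamma$-invariant polarisation; (iii) $\wt q(F)=0$, whence $\operatorname{Aut}^0(F)$ is trivial; (iv) hence $\Gamma$ is finite. Step~(ii) is wrong as written: finiteness of orbits of \emph{points} says nothing about the $\Gamma$-orbit of a class in $\operatorname{NS}(F)$, so there is nothing to average. Step~(iii) is unjustified: it is not clear that $\wt q(X)=0$ forces $\wt q(F)=0$ for a fibre (a cover of $F$ \'etale in codimension one need not spread to one of $X$). Both are repairable. For~(ii), restrict a $\phi^\circ$-ample line bundle on $X'^\circ$ to $F$: its first Chern class is a global section of $R^2\phi^\circ_*\mathbb{Z}$, hence monodromy-invariant, and the $\Gamma$-action on $H^2(F,\mathbb{Z})$ agrees with monodromy, so this class is $\Gamma$-invariant. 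With an invariant ample class in hand, step~(iii) is unnecessary: the Zariski closure $\bar\Gamma\subset\operatorname{Aut}(F,H)$ is a linear algebraic group whose orbits on $F$ coincide with the (finite) $\Gamma$-orbits, so $\bar\Gamma^0$ fixes every point and is trivial by faithfulness, hence $\bar\Gamma$—and $\Gamma$—is finite.

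A second issue: you take the product model to be $\bar F\times\wt Y$ with $\wt Y$ a cover of the \emph{base} of the family of leaves, and invoke Proposition~\ref{proposition:birational_decomposition_versus_biregular_decomposition}, which requires canonical singularities and nef canonical class on all three varieties. Nothing you have said controls the singularities or $K_{\wt Y}$ of $\wt Y$. The paper avoids this by taking \emph{both} factors to be general leaves ($\wt F_1$ and $\wt F_2$), which—as general fibres of almost-proper maps from a terminal variety with $K\sim_{\mathbb Q}0$—have terminal singularities and trivial canonical class. In your language: once the holonomy is trivial, a general horizontal leaf of $\sG$ is a section of $\phi^\circ$ and hence birational to $\wt Y$; use its closure in place of $\wt Y$.
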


\begin{proof} 
For the reader's convenience, the proof is subdivided into
a number of relatively independent steps.

To prove Proposition \ref{proposition:alg_int_towards_dec}, it is obviously enough to consider the case where $I=\{1,2\}$. Set $\tau(i)=3-i$ for each $i\in I$.

\medskip

\noindent\textit{Step 1. Reduction to $X$ $\mathbb{Q}$-factorial and terminal.}
Let $\beta \colon Z \to X$ be a $\mathbb{Q}$-factorial terminalization of $X$. By Lemma \ref{lemma:reduction_terminalization}, the tangent sheaf $T_{Z}$ decomposes as a direct sum $T_{Z} = \oplus_{i\in I} \sG_i$ of involutive subsheaves with trivial determinants 
such that $\sE_i \cong (\beta_*\sG_i)^{**}$. Notice that $\wt q(Z)=0$ by Lemma \ref{lemma:augmented_irregularity_birational_morphism}.

Suppose that there exists a finite cover $g \colon \wt Z \to Z$, \'etale in codimension one, such that
$\wt Z$ decomposes as a product
$\wt Z \cong \prod_{i\in I} T_i$
such that the induced decomposition of $T_{\wt Z}$
agree with the decomposition $T_{\wt Z} = \oplus_{i\in I} g^{[*]}\sG_i $.
From the K\"{u}nneth formula (see \cite[Theorem 6.7.8]{ega17}), we see that 
$q(T_i)=0$ for any $i\in I$.
Let $f \colon \wt X \to X$ be the Stein factorization
of the composed map $\wt Z \to Z \to X$. Then $f$ is \'etale in codimension one, and
thus $\wt X$ has canonical singularities by \cite[Proposition 3.16]{kollar97}.
Applying Lemma \ref{lemma:product_versus_contraction} to $\wt Z \to \wt X$, we see 
$\wt X$ decomposes as a product
$\wt X \cong \prod_{i\in I} Y_i$
such that the induced decomposition of $T_{\wt X}$
agree with the decomposition $T_{\wt X} = \oplus_{i\in I} f^{[*]}\sG_i $.
We can therefore assume without loss of generality that the following holds.

\begin{assumption}\label{assumption_terminal}
The variety $X$ has at worst $\mathbb{Q}$-factorial terminal singularities.
\end{assumption}

\noindent\textit{Step 2.} For $i\in I$, let $\psi_i\colon Z_i \to Y_i$ be the family of leaves, and let $\beta_i\colon Z_i \to X$ be the natural morphism (see \ref{family_leaves}). Notice that $\sE_i$ is induced by $\phi_i:= \psi_i\circ \beta_i^{-1} \colon X \map Y_i$. By Lemma \ref{lemma:almost_proper}, 
the rational map $\phi_i$ is almost proper. Moreover, it induces a regular map $X_{\textup{reg}} \to Y_i$ since $\sE_i$ is a regular foliation on $X_{\textup{reg}}$.

Let $F_i$ be a general fiber of $\phi_{\tau(i)}$. Then $F_i$ is a normal projective variety with at worst terminal singularities, and $K_{F_i}\sim_\mathbb{Q} 0$.
Set $F_i^\circ:=F_i \cap X_{\textup{reg}}$, and denote by $\wt{F_i^\circ \times_{Y_i} X_{\textup{reg}}}$ the normalization of $F_i^\circ \times_{Y_i} X_{\textup{reg}}$.
Next, we will prove the following.

\begin{claim}\label{claim:alg_int_structure_1}
The natural map
$\wt{F_i^\circ \times_{Y_i} X_{\textup{reg}}}\to X_{\textup{reg}}$ is finite and \'etale over an open subset 
$X_i^\circ\subset X_{\textup{reg}}$ with complement of codimension at least two.
\end{claim}

\begin{proof}[{Proof of Claim \ref{claim:alg_int_structure_1}}]
By Proposition \ref{proposition:regular_foliation_morphism}, there exists a dense open subset 
$Y_i^\circ \subset {Y_i}_{\textup{reg}}$ such that $X_i^\circ := \phi_i^{-1}(Y_i^\circ)$ has complement of codimension at least two in $X$, and such that ${\phi_i}_{|X_i^\circ} \colon X_i^\circ \to Y_i^\circ$ is a projective morphism with irreducible fibers.
Let $P$ be a prime divisor on $Y_i^\circ$, and 
write ${\phi_i}_{|X_i^\circ}^*P=t\, Q$ for some positive integer $t$.
Set $n:=\dim X$, and $m_i:=\dim Y_i$. Notice that $F_i\cap Q\neq\emptyset$.
Since $\sE_1$ and $\sE_2$ are regular foliations at a general point $x$ in $Q$ and $T_{X}=\sE_1\oplus\sE_2$, there exist local analytic coordinates centered at $x$ and $y:=\phi_i(x)$ respectively
such that $\phi_i$ is given
by $(x_1,x_2,\ldots,x_n)\mapsto (x_1^{t},x_2\ldots,x_{m_i})$, and such that 
$F_i$ is 
given by equation $x_{m_i+1}=\cdots=x_n=0$.
The claim then follows from a straightforward local computation.
\end{proof}

\noindent\textit{Step 3. End of proof.}
Let $X_1$ denotes the normalization of $X$ in the function field of 
$\wt{F_1^\circ \times_{Y_1} X_{\textup{reg}}}$. It comes with a finite morphism $f_1 \colon X_1 \to X$
which is \'etale in codimension one by Claim \ref{claim:alg_int_structure_1}.
Let $\wt\phi_1\colon \wt X_1 \map F_1$ be the almost proper rational map induced by $\phi_1$, and let 
$G_1 \subset \wt X_1$ be the Zariski closure of the rational section of $\wt \phi_1$ given by $F_1^\circ \to Y_1$. Finally, let $\wh\phi_2$ denotes the composed map $\wt X_1 \to X \map Y_2$, and set 
$y_2:\wh\phi_2(G_1)=\phi_2(F_1)$. Notice that $\wh \phi_2$ is an almost proper map. 

\begin{claim}\label{claim:holonomy}
The following holds.
\begin{enumerate}
\item The variety $G_1$ is a fiber of the Stein factorization 
$\wt\phi_2 \colon \wt X_1 \map \wt Y_2$
of $\wh\phi_2$.
\item The fiber $\wh\phi_2^{-1}(y_2)=f_1^{-1}(F_1)$ is reduced along $G_1$.
\end{enumerate}
\end{claim}

\begin{proof}[Proof of Claim \ref{claim:holonomy}]
Applying \cite[Proposition 3.16]{kollar97}, we see that
$\wt X$ has terminal singularities. In particular, $X$ is Cohen-Macaulay, and hence so is 
$\wh\phi_2^{-1}(y_2)$ by \cite[Proposition 18.13]{eisenbud}.
By the Nagata-Zariski purity theorem, $f_1$ branches only over the singular set of $X$. This immediately implies that $f_1^{-1}(F_1)$ is smooth in codimension one. Then (2) follows easily.
By Hartshorne's connectedness theorem (see 
\cite[Theorem 18.12]{eisenbud}), we see that irreducible components of $\wh\phi_2^{-1}(y_2)$ are disjoint, proving (1).
\end{proof}

Let $\wt F_2$ be a general fiber of $\wt \phi_1$. Then $\wt F_2$ intersects $G_1$ transversely in a point. 
From Claim \ref{claim:holonomy} (1), it follows that $\wt F_2$ intersects 
a general fiber $\wt F_1$ of $\wt\phi_2$
transversely in a point. This immediately implies that the map $\wt\phi_1\times\wt\phi_2 \colon \wt X_1 \map F_1 \times \wt Y_2$ is birational. But it also implies that $F_1$ and $\wt Y_2$ are birationally equivalent to 
$\wt F_1$ and 
$\wt F_2$ respectively, and we conclude that 
there exists a birational map $\wt X_1 \map \wt F_1\times \wt F_2$.
From the K\"{u}nneth formula (see \cite[Theorem 6.7.8]{ega17}) together with Remark \ref{rem:irregularity_bir_invariant}, we see that 
$q(\wt F_i)=0$ for any $i\in \{1,2\}$.
The conclusion then follows from Proposition \ref{proposition:birational_decomposition_versus_biregular_decomposition}.
This finishes the proof of Proposition \ref{proposition:alg_int_towards_dec}.
\end{proof}

\section{Algebraicity of leaves, I}

In this section we prove Theorem \ref{theorem:alg_int_flat_case}. The proof relies on an algebraicity criterion for leaves of algebraic foliations proved in \cite[Theorem 2.1]{bost}, which we recall now.

\begin{say}
Let $X$ be an algebraic variety over some field $k$ of positive 
characteristic $p$, and let $\sG \subset T_X$ be a subsheaf. We will denote by $\Frobabs \colon X \to X$ the absolute Frobenius morphism of $X$.

The sheaf of derivations $\textup{Der}_{k}(\sO_X)\cong T_X$ is endowed with the $p$-th power operation, which maps any local $k$-derivation $D$ of $\sO_X$ to its $p$-th iterate $D^{[p]}$. 
When $\sG$ is involutive, the map $\Frobabs^*\sG \to T_X/\sG$ which sends 
$D$ to the class in $T_X/\sG$ of $D^{[p]}$ is $\sO_X$-linear. The sheaf $\sG$ is said to be 
\emph{closed under $p$-th powers} if the map $\Frobabs^*\sG \to T_X/\sG$ vanishes.

A connected complex manifold $M$ satisfies the \emph{Liouville property}
when every plurisubharmonic function on $M$ bounded from above is constant (see \cite[Section 2.1.2]{bost}).
Examples of complex manifolds satisfying the Liouville property are provided by affine spaces $\mathbb{C}^n$.

\end{say}

We will use the following notation.

\begin{notation}
If $K$ is a number field, its ring of integers will be denoted by $\sO_K$. For any non-zero prime ideal $\mathfrak{p}$ of $\sO_K$, we let $k(\mathfrak{p})$ be the finite
field $\sO_K/\mathfrak{p}$. We denote by $k(\bar{\mathfrak{p}})$ an algebraic closure of $k(\mathfrak{p})$.
Given a scheme $X$ over $S :=\textup{Spec} \,\sO_K$, we let 
$X_K:=X\otimes K$,
$X_{\mathfrak{p}}:= X \otimes k(\mathfrak{p})$, and $X_{\bar{\mathfrak{p}}}:= X \otimes k(\bar{\mathfrak{p}})$.
Given a sheaf $\sG$ on $X$, we let $\sG_K:=\sG\otimes K$,
$\sG_{\mathfrak{p}}:= \sG \otimes k(\mathfrak{p})$, and $\sG_{\bar{\mathfrak{p}}}:= \sG \otimes k(\bar{\mathfrak{p}})$.

\end{notation}

\begin{thm}[{\cite[Theorem 2.1]{bost}}]\label{thm:bost}
Let X be a smooth algebraic variety over a number field $K$, let $\sG$ be an involutive
subbundle of the tangent bundle $T_X$ of $X$ $($defined over $K)$, and let $x$ be a point in $X(K)$. For some
sufficiently divisible integer $N$, let $\bX$ $($resp. $\sbfG)$ be a smooth model of $X$ over 
$\bS :=\textup{Spec} \,\sO_K[1/N]$
$($resp. a sub-vector bundle of the relative tangent bundle $T_{\bX/\bS}$ such that $\sbfG_K$ coincides
with $\sG)$. Assume that the following two conditions are satisfied.
\begin{enumerate}
\item For almost every non-zero prime ideal $\mathfrak{p}$ of $\sO_K[1/N]$, the subbundle 
$\sbfG_{\mathfrak{p}}$ of $T_{\bX_{\mathfrak{p}}}$
is stable by $p$-th power, where $p$ denotes the characteristic of $k(\mathfrak{p})$.
\item There exists an embedding $\sigma \colon K \into \mathbb{C}$ such that the analytic leaf through 
$x_{\sigma}$ of the involutive holomorphic bundle $\sG_{\sigma}$ on the complex analytic manifold 
$X_{\sigma}(\mathbb{C})$ satisfies the Liouville property.
\end{enumerate}
Then the leaf of $\sG$ through $x$ is algebraic.
\end{thm}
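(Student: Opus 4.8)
The plan is to recall the strategy of \cite[Theorem 2.1]{bost}, which proceeds by Bost's slope method in Arakelov geometry. Since the statement concerns a single leaf through a rational point, it suffices to prove that the formal leaf $\widehat V\subset\widehat X_x$ of $\sG$ through $x$ is algebraic, i.e. is the formal completion at $x$ of a closed subvariety of $X$. Compactifying $X$ if necessary, one fixes a flat projective model $\overline{\bX}\to\overline{\bS}$, $\overline{\bS}=\Spec\sO_K[1/N]$ (after enlarging $N$), together with an ample Hermitian line bundle $\overline L$ on it. For integers $n\ge 1$, $i\ge 0$ one introduces the decreasing filtration of $H^0(\overline{\bX},n\overline L)$ whose $i$-th term $E^i_n$ is the $\sO_K[1/N]$-submodule of sections whose restriction to $\widehat V$ vanishes to order $\ge i$; the graded pieces inject into the jet spaces $J^i_n:=\textup{Sym}^i(N^\vee_x)\otimes (nL)_x$, where $N^\vee_x$ is the conormal space of $\widehat V$ at $x$, and each of $E^i_n$ and $J^i_n$ is equipped with the induced Hermitian norm at $\sigma$ (and the other archimedean places) and with its natural lattice structure over $\sO_K[1/N]$. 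The leaf is algebraic precisely when the order of vanishing along $\widehat V$ of sections of $n\overline L$ grows like $n\cdot(\text{const})$, with the constant governed by $\dim\widehat V$; I would argue by contradiction, assuming $\widehat V$ is \emph{not} algebraic.

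The two hypotheses enter as local estimates on the jet evaluation maps $H^0(\overline{\bX},n\overline L)\to\bigoplus_{i<I}J^i_n$. At the finite places, condition (1) — that $\sbfG_{\mathfrak p}$ is closed under $p$-th powers — means, by Cartier's theorem for foliations in characteristic $p$, that modulo almost every $\mathfrak p$ the foliation $\sbfG_{\mathfrak p}$ is the relative tangent bundle of a smooth quotient morphism defined over $\overline{\bS}$; hence the reduction of $\widehat V$ is the formal fibre of that morphism, an honest algebraic subscheme, so the $\mathfrak p$-adic Taylor coefficients of $\widehat V$ at $x$ have denominators bounded uniformly in $\mathfrak p$. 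Consequently the $p$-adic operator norms of the jet evaluations are $\le 1+O(1/p)$, contributing negligibly to heights, and $\widehat{\deg}\,\overline{J^i_n}$ is, up to such negligible corrections, the elementary quantity built from $\overline L|_x$ and the normal bundle. At the archimedean place $\sigma$, condition (2) — the Liouville property of the analytic leaf $V_\sigma$ — yields a Schwarz-type lemma: $\log\|s|_{V_\sigma}\|$ is plurisubharmonic and bounded above, hence constant, so a nonzero $s|_{V_\sigma}$ has no zeros and, more quantitatively, the $\sigma$-operator norm of the order-$<I$ jet evaluation along $\widehat V_\sigma$ is of the shape $\exp\big(o(nI)\big)$. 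This last estimate is the one genuinely transcendental ingredient.

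The contradiction is produced by Bost's slope inequality. The arithmetic Hilbert--Samuel theorem gives $\widehat{\deg}\,\overline{E^0_n}=\frac{n^{d+1}}{(d+1)!}\,\overline L^{\,d+1}+o(n^{d+1})$, $d=\dim X$. If $\widehat V$ is not algebraic, a formal-geometry argument — were the vanishing orders bounded independently of $n$, finitely many such sections would cut out a proper algebraic subvariety containing $\widehat V$, and one concludes by Noetherian induction — lets one pick, for each large $n$, a threshold $I=I(n)$ with $I/n$ converging to a chosen $t>0$ such that $E^I_n\neq 0$ while $\mathrm{rk}(E^0_n/E^I_n)$ is of strictly smaller order than $\mathrm{rk}\,E^0_n$, using $\dim\widehat V<\dim X$. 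Applying the slope inequality to the injection $\overline{E^0_n}/\overline{E^I_n}\hookrightarrow\bigoplus_{i<I}\overline{J^i_n}$ gives
\[
\widehat{\deg}\big(\overline{E^0_n}/\overline{E^I_n}\big)\ \le\ \sum_{i<I}\widehat{\deg}\,\overline{J^i_n}\ +\ \log\|\mathrm{ev}\|\cdot\mathrm{rk}\big(E^0_n/E^I_n\big),
\]
whose right-hand side is $o(n^{d+1})$ for a suitable choice of $t$, by the two estimates above, while the left-hand side is $\ge\widehat{\deg}\,\overline{E^0_n}-o(n^{d+1})$ because $E^I_n$ carries a negligible part of the degree; this contradicts the Hilbert--Samuel asymptotics. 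Hence $\widehat V$, and therefore the leaf of $\sG$ through $x$, is algebraic.

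The hard part will be the archimedean estimate of the second paragraph: upgrading the purely qualitative Liouville property into the quantitative bound $\log\|\mathrm{ev}^I_n\|=o(nI)$ on the $\sigma$-norm of jet evaluation requires genuine potential theory on the (possibly non-relatively-compact) leaf $V_\sigma$ — controlling, via the maximum principle for plurisubharmonic functions and the vanishing of a suitable Green/capacity function, how fast holomorphic sections can concentrate near $x$ — and it is exactly here that hypothesis (2) is indispensable. A secondary difficulty is making the non-algebraicity dichotomy uniform in $n$ and keeping every error term (in arithmetic Hilbert--Samuel, in the $\mathfrak p$-adic corrections, and in the optimization over $t$) small enough that the final numerical balance genuinely closes.
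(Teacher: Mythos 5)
The paper does not prove this statement at all: Theorem~\ref{thm:bost} is quoted verbatim from Bost \cite[Theorem 2.1]{bost} and used as a black box, so the only meaningful comparison is with Bost's original argument. Your sketch does reproduce the genuine architecture of that argument (filtration of $H^0$ of powers of an ample hermitian line bundle by order of vanishing along the formal leaf, evaluation maps into symmetric powers of the conormal space at $x$, slope inequality played against arithmetic Hilbert--Samuel, contradiction if the leaf is not algebraic), so as an outline it is on target.

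However, the two substantive inputs are either wrong or missing. First, your use of hypothesis (1) is incorrect as stated: the quotient of $\bX_{\mathfrak{p}}$ by a $p$-closed foliation is a finite, purely inseparable morphism of height one --- it factors the relative Frobenius as $\bX_{\mathfrak{p}} \to \bX_{\mathfrak{p}}/\sbfG_{\mathfrak{p}} \to \bX_{\mathfrak{p}}^{(p)}$ --- not a smooth fibration whose fibre through $x_{\mathfrak{p}}$ is the reduction of the formal leaf; its fibres are infinitesimal, and $p$-closedness does not make formal leaves algebraic modulo $\mathfrak{p}$. What Bost actually extracts from (1) is a quantitative $p$-adic size estimate for the formal leaf (a lower bound of the shape $|p|^{1/p(p-1)}$, proved by iterated Frobenius descent), and it is the convergence of $\sum_p \tfrac{\log p}{p(p-1)}$ that keeps the finite-place contribution to the height of the degree-$i$ jet evaluation bounded by a uniform constant times $i$. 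Note that even the conclusion you want does not follow from the bound you assert: local operator norms $\le 1+O(1/p)$ would contribute $\sum_p O(1/p)$ to the height, which diverges. Second, the archimedean step --- upgrading the Liouville property of the analytic leaf to the estimate $\log\|\mathrm{ev}\|=o(nI)$ --- is precisely the analytic core of Bost's proof (his Schwarz-lemma/capacity argument on the parametrized leaf), and you explicitly leave it open. So the proposal is a correct strategic outline with a genuinely false step at the finite places and the key archimedean estimate missing; since the paper itself simply cites Bost, the honest course here is to do the same rather than to claim a proof.
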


It is well-known that in positive characteristic, there exist semistable vector bundles such that their 
pull-back under the absolute Frobenius morphism is no longer semistable. The next result says that this phenomenon does not occur on projective varieties whose tangent bundle is semistable with zero slope.
It partly extends \cite[Theorem 2.1]{mehta_ramanathan} to the setting where polarisations are given by  
big semiample divisors.
The proof of Proposition \ref{proposition:strong_stability_positive characteristic} is similar to that of \cite[Theorem 2.1]{mehta_ramanathan}.

\begin{prop}\label{proposition:strong_stability_positive characteristic}
Let $X$ be a smooth projective variety over an algebraically closed field $k$ of positive 
characteristic $p$, and let $H$ be a big semiample divisor on 
$X$. Suppose that $T_X$ is $H$-semistable and that $\mu_H(T_X) \ge 0$.
Let $\sE$ be a coherent 
locally free sheaf on $X$. Suppose furthermore that $p \ge \rank \,\sE + \dim X$.
If $\sE$ is $H$-semistable, then so is $\Frobabs^*\sE$.
\end{prop}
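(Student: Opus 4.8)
The plan is to adapt the Mehta--Ramanathan strategy (\cite[Theorem 2.1]{mehta_ramanathan}) to the present setting, where the polarisation is only big and semiample rather than ample. Suppose for contradiction that $\Frobabs^*\sE$ is not $H$-semistable, and let $\sF \subset \Frobabs^*\sE$ be the maximal destabilising subsheaf, so $\mu_H(\sF) > \mu_H(\Frobabs^*\sE) = p\,\mu_H(\sE)$, and $\sF$ is $H$-semistable. The first step is to recall that the kernel of the canonical connection $\nabla\colon \Frobabs^*\sE \to \Frobabs^*\sE \otimes \Omega_X^1$ (the connection whose horizontal sections are the pullbacks of local sections of $\sE$) is exactly $\Frobabs^*$ of nothing --- rather, $\nabla$ has $p$-curvature zero and its sheaf of flat sections recovers $\sE$. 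The point of Cartier descent is that a subsheaf of $\Frobabs^*\sE$ is of the form $\Frobabs^*\sE'$ for a subsheaf $\sE' \subseteq \sE$ if and only if it is preserved by $\nabla$. So I would measure the failure of $\sF$ to be $\nabla$-stable via the $\sO_X$-linear map
$$
\overline{\nabla}\colon \sF \longrightarrow (\Frobabs^*\sE/\sF) \otimes \Omega_X^1
$$
induced by $\nabla$, and show this map must vanish.

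The second step is the slope estimate that forces $\overline{\nabla} = 0$. By uniqueness of the maximal destabilising subsheaf, $\sF$ is preserved by any $\sO_X$-linear endomorphism-like operation that doesn't decrease slope too much; concretely, since $\sF$ is $H$-semistable and $\sF \otimes T_X$ receives (after dualising) the consideration of $\overline{\nabla}$, I would argue that a nonzero $\overline{\nabla}$ would produce a subsheaf of $\Frobabs^*\sE/\sF$, namely the image of $\overline{\nabla}$ contracted against sections of $T_X$, whose slope is at least $\mu_H(\sF) - \mu_H(\Omega_X^1 \text{ along } H)$. Here the hypotheses enter: $T_X$ being $H$-semistable with $\mu_H(T_X) \ge 0$ means $\mu_{\max,H}(\Omega_X^1) \le 0$, so any quotient of $\sF \otimes T_X^{\vee}\,$-type object has slope $\ge \mu_H(\sF) > p\,\mu_H(\sE) = \mu_H(\Frobabs^*\sE) \ge \mu_{\min,H}(\Frobabs^*\sE/\sF)$ once one checks $\Frobabs^*\sE/\sF$ has minimal slope $\le \mu_H(\Frobabs^*\sE)$ because $\sF$ was destabilising --- contradiction. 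The numerical bound $p \ge \rank\,\sE + \dim X$ is exactly what is needed to run the Cartier-descent iteration: applying $\overline{\nabla}$ repeatedly and using that each application drops a suitable rank/length invariant, the process must terminate before exhausting $p$, which is the standard Mehta--Ramanathan counting. I would spell this out as: the $p$-curvature being zero means $\nabla^{[p]}$ (the $p$-th iterate) is $\sO_X$-linear and in fact zero, so the flag generated by $\sF$ under $\nabla$ has length $< p$; combined with $\mu_H(\sF) > \mu_H(\Frobabs^*\sE)$ this is impossible once $p$ exceeds $\rank\,\sE + \dim X$.

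The technical subtlety specific to this paper, and the step I expect to be the main obstacle, is that $H$ is only \emph{big and semiample}, not ample, so slopes $\mu_H(-) = \big(c_1(-)\cdot H^{\dim X - 1}\big)/\rank(-)$ are computed against a non-ample class and the usual facts --- existence of Harder--Narasimhan filtrations, the maximal destabilising subsheaf being unique and saturated, semistability being preserved under the relevant operations --- need to be re-justified. Semiampleness lets one write $H$ as the pullback of an ample class under a morphism $g\colon X \to X'$ contracting the curves with $H$-degree zero; I would either work on $X'$ after checking $T_X$-semistability descends appropriately, or invoke the general theory of semistability with respect to movable/semiample classes (as in the work on such polarisations cited around Lemma \ref{lemma:tautological_psef}). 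One has to be careful that $\mu_H(T_X)\ge 0$ together with $H$-semistability of $T_X$ genuinely gives $\mu_{\max,H}(\Omega_X^1)\le 0$; this is formal from $\Omega_X^1 = T_X^{\vee}$ and semistability, but only once the HN formalism is in place for the class $H$. Granting that formalism, the argument is the Mehta--Ramanathan argument verbatim, and the bulk of the work is bookkeeping of slopes and the termination count governed by $p \ge \rank\,\sE + \dim X$.
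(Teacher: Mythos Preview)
Your overall strategy --- use the canonical connection on $\Frobabs^*\sE$, show the maximal destabilising subsheaf cannot be $\nabla$-stable, and derive a contradiction from slope inequalities --- is the right one and matches the paper's. But there is a genuine gap in your second step, and it is precisely the place where the hypothesis $p \ge \rank\,\sE + \dim X$ actually enters.

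You assert that a nonzero $\overline{\nabla}\colon \sF \to (\Frobabs^*\sE/\sF)\otimes\Omega_X^1$ produces a subsheaf of slope at least $\mu_H(\sF) - \mu_{\max,H}(\Omega_X^1)$, and then use $\mu_{\max,H}(\Omega_X^1)\le 0$. This implicitly uses additivity of $\mu_{\max}$ under tensor product, i.e.\ that the tensor of two $H$-semistable sheaves of slope zero is again $H$-semistable. In positive characteristic this is \emph{false in general}; it is exactly the content of the Ilangovan--Mehta--Parameswaran low-height theorem \cite{IMP}, which requires the sum of the ranks of the two factors to be at most $p$. Here the factors are a Harder--Narasimhan graded piece of $\Frobabs^*\sE$ (rank $\le \rank\,\sE$) and $\Omega_X^1$ (rank $\dim X$), so the bound $p\ge \rank\,\sE+\dim X$ is precisely what is needed. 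Your alternative explanation for the bound --- that it controls the length of a ``flag generated by $\sF$ under $\nabla$'' via vanishing of the $p$-th iterate --- is not the mechanism at work here and is not coherent as stated (the connection is not $\sO_X$-linear, so there is no such flag inside $\Frobabs^*\sE$).

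Two further remarks. First, the paper works with the map $\sE_{r-1}\to (\sE_r/\sE_{r-1})\otimes\Omega_X^1$ at the \emph{last} step of the Harder--Narasimhan filtration (citing \cite{shepherd-barron_ss_reduction} and \cite{langer_ss_sheaves}) rather than the first; either end works once the tensor-product issue is handled. Second, your proposed treatment of the big-semiample polarisation via the semiample fibration is more complicated than necessary: the paper simply restricts to a general complete-intersection curve $C$ in $|mH|$ and invokes Langer's restriction theorem \cite[Corollary 5.4]{langer_ss_sheaves} (valid for big semiample classes) to get semistability of all the relevant pieces on $C$, after which the slope comparison and the IMP theorem are applied on the curve.
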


\begin{proof}
Suppose that $\Frobabs^*\sE$ 
is not $H$-semistable, and let 
$$\{0\}=\sE_0 \subsetneq \sE_1 \subsetneq \cdots \subsetneq \sE_r:=\Frobabs^*\sE$$ be the Harder-Narasimhan filtration of $\Frobabs^*\sE$.
By \cite[Proposition $1^p$]{shepherd-barron_ss_reduction} (see also \cite[Corollary 2.4]{langer_ss_sheaves}), the canonical connection
on $\Frobabs^*\sE$ induces a non-zero $\sO_X$-linear map
$$\sE_{r-1} \to \sE_r/\sE_{r-1}\otimes \Omega_X^1.$$
Let $C$ be a smooth complete intersection curve of elements of $|mH|$ for 
some sufficiently large integer $m$. By \cite[Corollary 5.4]{langer_ss_sheaves}, the sheaves 
${\sE_i/\sE_{i-1}}_{|C}$ and ${\Omega_X^1}_{|C}$
are semistable. This implies that the sheaves
$(\sE_i/\sE_{i-1})_{|C} \otimes {\Omega_X^1}_{|C}$
are semistable as well by \cite[Remark 3.4]{IMP} using the assumption that $p \ge \rank \,\sE + \dim X$. It follows that
$$\mu_H^{\max}\big(\sE_r/\sE_{r-1}\otimes \Omega_X^1\big) \le 
\mu_H^{\max}\big(\sE_r/\sE_{r-1})$$
using the assumption that $\mu_H(\Omega_X^1) \le 0$.
The inequality $$\mu_H^{\min}\big(\sE_{r-1}\big)=\mu_H\big(\sE_{r-1}/\sE_{r-2}\big) > \mu_H\big(\sE_r/\sE_{r-1})=\mu_H^{\max}\big(\sE_r/\sE_{r-1})$$
then shows that the map 
$\sE_{r-1} \to \sE_r/\sE_{r-1}\otimes \Omega_X^1$
must vanish, yielding a contradiction.
\end{proof}

\begin{rem}
The condition ``$T_X$ $H$-semistable with $\mu_H(T_X) \ge 0$" in Proposition \ref{proposition:strong_stability_positive characteristic} can be weakened to ``$\mu_H^{\textup{min}}(T_X)\ge 0$",  
but we will not need this stronger statement.
\end{rem}

\begin{rem}
We will use Proposition \ref{proposition:strong_stability_positive characteristic} together with 
\cite[Proposition 5.1]{langerAIF} to conclude that, in the setup of Proposition \ref{proposition:strong_stability_positive characteristic}, a $H$-semistable vector bundle $\sE$ is numerically flat if and only if $c_1(\sE)\cdot H^{n-1}=c_1(\sE)^2\cdot H^{n-2}=c_2(\sE)\cdot H^{n-2}=0$, where $n:=\dim X$.
\end{rem}

We end the preparation for the proof of Theorem \ref{theorem:alg_int_flat_case} with the following lemma.

\begin{lemma}\label{lemma:canonical_resolution}
Let $X$ be a normal complex projective variety with at worst canonical singularities, and 
let $T_X = \sE \oplus \sG$ be a decomposition of $T_X$
into sheaves with trivial determinants. 
Suppose 
that $\sE$ is locally free. There exists a resolution of singularities $\beta \colon \wh X \to X$ such that
the following holds.
\begin{enumerate}
\item The morphism $\beta$ induces an isomorphism over the smooth locus $X_{\textup{reg}}$ of $X$.
\item The tangent sheaf  $T_{\wh X}$ decomposes
as a direct sum $T_{\wh X}\cong \beta^*\sE \oplus \wh\sG$. 
\item If $X$, $\sE$, and $\sG$ are defined over a subfield $k\subseteq \mathbb{C}$, then $\wh X$, $\beta$, and $\wh\sG$ are defined over $k$ as well.
\end{enumerate}
\end{lemma}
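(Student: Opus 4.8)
The strategy is to build $\wh X$ as a resolution that is an isomorphism over $X_{\textup{reg}}$ and along which the direct-sum decomposition of $T_X$ lifts to a genuine direct-sum decomposition of $T_{\wh X}$. The key inputs are the reflexive extension theorem of Greb--Kebekus--Kov\'acs--Peternell (\cite[Theorem 1.5]{greb_kebekus_kovacs_peternell10}) and the compatibility of functorial resolution of singularities with field extensions.

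First I would choose a log resolution $\beta\colon \wh X \to X$ that is an isomorphism over $X_{\textup{reg}}$, for instance via Hironaka's functorial resolution applied to the singular locus (together with its reduced structure); this immediately gives (1), and, since the construction is functorial, also gives (3), because $\wh X$, $\beta$, and hence all the sheaves we extract from $\beta$ and the given data are defined over $k$. Next, writing $q:=\rank\,\sG$ and letting $\omega \in H^0\big(X,\Omega_X^{[q]}\big)$ be the $q$-form defining the foliation $\sG$ (recall $\det(\sG)\cong\sO_X$), the extension theorem produces a reflexive $q$-form $\wh\omega \in H^0\big(\wh X,\Omega_{\wh X}^{[q]}\big)$ restricting to $\omega$ over $X_{\textup{reg}}$. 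Dually, since $\det(\sE)\cong\sO_X$ and $\sE$ is locally free, $\sE$ is defined by a global $p$-form with $p:=\rank\,\sE=n-q$, which extends the same way; alternatively one argues directly with $\wh\omega$. The point is that $\wh\omega$ defines a foliation $\wh\sG\subseteq T_{\wh X}$, and one computes $\det(\wh\sG)\cong\sO_{\wh X}(E)$ for some effective $\beta$-exceptional divisor $E$; pairing the generically injective map $\beta^*\sE\oplus\wh\sG\to T_{\wh X}$ against $\det(T_{\wh X})\cong\sO_{\wh X}(K_{\wh X})$ and using $\det(\beta^*\sE)\cong\sO_{\wh X}$ forces $E$ to sit inside $-K_{\wh X}=-\sum a(E_i,X)E_i$; for canonical singularities the $a(E_i,X)$ are $\ge 0$, so $E=0$ and $\wh\sG$ has trivial determinant. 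This is exactly the argument already used in the proof of Lemma \ref{lemma:reduction_terminalization}, so I would invoke it almost verbatim.

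It remains to upgrade the generically injective inclusion $\beta^*\sE\oplus\wh\sG\hookrightarrow T_{\wh X}$ to an isomorphism, i.e.\ to prove (2). Comparing first Chern classes, the cokernel $\sQ$ is a torsion sheaf supported on the $\beta$-exceptional locus with $c_1(\sQ)=0$; the delicate point is that a torsion sheaf supported in codimension one would still contribute to $c_1$, so in fact $\sQ$ is supported in codimension $\ge 2$, and then $\beta^*\sE\oplus\wh\sG$ is a saturated subsheaf of $T_{\wh X}$ of full rank, hence equal to it after taking reflexive hulls; since $T_{\wh X}$ is locally free on the smooth variety $\wh X$ and $\beta^*\sE$ is locally free, $\wh\sG$ is forced to be locally free too and the sum is a genuine direct-sum decomposition $T_{\wh X}\cong\beta^*\sE\oplus\wh\sG$. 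Here one should be a little careful that $\wh\sG$, a priori only reflexive, is actually a subbundle: this follows because $\beta^*\sE$ is a subbundle (it is locally free and a direct summand over the generic point, and the quotient $T_{\wh X}/\beta^*\sE$ is torsion-free of the right rank, hence $\wh\sG\cong T_{\wh X}/\beta^*\sE$, which is locally free as the kernel-complement of a subbundle on a smooth variety once one checks the quotient is locally free — equivalently, $\wh\sG=\ker(T_{\wh X}\to\Omega_{\wh X}^{q-1}\boxtimes\det\wh\sG)$ contracted with $\wh\omega$ has locally free cokernel since $\wh\sG$ is saturated and its rank-$p$ determinant is trivial).

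The main obstacle I anticipate is precisely this last local-freeness statement for $\wh\sG$: the extension theorem only delivers a reflexive foliation, and a general reflexive foliation on a smooth variety need not be a subbundle. What makes it work here is the rigidity coming from $\det(\wh\sG)\cong\sO_{\wh X}$ together with the complementary subbundle $\beta^*\sE$: the two determinants multiply to $\sO_{\wh X}\cong\det T_{\wh X}$, which pins the inclusion to be an isomorphism away from codimension two and then everywhere. I would organize the write-up so that this point is isolated as the one substantive verification, with everything else — the choice of functorial resolution for (1) and (3), and the triviality of $\det\wh\sG$ — quoted from the constructions already in the paper.
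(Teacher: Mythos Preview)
Your argument has two genuine gaps, both stemming from transplanting the proof of Lemma~\ref{lemma:reduction_terminalization} to a setting where it does not apply.

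First, you never construct the map $\beta^*\sE \to T_{\wh X}$. On $\beta^{-1}(X_{\textup{reg}})$ this is clear, but the exceptional locus of a resolution is divisorial, so you cannot extend a map between locally free sheaves across it by reflexivity alone. The paper handles this by choosing a resolution with the additional property $\beta_*T_{\wh X}\cong T_X$ (\cite[Corollary 4.7]{greb_kebekus_kovacs10}); the inclusion is then the composite $\beta^*\sE \hookrightarrow \beta^*T_X \cong \beta^*(\beta_*T_{\wh X}) \to T_{\wh X}$, the last arrow being the evaluation map. Without this extra condition on $\beta$, you do not have a candidate for the first summand.

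Second, your determinant argument breaks because $\beta$ is not crepant. In Lemma~\ref{lemma:reduction_terminalization} one has $\omega_{\wh X}\cong\sO_{\wh X}$, and that is what makes $\det(\wh\sG)\cong\sO_{\wh X}(E)$ with $E$ effective. For an arbitrary resolution of a canonical variety with $K_X\sim 0$ one has $K_{\wh X}=\sum a_i E_i$ with $a_i\ge 0$ and typically some $a_i>0$; unwinding the computation in \ref{q-forms} gives $\det(\wh\sG)\cong\sO_{\wh X}(F-\sum a_iE_i)$ where $F$ is the vanishing divisor of the extended form, and this need not be effective. Your inequality ``$E$ sits inside $-K_{\wh X}$'' is therefore not available, and the argument that the generically injective map $\beta^*\sE\oplus\wh\sG\to T_{\wh X}$ is an isomorphism collapses. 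The paper avoids this entirely: rather than extending the form defining $\sG$, it extends the \emph{projection} $T_{X_{\textup{reg}}}\to\sE_{|X_{\textup{reg}}}$, which is a section of $\Omega^1_{X_{\textup{reg}}}\otimes\sE_{|X_{\textup{reg}}}$, to a morphism $T_{\wh X}\to\beta^*\sE$ using \cite[Theorem 1.5]{greb_kebekus_kovacs_peternell10}. The composite $\beta^*\sE\to T_{\wh X}\to\beta^*\sE$ is then the identity (it is so on a dense open, and both source and target are locally free), which immediately yields the splitting $T_{\wh X}\cong\beta^*\sE\oplus\wh\sG$ with $\wh\sG$ the kernel; local freeness of $\wh\sG$ is automatic as a direct summand of a locally free sheaf.
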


\begin{proof}
Let $\beta \colon \wh X \to X$ be a resolution of singularities of $X$ such that $\beta_*T_{\wh X}\cong T_X$, and such that $\beta$ induces an isomorphism over $X_{\textup{reg}}$.
The existence of $\beta$ is established in \cite[Corollary 4.7]{greb_kebekus_kovacs10}. It relies on the existence of functorial resolutions of singularities (see \cite[Theorem 3.36]{kollar07}).
Consider the generically injective morphism of locally free sheaves 
$$\beta^*\sE \to \beta^*T_X \cong \beta^*\big(\beta_* T_{\wh X}\big) \to T_{\wh X},$$ where
$\beta^*\big(\beta_* T_{\wh X}\big) \to T_{\wh X}$ is the evaluation map. 
By \cite[Theorem 1.5]{greb_kebekus_kovacs_peternell10}, the projection morphism $T_{X_{\textup{reg}}} \to 
\sE_{|X_{\textup{reg}}}$
extends to a morphism $$T_{\wh X} \to \beta^*\sE.$$ The composed morphism $\beta^*\sE \to T_{\wh X} \to \beta^*\sE$ must be the identity map, and thus $T_{\wh X}$ decomposes
as a direct sum $T_{\wh X}\cong \beta^*\sE \oplus \wh\sG$, where $\wh\sG$ is the kernel of the map 
$T_{\wh X} \to \beta^*\sE$.

Suppose that $X$, $\sE$, and $\sG$ are defined over a subfield $k\subseteq \mathbb{C}$. Then $\wh X$ and $\beta$
are defined over $k$ as well by \cite[Theorem 3.36]{kollar07}. This implies that $\wh \sG$ is also defined over $k$, completing the proof of the lemma.
\end{proof}

Before proving Theorem \ref{theorem:alg_int_flat_case} below, we note the following immediate corollary.

\begin{cor}\label{corollary:augmented_irregularity_versus_second_chern_class}
Let $X$ be a normal complex projective variety of dimension $n$ with at worst terminal singularities, and 
let $T_X = \oplus_{i\in I} \sG_i \oplus \sE$ 
be a decomposition of $T_X$ into involutive subsheaves with trivial determinants. 
Suppose that $\sG_i$ is stable with respect to some ample Cartier divisor $H$, and that 
$c_2(\sG_i) \cdot H ^{n-2} \neq 0$.
Suppose furthermore that $\sE$ is $H$-semistable, and that $c_2(\sE)\cdot H ^{n-2}=0$.
Then $\wt q(X)=\rank \, \sE$.
\end{cor}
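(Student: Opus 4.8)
The plan is to extract this from Theorem \ref{theorem:alg_int_flat_case} (stated just above), whose hypotheses need to be verified, and then to read off the augmented irregularity from the abelian factor. First I would check the numerical hypotheses on the summands. Each $\sG_i$ is a direct summand of $T_X$ with $\det(\sG_i)\cong\sO_X$, so $c_1(\sG_i)\equiv 0$, hence $c_1(\sG_i)\cdot H^{n-1}=0$; combined with the assumption $c_2(\sG_i)\cdot H^{n-2}\neq 0$, the hypotheses of Theorem \ref{theorem:alg_int_flat_case} concerning the $\sG_i$ are met once we know $(g^*\sG_i)^{**}$ is $g^*H$-stable for every cover $g$ étale in codimension one. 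For this last point I would invoke that $X$ has terminal (hence canonical) singularities and $K_X\equiv 0$, so the decomposition is an instance of Theorem \ref{thm:infinitesimal_beauville_bogomolov}; by part (2) of that theorem the reflexive pullbacks of the summands are stable with respect to \emph{any} polarisation on any such cover — in particular the $\sG_i$ are strongly stable. (If one prefers to stay within the stated hypotheses of the Corollary, the stability of $(g^{*}\sG_i)^{**}$ for all $g$ is exactly what is needed, and this is the content of strong stability.) Likewise $\det(\sE)\cong\sO_X$ gives $c_1(\sE)\cdot H^{n-1}=c_1(\sE)^2\cdot H^{n-2}=0$, and the hypothesis $c_2(\sE)\cdot H^{n-2}=0$ together with $H$-semistability of $\sE$ completes the verification.

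Applying Theorem \ref{theorem:alg_int_flat_case} then yields an abelian variety $A$, a projective variety $\wt X$ with terminal singularities, and a finite cover $f\colon A\times\wt X\to X$, étale in codimension one, such that $(f^{*}\sE)^{**}=T_{A\times\wt X/\wt X}$ as subsheaves of $T_{A\times\wt X}$. Since this is a projection from a product, the rank of $T_{A\times\wt X/\wt X}$ equals $\dim A$; as reflexive pullback preserves generic rank, $\dim A=\rank\,\sE$. It remains to identify $\wt q(X)$ with $\dim A$. The inequality $\wt q(X)\ge\dim A=\rank\,\sE$ is clear: $A\times\wt X\to X$ is a cover étale in codimension one, and $q(A\times\wt X)=q(A)+q(\wt X)\ge\dim A$ by the Künneth formula (using that $A\times\wt X$ has rational singularities, being canonical — cf. Remark \ref{rem:irregularity_bir_invariant} — so $q$ is computed on a resolution and is additive over the product).

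For the reverse inequality $\wt q(X)\le\rank\,\sE$ I would argue that the presence of the stable summands $\sG_i$ with $c_2\cdot H^{n-2}\neq 0$ obstructs further irregularity. Pass to any cover $h\colon X'\to X$ étale in codimension one realizing $q(X')=\wt q(X)$; by Theorem \ref{theorem:kawamata_abelian_factor} applied to $X'$, there is a further cover of $X'$, étale in codimension one, splitting off an abelian variety of dimension $q(X') = \wt q(X')= \wt q(X)$ with the complementary factor of zero augmented irregularity. Chasing the decomposition $T_X=\oplus_i\sG_i\oplus\sE$ through these covers — the reflexive pullbacks of the $\sG_i$ remain stable (strong stability) and the relevant Chern numbers are preserved up to the degree of the cover, so $c_2$ of the pulled-back $\sG_i$ stays nonzero — one sees that the abelian factor can only absorb the $\sE$-part: a nontrivial stable summand with nonvanishing $c_2$ cannot be a subsheaf of the tangent sheaf of an abelian variety (where $c_2$ of any subsheaf relative to any polarisation vanishes, since $T_A$ is numerically flat), and it cannot map to the complementary zero-irregularity factor in a way enlarging $q$. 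Hence the abelian factor has dimension at most $\rank\,\sE$, giving $\wt q(X)\le\rank\,\sE$ and completing the proof. The main obstacle I anticipate is making this last compatibility argument clean: one must track how the tangent-sheaf decomposition behaves under the tower of covers and the product decomposition from Theorem \ref{theorem:kawamata_abelian_factor}, and confirm that the stable summands with $c_2\neq0$ land entirely in the zero-irregularity factor — this is where invariance of the Chern numbers under étale-in-codimension-one covers (Lemma \ref{lemma:conservation_numbers} is the relevant tool in spirit) and the numerical flatness of $T_A$ do the work.
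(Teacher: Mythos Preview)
The paper gives no explicit proof: the Corollary is introduced as an ``immediate corollary'' of Theorem~\ref{theorem:alg_int_flat_case}, and your approach---verify the numerical hypotheses, apply the theorem, and read off $\wt q(X)$ from the dimension of the abelian factor---is exactly what is intended.

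Two remarks on the details. First, you correctly spot that the Corollary only assumes $H$-stability of the $\sG_i$, whereas Theorem~\ref{theorem:alg_int_flat_case} requires stability of $(g^*\sG_i)^{**}$ for \emph{every} cover $g$ \'etale in codimension one. Your proposed fix---invoking Theorem~\ref{thm:infinitesimal_beauville_bogomolov} to say the given decomposition ``is an instance'' of that theorem---does not work as written: Theorem~\ref{thm:infinitesimal_beauville_bogomolov} produces its \emph{own} decomposition on a cover, and there is no reason it coincides with the decomposition handed to you in the Corollary. This mismatch is already present in the paper; in the only place the Corollary is used (the proof of Proposition~\ref{prop:main}) the summands are strongly stable by construction, so the issue does not arise there.

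Second, your argument for the upper bound $\wt q(X)\le\rank\,\sE$ is more involved than necessary and essentially reproves part of Step~4 of the proof of Theorem~\ref{theorem:alg_int_flat_case}. If you look into that proof rather than just its statement, the $\wt X$ produced there is obtained via Theorem~\ref{theorem:kawamata_abelian_factor} and hence satisfies $\wt q(\wt X)=0$. Since $\wt q$ is invariant under finite covers \'etale in codimension one (any such cover of $X$ can be dominated by a cover of $A\times\wt X$, and conversely), one gets $\wt q(X)=\wt q(A\times\wt X)=\dim A=\rank\,\sE$ directly, without the separate tracking of the $\sG_i$ through a tower of covers.
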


\begin{proof}[Proof of Theorem \ref{theorem:alg_int_flat_case}]We maintain notation and assumptions of Theorem \ref{theorem:alg_int_flat_case}.
For the reader's convenience, the proof is subdivided into
a number of relatively independent steps. 

\medskip

\noindent\textit{Step 1. Reduction step.} Set $\sG:=\oplus_{i \in I} \sG_i$. By \cite[Theorem 1.20]{gkp_flat}, there exists a finite cover 
$f_1\colon  X_1 \to X$ that is \'etale in codimension one such that 
$f_1^{[*]}\sE$ is a locally free, flat sheaf on $X_1$. From \cite[Proposition 3.16]{kollar97}, we see that $X_1$ has terminal singularities.
The sheaves $T_{X_1}$ and $f_1^{[*]} \sG \oplus f_1^{[*]}\sE$ agree on $f_1^{-1}(X_{\textup{reg}})$, and since both are reflexive, we obtain a decomposition $T_{X_1} \cong f_1^{[*]} \sG \oplus f_1^{[*]}\sE$
of $T_{X_1}$
into involutive subsheaves with trivial determinants. The same argument also shows that
$f_1^{[*]}\sG\cong \oplus_{i \in I} f_1^{[*]}\sG_i$.
Notice that $f_1$ branches only over the singular set of $X$.
It follows that $c_1(f_1^{[*]}\sG_i)\cdot (f_1^*H)^{n-1}=0$ and that
$c_2(f_1^{[*]}\sG_i)\cdot (f_1^*H)^{n-2} \neq 0$ for each $i\in I$.
Furthermore, $f_1^{[*]}\sE$ is $f_1^*H$-semistable by \cite[Lemma 3.2.2]{HuyLehn},
$f_1^{[*]}c_1(\sE)\cdot (f_1^*H)^{n-1} = c_1(f_1^{[*]}\sE)^2\cdot (f_1^*H)^{n-2} = 0$, and $c_2(f_1^{[*]}\sE)\cdot (f_1^*H)^{n-2} = 0$.
Therefore, we may assume without loss of generality that
the following holds.

\begin{assumption} 
The sheaf $\sE$ is a locally free, flat sheaf.
\end{assumption}

Suppose moreover that $H$ is very ample.

By Lemma \ref{lemma:canonical_resolution}, 
there exists a resolution of singularities $\beta \colon \wh X \to X$ such that $T_{\wh X}$ decomposes
as $T_{\wh X}\cong \beta^*\sE \oplus \wh \sG$. 
Moreover, we may assume that $\beta$ induces an isomorphism over $X_{\textup{reg}}$. 
Set $\wh \sE := \beta^* \sE$, and $\wh H := \beta^* H$. Notice that $\wh \sE$ and $\beta^{[*]}T_X$
are semistable with respect to $\wh H$. Moreover, $T_{\wh X}$ is also $\wh H$-semistable since $T_{\wh X}$ and $\beta^{[*]}T_X$ agree away from the $\beta$-exceptional set.

\medskip

\noindent\textit{Step 2. Algebraicity of leaves over number fields.}
Suppose that $X$, $H$, $\sE$, and the sheaves $\sG_i$ are defined over a number field $K$.
We will show that $\sE$ has algebraic leaves.
Recall from Lemma \ref{lemma:canonical_resolution} that $\wh X$, $\beta$, and $\wh\sG$ are also defined over $K$.

For some sufficiently divisible integer $N$, let $\bX$  
be a flat projective model of $X$ over $\bT:=\textup{Spec}\,\sO_K[1/N]$ with normal (see \cite[Th\'eor\`eme IV.12.2.4]{ega28}) and regular in codimension 2 geometric fibers. 
Let $\sbfE$ (resp. $\sbfG_i$) be a locally free (resp. a coherent) subsheaf
of $T_{\bX/\bT}$ such that $\sbfE_\mathbb{C}$ (resp. 
${\sbfG_i}_\mathbb{C}$) coincides with $\sE$ (resp. $\sG_i$). Suppose moreover that
the sheaves $\sbfG_i$ are flat over $\bT$, 
and that 
$T_{\bX/\bT} = \oplus_{i \in I} \sbfG_i \oplus \sbfE$.
Let $\bH$ be an ample Cartier divisor on $\bX$ such that 
$\bH_\mathbb{C} \sim H$.

Since semistability and geometric stability with respect to an ample divisor are open conditions in flat families of sheaves (see for instance \cite[Proposition 2.3.1]{HuyLehn}), we may assume that the sheaves ${\sbfG_i}_{\bar{\mathfrak{p}}}$ 
are stable with respect to $\bH_{\bar{\mathfrak{p}}}$, and that $\sbfE_{\bar{\mathfrak{p}}}$ 
is $\bH_{\bar{\mathfrak{p}}}$-semistable
for every non-zero prime ideal $\mathfrak{p}$
of $\sO_K[1/N]$. Suppose furthermore that $\sbfE_{\bar{\mathfrak{p}}}$ is involutive.
By Lemma \ref{lemma:conservation_numbers}, we may aslo assume without loss of generality that the following holds: 

\begin{enumerate}
\item $c_1({\sbfE}_{\bar{\mathfrak{p}}})\cdot \bH_{\bar{\mathfrak{p}}}^{n-1} = c_1({\sbfE}_{\bar{\mathfrak{p}}})^2\cdot \bH_{\bar{\mathfrak{p}}}^{n-2} = 0$, and
$c_2({\sbfE}_{\bar{\mathfrak{p}}})\cdot \bH_{\bar{\mathfrak{p}}}^{n-2} = 0$;
\item $c_1({\sbfG_i}_{\bar{\mathfrak{p}}})\cdot \bH_{\bar{\mathfrak{p}}}^{n-1} = 0$, and
$c_2({\sbfG_i}_{\bar{\mathfrak{p}}})\cdot \bH_{\bar{\mathfrak{p}}}^{n-2} \neq 0$.
\end{enumerate}

Let $\wh \bX$ be a smooth projective model of $\wh X$ over $\bT$, 
and let $\boldsymbol{\beta}\colon \wh \bX \to \bX$ be a projective birational morphism such that $\boldsymbol{\beta}_\mathbb{C}$ coincides with $\beta$. Suppose moreover that $\boldsymbol{\beta}_{\bar{\mathfrak{p}}}\colon \wh \bX_{\bar{\mathfrak{p}}} \to \bX_{\bar{\mathfrak{p}}}$ is birational, and that the $\boldsymbol{\beta}_{\bar{\mathfrak{p}}}$-exceptional set maps to a closed subset of codimension at least three in $\bX_{\bar{\mathfrak{p}}}$.
Set $\wh \sbfE := \boldsymbol{\beta}^*\sbfE$. 
Notice that $\wh \sbfE_{\bar{\mathfrak{p}}}$ and $\boldsymbol{\beta}_{\bar{\mathfrak{p}}}^{[*]}T_{\bX_{\bar{\mathfrak{p}}}}$
are semistable with respect to $\wh \bH_{\bar{\mathfrak{p}}}$. 
Moreover, $T_{\wh \bX_{\bar{\mathfrak{p}}}}$ is also $\wh \bH_{\bar{\mathfrak{p}}}$-semistable since $T_{\wh \bX_{\bar{\mathfrak{p}}}}$ 
and $\boldsymbol{\beta}_{\bar{\mathfrak{p}}}^{[*]}T_{\bX_{\bar{\mathfrak{p}}}}$
agree away from the  $\boldsymbol{\beta}_{\bar{\mathfrak{p}}}$-exceptional set. 

Let $\mathfrak{p}$ be non-zero prime ideal
of $\sO_K[1/N]$, and denote by $p$ the characteristic $k(\mathfrak{p})$.
We claim the following.

\begin{claim}
The involutive sub-vector bundle $\wh \sbfE_{\bar{\mathfrak{p}}}$ of 
$T_{\wh \bX_{\bar{\mathfrak{p}}}}$
is closed under $p$-th power. 
\end{claim}

\begin{proof}
We argue by contradiction and assume that $\wh \sbfE_{\bar{\mathfrak{p}}}$ is not closed under $p$-th power, and so neither is $\sbfE_{\bar{\mathfrak{p}}}$.
Thus, the map
$\Frob{\bar{\mathfrak{p}}}^* \sbfE_{\bar{\mathfrak{p}}} \to 
T_{ \bX_{\bar{\mathfrak{p}}}}/\sbfE_{\bar{\mathfrak{p}}}$
induced by the $p$-th power operation
does not vanish identically. 
Since $\sbfE_{\bar{\mathfrak{p}}}$ is semistable with 
$c_1(\sbfE_{\bar{\mathfrak{p}}})\cdot \bH_{\bar{\mathfrak{p}}}^{n-1} = 0$,
and since the sheaves ${\sbfG_i}_{\bar{\mathfrak{p}}}$ are stable with 
$c_1({\sbfG_i}_{\bar{\mathfrak{p}}})\cdot \bH_{\bar{\mathfrak{p}}}^{n-1} = 0$
as well, there exists $i_0\in I$ such that the induced morphism $\Frob{\bar{\mathfrak{p}}}^*\sbfE_{\bar{\mathfrak{p}}} \to {\sbfG_{i_0}}_{\bar{\mathfrak{p}}}$ is surjective in codimension one.

Let $\bS_{\bar{\mathfrak{p}}}$ be a smooth two dimensional complete intersection of general elements of $|m\wh \bH_{\bar{\mathfrak{p}}}|=\boldsymbol{\beta}_{\bar{\mathfrak{p}}}^*|m\bH_{\bar{\mathfrak{p}}}|$ 
for some positive integer $m$.
Since the $\boldsymbol{\beta}_{\bar{\mathfrak{p}}}$-exceptional set maps to a closed subset of codimension at least three in $\bX_{\bar{\mathfrak{p}}}$, 
$\bS_{\bar{\mathfrak{p}}}$ is contained in $\wh \bX_{\bar{\mathfrak{p}}}\setminus \textup{Exc}(\boldsymbol{\beta}_{\bar{\mathfrak{p}}}) \cong \bX_{\bar{\mathfrak{p}}}\setminus (\boldsymbol{\beta}_{\bar{\mathfrak{p}}}\big(\textup{Exc}(\boldsymbol{\beta}_{\bar{\mathfrak{p}}})\big)$, and thus
the restriction of $\wh \bH_{\bar{\mathfrak{p}}}$ to $\bS_{\bar{\mathfrak{p}}}$ is (very) ample. 

By Proposition \ref{proposition:strong_stability_positive characteristic} above, the locally free sheaves $(\Frob{\bar{\mathfrak{p}}}^{\,\circ k})^* \wh\sbfE_{\bar{\mathfrak{p}}}$
are semistable with respect to $\wh \bH_{\bar{\mathfrak{p}}}$. 
Applying \cite[Corollary 5.4]{langer_ss_sheaves} to the locally free sheaves 
$(\Frob{\bar{\mathfrak{p}}}^{\,\circ k})^* \wh\sbfE_{\bar{\mathfrak{p}}}$, we see that 
there exists a positive integer $m$ (that does not depend on $k \ge 1$) such that the restrictions ${(\Frob{\bar{\mathfrak{p}}}^{\,\circ k})^* \wh\sbfE_{\bar{\mathfrak{p}}}}_{|\bS_{\bar{\mathfrak{p}}}}\cong 
{(\Frob{\bar{\mathfrak{p}}}^{\,\circ k})^* \sbfE_{\bar{\mathfrak{p}}}}_{|\bS_{\bar{\mathfrak{p}}}}$ 
are semistable with respect to
${\bH_{\bar{\mathfrak{p}}}}_{|\bS_{\bar{\mathfrak{p}}}}$ for any positive integer $k$. From \cite[Proposition 5.1]{langerAIF}, we conclude that 
${{\sbfE}_{\bar{\mathfrak{p}}}}_{|\bS_{\bar{\mathfrak{p}}}}$ is nef
using the fact that 
$c_1({\sbfE}_{\bar{\mathfrak{p}}})\cdot \bH_{\bar{\mathfrak{p}}}^{n-1} = 0$, 
$c_1({\sbfE}_{\bar{\mathfrak{p}}})^2\cdot \bH_{\bar{\mathfrak{p}}}^{n-2} = 0$,
and
$c_2({\sbfE}_{\bar{\mathfrak{p}}})\cdot \bH_{\bar{\mathfrak{p}}}^{n-2} = 0$.
We view $\bS_{\bar{\mathfrak{p}}}$ as a surface contained in $\bX_{\bar{\mathfrak{p}}}$. 
Observe that  
${\sbfG_{i_0}}_{\bar{\mathfrak{p}}}$ is locally free along $\bS_{\bar{\mathfrak{p}}}$, and that the restriction of
$\Frob{\bar{\mathfrak{p}}}^*\sbfE_{\bar{\mathfrak{p}}} \to {\sbfG_{i_0}}_{\bar{\mathfrak{p}}}$ 
to $\bS_{\bar{\mathfrak{p}}}$ is surjective in codimension one by choice of $i_0$.
Since ${\sbfE_{\bar{\mathfrak{p}}}}_{|\bS_{\bar{\mathfrak{p}}}}$ is nef, we obtain that 
${{\sbfG_{i_0}}_{\bar{\mathfrak{p}}}}_{|\bS_{\bar{\mathfrak{p}}}}$ if nef.
This implies that 
$c_2(\sG_{i_0}) \cdot H^{n-2} = c_2({\sbfG_{i_0}}_{\bar{\mathfrak{p}}})\cdot \bH_{\bar{\mathfrak{p}}}^{n-2} = 0$ by
\cite[Proposition 5.1]{langerAIF} and Lemma \ref{lemma:conservation_numbers}, yielding a contraction. This completes the proof of our claim.
\end{proof}

By \cite[Theorem B]{pereira_touzet},
the leaves of $\wh \sE$ are uniformized by $\mathbb{C}^{\,\rank \,\wh\sE}$, and thus they satisfy the Liouville property.
By Theorem \ref{thm:bost}, we conclude that $\wh \sE$ and hence $\sE$ have algebraic leaves.

\medskip

\noindent\textit{Step 3. Algebraicity of leaves over $\mathbb{C}$.}
To show Theorem \ref{theorem:alg_int_flat_case} in the general case, let $R$ be a subring of $\mathbb{C}$,
finitely generated over $\mathbb{Q}$, and let $\bX$ be a flat projective model of $X$
over $\bT:=\textup{Spec}\, R$ with normal geometric fibers. We may also assume that 
the geometric fibers of $\bX \times_{\bT} {\bT_\mathbb{C}}$ over 
$\bT_\mathbb{C}=\textup{Spec}\, R\otimes\mathbb{C}$
have terminal singularities. 
Let $\sbfE$ (resp. $\sbfG_i$) be a locally free (resp. a coherent) subsheaf
of $T_{\bX/\bT}$ such that $\sbfE_\mathbb{C}$ (resp. 
${\sbfG_i}_\mathbb{C}$) coincides with $\sE$ (resp. $\sG_i$). Suppose moreover that
the sheaves $\sbfG_i$ are flat over $\bT$, 
and that $T_{\bX/\bT} = \oplus_{i\in I} \sbfG_i \oplus \sbfE$.
Let $\bH$ be an ample Cartier divisor on $\bX$ such that 
$\bH_\mathbb{C} \sim H$. As above, we may assume that the sheaves ${\sbfG_i}_{\bar t}$ are stable with respect to $\bH_{\bar t}$, and that $\sbfE_{\bar t}$ is semistable, for any geometric point $\bar t$ of $\bT$.
Suppose furthermore that $\sbfE_{\bar t}$ and the sheaves ${\sbfG_i}_{\bar t}$ are involutive.
By Lemma \ref{lemma:conservation_numbers} again, we may also assume without loss of generality that 
$c_1({\sbfE}_{\bar t})\cdot \bH_{\bar t}^{n-1} = 0$,
$c_1({\sbfE}_{\bar t})^2\cdot \bH_{\bar t}^{n-2} =  0$,
$c_2({\sbfE}_{\bar t})\cdot \bH_{\bar t}^{n-2} =  0$,
$c_1({\sbfG_i}_{\bar t})\cdot \bH_{\bar t}^{n-1} =  0$, and that
$c_2({\sbfG_i}_{\bar t})\cdot \bH_{\bar t}^{n-2}  \neq 0$.
When $t \in \bT$ is a closed point, its residue field is an algebraic number field, and hence $\sbfE_{\bar t}$ has algebraic leaves by the previous step. Applying Lemma \ref{lemma:algebraic_integrability_deformation} below, we conclude that $\sE$ has algebraic leaves.

\medskip

\noindent\textit{Step 4. End of proof.} By \cite[Proposition 2.5]{hwang_viehweg}, $\wh \sE$ is induced by a morphism $\wh \phi \colon \wh X \to \wh Y$ onto a normal projective variety.
As a classical consequence of Yau's theorem on the existence of K\"ahler-Einstein metrics, the geometric generic fiber of $\wh \phi$ is covered by an abelian variety. Thus, there exists a Zariski open set $\wh Y^\circ \subset \wh Y$, and a finite \'etale cover $\wt Y^\circ \to \wh Y^\circ$ such that 
$\wt \phi^\circ\colon \wt X^\circ : = \wh X \times_{\wh Y^\circ} \wt Y^\circ \to \wt Y^\circ$ is a smooth morphism with fibers isomorphic to abelian varieties. Moreover, $\wt  \phi^\circ$ admits a flat connection induced by $\wh \sG_{|{\wh \phi}^{-1}(\wh Y^\circ)}$. Applying Lemma \ref{lemma:isotrivial}, we see that the flat locally free sheaf $\wh \sE$ has finite monodromy, and hence $\sE$ has finite monodromy as well. 
Replacing $X$ by a further cover that is \'etale in codimension one, if necessary, we may assume that 
$\sE \cong \sO_{X}^{\oplus \rank \, \sE}$. 
By Theorem \ref{theorem:kawamata_abelian_factor}, there exists an abelian variety $A$
as well as a projective variety $\wt X$ with at worst canonical
singularities, and a finite cover $f\colon : A \times \wt X \to X$, \'etale in
codimension one, such that the augemented irregularity of $\wt X$ is zero.
The decomposition $T_{A \times \wt X} \cong  \oplus_{i \in I} f^{[*]}\sG_i\oplus f^{[*]}\sE$ of 
$T_{A \times \wt X}$ together with the assumption that $f^{[*]}\sG_i$ is $f^*H$-stable then imply easily that
$f^{[*]}\sE = T_{A\times \wt X/\wt X}$. This finishes the proof of the theorem.
\end{proof}

\begin{lemma}\label{lemma:algebraic_integrability_deformation}
Let $X$ and $T$ be normal noetherian schemes, and let $X \to T$ be a projective morphism with normal geometric fibers. Let $\sE \subseteq T_X$ be a foliation on $X$.
Suppose that $\sE \subseteq T_{X/T}$, and that for every closed point $t\in T$, the induced foliation
on $X_{\bar t}$ has algebraic leaves. Denote by $\eta$ the generic point of $T$. Then the leaves of the foliation
$\sE_{\bar{\eta}}$ on $X_{\bar{\eta}}$ are algebraic.
\end{lemma}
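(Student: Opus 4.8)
The statement is a standard "spreading out / constructibility" result: algebraic integrability of a foliation is a constructible condition in families, so if it holds at all closed points of a noetherian base it holds at the generic point. I would prove it by using the graph of the foliation inside a relative Chow variety and a Noetherian induction / constructibility argument. Concretely, over a dense open subset $T^\circ\subseteq T$ one can form the relative Chow scheme $\Chow(X/T)$ and, for each $t\in T^\circ$, the locus parametrizing closures of leaves of $\sE_{\bar t}$; the point is that the existence of a subvariety of $\Chow(X_{\bar t}/\kappa(\bar t))$ whose universal family dominates $X_{\bar t}$ and is everywhere tangent to $\sE_{\bar t}$ is an algebro-geometric (finite-type) condition, hence its validity is a constructible subset of $T$.

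First I would reduce to the case $T=\Spec R$ with $R$ a discrete valuation ring (or even to $T$ of finite type over a field or over $\mathbb{Z}$, by a limit argument — writing $T$ as a filtered limit and using that $X\to T$, $\sE$, and the inclusion $\sE\subseteq T_{X/T}$ are all finitely presented), so that "every closed point" together with constructibility forces the generic point into the locus where leaves are algebraic. Second, over the base I would invoke the construction of the family of leaves as in \ref{family_leaves}: when $\sE_{\bar t}$ is algebraically integrable one gets a normal variety $Y_{\bar t}$ in the normalization of the Chow variety of $X_{\bar t}$ together with a birational $\beta_t\colon Z_t\to X_{\bar t}$ and $\psi_t\colon Z_t\to Y_{\bar t}$ inducing the foliation. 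The content to be extracted is that these data spread out: there is a dense open $T^\circ$ and a proper $\mathcal{Z}\to\mathcal{Y}\to T^\circ$, together with $\mathcal{Z}\to X\times_T T^\circ$, whose fiber over each $\bar t\in T^\circ$ is (a model of) the family of leaves and such that the relative foliation induced by $\mathcal{Z}\to\mathcal{Y}$ agrees with $\sE$ on the smooth locus. Third, once such a relative family exists over a dense open containing $\eta$, the generic fiber $\sE_{\bar\eta}$ is by construction induced by the dominant rational map $X_{\bar\eta}\map\mathcal{Y}_{\bar\eta}$, hence has algebraic leaves.

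The key technical step — and the main obstacle — is the spreading-out of the family of leaves itself: a priori one only knows algebraic integrability fiber-by-fiber, and one must produce a single family over a dense open of $T$. The clean way to do this is to observe that, because $X\to T$ is projective, there is a relative Chow scheme $\Chow(X/T)\to T$ which is a disjoint union of projective $T$-schemes (stratified by the numerical invariants of the cycles), and the condition "there is an irreducible component $Y$ of $\Chow(X_{\bar t})$ whose associated universal cycle $Z\to X_{\bar t}$ is birational and everywhere tangent to $\sE_{\bar t}$" cuts out a constructible subset of $T$ (this uses that tangency of the universal family to $\sE$ is a closed condition, that birationality is open/constructible, and Chevalley's theorem on images of constructible sets). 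Since by hypothesis this constructible set contains every closed point of the noetherian scheme $T$, it contains $\eta$. Bounding the numerical invariants of the leaf-closures uniformly (so that only finitely many components of the relative Chow scheme are relevant) is exactly where one uses flatness of $X\to T$ and properness, e.g. via the canonical bundle formula $K_{Z/Y}-R(\psi)+B\sim_{\mathbb Q}\beta^*K_\sE$ of \eqref{eq:universal_canonical_bundle_formula} to control degrees. Once $\eta$ lies in this locus, the corresponding component of $\Chow(X_{\bar\eta})$ provides the family of leaves of $\sE_{\bar\eta}$, proving the lemma; alternatively, and perhaps more simply, one argues directly that the generic leaf of $\sE_{\bar\eta}$ specializes to leaves of $\sE_{\bar t}$ for $t$ closed, and uses that a family of subvarieties that is algebraic after specialization at a closed point (in a DVR) is already algebraic over the DVR.
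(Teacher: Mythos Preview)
Your overall strategy---use the relative Chow scheme $\Chow(X/T)$ to parametrize leaf-closures, then restrict to the generic fiber---is the same as the paper's.  However, the paper's execution is considerably more direct than what you sketch.  The paper does not argue via constructibility of an ``algebraic-integrability locus'' in $T$; instead it observes that the argument of \cite[Lemma 3.2]{fano_fols} runs verbatim over the base $T$ (replacing $\Chow(X)$ by $\Chow(X/T)$) and directly produces a normal projective $T$-scheme $B\to T$ together with a dominant rational map $X\map B$ over $T$ whose general fibers are leaves of $\sE$.  Restricting to $\bar\eta$ then immediately yields the rational map $X_{\bar\eta}\map B_{\bar\eta}$ inducing $\sE_{\bar\eta}$.

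The point is that one never needs to compare the fiberwise families of leaves or bound their numerical invariants: the foliation $\sE$ lives on the \emph{total space} $X$, so a single general leaf of $\sE$ (which is algebraic by the hypothesis at closed points, since closed points are dense) already determines a point of the relative Chow variety, and one takes the irreducible component through it.  Your proposed degree bound via the canonical bundle formula \eqref{eq:universal_canonical_bundle_formula} is both unnecessary and not well-founded here, since the lemma does not assume $K_\sE$ is $\mathbb{Q}$-Cartier nor fix an ample class with respect to which degrees would be measured.  The constructibility route you describe can presumably be made to work, but it creates an artificial obstacle (uniform boundedness) that the direct construction simply sidesteps.
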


\begin{proof}

The same argument used in the proof of \cite[Lemma 3.2]{fano_fols} shows that there exists 
a normal scheme $B \to T$, projective over $T$, and 
a dominant rational map $X \map B$ over $T$ whose general fibers are leaves of $\sE$.
One only needs to replace the use of the Chow variety $\textup{Chow}(X)$
of $X$ with $\textup{Chow}(X/T)$ the Chow variety of $X$ over $T$.
The proof of \cite[Lemma 3.2]{fano_fols} also shows that  
the foliation
$\sE_{\bar{\eta}}$ on $X_{\bar{\eta}}$ is induced by the rational map 
$X_{\bar{\eta}} \map B_{\bar{\eta}}$, proving the lemma.
\end{proof}

The following result is probably well-known to experts. We include a full proof here for the reader's convenience.

\begin{lemma}\label{lemma:isotrivial}
Let $\phi\colon X \to Y$ be a smooth projective morphism of quasi-projective complex manifolds with fibers isomorphic to abelian varieties. Suppose that $\phi$ is a locally trivial fibration for the Euclidean topology. Then there exists an abelian variety as well as a finite \'etale cover $\wt Y \to Y$ such that $X \times_Y \wt Y  \cong A \times \wt Y$ as varieties over $\wt Y$.
\end{lemma}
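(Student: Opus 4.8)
The plan is to reduce the lemma to two standard facts: a polarized variation of Hodge structure whose period map is locally constant has finite monodromy, and a projective torsor under a \emph{constant} abelian scheme is split after a finite \'etale base change. We may assume $Y$ is connected (pass to a component) and fix a fibre $A:=X_0$ together with a $\phi$-ample line bundle $\mathcal L$ on $X$. Because the fibres of $\phi$ carry no distinguished origin, I would first replace $\phi$ by a genuine abelian scheme: put $\mathcal A:=\underline{\textup{Pic}}^0_{X/Y}$ and $\widehat{\mathcal A}:=\underline{\textup{Pic}}^0_{\mathcal A/Y}=\mathcal A^\vee$. These are abelian schemes over $Y$ with $\mathcal A_y\cong\widehat{X_y}$ and $\widehat{\mathcal A}_y\cong X_y$, and the translation action of $\widehat{\mathcal A}$ on $X$ is canonical, so $X$ is canonically a torsor under $\widehat{\mathcal A}$, trivial over an \'etale cover of $Y$ because $\phi$ is smooth and proper. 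Relative ampleness of $\mathcal L$ makes the polarization homomorphism $\lambda_{\mathcal L}\colon\widehat{\mathcal A}\to(\widehat{\mathcal A})^\vee=\mathcal A$ a polarization of $\widehat{\mathcal A}$, and the hypothesis that $\phi$ be locally trivial in the analytic topology is inherited by $\mathcal A$ and $\widehat{\mathcal A}$ through the functoriality of $\underline{\textup{Pic}}^0$.

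Next I would prove that the monodromy is finite. Attached to $\widehat\pi\colon\widehat{\mathcal A}\to Y$ there is the weight-one variation of Hodge structure $\mathbb V:=R^1\widehat\pi_*\mathbb Q$, polarized by $\lambda_{\mathcal L}$; the polarization form is a flat section, being assembled from global cohomology classes. Local analytic triviality of $\widehat{\mathcal A}$ forces this VHS to be constant near every point of $Y$, hence the period map $Y\to\Gamma\backslash\mathfrak H_g$ is locally constant and therefore constant as $Y$ is connected. Consequently the monodromy representation $\rho\colon\pi_1(Y)\to\textup{Sp}(\mathbb V_0,Q_0)$ fixes the period point $F_0$, so its image lies in $\textup{Aut}(\mathbb V_0,Q_0,F_0)$, the automorphism group of the polarized abelian variety $\widehat{\mathcal A}_0\cong A$, which is finite. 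Taking $Y_1\to Y$ to be the finite \'etale (connected) cover attached to $\ker\rho$, the polarized integral VHS becomes constant over $Y_1$, so $\widehat{\mathcal A}_{Y_1}$ is the constant abelian scheme $A\times Y_1\to Y_1$.

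It then remains to treat the torsor. Now $X':=X\times_Y Y_1$ is a smooth projective torsor under $A\times Y_1\to Y_1$ and carries the relatively ample $\mathcal M:=\mathcal L|_{X'}$. By rigidity of homomorphisms between abelian varieties over a connected base, the polarization homomorphism $a\mapsto t_a^\ast\mathcal M\otimes\mathcal M^{-1}$ of the torsor is a constant isogeny $\lambda\colon A\to A^\vee$; choose $\psi\colon A^\vee\to A$ with $\psi\circ\lambda=[N]_A$ for a suitable $N>0$. The standard description of torsors under abelian schemes identifies $\lambda_\ast[X']\in H^1(Y_1,A^\vee\times Y_1)$ (\'etale cohomology), up to sign, with the class of the $A^\vee$-torsor $\underline{\textup{Pic}}^{[\mathcal M]}_{X'/Y_1}$ of line bundles on $X'$ whose polarization homomorphism equals $\lambda$; this torsor is trivial because $\mathcal M$ is a global section of it. Hence $N\cdot[X']=\psi_\ast\lambda_\ast[X']=0$, so $[X']$ is $N$-torsion. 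The Kummer sequence $0\to A[N]\times Y_1\to A\times Y_1\xrightarrow{\;N\;}A\times Y_1\to 0$ then lifts $[X']$ to a class in $H^1(Y_1,A[N]\times Y_1)$, that is, to an $A[N]$-torsor $\wt Y\to Y_1$, which is finite \'etale since $A[N]$ is a finite \'etale group scheme over $\mathbb C$. Over $\wt Y$ the class $[X']$ dies, hence $X'$ has a section there and $X\times_Y\wt Y\cong A\times\wt Y$ over $\wt Y$; since $\wt Y\to Y_1\to Y$ is finite \'etale, this is the desired conclusion.

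The main obstacle is this last step. Local analytic triviality already yields isotriviality for free, since the coarse moduli map $Y\to\mathcal A_g$ is then locally constant, hence constant; the real work lies in upgrading isotriviality to triviality over a \emph{finite} \'etale cover, i.e.\ in controlling torsors under the now-constant abelian scheme. It is precisely here that projectivity of $\phi$ (rather than mere properness) is indispensable: an analytic torsor under $A\times Y_1$ of infinite order would be a non-algebraic complex torus, whereas the relatively ample $\mathcal M$ forces the torsor class to be torsion. The reduction to finite monodromy likewise uses the polarization coming from $\mathcal L$ in an essential way, since without a polarization the relevant stabiliser of automorphisms need not be finite.
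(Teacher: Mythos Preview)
Your proof is correct and follows the same two-step architecture as the paper's: first make the underlying abelian scheme constant after a finite \'etale base change, then trivialise the remaining torsor after a further finite \'etale cover. The execution, however, differs in both steps.

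For the first step the paper forms the abelian scheme $\sA=\textup{Aut}^\circ_{X/Y}$ directly via \cite{sga3} and then appeals to the existence of a fine moduli scheme for polarised abelian varieties with level structure to conclude that the locally trivial $\sA$ becomes constant after a finite \'etale cover. Your route via the weight-one polarised VHS and the finiteness of $\textup{Aut}(A,\lambda)$ is essentially the Hodge-theoretic paraphrase of the same finiteness statement; both arguments ultimately rest on the fact that a polarised abelian variety has finite automorphism group.

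The genuine difference is in the torsor step. The paper dispatches it in one line by invoking Brion's structure theorem \cite[Theorem~2]{brion_action}: once the abelian variety $A$ acts faithfully on the projective $Y_1$-scheme $X_1$, Brion produces a finite \'etale cover $\wt X\to X_1$ that splits as $A\times\wt Y$. You instead argue cohomologically: the relatively ample $\mathcal M$ gives a polarisation $\lambda$, the associated $A^\vee$-torsor $\textup{Pic}^{[\mathcal M]}_{X'/Y_1}$ has $\mathcal M$ as a global section and is therefore trivial, whence $\lambda_*[X']=0$ and $[X']$ is $N$-torsion; the Kummer sequence then realises the trivialising cover as an $A[N]$-torsor. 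This is the classical proof that projective torsors under an abelian scheme have torsion class, and it has the virtue of making transparent exactly where projectivity (as opposed to mere properness) of $\phi$ enters --- a point you rightly emphasise. The paper's citation of Brion encapsulates the same phenomenon but as a black box. Either approach is perfectly adequate here; yours is more self-contained, the paper's is shorter.
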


\begin{proof}Let $y\in Y$, and denote by $X_y$ the fiber $\phi^{-1}(y)$.
Let $\textup{Aut}^\circ(X_y)\cong X_y$ denotes the neutral component of 
the automorphism group $\textup{Aut}(X_y)$ of $X_y$. 
Recall from \cite[Expos\'e VI$_{\textup{B}}$, Th\'eor\`eme 3.10]{sga3} 
that the algebraic groups
$\textup{Aut}^\circ(X_y)$ fit together to form an abelian scheme $\sA$
over $Y$. Since $\sA$ is locally trivial, there exist 
an abelian variety $A$, and a finite \'etale
cover $Y_1 \to Y$ such that $\sA \times_Y Y_1 \cong A \times Y_1$
as group shemes over $Y_1$. 
This follows from the fact that there is a fine moduli scheme for polarized abelian varieties of dimension $g$, with level $N$ structure and polarization of degree $d$ provided that $N$ is large enough.
In particular, $A$ acts faithfully on $X_1:=X\times_Y Y_1$. 
By \cite[Theorem 2]{brion_action}, there exist a finite \'etale cover 
$\wt X$
of $X_1$ 
equipped with a faithful action of $A$, and an $A$-isomorphism
$\wt X \cong A \times \wt Y$ for some quasi-projective manifold $\wt Y$, where $A$ acts 
trivially on $\wt Y$ and
diagonally on $A \times \wt Y$. 
One readily checks that
the natural morphism
$\wt Y \cong\{0_A\}\times \wt Y \to Y_1$ is \'etale, and that $\wt X \cong X_1\times_{Y_1}\wt Y$ as varieties over $\wt Y$.
The lemma then follows easily.
\end{proof}

\section{Stable reflexive sheaves with pseudo-effective tautological line bundle}

In this section we provide a technical tool for the proof of Theorem \ref{theorem:pereira_touzet_conjecture}:
we study stable reflexive sheaves with numerically trivial first Chern class and
pseudo-effective tautological line bundle.

In \cite{nakayama04}, Nakayama study semistable vector bundle $\sE$ of rank two on complex projective manifold with $c_1(\sE)\equiv 0$ and pseudo-effective tautological class (see \cite[Theorem IV.4.8]{nakayama04} for a precise statement). Our strategy of proof for Theorem \ref{theorem:stable_sheaf_psef} partly follows his line of reasoning.

\begin{thm}\label{theorem:stable_sheaf_psef}
Let $X$ be a normal complex projective variety of dimension $n$, let $H$ be an ample Cartier divisor, and let $\sE$ be a reflexive sheaf of rank $r\in\{1,2,3\}$ on $X$. Suppose that $X$ is smooth in codimension two and that $\sE$ is $H$-stable with $c_1(\sE)\cdot H^{n-1}=0$. 
Suppose furthermore that, for any finite morphism $f\colon \wt X \to X$ that is \'etale in codimension one, the 
reflexive pull-back $f^{[*]}\sE$ is stable with respect to $f^*H$.
Then one of the following holds:
\begin{enumerate}
\item either there exists $c>0$ such that
$h^0\big(X,S^{[i]}\sE\otimes\sO_X(jH)\big)=0$
for any positive integer $j$ and any natural number $i$ satisfying 
$i>cj$,
\item or $c_1(\sE)^2\cdot H^{n-2}=c_2(\sE)\cdot H^{n-2}=0$,
\item or $r=3$, and there exists a finite morphism $f\colon \wt X \to X$ that is \'etale in codimension one,  
and a rank $1$ reflexive sheaf $\sL$ on $\wt X$ with $c_1(\sL)\cdot (f^*H)^{n-1}=0$
such that 
$h^0\big(\wt X,(S^{2}f^{*}\sE)\boxtimes\sL\big)\neq 0$.
\end{enumerate}
\end{thm}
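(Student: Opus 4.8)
The plan is to argue by contradiction: assume that alternative (1) fails, i.e.\ that $[\sO_{\p_X(\sE)}(1)]$ is pseudo-effective (this equivalence is exactly Lemma \ref{lemma:tautological_psef}, after passing to the locally free locus and using that $X$ is smooth in codimension two, so $c_1,c_2$ are defined), and derive that either (2) or (3) holds. The first step is to set $Y := \p_X(\sE_{|X^\circ})$ on the big open locus $X^\circ$ where $\sE$ is locally free, with tautological bundle $\xi := \sO_Y(1)$ and projection $\pi\colon Y\to X^\circ$; since $\codim(X\setminus X^\circ)\ge 3$, intersection numbers against $H$ of classes of bounded codimension are unaffected. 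The numerical class $\xi$ is pseudo-effective and satisfies $\xi\cdot \pi^*H^{n-1} = c_1(\sE)\cdot H^{n-1} = 0$ (Grothendieck relation), so $\xi$ lies on the boundary of $\Psef(Y)$ in the hyperplane $\{\alpha : \alpha\cdot\pi^*H^{n-1}=0\}$. I would then run the divisorial Zariski decomposition of $\xi$ in Nakayama's sense on a resolution, following the line of \cite[Theorem IV.4.8]{nakayama04}: write $\xi = P + N$ with $P$ movable (nef in codimension one) and $N\ge 0$ effective. The point is that the nef class $\pi^*H$ is orthogonal to $\xi$, which forces strong numerical constraints on $P$ and $N$.

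The second step is the case analysis driven by $r=\rank\sE\in\{1,2,3\}$. If $r=1$ then $S^{[i]}\sE = \sE^{[i]}$ and pseudo-effectivity of $\xi = c_1(\sE)$ together with $c_1(\sE)\cdot H^{n-1}=0$ forces $c_1(\sE)\equiv 0$ (a pseudo-effective class orthogonal to a power of an ample class is numerically trivial — standard, via the Hodge index/Kleiman), hence $c_1(\sE)^2\cdot H^{n-2}=0$, and $c_2$ vanishes trivially for a line bundle, so (2) holds. For $r=2$: here $\det\sE$ has $c_1(\det\sE)\cdot H^{n-1}=0$, so after a finite cover étale in codimension one (using that $f^{[*]}\sE$ stays stable by hypothesis) one reduces to $c_1(\sE)\equiv 0$ modulo torsion; then $\xi$ being psef and the discriminant $\Delta(\sE) = 4c_2 - c_1^2$ entering the Bogomolov inequality, a boundedness-of-sections argument (à la Nakayama IV.4) shows the tautological bundle on $\p(\sE)$ is psef iff $\sE$ is, up to a finite étale-in-codimension-one cover, an extension of $\sL^{-1}$ by $\sL$ with $c_1(\sL)\cdot H^{n-1}=0$; stability then forces $\sL\equiv 0$ and $c_2(\sE)\cdot H^{n-2}=0$, landing in (2). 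For $r=3$: the new phenomenon is that $S^2\sE$ has rank $6$ and $\det(S^2\sE) = (\det\sE)^{\otimes 4}$, and after a suitable finite cover étale in codimension one the constraint coming from psef-ness of $\xi$ and the structure of the movable part $P$ produces a rank-one reflexive quotient or sub-object of $S^{[2]}(f^*\sE)$ twisted by a line bundle $\sL$ of $H$-degree zero; dualizing and using reflexivity, this is exactly a nonzero section of $(S^2 f^*\sE)\boxtimes \sL$, which is conclusion (3). The trichotomy (1)/(2)/(3) corresponds precisely to: $\xi$ not psef / $\xi$ psef with trivial positive part / $\xi$ psef with a genuine movable contribution.

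The main obstacle I anticipate is the $r=3$ analysis: one must show that pseudo-effectivity of $\xi$, which a priori only bounds $h^0(X,S^{[i]}\sE\otimes\sO_X(jH))$ linearly in the ratio $i/j$, actually forces the *existence* of a section for the specific twist $S^{[2]}\sE\boxtimes\sL$ after a controlled cover. This requires an argument in the spirit of Nakayama's study of the Zariski decomposition of $\xi$ on $\p(\sE)$: one analyzes the negative part $N$ of $\xi$, shows that either $N=0$ (forcing $\xi$ nef, hence $\xi\cdot\pi^*H^{n-1}=0$ together with nefness forces, via the relative structure over $X^\circ$, that $\sE$ is $H$-semistable with the right Chern class vanishing — case (2)), or $N$ is supported on a subvariety dominating $X$ that arises from a sub-line-bundle of $\sE$ or a quadric structure in $S^2\sE$; the quadric case is what produces the symmetric-square section. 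Controlling $N$ (showing its support is "horizontal" of the right type, and not some exceptional divisor of a resolution) and descending the resulting rank-one object correctly under the covers — keeping track of reflexive pullbacks, the hypothesis that stability is preserved by all such covers, and the fact that $\codim(X\setminus X^\circ)\ge 3$ so that $c_2$ and the relevant numbers are genuinely birational-invariant on $X^\circ$ — is the technical heart; the rank restriction $r\le 3$ is used precisely because for rank $\ge 4$ the symmetric-power combinatorics no longer forces a single rank-one object of bounded degree.
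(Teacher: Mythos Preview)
Your broad outline---assume (1) fails so $\xi$ is pseudo-effective, run the divisorial Zariski decomposition on $\p(\sE)$, split into cases according to the negative part---matches the paper's strategy. But there is a genuine gap in the $r=3$ analysis, and a related confusion in the $r=2$ case.

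The divisorial Zariski decomposition, via Proposition~\ref{proposition:negative_part}, only tells you that if $\xi$ is \emph{not} nef in codimension one then some $S^{m}\sE\otimes\sL$ has a nonzero section for \emph{some} $m\ge 1$ and some degree-zero $\sL$. It does not single out $m=2$. The paper closes this gap using the holonomy group of $\sE$ in the sense of Balaji--Koll\'ar: after a preliminary finite cover \'etale in codimension one (Lemma~\ref{lemma:holonomy_group_connected}), one may assume $\Hol_x(\sE)$ is connected. A nonzero section of $S^m\sE_S\otimes\sL_S$ on a general surface section $S$ gives a $\Hol_x(\sE)$-invariant line in $(S^m\sE)_x$. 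For $r=2$ the only connected irreducible subgroups of $\textup{GL}_2$ are $\textup{SL}_2$ and $\textup{GL}_2$, for which every $S^m$ is irreducible---contradiction, so the ``$\xi$ not nef in codimension one'' branch simply does not occur when $r=2$. For $r=3$ the list is $\textup{SL}_3$, $\textup{GL}_3$, $\textup{SO}_3$, $\textup{GSO}_3$; the first two again give a contradiction, while the last two fix a quadratic form, which is exactly what produces a section of $S^{[2]}\sE\boxtimes\sL$ on $X$ (not just on $S$). Your phrase ``quadric structure in $S^2\sE$'' gestures at this but does not supply the mechanism; without the holonomy classification there is no reason the exponent should be $2$ rather than an uncontrolled $m$.

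There is a second issue in the ``$N=0$'' branch. Vanishing of $N$ means $\xi$ is nef \emph{in codimension one} (movable), not nef. The paper does not conclude Chern-class vanishing directly from this: for $r=3$ it first upgrades to nef in codimension two (Proposition~\ref{proposition:nef_codim_1_2}, which again uses the connected-holonomy hypothesis via Lemma~\ref{lemma:holonomy_group_versus_strong_stability}), and then restricts to a surface and applies Lemma~\ref{lemma:restricted_base_locus_dimension_one}, which combines the Grothendieck relation, Bogomolov's inequality, and Hodge index. Your sketch (``$\xi$ nef together with $\xi\cdot\pi^*H^{n-1}=0$ forces the Chern vanishing'') skips this. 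Finally, your $r=2$ argument---that pseudo-effectivity forces $\sE$ to be an extension $\sL\subset\sE\twoheadrightarrow\sL^{-1}$ and then stability forces $\sL\equiv 0$---is not how the paper proceeds and does not obviously work: such an extension with $\deg\sL<0$ is perfectly compatible with stability and says nothing about $c_2$.
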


\begin{rem}
The condition on the dimension of the singular locus of $X$ posed in Theorem \ref{theorem:stable_sheaf_psef}
allows to define the Chern class $c_2(\sE)$.
\end{rem}

\begin{rem}\label{remark:pseudo_effectivity_versus_vanishing?}
The condition (1) in the statement of
Theorem \ref{theorem:stable_sheaf_psef} is a way of saying that the tautological line bundle is pseudo-effective on singular spaces (see \ref{lemma:tautological_psef}). 
\end{rem}

\begin{rem}\label{remark:flatness}
In the setup of Theorem \ref{theorem:stable_sheaf_psef}, suppose furthermore that $X$ is smooth, and that $\sE$ is locally free. If $c_1(\sE)^2\cdot H^{n-2}=c_2(\sE)\cdot H^{n-2}=0$, then $\sE$ is flat by a result of Uhlenbeck and Yau (see \cite{uhlenbeck_yau}). In particular, the tautological line bundle is nef.
\end{rem}

The following consequence of Theorem \ref{theorem:stable_sheaf_psef} improves \cite[Theorem 7.7]{bdpp}. The conclusion also holds for $\textup{K}3$-surfaces
by \cite[Theorem IV.4.15 ]{nakayama04}.

\begin{cor}
Let $X$ be a Calabi-Yau complex projective manifold of dimension $3$. Then the tautological line bundle on $\mathbb{P}_X(T_X)$ is not pseudo-effective. 
\end{cor}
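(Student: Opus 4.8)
The plan is to apply Theorem~\ref{theorem:stable_sheaf_psef} to the sheaf $\sE=T_X$, with $n=\rank\sE=3$, and to show that alternatives (2) and (3) cannot occur, so that alternative (1) holds; by Lemma~\ref{lemma:tautological_psef} (cf. Remark~\ref{remark:pseudo_effectivity_versus_vanishing?}) this is precisely the statement that $\sO_{\mathbb{P}_X(T_X)}(1)$ is not pseudo-effective. Since pseudo-effectivity is a numerical condition, I am free to fix any ample Cartier divisor $H$ on $X$. To check the hypotheses: $X$ is smooth, hence smooth in codimension two, and $c_1(T_X)=-K_X\equiv 0$, so $c_1(T_X)\cdot H^{n-1}=0$. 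The tangent bundle of a Calabi--Yau manifold is strongly stable in the sense of Definition~\ref{def:strongStab}: a destabilising subsheaf, or a nontrivial direct sum decomposition of $T_{\wt X}$ for some cover $\wt X\to X$ \'etale in codimension one, would by Theorem~\ref{thm:infinitesimal_beauville_bogomolov} produce a further cover \'etale in codimension one splitting as a nontrivial product, forcing $h^0\big(\Omega^{[q]}\big)\neq 0$ for some $0<q<3$ on that cover, against the Calabi--Yau hypothesis. Moreover any finite $f\colon\wt X\to X$ \'etale in codimension one is actually \'etale (Zariski--Nagata purity, $X$ being smooth), so $\wt X$ is again a smooth Calabi--Yau threefold and $f^{[*]}T_X=T_{\wt X}$ is $f^*H$-stable. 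Thus Theorem~\ref{theorem:stable_sheaf_psef} applies.

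To exclude alternative (2): one has $c_1(T_X)^2\cdot H^{n-2}=0$ automatically, but $c_2(T_X)\cdot H^{n-2}=c_2(X)\cdot H>0$. Indeed, if $c_2(X)\cdot H=0$ then $T_X$ would be flat by Remark~\ref{remark:flatness}, so $X$ would have a finite \'etale cover with trivial tangent bundle, i.e.\ an abelian variety --- impossible, since that cover would then carry a nonzero holomorphic $1$-form. (Equivalently one may invoke Miyaoka's pseudo-effectivity theorem for $c_2$ of minimal varieties together with its equality case.)

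To exclude alternative (3): suppose there are a finite $f\colon\wt X\to X$ \'etale in codimension one --- hence \'etale, with $\wt X$ a smooth Calabi--Yau threefold --- and an invertible sheaf $\sL$ on $\wt X$ with $c_1(\sL)\cdot(f^*H)^{n-1}=0$ carrying a nonzero section of $(S^2f^*T_X)\boxtimes\sL=S^2T_{\wt X}\otimes\sL$. Regarding it as a symmetric $\sL$-valued form on $\Omega^1_{\wt X}$, it gives a nonzero map $\tilde s\colon\Omega^1_{\wt X}\to T_{\wt X}\otimes\sL$. Were $\tilde s$ not generically an isomorphism, its image would be a nonzero proper subsheaf of the $f^*H$-stable sheaf $T_{\wt X}\otimes\sL$ of slope $0$, and simultaneously a quotient of the semistable sheaf $\Omega^1_{\wt X}$ of slope $0$; the former forces its slope to be $<0$, the latter $\ge 0$, a contradiction. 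Hence $\tilde s$ is generically an isomorphism, so $\det\tilde s$ is a nonzero section of $\det(T_{\wt X}\otimes\sL)\otimes\omega_{\wt X}^{-1}\cong\sL^{\otimes 3}$; its zero divisor is effective and of degree $0$ against $(f^*H)^{n-1}$, hence is empty, so $\sL^{\otimes 3}\cong\sO_{\wt X}$ and $\tilde s$ is an isomorphism everywhere. Passing to the cyclic \'etale cover of degree dividing $3$ on which $\sL$ becomes trivial (again a smooth Calabi--Yau threefold), one obtains an everywhere non-degenerate holomorphic symmetric form on its tangent bundle, i.e.\ a holomorphic Riemannian metric; as in the proof of Theorem~\ref{theorem:pereira_touzet_conjecture}, this produces a holomorphic connection on the tangent bundle, whence all its Chern classes vanish, contradicting the previous paragraph applied to this cover.

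Having ruled out (2) and (3), alternative (1) of Theorem~\ref{theorem:stable_sheaf_psef} holds, which is the assertion of the corollary. The main obstacle is the treatment of alternative (3): one must upgrade the \emph{a priori} abstract section of $S^2T_{\wt X}\otimes\sL$ to an everywhere non-degenerate form --- exploiting the strong stability of $T_{\wt X}$ together with the numerical triviality of $\sL$ forced by $c_1(\sL)\cdot(f^*H)^{n-1}=0$ --- and then derive a contradiction from the resulting holomorphic Riemannian metric; verifying the hypotheses of Theorem~\ref{theorem:stable_sheaf_psef} for $T_X$ and excluding (2) are comparatively routine.
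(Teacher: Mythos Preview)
Your proof is correct and follows the same overall strategy as the paper: apply Theorem~\ref{theorem:stable_sheaf_psef} to $\sE=T_X$ and rule out alternatives (2) and (3). The treatment of the hypotheses and of alternative (2) is essentially the same, only spelled out more fully than the paper's ``well-known''.

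The genuine difference is in how you exclude alternative (3). The paper exploits simple connectedness of $X$ twice: first the cover $\wt X\to X$ is trivial, so one works on $X$ itself; then, after deducing $\Omega_X^1\cong T_X\otimes\sL$ and hence that $\sL$ is torsion, simple connectedness forces $\sL\cong\sO_X$, yielding $h^0(X,S^2T_X)\neq 0$. This is then contradicted by Kobayashi's Bochner-type vanishing $h^0(X,S^2T_X)=0$ for manifolds with $c_1=0$ and irreducible holonomy (\cite{kobayashi78}). Your route instead stays within the paper's own toolkit: you upgrade the section to a holomorphic Riemannian metric on an \'etale cover and invoke Proposition~\ref{proposition:holomorphic_connection} (essentially the argument of Proposition~\ref{proposition:criterion_flatness_holonomy}) to obtain a holomorphic connection on the tangent bundle, whence $c_2=0$, contradicting the exclusion of (2). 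Your argument is longer but more self-contained, avoiding the external Bochner input; the paper's is shorter but leans on simple connectedness and Kobayashi's theorem. Note incidentally that since a Calabi--Yau threefold is simply connected, your covers $\wt X$ and the cyclic cover trivialising $\sL$ are in fact all equal to $X$, so the extra generality in your phrasing is harmless but not actually needed.
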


\begin{proof}
It is well-known that the tangent sheaf $T_X$ is stable with respect to any polarization $H$, and that $c_2(X)\cdot H^2\neq 0$. We argue by contradiction and assume that the tautological line bundle on $\mathbb{P}_X(T_X)$ is pseudo-effective.
By Theorem \ref{theorem:stable_sheaf_psef}, there exists a line bundle $\sL$ such that $h^0\big(X,(S^2T_X)\otimes\sL\big)=0$. This implies that $\Omega_X^1\cong T_X\otimes\sL$. Taking determinants, we obtain
$\sL^{\otimes 2}\cong\sO_X$. Since $X$ is simply connected, we must have $\sL\cong\sO_X$. On the other hand,
by \cite[Corollary 8]{kobayashi78}, we have $h^0(X,S^2T_X)=0$, yielding a contradiction.
\end{proof}

We collect several examples which illustrate to what extend our result is sharp.

\begin{exmp}
Let $E$ be a smooth complete curve of genus $1$, let $S$ be a projective $\textup{K}3$-surface with Picard number $1$, and
set $X:=E \times S$. Let $\sL$ be a non-torsion line bundle on $E$ of degree $0$, and consider $\sE:=\pi_1^*\sL\oplus \pi_2^*T_X$, where $\pi_1$ and $\pi_2$ are the projections onto $E$ and $S$, respectively. Then $\sE$ is polystable with respect to any polarization, and it is obviously not stable. One readily checks that the conclusion of Theorem \ref{theorem:stable_sheaf_psef} does not hold for $\sE$. 
Set $Y:=\mathbb{P}_{X}(\sE)$. The tautological class $\xi:=[\sO_Y(1)]\in \N^1(Y)$ is pseudo-effective, and not nef in codimension one.
\end{exmp}

\begin{exmp}
Let $X$ be a projective $\textup{K}3$-surface. The tautological line bundle on $\mathbb{P}_X(T_X)$ is not pseudo-effective by \cite[Theorem IV.4.15 ]{nakayama04}. Thus $\sE$ satisfies (1) in the statement of Theorem \ref{theorem:stable_sheaf_psef} by Lemma \ref{lemma:tautological_psef}.
\end{exmp}

\begin{exmp}
Let $X$ be a complex projective manifold, and let $\sL\in\textup{Pic}^0(X)$. Then $\sL$ obviously satisfies (2) in the statement of Theorem \ref{theorem:stable_sheaf_psef}.
\end{exmp}

\begin{exmp}
Let $C$ be a complete curve of genus $g \ge 2$. 
We construct a rank two vector bundle $\sE$ on $C$ of degree $0$ such that, for any \'etale cover $f\colon \wt C \to C$, the pull-back $f^*\sE$ is stable. 
The vector bundle $\sE$ satisfies (2) in the statement of Theorem \ref{theorem:stable_sheaf_psef}.

The construction is very similar to that of Hartshorne in \cite[Theorem I.10.5]{hartshorne_ample_sub}, and so we leave some easy details to the reader.
Pick $c \in C$. By a result of Narasimhan and Seshadri (\cite{narasimhan_seshadri65}), we must construct a
unitary representation $\rho\colon\pi_1(C,c)\to\mathbb{U}(2)$ such that, for any normal subgroup
$H \lhd \pi_1(C,c)$ of finite index, the induced representation 
$H \to \mathbb{U}(2)$ is irreducible.

It is well-known that $\pi_1(C,c)$ is generated by elements 
$a_1,b_1,\ldots,a_g,b_g$ satisfying the relation $$[a_1,b_1]\cdot \cdots \cdot [a_g,b_g]=1.$$
If we have chosen any two unitary matrices
$A_1,B_1 \in \mathbb{U}(2)$, then we can find further unitary matrices
$A_2,B_2,\ldots,A_g,B_g$ satisfying the relation above.
Let $A_1 =
\begin{pmatrix}
   \lambda_1 & 0 \\
   0 & \lambda_2 
\end{pmatrix}
$
where $|\lambda_i|=1$, and $\lambda_1\lambda_2^{-1}$ is not a root of unity. Let $B_1$ be a very general unitary matrix. Then all the entries of all the matrices $B_1^m$ ($m\ge 1$) are non-zero. 
Let $H \lhd \pi_1(C,c)$ be a normal subgroup of finite index. Let $m$ be a positive integer such that 
$A_1^m,B_1^m\in H$. The only invariant subspaces of $A_1^m$ are the subspaces spanned by some subset of the standard basis. This implies that the representation $H \to \mathbb{U}(2)$ is irreducible.
Indeed, in order for $B_1^m$ to have as fixed subspace a subspace generated by a subset of the standard basis, it would have to have some entries zero.
\end{exmp}

\begin{exmp}
Let $X$ be a projective $\textup{K}3$-surface, and consider $\sE:=S^2 T_X$. A straightforward computation shows that $c_2(\sE)=4 \cdot 24$. It is known that $\sE$ is stable with respect to any polarization.
On the other hand, $$S^2(S^2T_X) \cong S^4T_X \oplus \det(T_X)^{\otimes 2} \cong S^4T_X \oplus \sO_X,$$
and hence $h^0(X,S^2\sE)\neq 0$. Thus $\sE$ satisfies (3) in the statement of Theorem \ref{theorem:stable_sheaf_psef} above.
\end{exmp}

We have divided the proof of Theorem \ref{theorem:stable_sheaf_psef} into a sequence of steps, each formulated
as a separate result. Some of these statements might indeed be of independent
interest. The proof of Theorem \ref{theorem:stable_sheaf_psef} then follows quickly from these preliminary steps.

\begin{say}[Divisorial Zariski decomposition]
We briefly recall the definition of the divisorial Zariski decomposition from \cite{nakayama04}.
Let $X$ be a complex projective manifold, let $B$ be a big $\mathbb{R}$-divisor on $X$, and let $P$ be a prime divisor. The asymptotic order of vanishing of $B$ along $P$ is
$$\sigma_P(B)=\inf_G \big\{\mult_P(G)\big\},$$
where the infimum is over all effective $\mathbb{R}$-divisor $G$ with $G\sim_{\mathbb{R}} D$.

Let now $D$ be a pseudo-effective $\mathbb{R}$-divisor, and let $A$ be an ample $\mathbb{R}$-divisor on $X$.
Let 
$$\sigma_P(D) = \lim\limits_{\substack{\varepsilon \to 0 \\ \varepsilon > 0}} \sigma_P(D+\varepsilon A).$$
Then $\sigma_P(D)$ exists and is independent of the choice of $A$. 
There are only finitely many prime divisors $P$ such that $\sigma_P(D)>0$, and the $\mathbb{R}$-divisor 
$N(D):=\sum_P \sigma_P(D) P$ is determined by the numerical equivalence class of $D$. Set 
$P(D):=D-N(D)$.
\end{say}

\begin{say}[Restricted base locus] Let $D$ be a $\mathbb{Q}$-divisor on a smooth projective manifold $X$. 
Let $k$ be a positive integer such that $kD$ is integral. The \emph{stable base locus} of $D$ is
$\textup{B}(D):=\cap_{m\ge 1}\textup{Bs}(mkD)_{\textup{red}}$. It is independent of the choice of $k$.

The \emph{restricted base locus} of an $\mathbb{R}$-divisor $D$ is 
$\textup{B}_{-}(D)=\bigcup_A \textup{B}(D+A)$
where the union is taken over all ample $\mathbb{R}$-divisors $A$ 
such that $D+A$ is a $\mathbb{Q}$-divisor (see \cite[Definition 1.12]{ELMNP_IF}).
It is not known whether the restricted base locus of a divisor is Zariski closed in general, but
it is a countable union of Zariski closed subsets of $X$
by \cite[Proposition 1.19]{ELMNP_IF}. Notice that 
$\textup{B}_{-}(D)\subsetneq X$ if and only if $D$ is pseudo-effective, and that 
$\textup{B}_{-}(D) = \emptyset$ if and only if $D$ nef. Given a positive integer $m$ we say that 
$D$ is \emph{nef in codimension $m$} if each irreducible component of $\textup{B}_{-}(D)$ has codimension $\ge m+1$. In particular, we see that $D$ is nef in codimension one if and only if $N(D)=0$.
\end{say}

The following result is certainly well-known to experts. We include a proof for
lack of references.

\begin{lemma}\label{lemma:multiplicity_non_nef_locus}
Let $X$ be a complex projective manifold, and let $D$ be a pseudo-effective $\mathbb{R}$-divisor on $X$.
Let $B$ be an irreducible component $\textup{B}_{-}(D)$.
Let $\beta_1\colon Y_1 \to X$ be an embedded resolution of $B$, and let $\beta_2\colon Y \to Y_1$ be the 
blow-up of $Y_1$ along the strict transform $B_1$ of $B$ in $Y_1$, with exceptional divisor $E$. 
Set $\beta := \beta_1 \circ \beta_2$. Then $\sigma_E(\beta^*D)>0$.
\end{lemma}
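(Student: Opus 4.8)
The plan is to reformulate the statement analytically, in terms of a closed positive $(1,1)$-current with minimal singularities in the class of $D$, and to exploit the \emph{irreducibility} of $B$ to produce a uniform lower bound on the generic Lelong number of that current along $B$.

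First I would dispose of the divisorial case $\codim_X B = 1$: then $B_1$ is a smooth Cartier divisor on $Y_1$, the blow-up $\beta_2$ is an isomorphism, $E = B_1$, and $\sigma_E(\beta^*D) = \sigma_{B_1}(\beta_1^*D) = \sigma_B(D)$, the last equality because the asymptotic order of vanishing along a prime divisor depends only on the associated divisorial valuation, hence is unchanged under birational pull-back and passage to strict transforms. As $B$ is a divisorial component of $\textup{B}_{-}(D)$, it lies in $\Supp N(D)$, so $\sigma_B(D) > 0$. From now on one may assume $\codim_X B \ge 2$, although the argument below is uniform in the codimension.

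For the general case I would invoke the analytic incarnation of Nakayama's divisorial Zariski decomposition (see \cite{nakayama04}, Boucksom's work on divisorial Zariski decompositions, and compare \cite{bdpp}): fixing a closed positive $(1,1)$-current $T$ with minimal singularities in the class of $D$, one has $\textup{B}_{-}(D) = \{x \in X \mid \nu(T,x) > 0\}$, and for every prime divisor $\Gamma$ on a smooth birational model $\mu\colon Z \to X$ the number $\sigma_\Gamma(\mu^*D)$ equals the generic Lelong number of $\mu^*T$ along $\Gamma$, the current $\mu^*T$ being again of minimal singularities in the class $\{\mu^*D\}$. By Siu's semicontinuity theorem each sublevel set $E_c(T) := \{x \in X \mid \nu(T,x) \ge c\}$ is closed analytic, so $\textup{B}_{-}(D) = \bigcup_{k \ge 1} E_{1/k}(T)$ is a countable increasing union of closed analytic subsets. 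Since $B$ is an irreducible component of $\textup{B}_{-}(D)$ and a positive-dimensional irreducible complex variety cannot be written as a countable union of proper closed analytic subsets (if $\dim B = 0$ this is trivial), $B$ must be contained in a single $E_{1/k}(T)$; hence $\nu(T,x) \ge 1/k$ for all $x \in B$, so the generic Lelong number satisfies $\nu(T,B) \ge 1/k > 0$.

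It then remains to descend along $\beta$. Since $\beta_1$ restricts to an isomorphism over a neighbourhood of the generic point of $B$, we have $\nu(\beta_1^*T, B_1) = \nu(T,B)$; and since $\beta_2$ is the blow-up of the smooth centre $B_1$ with exceptional divisor $E$, comparing Lelong numbers at a general point of $E$ with those at its image gives $\nu(\beta^*T, E) \ge \nu(\beta_1^*T, B_1)$. Together with the dictionary above this yields $\sigma_E(\beta^*D) = \nu(\beta^*T, E) \ge \nu(T,B) > 0$, which is the assertion. The one genuinely delicate ingredient is the dictionary itself — that Nakayama's $\sigma$ is computed by Lelong numbers of the minimal current, that $\textup{B}_{-}$ coincides with the non-nef locus, and that minimal singularities are preserved under birational pull-back; granting these standard facts, Siu's theorem, the countable-union argument and the behaviour of Lelong numbers under blowing up a smooth centre are all routine. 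One could instead run the same strategy purely algebraically, replacing $T$ by suitable asymptotic multiplier ideals in the spirit of Ein--Lazarsfeld--Musta\c{t}\u{a}--Nakamaye--Popa, at the cost of a heavier set-up.
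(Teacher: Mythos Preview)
Your argument is correct but follows a genuinely different route from the paper's. The paper's proof is purely algebraic and very short: it invokes \cite[Lemma 3.3]{ELMNP_IF} to identify $\sigma_E\big(\beta^*(D+A)\big)$ with the infimum of $\mult_E(\beta^*G)$ over effective $G\equiv D+A$, then \cite[Lemma 3.5.3]{bchm} to realise $\textup{B}(D+A)$ as $\bigcap_{F\equiv D+A,\,F\ge 0}\textup{Supp}(F)$, and finally \cite[Lemma V.1.9]{nakayama04} to conclude. In other words, the paper runs exactly the Ein--Lazarsfeld--Musta\c{t}\u{a}--Nakamaye--Popa framework you allude to in your closing sentence.

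Your approach instead passes through the analytic dictionary: a current $T$ with minimal singularities in $\{D\}$, the identification $\textup{B}_{-}(D)=\{x:\nu(T,x)>0\}$ and $\sigma_E(\beta^*D)=\nu(\beta^*T,E)$, then Siu's semicontinuity plus the fact that an irreducible variety is not a countable union of proper closed subvarieties to force $\nu(T,B)>0$, and finally the elementary inequality $\nu(\mu^*T,z)\ge\nu(T,\mu(z))$ under holomorphic pull-back. This is entirely sound, and you are right that the only delicate input is the dictionary itself (due to Boucksom; preservation of minimal singularities under birational pull-back is the one item worth a precise reference). What your route buys is conceptual transparency---the irreducibility of $B$ visibly produces the uniform lower bound---and it works verbatim in the K\"ahler category. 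What the paper's route buys is self-containment within the algebraic references already cited elsewhere in the paper, and it avoids importing the analytic machinery for a lemma that is, in the end, an algebraic statement about asymptotic orders of vanishing.
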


\begin{proof}
Let $A$ be an ample $\mathbb{R}$-divisor on $X$.
By \cite[Lemma 3.3]{ELMNP_IF} applied to the divisorial valuation 
$\mult_E$ of the function field of $X$ given by the order of vanishing at the generic point of $B$,
$$\sigma_E\big(\beta^*(D+A)\big)=\inf_G \big\{\mult_E(\beta^*G)\big\}$$
where the infimum is over all effective $\mathbb{R}$-divisor $G$ with $G\equiv D+A$. 
If $D+A$ is a $\mathbb{Q}$-divisor, then by \cite[Lemma 3.5.3]{bchm},
$$\textup{B}(D+A)=\bigcap_F \textup{Supp}(F)$$
where the intersection is over all effective $\mathbb{R}$-divisor $F$ with $F\equiv D+A$.
The assertion now follows from \cite[Lemma V.1.9]{nakayama04}.
\end{proof}

The proof of Proposition \ref{proposition:nef_codim_1_2} below
makes use of the following lemma.

\begin{lemma}\label{lemma:non_nef_codimension_2}
Let $Y$ be a complex projective manifold of dimension $n \ge 2$, and let $D$ be an $\mathbb{R}$-divisor.
Suppose that $D$ is nef in codimension one, and suppose furthermore that there is 
an irreducible component $B$ of $\B_{-}(D)$ of codimension two. Let $|H|$ be a base-point-free linear system on $Y$, let $0 \le k\le n-2$ be an integer, and let 
$S$ be a complete intersection of $k$ very general elements in $|H|$.
There exists a real number $a>0$ such that
$$(D_{|S}^{2}-aB\cap S) \cdot h_{3} \cdot \cdots \cdot h_{n-k} \ge 0$$
for arbitrary codimension one nef classes $h_{3},\ldots,h_{n-k}$ on $S$.
\end{lemma}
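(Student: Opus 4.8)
The statement concerns a pseudo-effective $\mathbb{R}$-divisor $D$ on a smooth projective $n$-fold $Y$ which is nef in codimension one (so $N(D)=0$ in the divisorial Zariski decomposition), but has an irreducible component $B$ of $\mathbf{B}_-(D)$ of codimension two. Cutting by $k$ very general members of the base-point-free system $|H|$ produces a smooth surface-or-higher $S$ of dimension $n-k\ge 2$, on which $B\cap S$ is a codimension-two cycle (a finite set of points when $\dim S = 2$). The goal is a numerical positivity statement: $(D_{|S}^2 - aB\cap S)\cdot h_3\cdots h_{n-k}\ge 0$ for all nef $h_i$ on $S$ and a suitable $a>0$.

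First I would reduce to understanding why $D$ is "sufficiently positive away from $B$": since $D$ is nef in codimension one, for every prime divisor $P$ on $Y$ we have $\sigma_P(D)=0$, so $P(D)=D$, i.e. $D$ is its own positive part. The only obstruction to $D$ being genuinely nef comes from higher-codimension loci, here $B$. The idea is to run the blow-up of Lemma \ref{lemma:multiplicity_non_nef_locus}: take an embedded resolution $\beta_1\colon Y_1\to X$ of $B$ and then blow up along the strict transform, obtaining $\beta\colon Y'\to Y$ with exceptional divisor $E$ over $B$, and conclude $\sigma_E(\beta^*D)>0$. On the blow-up, $\beta^*D - \sigma_E(\beta^*D)\,E$ is still pseudo-effective, in fact this is (up to the remaining divisorial part over other components of $\mathbf{B}_-$, which do not meet $S$ after cutting by very general $|H|$) nef in codimension one and its restriction to the strict transform $S'$ of $S$ controls $D_{|S}$. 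Pushing the intersection-theoretic identity $\beta^*(D_{|S}^2) = (\beta^*D)_{|S'}^2$ down and using $(\beta^*D - cE)_{|S'}$ nef for $c = \sigma_E(\beta^*D)>0$ against the nef classes $h_i$ (pulled back), I would extract the claimed inequality with $a$ proportional to $c^2$ (the self-intersection of the exceptional divisor of the blow-up of a codimension-two center contributes a $B\cap S$ term with coefficient $-c^2$).

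More concretely: on $Y'$, restricting to $S'$ (which is a smooth variety birational to $S$ via $\beta$, isomorphic away from the finitely many components of $B\cap S$), one has $(\beta^*D)_{|S'} = D_{|S}^{\,\text{pullback}}$, and the class $(\beta^*D - cE)_{|S'}$ is pseudo-effective and nef in codimension one on $S'$, hence its intersection with $(n-k-2)$ nef classes is $\ge 0$ by the elementary fact that a nef-in-codimension-one class on a smooth variety pairs nonnegatively with products of nef classes after one more restriction to a very general surface section (or directly, since $\dim S' - 1 = n-k-1$ cuts leave a surface, and a pseudo-effective class on a surface times nothing is its self-intersection... here I would instead argue: $(\beta^*D - cE)_{|S'}^2\cdot(\text{pullbacks of }h_i)\ge 0$ because $(\beta^*D-cE)$ restricted to the complete intersection surface in $S'$ is nef — its only potential non-nefness lives over $B$, which we have already subtracted off). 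Expanding $(\beta^*D - cE)_{|S'}^2 = (\beta^*D)_{|S'}^2 - 2c(\beta^*D\cdot E)_{|S'} + c^2(E^2)_{|S'}$ and using $E^2|_{S'}\cdot(\text{pullbacks}) = -[B\cap S]\cdot h_3\cdots h_{n-k}$ (self-intersection of a $\mathbb{P}^1$-bundle exceptional divisor restricted over a codimension-two center) and $\beta^*D\cdot E|_{S'}\cdot(\text{pullbacks}) \ge 0$ (as $D$ is pseudo-effective and $E$ is effective, the cross term is a nonnegative multiple of an intersection against pulled-back nef classes — more care needed here), one pushes down via $\beta_*$ to land on $S$ and read off $a = c^2$ up to a harmless multiplicative constant, absorbing the sign.

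\textbf{Main obstacle.} The delicate point is controlling the mixed term $(\beta^*D\cdot E)_{|S'}$: a priori this need not be nonnegative against arbitrary nef classes, and one must use that $D$ is nef in codimension one — equivalently $\sigma_P(\beta^*D - cE) = 0$ for prime divisors $P$ other than $E$, so $\beta^*D - cE$ has trivial negative part along divisors meeting the strict transform $S'$ — to argue that after restricting to the very general complete-intersection surface the class $(\beta^*D - cE)_{|S'}$ is actually \emph{nef} (not merely pseudo-effective), so that its square against the remaining nef classes is $\ge 0$. That "nef on the very general surface section" step is precisely where the hypothesis that $B$ is an \emph{isolated} codimension-two component of $\mathbf{B}_-(D)$ (together with genericity of $S$, so $S$ avoids the other, possibly higher-codimension, components) is essential: on $S'$ the restricted base locus of $(\beta^*D - cE)_{|S'}$ becomes empty, hence the class is nef, and the whole computation goes through. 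I expect the bulk of the write-up to be this promotion from "nef in codimension one on $Y$" to "nef on a very general complete intersection surface in the blow-up," invoking \cite[Proposition 1.19]{ELMNP_IF} and the genericity of $|H|$-sections to control $\mathbf{B}_-$ under restriction.
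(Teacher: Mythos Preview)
Your overall strategy matches the paper's: blow up along (a resolution of) $B$, subtract off the asymptotic multiplicity along the exceptional divisor, use that the resulting class is nef in codimension one, square it against the pulled-back nef classes, and push down. But two points are off.

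\textbf{The cross term is not the obstacle.} The term $(\beta^*D\cdot E)_{|S'}\cdot \mu^*h_3\cdots\mu^*h_{n-k}$ is simply zero by the projection formula, since $E$ is $\beta$-exceptional and every other factor is a pullback. No inequality is needed here, and this is exactly how the paper dispatches it.

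\textbf{The genuine gap is the other exceptional divisors.} You write that $\beta^*D-cE$ is nef in codimension one ``up to the remaining divisorial part over other components of $\mathbf{B}_-$, which do not meet $S$.'' This is not where the extra negative part lives. The embedded resolution $\beta_1$ of $B$ produces exceptional divisors $E_2,\ldots,E_r$ over the singular locus of $B$ itself, and there is no reason $\sigma_{E_i}(\beta^*D)=0$ for these. So $\beta^*D-cE_1$ need not be nef in codimension one, and your argument stalls. The paper's fix is to subtract the full negative part $\sum_i a_iE_i$ with $a_i=\sigma_{E_i}(\beta^*D)$, so that $\beta^*D-\sum a_iE_i$ really is nef in codimension one (this is just $P(\beta^*D)$, using that $D$ itself was nef in codimension one so no strict-transform divisors appear). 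After restricting to $T=\beta^{-1}(S)$ via Lemma~\ref{lemma:restriction_movable_classes}, one expands $(\mu^*D_S-\sum a_iF_i)^2\cdot\mu^*h_3\cdots\mu^*h_{n-k}\ge 0$. The point is then that for $i\ge 2$ one has $F_i\cdot\mu^*h_3\cdots\mu^*h_{n-k}\equiv 0$ as a $1$-cycle, because $\mu(F_i)\subsetneq B\cap S$ has dimension strictly less than $n-k-2$ and all the $h_j$ are pullbacks. So only the $a_1^2F_1^2$ term survives, and $\mu_*F_1^2=-B\cap S$ gives $a=a_1^2$.

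Finally, you do not need to promote ``nef in codimension one'' to ``nef on a very general surface section.'' The paper works directly with nef-in-codimension-one (movable) classes: Lemma~\ref{lemma:restriction_movable_classes} says movability restricts to a very general complete intersection, and a movable class $\alpha$ on any smooth projective variety satisfies $\alpha^2\cdot(\text{nef})^{d-2}\ge 0$ (approximate $\alpha$ by classes of linear systems without fixed components, whose self-intersection is an effective codimension-two cycle). Your proposed route through genuine nefness on $S'$ would require controlling all of $\mathbf{B}_-(\beta^*D-\sum a_iE_i)$, which the argument does not do and does not need.
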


\begin{proof}
Let $\beta_1\colon Z_1 \to Y$ be an embedded resolution of $B$, and let $\beta_2\colon Z \to Z_1$ be the 
blow-up of $Z_1$ along the strict transform $B_1$ of $B$ in $Z_1$. 
Set $\beta := \beta_1 \circ \beta_2$, and let 
$E_1,\ldots, E_r$ be the $\beta$-exceptional divisors. Suppose that $E_1=\textup{Exc}(\beta_2)$. 

Set $S_1:=\beta_1^{-1}(S)$, and $T:=\beta^{-1}(S)$. Notice that $S_1$ is an embedded resolution of $B\cap S$, and that $T \to S_1$ is the blow-up of the strict transform $B_1\cap S_1$
of $B\cap S$ in $S_1$. Set $F_i:=E_i \cap T$, and denote
by $\mu_1\colon S_1 \to S$, $\mu_2\colon T \to S_1$, and $\mu\colon T \to S$ the natural morphisms.
Set $D_S:=D_{|S}$, and let $h_{3},\ldots,h_{n-k}$ be codimension one nef classes on $S$.

Set $a_i:=\sigma_{E_i}(\beta^*D)\in \mathbb{R}_{\ge 0}$. The $\mathbb{R}$-divisor
$\displaystyle{\beta^*D-\sum_{1 \le i\le r}a_iE_i}$ is then nef in codimension one.
Notice that $a_1 >0$ by Lemma \ref{lemma:multiplicity_non_nef_locus} above.

By Lemma \ref{lemma:restriction_movable_classes} below, the restriction of 
$\displaystyle{\beta^*D-\sum_{1 \le i\le r}a_iE_i}$ to $T$ is also nef in codimension one, and therefore
$$\big(\mu^*D_S-\sum_{1 \le i\le r}a_iF_i\big)^2\cdot \mu^*h_{3} \cdot \cdots \cdot \mu^*h_{n-k} \ge 0.$$
But $F_i\cdot \mu^*D_S \cdot \mu^*h_{3} \cdot \cdots \cdot \mu^*h_{n-k} = 0$ since $F_i$ is $\mu$-exceptional,
and $F_i\cdot \mu^*h_{3} \cdot \cdots \cdot \mu^*h_{n-k} \equiv 0$ when $i \ge 2$ since
$\mu(F_i) \subsetneq B\cap S$ for each $i \ge 2$. Thus
$$\big(\mu^*D_S-\sum_{1 \le i\le r}a_iF_i\big)^2\cdot \mu^*h_{3} \cdot \cdots \cdot \mu^*h_{n-k}
=\big(\mu^*(D_S^2)+a_1^2F_1^2\big)\cdot \mu^*h_{3} \cdot \cdots \cdot \mu^*h_{n-k}.$$
The projection formula gives
$$\big(\mu^*(D_S^2)+a_1^2F_1^2\big)\cdot \mu^*h_{3} \cdot \cdots \cdot \mu^*h_{n-k}
=\big(D_S^2-a_1^2 B\cap S\big)\cdot h_{3} \cdot \cdots \cdot h_{n-k},$$
using the fact that ${\mu_2}_{*}F_1^2=-B_1\cap S_1$. 
This proves the lemma.
\end{proof}

\begin{lemma}\label{lemma:restriction_movable_classes}
Let $Y$ be a complex projective manifold, let $V\subset |H|$ be a not necessarily complete base-point-free linear system on $Y$, and let $D$ be an $\mathbb{R}$-divisor. 
If $D$ is pseudo-effective (resp. nef in codimension one), then its restriction to a very general element in
$V$ is pseudo-effective (resp. nef in codimension one) as well.
\end{lemma}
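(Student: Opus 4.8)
The plan is to establish the two assertions separately, deducing the statement for divisors nef in codimension one from the one for pseudo-effective divisors. I would begin by fixing an ample divisor $A_0$ on $Y$ and recording two elementary facts about general members of the base-point-free system $V$: a general $S\in V$ is a smooth projective manifold by Bertini, and for any proper irreducible closed subset $Z\subsetneq Y$ a general member of $V$ does not contain $Z$, so that $\dim(Z\cap S)\le\dim Z-1$. Each condition imposed on $S$ below excludes only a countable union of proper closed subsets of $V$, so all of them can be required simultaneously of a very general $S$; keeping track of this ``very general'' is essentially the only real bookkeeping in the argument.

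For the pseudo-effective case I would fix a sequence of positive reals $\varepsilon_p\to 0$ and observe that, since $D$ is pseudo-effective, each $\mathbb{R}$-divisor $D+\varepsilon_p A_0$ is big; by Kodaira's lemma one can write $D+\varepsilon_p A_0\sim_{\mathbb{R}}A_p+E_p$ with $A_p$ an ample $\mathbb{R}$-divisor and $E_p$ an effective $\mathbb{R}$-divisor. For a very general $S$, not contained in $\Supp(E_p)$ for any $p$, the restriction ${(D+\varepsilon_p A_0)}_{|S}\sim_{\mathbb{R}}{A_p}_{|S}+{E_p}_{|S}$ is then again big, in particular pseudo-effective, for every $p$; letting $p\to\infty$ and using that the pseudo-effective cone of $S$ is closed in $\NS(S)$, we get that $D_{|S}$ is pseudo-effective.

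For the case where $D$ is nef in codimension one, $D$ is in particular pseudo-effective, so the previous paragraph applies and the divisorial Zariski decomposition of $D_{|S}$ is defined for very general $S$. Here I would invoke \cite[Proposition 1.19]{ELMNP_IF} to write $\B_{-}(D)=\bigcup_i\B(D+A_i)$ for a countable family of ample $\mathbb{R}$-divisors $A_i$ with $D+A_i$ a $\mathbb{Q}$-divisor and with classes tending to $0$ in $\NS(Y)$. Since $D$ is nef in codimension one, $\B_{-}(D)$, and hence each $\B(D+A_i)\subseteq\B_{-}(D)$, contains no prime divisor, so $\dim\B(D+A_i)\le n-2$. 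Restricting sections shows ${\B\big((D+A_i)_{|S}\big)}\subseteq\B(D+A_i)\cap S$; and since $D_{|S}+{A_i}_{|S}$ is a $\mathbb{Q}$-divisor, ${A_i}_{|S}$ is ample, these classes tend to $0$ in $\NS(S)$, and adding an ample $\mathbb{Q}$-divisor does not enlarge the stable base locus, the family $\{{A_i}_{|S}\}_i$ also computes $\B_{-}(D_{|S})$. Hence $\B_{-}(D_{|S})\subseteq\bigcup_i\big(\B(D+A_i)\cap S\big)$, and for very general $S$ each term has codimension at least two in $S$ by the dimension estimate recorded at the outset; so $\B_{-}(D_{|S})$ contains no prime divisor, i.e.\ $N(D_{|S})=0$, i.e.\ $D_{|S}$ is nef in codimension one.

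The ingredients --- Kodaira's lemma, closedness of the pseudo-effective cone, the description of $\B_{-}$ from \cite{ELMNP_IF}, and the fact that adding an ample divisor shrinks the stable base locus --- are all standard, so I do not expect any step to be genuinely hard. The main point requiring care is to make sure every restriction map, both on N\'eron--Severi groups and on (stable or restricted) base loci, is used in the direction that shrinks rather than enlarges the relevant locus, and that all the general-position requirements on $S$ are simultaneously satisfiable by a very general member of $V$.
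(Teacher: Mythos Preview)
Your argument is correct, but it takes a genuinely different route from the paper's for the ``nef in codimension one'' case.

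The paper's proof is a single paragraph. It invokes \cite[Proposition III 1.14]{nakayama04} to identify ``nef in codimension one'' with ``movable'', and then treats both cases uniformly: approximate $[D]$ by classes $\lambda_i[M_i]$ with $M_i$ effective (resp.\ movable) \emph{integral} divisors, observe that the restriction of an effective (resp.\ movable) integral divisor to a general $S\in V$ is again effective (resp.\ movable), and pass to the limit. The point is that movability of an integral divisor is a statement about the codimension of $\Bs(|M|)$, which drops correctly under a general hyperplane cut.

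Your approach instead tracks the restricted base locus $\B_{-}$ directly: you choose a countable family $\{A_i\}$ of small ample perturbations computing $\B_{-}(D)$ on $Y$, show that the restricted family $\{(A_i)_{|S}\}$ still computes $\B_{-}(D_{|S})$ on $S$ (using that adding an ample $\mathbb{Q}$-divisor shrinks the stable base locus), and then use $\B\big((D+A_i)_{|S}\big)\subseteq\B(D+A_i)\cap S$ together with a dimension count. This is more hands-on and slightly longer, but it has the merit of staying entirely within the $\B_{-}$ formalism already set up in the paper, without appealing to the movable-cone characterisation. The paper's route is shorter precisely because Nakayama's equivalence repackages the codimension condition into a cone-theoretic statement that restricts trivially.

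For the pseudo-effective case both arguments are essentially the same approximation-and-limit idea; you use Kodaira's lemma where the paper uses a direct density argument in $\Psef(Y)$.
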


\begin{proof}
By \cite[Proposition III 1.14]{nakayama04}, an $\mathbb{R}$-divisor is nef in codimension one if and only if  it is movable. Suppose that $D$ is pseudo-effective (resp. nef in codimension one). 
There is a sequence of effective (resp. movable) integral divisors $M_i$ on $Y$ and a sequence $\lambda_i$ of non-negative real numbers such that $\lambda_i[M_i]\to [D]$ in $\Psef(Y)$ as $i\to+\infty$.
Now, observe that the restriction of an effective (resp. movable) divisor to a general element in $V$ is effective (resp. movable). So, if $H'$ is a very general element in $V$, then for each $i$, ${M_i}_{|H'}$ is effective (resp. movable), and hence so is $D_{|H'}$.
\end{proof}

The proofs of Lemma \ref{lemma:tautological_class_extremal} and Proposition \ref{proposition:negative_part} follow arguments that go back at least as far as \cite[Theorem IV 4.8]{nakayama04}.

\begin{lemma}\label{lemma:tautological_class_extremal}
Let $X$ be a complex projective manifold, and let $\sE$ be a coherent locally free sheaf on $X$. Suppose that $\sE$ is semistable with respect to any polarization and that $c_1(\sE) \equiv 0$. Set $Y:=\mathbb{P}_X(\sE)$, and denote by $\sO_Y(1)$ the tautological line bundle. If $\xi:=[\sO_Y(1)]\in \N^1(Y)$ is pseudo-effective, then it generates an extremal ray of the cone of pseudo-effective classes $\Psef(Y)$.
\end{lemma}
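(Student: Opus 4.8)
The plan is to descend to the situation in which the base is a smooth curve, where the pseudo-effective cone of the projectivised bundle can be computed by hand, and then to lift the resulting extremality back to $Y$. Write $\pi\colon Y\to X$ for the projection and $\xi:=[\sO_Y(1)]\in\N^1(Y)$, and recall the projective bundle formula $\N^1(Y)=\pi^*\N^1(X)\oplus\mathbb{R}\,\xi$. We may assume $r:=\rank\,\sE\ge 2$, since for $r=1$ the hypothesis $c_1(\sE)\equiv 0$ forces $\xi\equiv 0$. By assumption $\xi\in\Psef(Y)$; suppose $\xi=\alpha_1+\alpha_2$ with $\alpha_1,\alpha_2\in\Psef(Y)$, and write $\alpha_i=a_i\xi+\pi^*\delta_i$, so that $a_1+a_2=1$ and $\delta_1+\delta_2=0$. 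The goal is to prove $a_i\ge 0$ and $\delta_i=0$ for $i=1,2$.

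The first step is the reduction to curves. Fix an ample divisor $H$ on $X$. By the Mehta--Ramanathan restriction theorem, applied via the hypothesis that $\sE$ is semistable with respect to \emph{every} polarisation, one may choose, for $m\gg 0$, a very general complete intersection curve $C$ of $n-1$ members of $|mH|$ such that ${\sE}_{|C}$ is semistable; it has degree $0$ since $c_1(\sE)\equiv 0$, hence ${\sE}_{|C}$ is a nef bundle on the smooth curve $C$. Put $S:=\pi^{-1}(C)\cong\mathbb{P}_C({\sE}_{|C})$, a smooth irreducible $r$-fold. Then ${\xi}_{|S}$ is nef, the Grothendieck relation gives ${\xi}_{|S}^{\,r}=0$, and $\N^1(S)=\mathbb{R}\,{\xi}_{|S}\oplus\mathbb{R}\,f$ with $f$ the class of a fibre of $S\to C$, where $f^2=0$ and $f\cdot{\xi}_{|S}^{\,r-1}=1$.

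The second step is to describe $\Psef(S)$ and restrict to $S$. Both ${\xi}_{|S}$ and $f$ are nef, hence lie in $\Psef(S)$. Conversely, if $a\,{\xi}_{|S}+bf\in\Psef(S)$, then pairing against the movable curve class $f\cdot{\xi}_{|S}^{\,r-2}$ (a line inside a fibre) gives $a\ge 0$, and pairing against ${\xi}_{|S}^{\,r-1}$ — a limit of complete intersection curve classes since ${\xi}_{|S}$ is nef — gives $b\ge 0$. Hence $\Psef(S)=\mathbb{R}_{\ge 0}\,{\xi}_{|S}+\mathbb{R}_{\ge 0}\,f$, so ${\xi}_{|S}$ spans an extremal ray there. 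Now, since $S$ is a very general complete intersection of members of the base-point-free system $\pi^*|mH|$ on $Y$, iterating Lemma~\ref{lemma:restriction_movable_classes} shows ${\alpha_i}_{|S}\in\Psef(S)$; and ${\alpha_i}_{|S}=a_i\,{\xi}_{|S}+(\delta_i\cdot C)\,f$, whence $a_i\ge 0$ and $\delta_i\cdot C\ge 0$. As $\delta_1\cdot C+\delta_2\cdot C=0$, both vanish: $\delta_i\cdot H^{n-1}=0$.

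The last step upgrades $\delta_i\cdot H^{n-1}=0$ to $\delta_i=0$. Because $\sE$ is semistable with respect to every polarisation, the previous step applies to \emph{every} ample $H$ on $X$, so $\delta_i\cdot H^{n-1}=0$ for all ample $H$. Polarising this identity gives $\delta_i\cdot H_1\cdots H_{n-1}=0$ for all ample $H_1,\dots,H_{n-1}$, hence (ample classes spanning $\N^1(X)$) $\delta_i\cdot H^{n-2}=0$ in $\N_1(X)$ for every ample $H$; by the Hard Lefschetz theorem on the smooth projective variety $X$, cup product with $H^{n-2}$ is injective on $H^2(X,\mathbb{R})$, into which $\N^1(X)$ embeds, so $\delta_i=0$ (the case $n=1$ being immediate, $\delta_i$ then being a degree-$0$ divisor on a curve). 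Therefore $\alpha_i=a_i\xi$ with $a_i\ge 0$, which is exactly the assertion that $\mathbb{R}_{\ge 0}\xi$ is an extremal ray of $\Psef(Y)$. I expect the main obstacle to be precisely this last passage: the curve restriction yields only $\delta_i\cdot H^{n-1}=0$, and promoting it to $\delta_i=0$ is where the strong hypothesis (semistability for all polarisations, so that every $H$ is available) must be combined with Hard Lefschetz. A secondary technical point is the hands-on computation of $\Psef(\mathbb{P}_C({\sE}_{|C}))$, which relies on nefness of semistable degree-$0$ bundles on curves and on testing pseudo-effectivity against movable curve classes, together with the usual Bertini bookkeeping (smoothness and irreducibility of $C$ and $S$, and compatibility of the ``general'' choice in Mehta--Ramanathan with the ``very general'' choice in Lemma~\ref{lemma:restriction_movable_classes}).
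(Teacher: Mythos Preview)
Your proof is correct and follows essentially the same strategy as the paper's: restrict to $Z=\pi^{-1}(C)$ for a Mehta--Ramanathan curve $C$, use that $\Psef(Z)$ is the cone spanned by $\xi_{|Z}$ and the fibre class (the paper cites \cite{fulger} for this, while you compute it by hand), deduce $\delta_i\cdot H^{n-1}=0$, and then vary $H$ to force $\delta_i=0$. Your write-up is in fact more careful than the paper's on two points it leaves implicit: the appeal to Lemma~\ref{lemma:restriction_movable_classes} to ensure $\alpha_i{}_{|S}\in\Psef(S)$, and the justification (via Hard Lefschetz/Hodge index) of the step ``since $H$ is arbitrary, $\delta_i=0$''.
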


\begin{proof}Let $\xi_1$ and $\xi_2$ be pseudo-effective classes on $Y$ such that $\xi = \xi_1+\xi_2$.
Write $\xi_i=a_i\xi+\pi^*\gamma_i$ for some real number $a_i$ and some class $\gamma_i$ on $X$. Notice that
$a_i \ge 0$, $a_1+a_2=1$, and $\gamma_1+\gamma_2=0$.

Denote by $\pi\colon Y \to X$ the natural projection.
Let $H$ be an ample divisor, and let $C \subset X$ be a general complete intersection curve of elements in $|mH|$ where $m$ is a sufficiently large positive integer. 
By the restriction theorem of Mehta and Ramanathan,
the locally free sheaf $\sE_{|C}$ is stable with $\deg(\sE_{|C})=0$. In particular, $\sE_{|C}$ is nef. 
Set $Z:=\pi^{-1}(C)$.
By \cite[Lemma 2.2]{fulger}, 
$$\Nef(Z)=\Psef(Z)=\langle \xi_{|Z},f\rangle$$
where $f$ denotes the numerical class of a fiber of the projection morphism $Z \to C$. This implies that 
$\deg({\gamma_i}_{|C}) \ge 0$, and hence $\deg({\gamma_i}_{|C}) = 0$ since $\gamma_1+\gamma_2=0$. Since $H$ is arbitrary, we conclude that $\gamma_1=\gamma_2=0$. This completes the proof of the lemma.
\end{proof}

\begin{prop}\label{proposition:negative_part}
Let $X$ be a complex projective manifold, and let $\sE$ be a locally free sheaf on $X$. Suppose that $\sE$ is semistable with respect to an ample divisor $H$, and that $\mu_H(\sE)=0$. Set $Y:=\mathbb{P}_X(\sE)$.
Suppose that the tautological line bundle $\sO_Y(1)$
is pseudo-effective. If $\sO_Y(1)$ is not nef in codimension one, then there exists a line bundle $\sL$ with 
$\mu_H(\sL)=0$ and a positive integer $m$ such that $h^0\big(X,(S^m\sE)\otimes\sL\big)\neq 0$.
\end{prop}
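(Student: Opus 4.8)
The plan is to run the divisorial Zariski decomposition on $Y:=\mathbb{P}_X(\sE)$ — which is a smooth complex projective manifold since $X$ is smooth and $\sE$ locally free — writing $\xi=P(\xi)+N(\xi)$ for $\xi:=[\sO_Y(1)]\in\N^1(Y)$. By hypothesis $\sO_Y(1)$ is not nef in codimension one, which by definition means $N(\xi)\neq 0$. I will use throughout that $\Pic(Y)=\pi^*\Pic(X)\oplus\mathbb{Z}\,\xi$, where $\pi\colon Y\to X$ is the projection, and that every prime divisor on $Y$ is either \emph{horizontal} (dominates $X$) or \emph{vertical}, i.e.\ of the form $\pi^*D$ for a prime divisor $D$ on $X$; the latter dichotomy is a dimension count, using that $\pi^{-1}(D)=\mathbb{P}_D(\sE_{|D})$ is irreducible. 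The target is to produce a horizontal prime component $P$ of $N(\xi)$ whose class is $m\xi+\pi^*c_1(\sL)$ with $m\ge 1$ and $\mu_H(\sL)=0$: then $P$ effective together with the projection formula (exactly as in the proof of Lemma \ref{lemma:tautological_psef}) give $0\neq H^0\big(Y,\sO_Y(m)\otimes\pi^*\sL\big)=H^0\big(X,(S^m\sE)\otimes\sL\big)$, which is the desired conclusion.

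The key device is restriction to a curve, in the spirit of Nakayama. Set $n:=\dim X$, $r:=\rank\,\sE$, fix $m\gg 0$, and take $C\subset X$ a sufficiently general complete intersection of $n-1$ members of $|mH|$ (a smooth irreducible curve), and $Z:=\pi^{-1}(C)=\mathbb{P}_C(\sE_{|C})$. By the Mehta--Ramanathan restriction theorem $\sE_{|C}$ is semistable, and since $\deg_C(\sE_{|C})=m^{n-1}\big(c_1(\sE)\cdot H^{n-1}\big)=0$, both $\sE_{|C}$ and its dual are nef, so $\sE_{|C}$ is numerically flat; by \cite[Lemma 2.2]{fulger}, $\Nef(Z)=\Psef(Z)$ is the two-dimensional cone spanned by $\xi_{|Z}$ and the class $f$ of a fibre of $Z\to C$. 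Since $Z$ is a general complete intersection in the base-point-free system $\pi^*|mH|$, iterating Lemma \ref{lemma:restriction_movable_classes} shows that $P(\xi)_{|Z}$ is pseudo-effective (being the restriction of the nef-in-codimension-one class $P(\xi)$), and that the restriction to $Z$ of any effective $\mathbb{R}$-divisor on $Y$ is pseudo-effective.

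The main point, and the step I expect to be the real obstacle, is to show that $N(\xi)$ must have a horizontal component — equivalently, that the failure of nefness in codimension one cannot be "purely vertical". For this I would argue by contradiction: if $\pi^*D$ were a component of $N(\xi)$ with coefficient $a:=\sigma_{\pi^*D}(\xi)>0$, then $N(\xi)-a\,\pi^*D\ge 0$ exhibits $\xi_{|Z}-a\,(\pi^*D)_{|Z}=P(\xi)_{|Z}+\big(N(\xi)-a\,\pi^*D\big)_{|Z}$ as a sum of pseudo-effective classes on $Z$, hence pseudo-effective; but $(\pi^*D)_{|Z}\equiv (D\cdot C)\,f$ with $D\cdot C=m^{n-1}\big(D\cdot H^{n-1}\big)>0$, so this class equals $\xi_{|Z}-a(D\cdot C)f$, whose $f$-coefficient is negative, contradicting $\Psef(Z)=\langle\xi_{|Z},f\rangle$. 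Hence $N(\xi)$, being nonzero, has a horizontal prime component $P$; writing $\sO_Y(P)\cong\sO_Y(m)\otimes\pi^*\sL$ with $m\in\mathbb{Z}$ and $\sL\in\Pic(X)$, restriction of $P$ to a general fibre $\mathbb{P}^{r-1}$ of $\pi$ gives $m=\deg\big(P_{|\mathbb{P}^{r-1}}\big)\ge 1$. Restricting to $Z$: $P_{|Z}\equiv m\,\xi_{|Z}+(\sL\cdot C)f$ is pseudo-effective, so $\sL\cdot C\ge 0$, while $\xi_{|Z}-\sigma_P(\xi)P_{|Z}=P(\xi)_{|Z}+\big(N(\xi)-\sigma_P(\xi)P\big)_{|Z}$ is pseudo-effective and equals $(1-m\sigma_P(\xi))\xi_{|Z}-\sigma_P(\xi)(\sL\cdot C)f$, forcing $\sL\cdot C\le 0$. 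Therefore $\sL\cdot C=m^{n-1}\big(\sL\cdot H^{n-1}\big)=0$, i.e.\ $\mu_H(\sL)=0$, and the conclusion follows as explained above. A minor point to be checked carefully is that the several genericity requirements on $C$ (Bertini smoothness, Mehta--Ramanathan semistability of $\sE_{|C}$, and the "very general" hypothesis of Lemma \ref{lemma:restriction_movable_classes}) are jointly satisfiable, which they are since each cuts out a dense subset of the relevant parameter space of the prescribed type.
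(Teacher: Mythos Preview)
Your proof is correct and follows essentially the same approach as the paper: take the divisorial Zariski decomposition of the tautological class, restrict to $Z=\pi^{-1}(C)$ for a Mehta--Ramanathan curve $C$, and use Fulger's description $\Psef(Z)=\langle \xi_{|Z},f\rangle$ to force the twisting line bundle to have slope zero. The only organizational difference is that the paper packages the cone argument via Lemma~\ref{lemma:tautological_class_extremal} (extremality of $\xi_{|Z}$) and rules out the $m_i=0$ case at the end, whereas you rule out vertical components first and then sandwich $\sL\cdot C$ between two inequalities---but the substance is identical.
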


\begin{proof}
Denote by $\Xi$ a tautological divisor on $Y$, and by $\pi\colon Y \to X$ the natural morphism.
Let $\Xi=P+N$ be the divisorial Zariski decomposition of $\Xi$. 
Write $N=\sum_{i\in I}\sigma_i N_i$, where $N_i$ is a prime exceptional divisor, and $\sigma_i \in \mathbb{R}_{> 0}$. We have $N_i \sim_\mathbb{Z} m_i\Xi+\pi^*\Gamma_i$
for some divisor $\Gamma_i$ on $X$ and some non-negative integer $m_i$.

Let $C \subset X$ be a complete intersection curve of very general elements in $|mH|$ where $m$ is a sufficiently large positive integer. 
By the restriction theorem of Mehta and Ramanathan,
the locally free sheaf $\sE_{|C}$ is stable with $\deg(\sE_{|C})=0$. 
Set $Z:=\pi^{-1}(C)$. Notice that $\Xi_{|Z}$ is pseudo-effective by Lemma \ref{lemma:restriction_movable_classes}. 
By Lemma \ref{lemma:tautological_class_extremal}, 
$[{\pi^*\Gamma_i}_{|Z}]\in \mathbb{R}_{\ge 0}[\Xi_{|Z}]$ for each $i\in I$, and thus
$\Gamma_i\cdot H^{n-1}=0$, where $n:=\dim X$. 
Pick $i\in I$. If $m_i=0$, then $h^0\big(X,\sO_X(\Gamma_i)\big)\neq 0$, and hence $\Gamma_i\sim_\mathbb{Z}0$ since 
$\Gamma_i\cdot H^{n-1}=0$. This implies that $N_i=0$, yielding a contradiction. Therefore, $m_i>0$ and 
$h^0\big(X,(S^{m_i}\sE)\otimes \sO_X(\Gamma_i)\big)\neq 0$. This proves the proposition.
\end{proof}

The proof of the next result follows the line of argument given in \cite[Theorem 7.6]{bdpp}.

\begin{lemma}\label{lemma:restricted_base_locus_dimension_one}
Let $S$ be a smooth complex projective surface, and let $\sE$ be a locally free sheaf of rank $r\ge 2$ on $S$. Suppose that 
$\sE$ is semistable with respect to an ample divisor $H$, and that $c_1(\sE)\cdot H = 0$. 
Let $\xi \in \N^1(Y)$ be the class of the tautological line bundle $\sO_Y(1)$.
If any irreducible component of $\textup{B}_{-}(\xi)$ has dimension at most $1$,
then $c_1(\sE)^2=c_2(\sE)=0$.
\end{lemma}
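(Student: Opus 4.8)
The plan is to reduce the statement to the single numerical identity $\xi^{r+1}=0$, where $\xi^{r+1}$ is a top self-intersection number on $Y:=\mathbb{P}_S(\sE)$, which has dimension $r+1$. Write $\pi\colon Y\to S$ for the structure morphism. The Grothendieck relation $\xi^r=\pi^*c_1(\sE)\cdot\xi^{r-1}-\pi^*c_2(\sE)\cdot\xi^{r-2}$ (valid since $\sE$ lives on a surface) yields $\pi_*(\xi^{r-1})=[S]$, $\pi_*(\xi^r)=c_1(\sE)$ and $\pi_*(\xi^{r+1})=c_1(\sE)^2-c_2(\sE)$; in particular $\xi^{r+1}=c_1(\sE)^2-c_2(\sE)$ and $\xi^r\cdot\pi^*H=c_1(\sE)\cdot H=0$. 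Next I would bring in two classical inequalities: since $H$ is ample with $c_1(\sE)\cdot H=0$, the Hodge index theorem gives $c_1(\sE)^2\le 0$; and since $\sE$ is $H$-semistable of rank $r$ on the smooth projective surface $S$, the Bogomolov inequality gives $2r\,c_2(\sE)-(r-1)c_1(\sE)^2\ge 0$. Combining them, $\xi^{r+1}=c_1(\sE)^2-c_2(\sE)\le\frac{r+1}{2r}c_1(\sE)^2\le 0$, and these same two inequalities show that $\xi^{r+1}=0$ already forces $c_1(\sE)^2=c_2(\sE)=0$ (indeed $\xi^{r+1}=0$ gives $c_2(\sE)=c_1(\sE)^2$, whence $(r+1)c_1(\sE)^2=2r\,c_2(\sE)-(r-1)c_1(\sE)^2\ge 0$, so $c_1(\sE)^2=0$ and then $c_2(\sE)=0$). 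Hence everything comes down to proving $\xi^{r+1}\ge 0$.

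The crux, and the step I expect to be the main obstacle, is to show that the $1$-cycle class $\xi^{r}\cap[Y]$ lies in the Mori cone $\NE(Y)$. Granting this, choose $m\gg 0$ so that $A:=\xi+m\pi^*H$ is ample; then, using $\pi^*H\cdot\xi^r=0$ and the fact that an ample class is non-negative on $\NE(Y)$, one gets $\xi^{r+1}=(A-m\pi^*H)\cdot(\xi^{r}\cap[Y])=A\cdot(\xi^{r}\cap[Y])\ge 0$, as wanted. To prove $\xi^{r}\cap[Y]\in\NE(Y)$ I would argue as follows. First, $\xi$ is pseudo-effective: the irreducible components of $\textup{B}_{-}(\xi)$ have dimension at most $1$, while $\dim Y=r+1\ge 3$, so $\textup{B}_{-}(\xi)\subsetneq Y$. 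Fix an ample $\mathbb{Q}$-divisor $A_0$ of small norm; then $\xi+A_0$ is big and $\textup{B}(\xi+A_0)\subseteq\textup{B}_{-}(\xi)$ has dimension at most $1$. For $l$ sufficiently divisible, $\Bs|l(\xi+A_0)|=\textup{B}(\xi+A_0)$, so the linear system $|l(\xi+A_0)|$ has no fixed component and base locus of dimension at most $1$. Take $r$ general members $M^{(1)},\dots,M^{(r)}$ and intersect them successively; an induction on $i$ shows that each partial intersection $V_i:=M^{(1)}\cap\dots\cap M^{(i)}$ is an effective cycle, pure of dimension $r+1-i$, whose class equals $l^i(\xi+A_0)^i\cap[Y]$. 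Indeed, for $i\le r$ the components of $V_{i-1}$ have dimension $r+2-i\ge 2$, so none of them is contained in $\Bs|l(\xi+A_0)|$; hence a general $M^{(i)}$ contains no component of $V_{i-1}$, so $V_i$ is the effective Cartier divisor cut out by $M^{(i)}$ on $V_{i-1}$, of the expected numerical class. In particular $(\xi+A_0)^{r}\cap[Y]=l^{-r}[V_r]\in\NE(Y)$; letting $A_0\to 0$, the continuity of the intersection product and the closedness of $\NE(Y)$ give $\xi^{r}\cap[Y]\in\NE(Y)$.

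Combining the two halves, $0\le\xi^{r+1}\le 0$, so $\xi^{r+1}=c_1(\sE)^2-c_2(\sE)=0$, and therefore $c_1(\sE)^2=c_2(\sE)=0$ by the computation in the first paragraph. The one point that requires care is the cycle-theoretic claim that successive intersections of general members of $|l(\xi+A_0)|$ remain effective and equidimensional of the expected dimension in spite of the base locus; this is precisely where---and the only place where---the hypothesis that every irreducible component of $\textup{B}_{-}(\xi)$ has dimension at most one is used.
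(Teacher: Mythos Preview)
Your proof is correct and rests on the same three inequalities as the paper's: $c_1(\sE)^2 - c_2(\sE) \ge 0$ (extracted from the hypothesis on $\textup{B}_{-}(\xi)$), Bogomolov's inequality, and the Hodge index theorem. The only difference lies in how you obtain the first one. You show $\xi^r\cap[Y]\in\NE(Y)$ by intersecting $r$ general members of $|l(\xi+A_0)|$ for small ample $A_0$---the base locus having dimension $\le 1$ guarantees the successive intersections stay effective of the expected dimension---and then pair with an ample class, using $\xi^r\cdot\pi^*H=0$. The paper instead takes a \emph{single} very general hyperplane section $G\equiv m(\xi+t\pi^*h)$: since $G$ contains no irreducible component of $\textup{B}_{-}(\xi)$, any curve $C\subset G$ with $\xi\cdot C<0$ would have to be such a component, so $\xi_{|G}$ is nef and $\xi^r\cdot G\ge 0$; a direct computation gives $\xi^r\cdot G=m\big(c_1(\sE)^2-c_2(\sE)\big)$. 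The paper's route is shorter and avoids the limiting argument, but your approach has the virtue of exhibiting $\xi^r$ explicitly as a limit of effective $1$-cycles, which is a statement of independent interest.
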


\begin{proof}
Denote by $\pi \colon Y \to S$ the natural morphism, and denote by $h \in \N^1(S)$ the class of $H$.
Let $G\subset Y$ be a very general hyperplane section. 
Suppose that $[G] \equiv m(\xi +t\pi^*h)$ for some positive integers $m$ and $t$.
Notice that $G$ does not contain any irreducible
component of $\textup{B}_{-}(\xi)$. It follows that $\xi_{|G}$ is nef, and hence 
$$\xi^{r}\cdot G \ge 0.$$ 
The equation
$$\xi^r\equiv \pi^*c_1(\sE)\cdot \xi^{r-1} -\pi^*c_2(\sE)\cdot \xi^{r-2},$$ yields 
$$\xi^{r}\cdot G=m\big(\pi^*c_1(\sE)\cdot \xi^{r-1}-\pi^*c_2(\sE)\cdot \xi^{r-2}\big)\cdot (\xi +t\pi^*h)=
m \big(c_1(\sE)^2-c_2(\sE)\big),$$
and hence $$c_1(\sE)^2 - c_2(\sE) \ge 0.$$
On the other hand, we have $$2rc_2(\sE)-(r-1)c_1(\sE)^2\ge 0$$ by Bogomolov's inequality (see \cite[Theorem 3.4.1]{HuyLehn}), and thus $c_1(\sE)^2 \ge 0$. Finally, the Hodge index theorem implies that  $c_1(\sE)^2 \le 0$, and hence
we must have $c_1(\sE)^2=c_2(\sE)=0$. This proves the lemma.
\end{proof}

\begin{say}[The holonomy group of a stable reflexive free sheaf]
Let $X$ be a normal complex projective variety, and let $\sE$ be a reflexive sheaf on $X$. 
Suppose that $\sE$ is stable with respect to an ample Cartier divisor $H$.
For a sufficiently large positive integer $m$,
let $C \subset X$ be a general complete intersection curve of elements in $|mH|$. Let $x \in C$.
By the restriction theorem of Mehta and Ramanathan,
the locally free sheaf $\sE_{|C}$ is stable with $\deg(\sE_{|C})=0$, and hence
it corresponds to a unique unitary representation $\rho\colon\pi_1(C,x)\to\mathbb{U}(\sE_x)$ 
by a result of Narasimhan and Seshadri (\cite{narasimhan_seshadri65}). The \emph{holonomy group} $\Hol_x(\sE)$
of $\sE$ is the Zariski closure of $\rho\big(\pi_1(C,x)\big)$ in $\textup{GL}(\sE_x)$. It does not depend on $C \ni x$ provided that $m$ is large enough. Moreover, the fiber map $\sE \to \sE_x$ induces a one-to-one correspondance between direct summands of $\sE^{\otimes r}\boxtimes (\sE^*)^{\otimes s}$ and 
$\Hol_x(\sE)$-invariant subspaces of $\sE_x^{\otimes r}\otimes (\sE_x^*)^{\otimes s}$, where $r$ and $s$ are non-negative integer (see \cite[Theorem 1]{balaji_kollar}).
\end{say}

The proof of Theorem \ref{theorem:stable_sheaf_psef} makes use of the following lemma. Example \ref{example:hol_connected} below shows that the statement of \cite[Lemma 40]{balaji_kollar} is slightly incorrect. An extra assumption is needed to guarantee that the holonomy groups are well-defined.

\begin{lemma}[{\cite[Lemma 40]{balaji_kollar}}]\label{lemma:holonomy_group_connected}
Let $X$ be a normal complex projective variety, let $x \in X$ be a general point, and let $\sE$ be a reflexive sheaf on $X$.
Suppose that $\sE$ is stable with respect to an ample divisor $H$, and that $\mu_H(\sE)=0$. Suppose furthermore that, for any finite morphism $f\colon \wt X \to X$ that is \'etale in codimension one, the 
reflexive pull-back $f^{[*]}\sE$ is stable with respect to $f^*H$. Then there exists a finite morphism $f\colon \wh X \to X$, \'etale in codimension one, such that $\Hol_{\wh x}(f^{[*]}\sE)$ is connected, where 
$\wh x$ is a point on $\wh X$ such that $f(\wh x)=x$.
\end{lemma}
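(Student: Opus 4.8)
The plan is to exhibit the cover $f$ as the normalization of the relative spectrum over $X$ of a sheaf of commutative $\sO_X$-algebras which is a reflexive direct summand of a reflexive tensor power of $\sE$, the summand being prescribed by the component group of the holonomy computed on a general complete intersection curve.

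First I would fix, for $m\gg 0$, a general complete intersection curve $C\subset X$ of members of $|mH|$ through $x$; then $C\subset X_{\textup{reg}}$, the restriction $\sE_{|C}$ is stable of degree $0$ by the theorem of Mehta and Ramanathan, and $G:=\Hol_x(\sE)$ is the Zariski closure in $\textup{GL}(\sE_x)$ of the image of the Narasimhan--Seshadri representation $\rho\colon\pi_1(C,x)\to\mathbb U(\sE_x)$. Since $G$ preserves a positive definite hermitian form it is reductive, so $G^\circ\trianglelefteq G$ has finite index; set $Q:=G/G^\circ$, and let $C'\to C$ be the connected $Q$-Galois cover corresponding to $\rho^{-1}(G^\circ)\subset\pi_1(C,x)$. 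Here I would record the elementary fact that if $\Gamma\subset\textup{GL}_N(\mathbb C)$ is Zariski dense in an algebraic group $G$ and $\Gamma'\le\Gamma$ has finite index, then $\overline{\Gamma'}$ is a finite index subgroup of $G$ containing $G^\circ$; in particular $\overline{\rho(\rho^{-1}(G^\circ))}=G^\circ$. Consequently it suffices to produce a finite cover $f\colon\wh X\to X$, étale in codimension one, with $f^{-1}(C)=C'$: then, by the standing hypothesis, $\Hol_{\wh x}(f^{[*]}\sE)$ is defined and, being the Zariski closure of $\rho$ restricted to $\pi_1(C',\wh x)=\rho^{-1}(G^\circ)$, it equals $G^\circ$, which is connected.

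Next I would construct the algebra. The algebra $\sO(Q)$ of functions on $Q$, with its $G$-action through $Q$, is a commutative algebra object in the tensor category of $G$-representations on which $G^\circ$ acts trivially, and by Chevalley's theorem it embeds, compatibly with multiplication and unit, into a finite direct sum $W$ of tensor powers of $\sE_x\oplus\sE_x^{*}$. Under the correspondence between $\Hol_x(\sE)$-invariant subspaces of tensor powers of $\sE_x\oplus\sE_x^{*}$ and direct summands of the corresponding polystable degree-$0$ bundles on $C$, the subalgebra $\sO(Q)\subset W$ determines a direct summand $\cA_C$ of a reflexive tensor power of $\sE_{|C}$ carrying the structure of a sheaf of commutative $\sO_C$-algebras with $\Spec_C(\cA_C)=C'$. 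The key point is to descend $\cA_C$ to $X$: in characteristic zero tensor products of semistable sheaves are semistable, so the reflexive tensor powers of the stable sheaf $\sE$ are semistable of slope $0$; and for a sufficiently ample general complete intersection curve the restriction functor from slope-$0$ semistable reflexive sheaves on $X$ to vector bundles on $C$ is fully faithful (Mehta--Ramanathan together with a Flenner/Langer type restriction theorem and the Enriques--Severi--Zariski lemma, after restricting to $X_{\textup{reg}}$). Hence the idempotent cutting out $\cA_C$, together with the multiplication, unit and associativity morphisms, lifts uniquely to $X$ and yields a reflexive direct summand $\cA$ of a reflexive tensor power of $\sE\oplus\sE^{*}$ which is a sheaf of commutative $\sO_X$-algebras with $\cA_{|C}\cong\cA_C$ for general $C$.

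Finally I would take $\wh X$ to be the normalization of $\Spec_X(\cA)$ and $f\colon\wh X\to X$ the projection. Since $\cA$ is reflexive it is locally free off a closed set of codimension $\ge 2$, and the discriminant ideal of $\cA/\sO_X$ restricts on a general $C$ to that of the étale cover $C'\to C$, namely the unit ideal; its zero locus therefore meets no prime divisor of $X$, hence has codimension $\ge 2$, so $f$ is étale in codimension one. As $\cA_{|C}\cong\cA_C$ and $\Spec_C(\cA_C)=C'$ is connected, $\Spec_X(\cA)$ is irreducible and $f^{-1}(C)=C'$; choosing $\wh x\in f^{-1}(x)\subset C'$, the second paragraph gives $\Hol_{\wh x}(f^{[*]}\sE)=G^\circ$, which is connected. (The hypothesis that $f^{[*]}\sE$ be stable for every $f$ étale in codimension one enters only to guarantee that $\Hol_{\wh x}(f^{[*]}\sE)$ is well-defined; this is exactly the gap in \cite[Lemma 40]{balaji_kollar}, see Example \ref{example:hol_connected}.) I expect the main obstacle to be this descent step — establishing full faithfulness of restriction to a general, sufficiently ample complete intersection curve on slope-$0$ semistable reflexive sheaves over the possibly singular $X$, and carrying out the attendant reflexive-sheaf bookkeeping so that both the summand and its algebra structure descend; granting that, the remaining arguments are formal.
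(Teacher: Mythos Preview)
The paper does not give its own proof of this lemma: it is stated with attribution to \cite[Lemma~40]{balaji_kollar}, and the paper's sole contribution is Example~\ref{example:hol_connected}, which explains why the extra hypothesis (stability of $f^{[*]}\sE$ for all covers \'etale in codimension one) is needed so that $\Hol_{\wh x}(f^{[*]}\sE)$ is actually defined. There is therefore no in-paper proof to compare against.

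Your sketch is, in outline, the Balaji--Koll\'ar argument: pass to the component group $Q=G/G^\circ$, realise $\sO(Q)$ as a $G$-module and hence as an algebra-object direct summand of a tensor power of $\sE_x\oplus\sE_x^{*}$, transport this via the Tannakian correspondence to a sheaf of algebras $\cA_C$ on $C$, descend $\cA_C$ to a reflexive algebra $\cA$ on $X$ using full faithfulness of restriction to a sufficiently general complete intersection curve, and take $\wh X$ to be the normalization of $\Spec_X(\cA)$. You have also correctly isolated where the added hypothesis is used. The one place I would tighten is the descent step: rather than invoking a single curve, the cleaner way (and the way Balaji--Koll\'ar phrase it) is to observe that the relevant idempotent, multiplication and unit are morphisms between \emph{fixed} polystable slope-$0$ reflexive sheaves on $X$, and that such Hom-spaces inject into Hom-spaces on a general $C$ by Mehta--Ramanathan plus semicontinuity; uniqueness then forces the algebra axioms to hold on $X$ because they hold after restriction. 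With that adjustment your argument is the standard one.
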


\begin{exmp}[{see \cite[Example 8.6]{gkp_bo_bo}}]\label{example:hol_connected}
Let $Z$ be a projective $\textup{K}3$-surface, let $\wt X:=Z \times Z$, and let $\iota\in\textup{Aut}(\wt X)$
be the automorphism which interchanges the two factors. The
quotient $X := \wt X / \iota$ is then a normal projective 
variety. 
The quotient map $\pi \colon \wt X \to X$ is finite and \'etale in codimension one. 
The tangent sheaf $T_X$ of $X$ is stable with respect to any ample polarization on $X$ (see \cite[Example 8.6]{gkp_bo_bo}). Let $x$ is a general point on $X$. Then $\Hol_x(T_X)^\circ =\textup{SL}_2(\mathbb{C})$, and 
$\Hol_x(T_X)/\Hol_x(T_X)^\circ \cong\mathbb{Z}/2\mathbb{Z}$. Moreover, the morphism
$\pi$ is the map given by \cite[Lemma 40]{balaji_kollar}. But, the reflexive pull-back $\pi^{[*]}T_X = T_{\wt X}=T_Z\boxplus T_Z$ is obviously not stable.
\end{exmp}

\begin{lemma}\label{lemma:holonomy_group_versus_strong_stability}
Let $X$ be a complex projective manifold, let $x \in X$, and let $\sE$ be a coherent locally free sheaf on $X$.
Suppose that $\sE$ is stable with respect to an ample divisor $H$, and that $\mu_H(\sE)=0$.
Suppose furthermore that its holonomy group $\Hol_x(\sE)$ is connected. 
Then, for any finite cover $f \colon \wt X \to X$ with $\wt X$ smooth and projective,
the pull-back $f^*\sE$ is stable with respect to $f^*H$. 
\end{lemma}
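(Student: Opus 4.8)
The plan is to detect the stability of $f^*\sE$ on a general complete intersection curve, where everything is controlled by the fundamental group, and to exploit that a finite-index subgroup of a group with connected Zariski closure has the same Zariski closure.

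First I would record two easy reductions. The pull-back $f^*H$ of the ample divisor $H$ under the finite surjective morphism $f$ is again ample, so ``$f^*H$-stable'' is meaningful; moreover $\mu_{f^*H}(f^*\sE)=(\deg f)\,\mu_H(\sE)=0$. Hence it suffices to show that $f^*\sE$ admits no saturated subsheaf $\sF\subsetneq f^*\sE$ with $0<\rank \sF<\rank \sE=:r$ and $\mu_{f^*H}(\sF)\ge 0$. Assume for contradiction that such an $\sF$ exists.

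Next I would fix $m\gg 0$ and choose a complete intersection curve $C\subset X$ of general members of $|mH|$ through $x$, general enough that: (i) $\sE_{|C}$ is stable of degree $0$ by the restriction theorem of Mehta and Ramanathan, hence by Narasimhan--Seshadri corresponds to an irreducible unitary representation $\rho\colon\pi_1(C,x)\to\mathbb{U}(\sE_x)$ whose image is Zariski dense in the connected group $G:=\Hol_x(\sE)$; (ii) $\wt C:=f^{-1}(C)$ is a smooth \emph{connected} curve --- this I would extract from the irreducibility form of Bertini's theorem applied to the base-point-free linear system $f^*|mH|$, whose associated morphism to projective space is finite and hence has $n$-dimensional image, together with generic smoothness in characteristic zero; (iii) $\wt C$ misses the closed subset of codimension $\ge 2$ along which $f^*\sE/\sF$ fails to be locally free, so that $\sF_{|\wt C}\subseteq(f^*\sE)_{|\wt C}$ is a subbundle (and $\sF$ is locally free near $\wt C$). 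Then $(f^*\sE)_{|\wt C}=\big(f_{|\wt C}\big)^*\big(\sE_{|C}\big)$ is the flat unitary bundle attached to $\rho\circ\big(f_{|\wt C}\big)_*$, where $\big(f_{|\wt C}\big)_*\colon\pi_1(\wt C,\wt x)\to\pi_1(C,x)$ is the map induced by the finite morphism of smooth connected projective curves $f_{|\wt C}$ (with $\wt x\in f^{-1}(x)$). Deleting the branch points downstairs and the ramification points upstairs shows that the image of $\big(f_{|\wt C}\big)_*$ has finite index in $\pi_1(C,x)$; therefore $\Gamma:=\rho\big(\big(f_{|\wt C}\big)_*\pi_1(\wt C,\wt x)\big)$ has finite index in $\rho(\pi_1(C,x))$, its Zariski closure has finite index in $G$, and hence --- $G$ being connected --- equals $G$. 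Since $\sE_x$ is an irreducible $G$-module (equivalently $\rho$ is irreducible, equivalently $\sE_{|C}$ is stable), it is an irreducible $\Gamma$-module, so $(f^*\sE)_{|\wt C}$ is stable of degree $0$ by Narasimhan--Seshadri. This is the desired contradiction: $\sF_{|\wt C}$ is then a proper subbundle of a stable degree-$0$ bundle, so $\deg\big(\sF_{|\wt C}\big)<0$, whereas $\deg\big(\sF_{|\wt C}\big)=c_1(\sF)\cdot\big(mf^*H\big)^{n-1}=m^{n-1}(\rank\sF)\,\mu_{f^*H}(\sF)\ge 0$.

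The step I expect to require the most care is (ii): since $f$ may be ramified, the connectedness and smoothness of $f^{-1}(C)$ for general $C$ is not automatic and must be deduced from Bertini applied to the subsystem $f^*|mH|\subseteq|mf^*H|$; and this connectedness is exactly what converts the inequality $\mu_{f^*H}(\sF)\ge 0$ --- which, $f^*\sE$ being merely semistable, does \emph{not} strictly destabilize --- into a genuine contradiction, since over a disconnected curve one would only recover polystability of $(f^*\sE)_{|\wt C}$ and no contradiction. The remaining inputs, namely ampleness of $f^*H$, functoriality of the Narasimhan--Seshadri correspondence under pull-back, and the finite-index statement for $\pi_1$ of a branched cover of curves, are standard.
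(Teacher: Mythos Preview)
Your proof is correct and follows essentially the same route as the paper: restrict to a general complete intersection curve, translate via Narasimhan--Seshadri into a statement about unitary representations, and use that a finite-index subgroup has the same Zariski closure when that closure is connected. The only noteworthy difference is that the paper first invokes Kempf's theorem to know that $f^*\sE$ is polystable and then argues with a hypothetical direct summand, whereas you bypass Kempf by arguing directly with a destabilizing subsheaf and explicitly establishing connectedness of $\wt C=f^{-1}(C)$ via Bertini for the base-point-free subsystem $f^*|mH|$; both variants lead to the same finite-index/connectedness punchline.
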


\begin{proof}
Let $\wt X$ be a complex projective manifold, and let $f\colon \wt X \to X$ be a finite cover.
By \cite[Theorem 1]{kempf}, the locally free sheaf $f^*\sE$ is polystable with respect to $f^*H$.
For a sufficiently large positive integer $m$,
let $C \subset X$ (resp. $\wt C$) be a general complete intersection curve of elements in $|mH|$
(resp. $|mf^*H|$), and 
let $x \in C$ (resp. $\wt x \in f^{-1}(x))$.
By the restriction theorem of Mehta and Ramanathan,
the locally free sheaf $\sE_{|C}$ is stable with $\deg(\sE_{|C})=0$, 
and hence it corresponds to a unique unitary irreducible representation $\rho\colon\pi_1(C,x)\to\mathbb{U}(\sE_x)$
by \cite{narasimhan_seshadri65}. Notice that $f^*\sE_{|\wt C}$
is then induced by the representation
$\wt\rho:=\rho\circ\pi_1(f_{|\wt C})\colon\pi_1(\wt C,\wt x)\to \pi_1(C,x)\to\mathbb{U}(\sE_x)\cong \mathbb{U}\big((f^*\sE)_{\wt x}\big)$.

We argue by contradiction and assume that $f^*\sE$ is not stable with respect to $f^*H$. So let 
$\sG$ be a $f^*H$-stable direct summand of $f^*\sE$. Then $\sG_{\wt x}$ is $\wt\rho$-invariant, and since 
the image of 
$\pi_1(f_{|\wt C})\colon\pi_1(\wt C,\wt x)\to \pi_1(C,x)$ has finite index,
the orbit $\pi_1(C,x)\cdot\sG_{\wt x}$ of $\sG_{\wt x}$
is a finite union of proper linear subspaces, where we view $\sG_{\wt x}$ as a linear subspace of $\sE_x\cong (f^*\sE)_{\wt x}$. 
It follows that $\pi_1(C,x)\cdot\sG_{\wt x}$ is also $\Hol_x(\sE)$-invariant.
Now, since $\Hol_x(\sE)$ is a connected algebraic group, we conclude that 
$\sG_{\wt x}$ is $\pi_1(C,x)$-invariant. Therefore $\sE$ is not $H$-stable, yielding a contradiction. This completes the proof of the lemma.
\end{proof}

We now provide another technical tool for the proof of Theorem \ref{theorem:stable_sheaf_psef}.

\begin{prop}\label{proposition:nef_codim_1_2}
Let $X$ be a complex projective manifold, and let $\sE$ be a rank $3$ locally free sheaf on $X$. Suppose that $\sE$ is stable with respect to an ample divisor $H$, and that $\mu_H(\sE)=0$. 
Suppose furthermore that $\Hol_x(E)$ is connected for $x\in X$.
Set $Y:=\mathbb{P}_X(\sE)$, and denote by $\sO_Y(1)$ the tautological line bundle. 
Suppose that $\xi:=[\sO_Y(1)]\in \N^1(Y)$ is pseudo-effective. If $\xi$ is nef in codimension one, then it is nef in codimension two.
\end{prop}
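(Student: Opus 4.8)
The plan is to argue by contradiction: assume $\xi=[\sO_Y(1)]$ is nef in codimension one but not in codimension two, so that $\B_{-}(\xi)$ has an irreducible component $B$ of codimension exactly two in $Y$. Since $\xi$ is nef in codimension one we have $N(\xi)=0$, i.e. $P(\xi)=\xi$ in the divisorial Zariski decomposition. First I would cut down to a surface: choose $H_Y$ a very ample divisor on $Y$ (for instance pulled back from a tautological-plus-ample class, or simply any very ample class) and let $S\subset Y$ be a complete intersection of $n+1$ very general elements of $|mH_Y|$, so that $S$ meets $B$ in a nonempty finite set of points and $\dim S=2$. By Lemma \ref{lemma:restriction_movable_classes}, $\xi_{|S}$ is still pseudo-effective and nef in codimension one. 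The point of passing to $S$ is that on a surface "nef in codimension two" is vacuous, so I must instead derive the contradiction directly from an intersection-theoretic inequality obstructed by the presence of $B\cap S$.

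The key mechanism is Lemma \ref{lemma:non_nef_codimension_2}: applied on $Y$ with $D=\xi$ and a base-point-free linear system $|H_Y|$, cutting down by $k=n-2$ very general members, it produces a surface $S_0\subset Y$, a real number $a>0$, and the inequality $(\xi_{|S_0}^2-a\,B\cap S_0)\cdot h_3\cdots h_{n-k}\ge 0$ — but on a surface there are no further nef factors, so this reads $\xi_{|S_0}^2 - a\,\deg(B\cap S_0)\ge 0$, hence $\xi_{|S_0}^2 > 0$ strictly, since $B\cap S_0\neq\emptyset$ and $a>0$. So the strategy is: on one hand Lemma \ref{lemma:non_nef_codimension_2} forces $\xi_{|S_0}^2$ to be \emph{strictly positive} on the cut-down surface; on the other hand I will show $\xi_{|S_0}^2 \le 0$, using the rank-$3$ structure, stability of $\sE$, connectedness of the holonomy group, and the identity $\xi^3 \equiv \pi^*c_1(\sE)\cdot\xi^2 - \pi^*c_2(\sE)\cdot\xi + \pi^*c_3(\sE)$ (more precisely its restriction to an appropriate complete intersection). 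Here $S_0$ is a complete intersection in $Y=\p_X(\sE)$ of members of $|mH_Y|$; choosing $H_Y$ appropriately (e.g. $\xi+t\pi^*H$ ample for $t\gg0$), I can compute $\xi_{|S_0}^2$ as a universal polynomial in the numbers $c_1(\sE)^j\cdot H^{n-j}$ and $c_2(\sE)\cdot H^{n-2}$. By Lemma \ref{lemma:holonomy_group_versus_strong_stability}, connectedness of $\Hol_x(\sE)$ implies $f^*\sE$ is stable for every finite cover $f\colon\wt X\to X$, so the hypotheses of the earlier results on stable sheaves with $\xi$ pseudo-effective (Theorem \ref{theorem:stable_sheaf_psef}, and in particular Lemma \ref{lemma:restricted_base_locus_dimension_one} applied on a cut-down surface in $X$) are available; combined with $c_1(\sE)\cdot H^{n-1}=0$ and semistability, Bogomolov's inequality and the Hodge index theorem then pin down the sign of $\xi_{|S_0}^2$.

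The main obstacle, I expect, is bookkeeping the compatibility between the two surface reductions — the surface $S_0\subset Y$ coming from Lemma \ref{lemma:non_nef_codimension_2} lives over a curve $C\subset X$ (it is $\pi^{-1}(C)$ intersected with tautological hyperplanes), so I need to arrange that on that same curve $\sE_{|C}$ is stable of degree zero (Mehta–Ramanathan), identify $\p_C(\sE_{|C})$, and carry out the $\xi^2$ computation there using the Grothendieck relation for a rank-$3$ bundle on a curve, namely $\xi^3 = \pi^*c_1(\sE_{|C})\cdot\xi^2 = 0$ since $\deg\sE_{|C}=0$, together with nefness of $\sE_{|C}$ forcing $\Nef=\Psef=\langle\xi_{|Z},f\rangle$ on $Z=\pi^{-1}(C)$ by \cite[Lemma 2.2]{fulger} as in the proof of Lemma \ref{lemma:tautological_class_extremal}; from $\xi$ extremal and pseudo-effective on $Z$ one gets $\xi_{|S_0}^2 \le 0$ by the Hodge index theorem on the surface $Z$ (a fibered surface over a curve with $\xi$ on the boundary of the pseudo-effective cone). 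Reconciling "$\xi_{|S_0}^2>0$" with "$\xi_{|S_0}^2\le 0$" yields the contradiction and proves $\xi$ is nef in codimension two. A secondary technical point is ensuring the very general complete intersection surface simultaneously avoids the bad loci for all the cited restriction theorems and contains no component of $\B_-(\xi)$ while still meeting $B$; this is handled by the standard "very general member" arguments already used repeatedly in Section 6.
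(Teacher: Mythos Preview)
Your approach has a genuine gap. You cut $Y=\p_X(\sE)$ by very general members of an \emph{ample} linear system $|H_Y|$ down to a surface $S_0$, and then hope to obtain the contradiction ``$\xi_{|S_0}^2>0$ versus $\xi_{|S_0}^2\le 0$''. The first inequality is fine (Lemma~\ref{lemma:non_nef_codimension_2} gives it once $B\cap S_0\neq\emptyset$), but the second is simply false for such a surface. Indeed, with $H_Y=\xi+t\,\pi^*H$ and $\dim Y=n+2$, the leading term of $\xi^2\cdot H_Y^{\,n}$ is $t^n\,\xi^2\cdot\pi^*H^n=t^n\,H^n>0$, coming from $\xi^2$ restricted to a fibre $\p^2$. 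So $\xi_{|S_0}^2>0$ automatically and your Hodge-index argument cannot produce the opposite sign. (You also refer to ``the surface $Z$'' where $Z=\pi^{-1}(C)$; note that $Z$ is a threefold since $\rank\sE=3$.)

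The paper's argument is different in two essential respects. First, it cuts in $X$, not in $Y$: one applies Lemma~\ref{lemma:non_nef_codimension_2} on $Y$ with the \emph{base-point-free but not ample} system $\pi^*|mH|$, intersecting $n-1$ very general members to obtain the threefold $Z=\pi^{-1}(C)$ for a Mehta--Ramanathan curve $C\subset X$, and then uses the remaining nef class $h_3=\xi_{|Z}$ in the lemma's conclusion. Since $\sE_{|C}$ is stable of degree $0$, $\xi_{|Z}$ is nef with $\xi_{|Z}^3=0$, so the lemma gives $0\le a\,(B\cap Z)\cdot\xi_{|Z}\le\xi_{|Z}^3=0$, hence $(B\cap Z)\cdot\xi_{|Z}=0$. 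Second, the contradiction is \emph{geometric}, not numerical: the curve $B\cap Z\subset\p_C(\sE_{|C})$ has $\xi$-degree $0$ and (being nonempty and $\xi$-trivial while $\xi$ is relatively ample) dominates $C$; its normalization $\wt C\to C$ then yields a line bundle quotient of $f^*(\sE_{|C})$ of degree $0$, so $f^*(\sE_{|C})$ is not stable. This is exactly where the connected-holonomy hypothesis enters, via Lemma~\ref{lemma:holonomy_group_versus_strong_stability}: it forces $f^*(\sE_{|C})$ to remain stable for every finite cover $f$, giving the contradiction. Your outline invokes connected holonomy only to quote Theorem~\ref{theorem:stable_sheaf_psef} and Lemma~\ref{lemma:restricted_base_locus_dimension_one}, but those do not control the sign of $\xi_{|S_0}^2$ on an ample-cut surface, and the destabilizing-quotient mechanism is absent.
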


\begin{proof}
Denote by $\pi\colon Y \to X$ the natural projection.
We argue by contradiction and assume that 
$\xi$ is nef in codimension one, and that
there exists an irreducible component $B$ of $\B_{-}(\xi)$ of codimension two.

For a sufficiently large positive integer $m$,
let $C \subset X$ be a complete intersection curve of very general elements in $|mH|$.
By the restriction theorem of Mehta and Ramanathan,
the locally free sheaf $\sE_{|C}$ is stable with $\deg(\sE_{|C})=0$. 
Moreover, if $x\in C$, then $\Hol_x(\sE_{|C})=\Hol_x(\sE)$ is connected.
Set $Z:=\pi^{-1}(C)$, and $\xi_Z:=\xi_{|Z}$. 
Notice that $\dim Z=3$.
Then $\xi_Z$ is nef,
and $\xi_{Z}^{3}=0$.
By Lemma \ref{lemma:non_nef_codimension_2}, there exists a 
real number $a>0$ such that
$0 \le a(B\cap Z) \cdot \xi_Z \le \xi_Z^3=0$, and hence
$(B\cap Z) \cdot \xi_Z=0$. In particular, since $\xi_Z$ is relatively ample over $C$,  
any irreducible component of $B\cap Z$ maps onto $C$. Let $\wt C$ be the normalization of $B\cap Z$, and denote by $f\colon \wt C \to C$ the induced morphism. The natural morphism $g\colon \wt C \to Z$ induces a surjective morphism $f^*(\sE_{|C}) \onto g^*(\sO_Y(1)_{|Z})$. Since $\deg g^*(\sO_Y(1)_{|Z})=(B\cap Z)\cdot \xi_Z=0$, the locally free sheaf $f^*(\sE_{|C})$ is not stable. But this contradicts Lemma \ref{lemma:holonomy_group_versus_strong_stability}, completing the proof of the proposition.
\end{proof}

\begin{proof}[Proof of Theorem \ref{theorem:stable_sheaf_psef}]
We maintain notation and assumptions of Theorem \ref{theorem:stable_sheaf_psef}.
We claim that we may assume without loss of generality that the algebraic group $\Hol_x(\sE)$ is connected.
Indeed, by Lemma \ref{lemma:holonomy_group_connected}, 
there exists a finite cover $f\colon \wt X \to X$, \'etale in codimension one, such that $\Hol_{\wt x}(f^{[*]}\sE)$ is connected, where 
$\wt x$ is a point on $\wt X$ such that $f(\wt x)=x$.
By the Nagata-Zariski purity theorem, $f$ is \'etale in codimension two, and hence $\wt X$ is 
smooth in codimension two as well. 
Suppose now that the conclusion of Theorem \ref{theorem:stable_sheaf_psef} holds for 
$f^{[*]}\sE$. The sheaf 
$S^{[i]}\sE\otimes\sO_X(jH)$ is a direct summand of 
$f_{[*]} \big (S^{i} (f^{*} \sE)\otimes f^*\sO_X(jH)\big)$ for any non-negative integers $i$ and $j$. Thus, if 
$f^{[*]}\sE$ satisfies condition (1) in the statement of Theorem \ref{theorem:stable_sheaf_psef}, then the same holds for $\sE$.
Let $X^\circ \subset X_{\textup{reg}}$ be the maximal open set where $\sE$ is locally free. 
Then $X^\circ$ has codimension at least 3 in $X$ by \cite[Corollary 1.4]{hartshorne80} using the fact that $X$ is smooth in codimension two. Since
$c_1\big(f_{|f^{-1}(X^\circ)}^*\sE_{|X^\circ}\big)=f_{|f^{-1}(X^\circ)}^*c_1(\sE_{|X^\circ})$ and
$c_2\big(f_{|f^{-1}(X^\circ)}^*\sE_{|X^\circ}\big)=f_{|f^{-1}(X^\circ)}^*c_2(\sE_{|X^\circ})$, we conclude that
$c_1(f^{[*]}\sE)^2 \cdot (f^*H)^{n-2}= \deg (f) c_1(\sE)^2 \cdot H^{n-2}$ and that
$c_2(f^{[*]}\sE) \cdot (f^*H)^{n-2}= \deg (f) c_2(\sE) \cdot H^{n-2}$. This implies that if 
$f^{[*]}\sE$ satisfies condition (2) in the statement of Theorem \ref{theorem:stable_sheaf_psef}, then the same holds for $\sE$. Finally, if $f^{[*]}\sE$ satisfies condition (3) in the statement of Theorem \ref{theorem:stable_sheaf_psef}, then the same obviously holds for $\sE$.
Thus, by replacing $X$ with $\wt X$, we may assume that $\Hol_x(\sE)$ is connected. This proves our claim.

Suppose from now on that $h^0\big(X,S^{[i]}\sE\otimes\sO_X(jH)\big) \neq 0$ for infinitely many 
$(i,j) \in \mathbb{N} \times \mathbb{N}_{\ge 1}$ with $i/j \to +\infty$.
Let $S$ be a smooth two dimensional complete intersection of very general elements in $|mH|$ for a sufficiently large positive integer $m$. Observe that $S$ is contained in the smooth locus $X_\textup{reg}$ of $X$, and that 
$\sE$ is locally free along $S$ (see \cite[Corollary 1.4]{hartshorne80}).
We may also obviously assume that $x \in S$.
Set $\sE_S:=\sE_{|S}$, and $H_S:=H_{|S}$. 
By the restriction theorem of Mehta and Ramanathan,
the locally free sheaf $\sE_S$ is stable with respect to $H_S$, and $\mu_{H_S}(\sE_S)=0$. Moreover, the algebraic group
$\Hol_x(\sE_S)=\Hol_x(\sE)$ is connected.
Set $Y:=\mathbb{P}_S(\sE_S)$ with 
natural morphism $\pi\colon Y \to S$.
Denote by $\xi\in\N^1(Y)$ the numerical class of the tautological line bundle $\sO_Y(1)$.
Notice that $h^0\big(Y,S^{i}\sE_S\otimes\sO_S(jH_S)\big) \neq 0$ for infinitely many 
$(i,j) \in \mathbb{N} \times \mathbb{N}_{\ge 1}$ with $i/j \to +\infty$. This implies that $\xi \in \Psef(Y)$.
If $r=1$, then obviously we must have $\xi=0$. Suppose from now on that $r\in\{2,3\}$.

\medskip

\noindent\textit{Case 1: $\xi$ is nef in codimension one.} If $r=3$, then $\xi$ is automatically nef in codimension two by Proposition \ref{proposition:nef_codim_1_2}. In either case, any irreducible component of 
$\textup{B}_{-}(\xi)$ has dimension at most $1$.
Thus, by Lemma \ref{lemma:restricted_base_locus_dimension_one}, we must have 
$c_1(\sE)^2\cdot H^{n-2}=c_1(\sE_S)^2=0$
and $c_2(\sE)\cdot H^{n-2}=c_2(\sE_S)=0$.

\medskip

\noindent\textit{Case 2: $\xi$ is not nef in codimension one.} By Proposition \ref{proposition:negative_part},
there exists a line bundle $\sL_S$ on $S$ with 
$\mu_{H_S}(\sL_S)=0$ and a positive integer $k$ such that $h^0(S,S^k\sE_S\otimes\sL_S)\neq 0$.
Since $S^k\sE_S$ is polystable, $\sL_S^{\otimes -1}$ is a direct summand  of $S^k\sE_S$, and hence
$(\sL_S^{\otimes -1})_x \subset (S^k\sE)_x$ is $\Hol_x(\sE)$-invariant. 

Suppose first that $r=2$. By \cite[45]{balaji_kollar}, $\Hol_x(\sE)=\textup{SL}(\sE_x)$ or
$\textup{GL}(\sE_x)$. In either case, $(S^k\sE)_x$ is an irreducible 
$\Hol_x(\sE)$-module, yielding a contradiction.

Suppose now that $r=3$. Then $\Hol_x(\sE)=\textup{SL}(\sE_x)$, 
$\textup{GL}(\sE_x)$, $\textup{SO}(\sE_x)$, or $\textup{GSO}(\sE_x)$ by \cite[45]{balaji_kollar} again, where
$\textup{GSO}(\sE_x)$ denotes the group of proper similarity transformations.
Arguing 
as above, we conclude that $\Hol_x(\sE)=\textup{SO}(\sE_x)$ or $\textup{GSO}(\sE_x)$. 
In either case, 
there exists a rank one reflexive sheaf $\sL$ on $X$ with $\mu_H(\sL)=0$
such that $h^0(X,S^{2}\sE\boxtimes\sL)\neq 0$. 
This completes the proof of the theorem. 
\end{proof}

\section{Holomorphic Riemannian metric and holomorphic connection}

\begin{say}[Bott connection]\label{partial_connection}
Let $X$ be a complex manifold, let $\sG\subset T_X$ be a regular foliation, and set $\sN=T_X/\sG$. Let $p\colon T_X\to \sN$ denotes the natural projection. For sections $U$ of $\sN$, $T$ of $T_X$, and $V$ of $\sG$ over some open subset of $X$ with 
$U=p(T)$, set $\nabla^{\textup{B}}_V U=p([V,U])$. This expression is well-defined,
$\sO_X$-linear in $V$ and satisfies the Leibnitz rule 
$\nabla^{\textup{B}}_V(fU)=f\nabla^{\textup{B}}_V U+(V\cdot f)U$ so that $\nabla^{\textup{B}}$ is an $\sG$-connection on $\sN$ (see \cite{baum_bott70}). We refer to it as the \textit{Bott} (partial) \emph{connection} on $\sN$.
\end{say}

\begin{say}[Holomorphic Riemannian metric]
Given a complex manifold $X$ and a vector bundle $\sE$ on $X$,
recall that a holomorphic metric $\mathfrak{g}$ on $\sE$ is a global section of $S^2(\sE^*)$
such that $\mathfrak{g}(x)$ is non-degenerate for all $x \in X$.
\end{say}

\begin{lemma}\label{lemma:levi_civita}
Let $X$ be a complex manifold, and let $T_X = \sE \oplus \sG$ be a decomposition of $T_X$ into locally free sheaves. Suppose that $\sE$ is involutive, and suppose furthermore that $\sE$ admits a holomorphic metric 
$\mathfrak{g}$. Then there exists an $\sE$-connection $\nabla^{\textup{LC}}$ on $\sE$ such that, for sections 
$U$, $V$, and $W$ of $\sE$ over some open subset of $X$, the following holds: 
\begin{enumerate}
\item $\nabla^{\textup{LC}}_U V-\nabla^{\textup{LC}}_V U = [U,V]$ $(\nabla^{\textup{LC}}$ is torsion-free$)$, and
\item $W \cdot  \mathfrak{g}(U,V) = \mathfrak{g}\big(\nabla^{\textup{LC}}_W U,V\big) + \mathfrak{g}\big(U,\nabla^{\textup{LC}}_W V\big)$ $(\nabla^{\textup{LC}}$ preserves $\mathfrak{g})$.
\end{enumerate}
\end{lemma}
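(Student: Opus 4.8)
The plan is to define $\nabla^{\textup{LC}}$ by the usual Koszul formula and then verify the required identities by term-by-term bookkeeping, exactly as in classical Riemannian geometry. Concretely, for local sections $U$, $V$, $W$ of $\sE$ over an open subset of $X$, I would \emph{define} $\nabla^{\textup{LC}}_U V$ to be the unique local section of $\sE$ such that
\[
2\,\mathfrak{g}\big(\nabla^{\textup{LC}}_U V,\, W\big) = U\cdot\mathfrak{g}(V,W) + V\cdot\mathfrak{g}(W,U) - W\cdot\mathfrak{g}(U,V) + \mathfrak{g}\big(W,[U,V]\big) + \mathfrak{g}\big(V,[W,U]\big) - \mathfrak{g}\big(U,[V,W]\big)
\]
for every local section $W$ of $\sE$. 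This makes sense: since $\sE$ is involutive, the brackets $[U,V]$, $[W,U]$ and $[V,W]$ are again sections of $\sE$, so every occurrence of $\mathfrak{g}$ is evaluated on a pair of sections of $\sE$; and since $\mathfrak{g}$ is a non-degenerate symmetric $\sO_X$-bilinear form on the locally free sheaf $\sE$, it identifies $\sE$ with $\sE^*$, so the right-hand side, which is $\sO_X$-linear and additive in $W$, indeed determines a unique section $\nabla^{\textup{LC}}_U V$ of $\sE$. The complement $\sG$ plays no role whatsoever in the construction.

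Next I would check that $\nabla^{\textup{LC}}$ is an $\sE$-connection on $\sE$ in the sense of \ref{partial_connection}, i.e.\ that it is $\sO_X$-linear in $U$ and satisfies the Leibniz rule in $V$. Additivity in both slots is clear from the formula. For $\sO_X$-linearity in $U$, replace $U$ by $fU$ for a local holomorphic function $f$: the term $U\cdot\mathfrak{g}(V,W)$ scales by $f$, while the Leibniz rule for the derivations $V\cdot$ and $W\cdot$ together with the identities $[fU,V]=f[U,V]-(V\cdot f)U$ and $[W,fU]=f[W,U]+(W\cdot f)U$ produce four correction terms, namely $(V\cdot f)\,\mathfrak{g}(W,U)$, $-(W\cdot f)\,\mathfrak{g}(U,V)$, $-(V\cdot f)\,\mathfrak{g}(W,U)$ and $(W\cdot f)\,\mathfrak{g}(V,U)$, which cancel in pairs by symmetry of $\mathfrak{g}$; by non-degeneracy this yields $\nabla^{\textup{LC}}_{fU}V=f\,\nabla^{\textup{LC}}_U V$. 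For the Leibniz rule in $V$, replace $V$ by $fV$: the analogous bookkeeping leaves a single surviving correction term $2(U\cdot f)\,\mathfrak{g}(V,W)$, so $\nabla^{\textup{LC}}_U(fV)=f\,\nabla^{\textup{LC}}_U V+(U\cdot f)\,V$.

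Finally I would establish (1) and (2). For the torsion-freeness (1), write the defining identity again with $U$ and $V$ interchanged and subtract: the three derivative terms cancel because $\mathfrak{g}$ is symmetric, the pair $\mathfrak{g}(W,[U,V])-\mathfrak{g}(W,[V,U])$ contributes $2\,\mathfrak{g}(W,[U,V])$, and the remaining bracket terms cancel using $[W,U]+[U,W]=0$ and $[W,V]+[V,W]=0$; non-degeneracy then gives $\nabla^{\textup{LC}}_U V-\nabla^{\textup{LC}}_V U=[U,V]$. For the compatibility (2), add the defining identity for $2\,\mathfrak{g}(\nabla^{\textup{LC}}_W U,V)$ to the one for $2\,\mathfrak{g}(\nabla^{\textup{LC}}_W V,U)$: after using symmetry of $\mathfrak{g}$ and antisymmetry of the bracket, every term cancels except $2\,W\cdot\mathfrak{g}(U,V)$, and since $\mathfrak{g}(\nabla^{\textup{LC}}_W V,U)=\mathfrak{g}(U,\nabla^{\textup{LC}}_W V)$ this is exactly $\mathfrak{g}(\nabla^{\textup{LC}}_W U,V)+\mathfrak{g}(U,\nabla^{\textup{LC}}_W V)=W\cdot\mathfrak{g}(U,V)$. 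There is no real obstacle: the argument is the standard Levi-Civita computation, and the only place the hypotheses genuinely intervene is that every Lie bracket occurring stays inside $\sE$ — which is precisely the involutivity of $\sE$ — so that $\mathfrak{g}$ may be applied to it, while the non-degeneracy of $\mathfrak{g}$ is what promotes the Koszul relation to an honest definition.
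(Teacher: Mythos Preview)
Your proposal is correct and follows exactly the paper's approach: define $\nabla^{\textup{LC}}$ via the Koszul formula and verify the identities directly. The paper writes down the same formula (with $W$ replaced by a placeholder $\bullet$) and then simply asserts that ``a straightforward local computation shows that $\nabla^{\textup{LC}}$ is a torsion-free $\sE$-connection on $\sE$ that preserves the metric''; you have carried out that computation in full, including the check that involutivity of $\sE$ is precisely what keeps every bracket inside the domain of $\mathfrak{g}$.
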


\begin{defn}
We will refer to $\nabla^{\textup{LC}}$ as the \emph{Levi-Civita} (partial) \emph{connection} on $\sE$.
\end{defn}

\begin{proof}[Proof of Lemma \ref{lemma:levi_civita}]
Let $U$ and $V$ be local sections of $\sE$ over some open subset of $X$. Then 
$\nabla^{\textup{LC}}_U V$ is defined by 
$$
2\mathfrak{g}\big(\nabla^{\textup{LC}}_U V,\bullet\big) =
U\cdot \mathfrak{g}(V,\bullet)+V\cdot \mathfrak{g}(\bullet,U)-\bullet\cdot \mathfrak{g}(U,V)+\mathfrak{g}\big([U,V],\bullet\big)
-\mathfrak{g}\big([V,\bullet],U\big)-\mathfrak{g}\big([U,\bullet],V\big).
$$
A straightforward local computation shows that $\nabla^{\textup{LC}}$ is a torsion-free $\sE$-connection on $\sE$
that preserves the metric.
\end{proof}

\begin{prop}\label{proposition:holomorphic_connection}
Let $X$ be a complex manifold, and let $T_X = \sE \oplus \sG$ be a decomposition of $T_X$ into locally free involutive subsheaves. Suppose that $\sE$ admits a holomorphic metric. Then $\sE$ has a holomorphic connection.
\end{prop}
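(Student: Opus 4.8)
The plan is to build a holomorphic connection on $\sE$ by splitting the directional variable along the decomposition $T_X = \sE \oplus \sG$ and assembling a partial connection in each direction. Recall that a holomorphic connection on $\sE$ is the same as a map $\nabla\colon T_X \times \sE \to \sE$ which is $\sO_X$-linear in the first slot and satisfies the Leibniz rule $\nabla_W(fU) = f\,\nabla_W U + (W\cdot f)\,U$ in the second. Every local section $W$ of $T_X$ decomposes uniquely as $W = W_\sE + W_\sG$ with $W_\sE$ a local section of $\sE$ and $W_\sG$ a local section of $\sG$, and the two projections $\textup{pr}_\sE\colon T_X \to \sE$ and $\textup{pr}_\sG\colon T_X \to \sG$ attached to this direct sum are $\sO_X$-linear. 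Hence it is enough to produce an $\sE$-connection on $\sE$ and a $\sG$-connection on $\sE$, and then to set $\nabla_W U := \nabla^{\textup{LC}}_{W_\sE} U + \nabla^{\textup{B}}_{W_\sG} U$: the Leibniz rule holds because it holds for each summand and $W\cdot f = W_\sE\cdot f + W_\sG\cdot f$, and $\sO_X$-linearity in $W$ holds because the projections are $\sO_X$-linear; no compatibility between the two partial connections is needed for the sum to be a genuine connection.

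For the $\sE$-direction I would take the Levi-Civita partial connection $\nabla^{\textup{LC}}$ furnished by Lemma \ref{lemma:levi_civita}, which applies because $\sE$ is locally free, involutive, and carries a holomorphic metric. For the $\sG$-direction, observe that the composite of the inclusion $\sE \into T_X$ with the quotient map $T_X \to T_X/\sG =: \sN$ is an isomorphism $\sE \cong \sN$, whose inverse is induced by $\textup{pr}_\sE$. Since $\sG$ is involutive, the Bott partial connection of \ref{partial_connection} is a $\sG$-connection on $\sN$; transporting it through this isomorphism gives a $\sG$-connection on $\sE$, namely $\nabla^{\textup{B}}_V U = \textup{pr}_\sE\big([V,U]\big)$ for a local section $V$ of $\sG$ and a local section $U$ of $\sE$. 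One checks directly that this expression is well defined, is $\sO_X$-linear in $V$ (because $\textup{pr}_\sE$ annihilates $\sG$, so the term $-(U\cdot f)\,V$ appearing in $[fV,U]$ is killed), and obeys the Leibniz rule in $U$ (since $\textup{pr}_\sE U = U$).

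It remains only to combine the two partial connections into $\nabla := \nabla^{\textup{LC}} \oplus \nabla^{\textup{B}}$, given explicitly by $\nabla_W U = \nabla^{\textup{LC}}_{W_\sE} U + \textup{pr}_\sE\big([W_\sG, U]\big)$, and to record that it is $\sO_X$-linear in $W$ and satisfies Leibniz in $U$; both are immediate from the properties of $\nabla^{\textup{LC}}$ and $\nabla^{\textup{B}}$ noted above. I do not expect a serious obstacle here: the only mild point to keep in mind is that a holomorphic connection carries no integrability or metric-compatibility requirement, so the metric-adapted connection along $\sE$ and the tautological Bott connection along $\sG$ may simply be summed, and the identification $T_X/\sG \cong \sE$ coming from the splitting is exactly what allows the Bott connection, a priori defined on the normal sheaf, to be regarded as a connection on $\sE$.
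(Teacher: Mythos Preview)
Your proposal is correct and follows exactly the paper's approach: define $\nabla_W U := \nabla^{\textup{LC}}_{W_\sE} U + \nabla^{\textup{B}}_{W_\sG} U$ using the Levi-Civita partial connection along $\sE$ and the Bott partial connection along $\sG$, then note that $\sO_X$-linearity and the Leibniz rule follow immediately. Your write-up is in fact more detailed than the paper's, which simply states the formula and asserts the two properties; your explicit identification $\sE \cong T_X/\sG$ via the splitting and the verification that $\nabla^{\textup{B}}_V U = \textup{pr}_\sE([V,U])$ satisfies the required identities are welcome clarifications.
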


\begin{proof}
Let $X=U+V$ and $W$ be local sections of $T_X = \sE \oplus \sG$ and $\sE$ respectively.
Set $$\nabla_X W : = \nabla^{\textup{LC}}_U W + \nabla^{\textup{B}}_V W$$
where $\nabla^{\textup{LC}}$ denotes the Levi-Civita connection on $\sE$ and where
$\nabla^{\textup{B}}$ denotes the Bott connection on $\sE$ induced by the foliation $\sG \subset T_X$.
This expression is $\sO_X$-linear in $X$ and satisfies the Leibnitz rule 
$\nabla_X(fW)=f\nabla_X W+(X\cdot f)W$ so that $\nabla$ is a holomorphic connection on $\sE$.
\end{proof}

\begin{rem}\label{remark:atiyah}
In the setup of Proposition \ref{proposition:holomorphic_connection}, let $S \subset X$ be a projective subvariety. Then characteristic classes of $\sE_{|S}$ vanish. This follows from \cite{atiyah57}.
\end{rem}

\section{Bost-Campana-P\u{a}un algebraicity criterion $-$ Algebraicity of leaves, II}

We prove Theorem \ref{theorem:pereira_touzet_conjecture} in this section.   
We first provide a technical tool for the proof of our main result (see \cite[Proposition 6.1]{druel15} for a somewhat related result).

\begin{prop}\label{proposition:alg_int_reduction_strongly_stable_case}
Let $X$ be a normal complex projective variety, and let $H$ be an ample Cartier divisor. Let 
$\sE\subset T_X$ be a foliation on $X$. Suppose that $\sE$ is $H$-stable and that $\mu_H(\sE)=0$.
Suppose furthermore that through a general point of $X$, there is a
positive-dimensional algebraic subvariety that is tangent to $\sE$. Then $\sE$ has algebraic leaves.
\end{prop}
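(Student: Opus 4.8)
The plan is to locate a nonzero algebraically integrable subfoliation inside $\sE$ and then to use stability to force it to be all of $\sE$. First I would invoke the existence of the maximal algebraically integrable subfoliation $\sE^{\textup{alg}}\subseteq\sE$: among all algebraically integrable subfoliations of $\sE$ there is a unique one of largest rank, obtained by the now-standard argument with $\Chow(X)$ and quotients by the equivalence relation generated by ``lying on a common subvariety tangent to $\sE$'' (compare \cite{druel15} and \cite{fano_fols}). By the hypothesis $\sE^{\textup{alg}}\neq 0$, and if $\sE^{\textup{alg}}=\sE$ there is nothing to prove, so assume $\sE^{\textup{alg}}\subsetneq\sE$. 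Let $\phi\colon X\map Y$ be the rational map to the family of leaves of $\sE^{\textup{alg}}$. Since $\sE^{\textup{alg}}$ is the foliation defined by the fibres of $\phi$ and is contained in the involutive sheaf $\sE$, the sheaf $\sE$ descends along $\phi$: there is a foliation $\sG$ on $Y$ with $\sE=\phi^{-1}\sG$ and $\rank\sG=\rank\sE-\rank\sE^{\textup{alg}}>0$. Moreover, by maximality of $\sE^{\textup{alg}}$, the foliation $\sG$ has \emph{no} positive-dimensional subvariety tangent to it through a general point of $Y$ --- otherwise pulling such a subvariety back through $\phi$ would enlarge $\sE^{\textup{alg}}$ inside $\sE$.

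Next I would pass to the family of leaves of $\sE^{\textup{alg}}$, namely the equidimensional morphism $\psi\colon Z\to Y$ and the birational morphism $\beta\colon Z\to X$ of \ref{family_leaves}, so that $\beta^{-1}\sE=\psi^{-1}\sG$ while $\beta^{-1}\sE^{\textup{alg}}$ is the foliation defined by $\psi$. Pulling back via $\psi$ a $q$-form defining $\sG$ (see \ref{q-forms}) and comparing with Example~\ref{example:canonical_class_foliation} applied to $\psi$ produces a relation
$$K_{\beta^{-1}\sE}\;\sim_{\bQ}\;K_{\beta^{-1}\sE^{\textup{alg}}}\;+\;\psi^{*}K_{\sG}\;+\;\Delta,$$
where $\Delta$ is an \emph{effective} $\bQ$-divisor on $Z$ (it collects the ramification divisor of $\psi$ and the divisorial zeros of the pulled-back $q$-form). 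Intersecting with $(\beta^{*}H)^{n-1}$ and using the projection formula, the $\beta$-exceptional contributions drop out, so
$$0=-c_{1}(\sE)\cdot H^{n-1}=-c_{1}(\sE^{\textup{alg}})\cdot H^{n-1}+\psi^{*}K_{\sG}\cdot(\beta^{*}H)^{n-1}+\Delta\cdot(\beta^{*}H)^{n-1}.$$
Here $\Delta\cdot(\beta^{*}H)^{n-1}\ge 0$ because $\Delta$ is effective, and $-c_{1}(\sE^{\textup{alg}})\cdot H^{n-1}=-\rank(\sE^{\textup{alg}})\,\mu_{H}(\sE^{\textup{alg}})>0$ because $\sE^{\textup{alg}}$ is a \emph{saturated} subsheaf of the $H$-stable sheaf $\sE$ of slope $0$ with $\sE^{\textup{alg}}\subsetneq\sE$, whence $\mu_{H}(\sE^{\textup{alg}})<0$. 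Therefore $K_{\sG}\cdot\psi_{*}(\beta^{*}H)^{n-1}=\psi^{*}K_{\sG}\cdot(\beta^{*}H)^{n-1}<0$.

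Finally, $\psi_{*}(\beta^{*}H)^{n-1}$ is represented by a curve sweeping out $Y$, so it lies in $\overline{\Mov}_{1}(Y)$ and the last inequality forces $K_{\sG}$ to be \emph{not} pseudo-effective. By the Bogomolov--McQuillan--Campana--P\u{a}un circle of results (\cite{bogomolov_mcquillan01}, \cite{campana_paun15}), a foliation with non--pseudo-effective canonical class is uniruled along its leaves; in particular there is a positive-dimensional (even rational) subvariety tangent to $\sG$ through a general point of $Y$, contradicting the first paragraph. Hence $\sE^{\textup{alg}}=\sE$, and $\sE$ has algebraic leaves. I expect the main obstacle to be the structural input of the first paragraph --- the existence of $\sE^{\textup{alg}}$ and the transfer of ``no tangent subvariety through a general point'' to the quotient foliation $\sG$ --- together with the non-elementary implication ``$K_{\sG}$ not pseudo-effective $\Rightarrow$ $\sG$ uniruled along the leaves''; the bookkeeping of the exceptional and ramification divisors in the second paragraph is routine but must be carried out with care, and $Z$ and $Y$ may have to be replaced by resolutions along the way.
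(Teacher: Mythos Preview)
Your proof is correct and follows essentially the same route as the paper's: both rest on the maximal algebraically integrable subfoliation and the descent of $\sE$ to a purely transcendental foliation on the leaf space (the paper cites this as the Loray--Pereira--Touzet decomposition \cite{loray_pereira_touzet}), the canonical class formula relating $K_\sE$, $K_{\sE^{\textup{alg}}}$, and the pullback of the canonical class of the transverse foliation, and the Campana--P\u{a}un theorem that a purely transcendental foliation has pseudo-effective canonical class. The only difference is the direction in which the contradiction is run --- the paper deduces $\mu_H(\sE^{\textup{alg}})\ge 0$ from pseudo-effectivity and then invokes stability, whereas you start from $\mu_H(\sE^{\textup{alg}})<0$ by stability, push through the formula to get $K_\sG$ not pseudo-effective, and contradict Campana--P\u{a}un; this is a cosmetic contrapositive, though the paper's direction is marginally cleaner since ``$K_\sG$ pseudo-effective $\Rightarrow \psi^*K_\sG\cdot(\beta^*H)^{n-1}\ge 0$'' is immediate while your direction needs the movable-curve observation on a possibly singular $Y$.
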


\begin{proof}
There exist  a normal projective variety $Y$, unique up to birational equivalence,  a dominant rational map with connected fibers $\varphi:X\map Y$,
and a holomorphic foliation $\sH$ on $Y$ such that the following holds (see \cite[Section 2.4]{loray_pereira_touzet}):
\begin{enumerate}
\item $\sH$ is purely transcendental, i.e., there is no positive-dimensional algebraic subvariety through a general point of $Y$ that is tangent to $\sG$; and
\item $\sE$ is the pullback of $\sH$ via $\varphi$.
\end{enumerate}
Let $\sG \subseteq \sE$ be the foliation on $X$ induced by $\phi$.
Let $\psi\colon Z\to Y$ be the family of leaves of $\sG$, and let $\beta\colon Z \to X$ be the natural morphism, so that 
$\phi:=\psi \circ \beta^{-1}\colon X \dashrightarrow Y$.
By \ref{family_leaves},
there is an effective divisor $R$ on $X$ such that
$$K_\sG-K_\sE=-(\phi^*K_\sH+R).$$
Notice that the pull-back $\phi^*K_\sH$ is well-defined (see Definition \ref{definition:pull-back}).
Applying \cite[Corollary 4.8]{campana_paun15} to the foliation induced by $\sH$ on a desingularization of $Y$, we see that $K_{\sH}$ is pseudo-effective: given an ample divisor $A$ on $Y$ and a positive number $\varepsilon \in\mathbb{Q}$, there exists an effective
$\mathbb{Q}$-divisor $D_\varepsilon$ such that $K_\sH+\varepsilon A\sim_\mathbb{Q} D_\varepsilon$.
This implies that $\mu_H(\phi^*K_\sH) \ge 0$, and hence $\mu_H(K_\sG) \le 0$. Since $\sE$ is $H$-stable, we must have $\sG=\sE$. This proves the lemma.
\end{proof}

The proof of Theorem \ref{theorem:pereira_touzet_conjecture}
relies on a criterion (see  Proposition \ref{proposition:alg_integrability}) that 
guarantees that a given foliation has algebraic leaves, which we establish now.

The following observation, due to Bost, will prove to be crucial.

\begin{prop}[{\cite[Proposition 2.2]{bost_germs}}]\label{proposition:multiplicity_versus_algebraic_integrability}
Let $Z$ be a projective variety over a field $k$, let $x$ be a $k$-point, and let $H$ be an ample divisor.
Let $\wh V \subset \wh Z$ be a smooth formal subscheme of the formal completion $\wh Z$ of $Z$ at $x$. Then 
$\wh V$ is algebraic if and only if there exists $c>0$ such that,
for any positive integer $j$ and any section $s \in H^0\big(Z,\sO_Z(jH)\big)$ such that $s_{|\wh V}$ is non-zero, the multiplicity $\mult_{x} (s_{|\wh V})$ of $s_{|\wh V}$ at $x$ is $\le c j$.
\end{prop}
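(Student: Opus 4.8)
The plan is to establish the two implications separately, after two harmless reductions. Both the algebraicity of $\wh V$ and the existence of the constant $c$ are unchanged if $k$ is replaced by its algebraic closure, and if $H$ is replaced by a positive multiple (if the estimate holds for $H$ with constant $c$ it holds for $rH$ with constant $rc$, and applying the estimate for $rH$ to $s^{r}$ recovers it for $H$ with constant $c'/r$); so I assume $k$ algebraically closed and $\sO_Z(H)$ very ample. Write $d:=\dim\wh V$; the case $d=0$ is trivial (then $\wh V=\{x\}$ and $s|_{\wh V}\neq0$ means $s(x)\neq0$), so assume $d\ge 1$. Let $V_0\subseteq Z$ be the Zariski closure of $\wh V$, i.e. the smallest reduced closed subscheme with $\wh{V_0}\supseteq\wh V$ (it is irreducible because $\wh V$ is). Always $d\le\dim_xV_0$, since the surjection $\wh{\sO}_{V_0,x}\twoheadrightarrow\wh{\sO}_{\wh V,x}\cong k[[t_1,\dots,t_d]]$ forces $\dim\sO_{V_0,x}\ge d$. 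The whole content of the proposition is that the multiplicity estimate is equivalent to $\dim_xV_0=d$.

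\emph{The estimate holds when $\wh V$ is algebraic.} Here $\dim_xV_0=d$, so the integral $d$-dimensional $\wh V$ is an analytic branch of $V_0$ at $x$; being smooth it equals its own normalization, so if $\nu\colon V_0^{\nu}\to V_0$ is the normalization and $y\in\nu^{-1}(x)$ is the corresponding point, then $y$ is a smooth point of the projective variety $V_0^{\nu}$ and $\wh{\sO}_{V_0^{\nu},y}$ is identified with $\wh{\sO}_{\wh V,x}$. Set $L:=\nu^{*}\sO_Z(H)$, an ample line bundle on $V_0^{\nu}$. For $s\in H^0(Z,\sO_Z(jH))$ with $s|_{\wh V}\neq 0$, the pull-back $s':=\nu^{*}(s|_{V_0})\in H^0(V_0^{\nu},L^{\otimes j})$ is nonzero with $\mult_y(s')=\mult_x(s|_{\wh V})$. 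Blowing up the smooth point $y$, say $\mu\colon B\to V_0^{\nu}$ with exceptional divisor $E$, the effective divisor of $\mu^{*}s'$ is $\ge\mult_y(s')\,E$ and lies in $|j\,\mu^{*}L|$; intersecting $j\,\mu^{*}L-\mult_y(s')\,E$ with $A^{d-1}$ for a fixed ample $A$ on $B$ gives $\mult_x(s|_{\wh V})\le c\,j$ with $c:=(\mu^{*}L\cdot A^{d-1})/(E\cdot A^{d-1})>0$ independent of $s$ and $j$.

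\emph{The estimate implies $\wh V$ is algebraic.} Put $E_j:=H^0(Z,\sO_Z(jH))$, $E_j^{0}:=\{s\in E_j:\,s|_{\wh V}=0\}$, let $\mathfrak{a}_j\subseteq\sO_Z$ be the ideal sheaf generated by $E_j^{0}$ and $Z_j:=V(\mathfrak{a}_j)$. Since $\sO_Z(H)$ is globally generated and $E_j^{0}\cdot H^0(Z,\sO_Z(H))\subseteq E_{j+1}^{0}$, the $\mathfrak{a}_j$ increase and therefore stabilize: $Z_j=Z_{\infty}$ for $j\gg 0$. One checks $\wh V\subseteq\wh{Z_j}$ and that a section of $\sO_Z(jH)$ vanishes on $\wh V$ exactly when it vanishes on $Z_j$, so $E_j^{0}=H^0(Z,\cI_{Z_j}(jH))$; by Serre vanishing the evaluation sequence gives $\dim_k(E_j/E_j^{0})=\chi(Z_{\infty},\sO_{Z_\infty}(jH))$ for $j\gg 0$, a polynomial in $j$ with positive leading term of degree $e:=\dim Z_{\infty}$. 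On the other hand the multiplicity hypothesis says precisely that the kernel of the evaluation map $E_j\to\wh{\sO}_{\wh V,x}/\frm^{\lfloor cj\rfloor+1}$ equals $E_j^{0}$, whence $\dim_k(E_j/E_j^{0})\le\binom{\lfloor cj\rfloor+d}{d}\sim\frac{c^{d}}{d!}\,j^{d}$. Comparing the two growth rates forces $e\le d$. Since $\sO_Z$ is excellent, completion commutes with reduction, so the reduced $\wh V$ is contained in $\wh{(Z_{\infty})_{\mathrm{red}}}$ and hence $V_0\subseteq(Z_{\infty})_{\mathrm{red}}$; thus $\dim_xV_0\le\dim(Z_{\infty})_{\mathrm{red}}=e\le d$, and with $d\le\dim_xV_0$ we get $\dim_xV_0=d$, i.e. $\wh V$ is algebraic.

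\emph{Expected main obstacle.} The delicate direction is the second one: converting the purely numerical multiplicity bound into the geometric statement $\dim_xV_0=d$. The mechanism is a comparison of two Hilbert-type polynomials — the hypothesis caps $\dim_k(E_j/E_j^{0})$ by a polynomial of degree $d=\dim\wh V$, while stabilization of the ideals $\mathfrak{a}_j$ exhibits $E_j/E_j^{0}$ (for $j\gg0$) as the space of global sections of an ample bundle on a fixed subscheme $Z_{\infty}$ with $\dim Z_\infty=\dim(Z_\infty)_{\mathrm{red}}\ge\dim_xV_0$. The remaining work is routine bookkeeping: that $\mathfrak{a}_j$ genuinely stabilizes, that $E_j^{0}=H^0(\cI_{Z_j}(jH))$, Serre vanishing, and that completion commutes with reduction over the excellent ring $\sO_Z$.
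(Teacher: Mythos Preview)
The paper does not supply its own proof of this proposition: it is quoted verbatim from Bost's \cite[Proposition~2.2]{bost_germs}, with no argument given here. Your write-up is a correct reconstruction of the standard proof, and it is essentially the argument one finds in Bost's paper: the ``algebraic $\Rightarrow$ bound'' direction via intersection theory on a blow-up of the normalization of the Zariski closure, and the converse via the Hilbert-polynomial comparison between $\dim_k(E_j/E_j^{0})$ computed on the stabilized scheme $Z_\infty$ and the jet-space bound coming from the multiplicity hypothesis. The bookkeeping steps you flag (stabilization of the $\mathfrak a_j$, the identification $E_j^{0}=H^0(\mathcal I_{Z_j}(jH))$, Serre vanishing, and that reduction commutes with completion over an excellent ring) are all sound as stated.
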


\begin{cor}\label{corollary:multiplicity_versus_algebraic_integrability}
Let $Y^\circ$ be a smooth complex quasi-projective variety, let $Z$ be a complex projective variety
with $Y^\circ\subseteq Z$, and let $H$ be an ample Cartier divisor on $Z$. 
Let $V \subset Z$  be a germ of smooth locally closed
analytic submanifold along $Y^\circ$ in $Z$. Then $V$ is algebraic if and only if there exists $c>0$ such that,
for any positive integer $j$ and any section $s \in H^0\big(Z,\sO_Z(jH)\big)$ such that $s_{|V}$ is non-zero, 
the multiplicity $\mult_{Y^\circ} (s_{|V})$ of $s_{|V}$ along $Y^\circ$ is 
$\le c j$.
\end{cor}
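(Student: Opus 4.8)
The plan is to deduce Corollary \ref{corollary:multiplicity_versus_algebraic_integrability} from Bost's Proposition \ref{proposition:multiplicity_versus_algebraic_integrability} by reducing the statement about a germ of analytic submanifold along the (possibly positive-dimensional) base $Y^\circ$ to a pointwise statement about formal germs. The point is that algebraicity of $V$ can be tested on a single general point $x\in Y^\circ$: if $\wh V$ denotes the formal completion of $V$ at such a point, then $V$ is algebraic if and only if $\wh V$ is algebraic (an irreducible analytic germ that agrees formally with an algebraic variety at a point is algebraic; conversely algebraicity of $V$ trivially gives algebraicity of $\wh V$). So I would first fix a general point $x\in Y^\circ$ and apply Proposition \ref{proposition:multiplicity_versus_algebraic_integrability} to $\wh V \subset \wh Z_x$, which characterizes algebraicity of $\wh V$ by the existence of $c>0$ bounding $\mult_x(s_{|\wh V})$ by $cj$ for all sections $s\in H^0(Z,\sO_Z(jH))$.

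The remaining work is to compare the two multiplicity bounds: $\mult_{Y^\circ}(s_{|V})$, the order of vanishing of $s_{|V}$ along the subvariety $Y^\circ$, and $\mult_x(s_{|\wh V})$, the order of vanishing at the point $x$ of the restriction to the formal germ. The key elementary fact I would invoke here is that for a section $s$ of a line bundle on a smooth variety $V$ which does not vanish identically on the smooth subvariety $Y^\circ\subset V$, the order of vanishing along $Y^\circ$ equals the order of vanishing at a general point of $Y^\circ$ of the restriction of $s$ to a general slice transverse to $Y^\circ$ — more precisely $\mult_{Y^\circ}(s_{|V}) \le \mult_x(s_{|V})$ always, and equality holds for $x$ general in $Y^\circ$. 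Since $V$ is smooth along $Y^\circ$ we may choose analytic coordinates in which $Y^\circ$ is a coordinate subspace, making this transparent. Thus the condition "there exists $c>0$ with $\mult_{Y^\circ}(s_{|V})\le cj$ for all $j$ and all $s\in H^0(Z,\sO_Z(jH))$ with $s_{|V}\neq 0$" is equivalent to "there exists $c>0$ with $\mult_x(s_{|V})\le cj$" for $x$ a general point of $Y^\circ$, provided one is careful that the finitely many (for each $j$) sections with $s_{|V}\neq 0$ but vanishing to higher order on a non-general locus do not cause trouble — but since $Y^\circ$ is irreducible, for each individual $s$ the locus of bad points is a proper closed subset, and we only need the bound to hold for a single general $x$ avoiding the countable union of these.

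Finally I would observe that $\mult_x(s_{|V}) = \mult_x(s_{|\wh V})$ since the formal completion of $V$ at $x$ sees exactly the Taylor expansion of $s_{|V}$ at $x$, so the two notions of vanishing order coincide. Stringing these equivalences together: $V$ algebraic $\iff$ $\wh V$ algebraic (for $x$ general) $\iff$ $\exists c>0$ bounding $\mult_x(s_{|\wh V})$ $\iff$ $\exists c>0$ bounding $\mult_x(s_{|V})$ $\iff$ $\exists c>0$ bounding $\mult_{Y^\circ}(s_{|V})$, which is the claim. The main obstacle I anticipate is bookkeeping around generality of $x$: I must ensure that a single choice of general $x\in Y^\circ$ simultaneously realizes the equality $\mult_{Y^\circ}(s_{|V})=\mult_x(s_{|V})$ for \emph{all} relevant sections $s$ (across all $j$), which works because there are only countably many sections to worry about and $Y^\circ$ is an irreducible variety over $\mathbb{C}$, hence not a countable union of proper closed subsets; and that $\wh V$ is genuinely irreducible as a formal germ, which holds since $V$ is a smooth connected analytic germ along the irreducible $Y^\circ$.
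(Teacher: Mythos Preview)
Your overall strategy---reduce to Bost's pointwise Proposition~\ref{proposition:multiplicity_versus_algebraic_integrability}---is the same as the paper's, but you implement the reduction differently, and your implementation has a genuine gap at exactly the point you flagged as the main obstacle.

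The gap is in the sentence ``there are only countably many sections to worry about'' (and the earlier ``finitely many (for each $j$) sections''). Each $H^0\big(Z,\sO_Z(jH)\big)$ is a finite-dimensional \emph{complex} vector space and hence has uncountably many elements; the bad locus $\{x : \mult_x(s_{|V})>\mult_{Y^\circ}(s_{|V})\}$ genuinely depends on $s$, not just on a basis, because multiplicity is not linear. So your Baire-type argument as written does not produce a single $x$ that works for all $s$. The repair is to argue with the multiplicity filtration instead: for each pair $(j,m)$ the subset
\[
\bigl\{\,x\in Y^\circ \;:\; \{s:\mult_x(s_{|V})\ge m\}\supsetneq \{s:\mult_{Y^\circ}(s_{|V})\ge m\}\,\bigr\}
\]
is proper and Zariski-closed in $Y^\circ$, by upper semicontinuity of $x\mapsto \dim\{s\in H^0(jH):\mult_x(s_{|V})\ge m\}$. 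There are only countably many pairs $(j,m)$, so a very general closed point $x$ satisfies $\mult_x(s_{|V})=\mult_{Y^\circ}(s_{|V})$ for every $s$, and then your chain of equivalences goes through.

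The paper sidesteps this entirely by working at the scheme-theoretic generic point $\eta$ of $Y^\circ$ rather than at a very general closed point: one base-changes to $Z_\eta:=Z\otimes k(\eta)$, where $\eta$ becomes an honest $k(\eta)$-rational point, and applies Proposition~\ref{proposition:multiplicity_versus_algebraic_integrability} over the field $k(\eta)$. The identity $\mult_{Y^\circ}(s_{|V})=\mult_\eta\big((s\otimes k(\eta))_{|\wh V_\eta}\big)$ then holds on the nose for every $s$---it is essentially the definition of multiplicity along a subvariety---so no countability or genericity bookkeeping is required. Once your gap is fixed both routes are valid; the generic-point argument is cleaner precisely because it dissolves the obstacle you anticipated.
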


\begin{proof}
Let $\eta \in Z$ be the generic point of $Y^\circ$. Denote by $k(\eta)$ its residue field.
Set $Z_\eta : = Z \otimes k(\eta)$, and $H_\eta:=H \otimes k(\eta)$ . 
Notice that $H^0\big(Z_\eta,\sO_{Z_\eta}(jH_\eta)\big)\cong H^0\big(Z,\sO_Z(jH)\big)\otimes k(\eta)$ for any number $j \in \mathbb{Z}$.
The point $\eta$ corresponds to a $k(\eta)$-point on $Z_\eta$ still denoted by $\eta$. Let 
$\wh V$ be the formal completion of $V$ along $Y^\circ$. Then $\wh V$ induces
a smooth formal subscheme $\wh V_\eta$ of the formal completion  $\wh Z_\eta$ of $Z_\eta$ at $\eta$. Observe that $\wh V$ is algebraic if and only if $\wh V_\eta$ is.
The lemma now follows from 
Proposition \ref{proposition:multiplicity_versus_algebraic_integrability}
applied to $(Z_\eta,\eta,H_\eta)$ and $\wh V_\eta$
since $\mult_{Y^\circ} (s_{|V}) = \mult_{\eta} \big({s\otimes k(\eta)}_{|\wh V_\eta}\big)$ for any number $j\in \mathbb{Z}$ and any section $s \in H^0\big(Z,\sO_Z(jH)\big)$.
\end{proof}

The proof of Proposition \ref{proposition:alg_integrability} below follows the line of argument given in \cite[4.1]{campana_paun15} (see also \cite[Corollary 3.8]{bost}).

\begin{prop}\label{proposition:alg_integrability}
Let $X$ be a normal complex projective variety, 
let $H$ be an ample Cartier divisor, and let $\sG\subseteq T_X$ be a foliation.
Suppose that there exists $c>0$ such that
$h^0\big(X,S^{[i]}\sG^*\otimes\sO_X(jH)\big)=0$
for any positive integer $j$ and any natural number $i$ satisfying 
$i>cj$. Then $\sG$ has algebraic leaves.
\end{prop}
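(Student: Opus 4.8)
The plan is to deduce the statement from the algebraicity criterion of Corollary~\ref{corollary:multiplicity_versus_algebraic_integrability}, applied to the ``universal leaf'' of $\sG$ realised as a germ inside $X\times X$; this is the line of argument of \cite[4.1]{campana_paun15} (compare \cite[Corollary 3.8]{bost}). Let $X^\circ\subseteq X_{\textup{reg}}$ be the maximal open subset over which $\sG$ is a subbundle of $T_X$. Since $\sG$ is saturated in $T_X$ and $X$ is normal, the complement $X\setminus X^\circ$ has codimension at least two. Set $Z:=X\times X$, a normal projective variety, with projections $p_1,p_2\colon Z\to X$, and put $H_Z:=p_1^*H+p_2^*H$, an ample Cartier divisor on $Z$. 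Let $Y^\circ\subset Z$ be the image of $X^\circ$ under the diagonal embedding; it is a smooth quasi-projective variety, and the diagonal identifies $Y^\circ\cong X^\circ$ and $H_Z|_{Y^\circ}$ with $2H|_{X^\circ}$. For each $b\in X^\circ$ the holomorphic subbundle $\sG$ is regular near $b$, so by the Frobenius theorem the fibre $p_1^{-1}(b)\cong X$ carries a well-defined germ of leaf through $(b,b)$, of the form $\{b\}\times L_b$ with $L_b\ni b$ a germ of $r$-dimensional submanifold of $X$ with $T_bL_b=\sG_b$, where $r:=\rank\,\sG$. Letting $b$ vary, these germs assemble into a germ $V\subset Z$ of smooth locally closed analytic submanifold along $Y^\circ$, smooth of relative dimension $r$ over $Y^\circ$ and containing $Y^\circ$. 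I will use that $V$ is algebraic if and only if $\sG$ has algebraic leaves: if $V$ is the completion along $Y^\circ$ of an algebraic subvariety $\mathcal V\subseteq Z$, then for general $b\in X^\circ$ the leaf $L_b$ is open in a fibre of $p_1|_{\mathcal V}$, hence algebraic, and since the closures of general leaves form an algebraic family this forces all leaves of $\sG$ to be algebraic; the converse is clear.

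Next I estimate leafwise multiplicities. Fix a positive integer $j$ and a section $s\in H^0\big(Z,\sO_Z(jH_Z)\big)$ with $s|_V\neq 0$, and set $i:=\mult_{Y^\circ}(s|_V)\in\mathbb{N}$; I claim $i\le 2cj$. We may assume $i\ge 1$. The description of $V$ shows that the normal directions to $Y^\circ$ in $V$ are precisely the leaf directions, so $N_{Y^\circ/V}\cong\sG|_{X^\circ}$ and the conormal sheaf of $Y^\circ$ in $V$ is $\sG^*|_{X^\circ}$. Hence, passing to the leading term of $s|_V$ along $Y^\circ$ produces a non-zero section
$$\overline{s}\in H^0\big(Y^\circ,S^i(\sG^*|_{X^\circ})\otimes\sO_Z(jH_Z)|_{Y^\circ}\big)\cong H^0\big(X^\circ,S^i(\sG^*|_{X^\circ})\otimes\sO_X(2jH)|_{X^\circ}\big).$$
Because $X\setminus X^\circ$ has codimension at least two and $S^{[i]}\sG^*\otimes\sO_X(2jH)$ is reflexive, $\overline{s}$ extends to a section over $X$; as $\overline s$ can be computed inside the formal completion of $Z$ along $Y^\circ$, this extension is algebraic, i.e.\ a non-zero element of $H^0\big(X,S^{[i]}\sG^*\otimes\sO_X(2jH)\big)$. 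By the hypothesis of the proposition applied with $2j$ in place of $j$, the non-vanishing of this group forces $i\le c\cdot(2j)=2cj$, as claimed.

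The estimate just obtained says precisely that the hypothesis of Corollary~\ref{corollary:multiplicity_versus_algebraic_integrability} is satisfied for $(Z,Y^\circ,H_Z)$, the germ $V$, and the constant $c':=2c$. Therefore $V$ is algebraic, and hence $\sG$ has algebraic leaves.

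The crux is the first paragraph: constructing the relative leaf $V$ along $Y^\circ$ and checking that its conormal sheaf is $\sG^*$. This is exactly what converts a ``leafwise Taylor coefficient'' of a global section on $X\times X$ into a genuine global section of $S^{[i]}\sG^*\otimes\sO_X(2jH)$ on $X$, which is what lets the vanishing hypothesis be brought to bear; once this is set up, the remaining points — the equivalence ``$V$ algebraic $\iff$ $\sG$ algebraically integrable'' and the codimension-two extension of sections of reflexive sheaves — are routine.
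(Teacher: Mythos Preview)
Your proof is correct and follows essentially the same approach as the paper: build the universal leaf germ $V$ along the diagonal $Y^\circ\subset X\times X$, identify $N_{Y^\circ/V}\cong\sG|_{X^\circ}$, convert the leading term of $s|_V$ into a global section of $S^{[i]}\sG^*\otimes\sO_X(2jH)$, and feed the resulting linear multiplicity bound into Corollary~\ref{corollary:multiplicity_versus_algebraic_integrability}. The only cosmetic differences are that the paper realises $V$ as the leaf space of the pulled-back foliation $p_1^*\sG$ (you work fibrewise over $p_1$, which amounts to $p_2^*\sG$), and that your bound $i\le 2cj$ is the correct one --- the paper's stated $cj/2$ appears to be a slip.
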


\begin{proof}
Let $X^\circ \subset X_{\textup{reg}}$ be the maximal open set where $\sG_{|X_{\textup{reg}}}$ is a subbundle of 
$T_{|X_{\textup{reg}}}$. Set $Z^\circ := X^\circ\times X^\circ$,
$Z:=X \times X$, and $A:=p_1^*H+p_2^*H$ where $p_1,p_2\colon Z=X\times X \to X$ denote the projections on $X$. Let $Y^\circ \subset Z^\circ$ be the open subset of the diagonal corresponding to $X^\circ$.
Applying Frobenius' Theorem to the regular foliation
$$({p_1}_{|Z^\circ})^*\sG_{|X^\circ}\subset ({p_1}_{|Z^\circ})^*T_{X^\circ} \subset 
({p_1}_{|Z^\circ})^*T_{X^\circ}\oplus ({p_2}_{|Z^\circ})^*T_{X^\circ} = T_{Z^\circ}$$ 
on $Z^\circ$, we see that there exists a smooth locally closed
analytic submanifold $V \subset Z^\circ$
containing $Y^\circ$ such that ${p_2}_{|V}$ is smooth, and such that
its fibers are analytic open subsets of the leaves of the foliation 
$({p_1}_{|Z^\circ})^*\sG_{|X^\circ} \subset T_{Z^\circ}$
passing through points of $Y^\circ$. 
Moreover, we have $\sN_{Y^\circ/V}\cong \sG_{|X^\circ}$. The closed subset 
$X \setminus X_\circ$ has codimension $\ge 2$, and hence
$h^0\big(X^\circ,S^{i}{\sN_{Y^\circ/V}^*}\otimes\sO_{X^\circ}(jH)\big)=0$
for any positive integer $j$ and any natural number $i$ satisfying 
$i>cj$ by assumption. This implies that $\mult_{Y^\circ} (s_{|V}) \le cj/2$ 
for any positive integer $j$ and any section $s \in H^0\big(Z,\sO_Z(jA)\big)$ such that $s_{|V}$ is non-zero.
The proposition now follows from Corollary \ref{corollary:multiplicity_versus_algebraic_integrability} applied to $(Z,Y^\circ,A)$ and $V$.
\end{proof}

\begin{proof}[Proof of Theorem \ref{theorem:pereira_touzet_conjecture}]
Maintaining notation and assumptions of Theorem \ref{theorem:pereira_touzet_conjecture}, set
$r:=\rank \,\sE$. 

Suppose that there exists a finite cover $f\colon \wt X \to X$ that is \'etale in codimension one such that 
the reflexive pull-back $f^{[*]}\sE$ is not stable with respect to $f^*H$.
Applying \cite[Lemma 3.2.3]{HuyLehn}, we see that the 
$f^{[*]}\sE$ is polystable, and hence, there exist non-zero reflexive sheaves $(\sE_i)_{i \in I}$, $f^*H$-stable with slopes 
$\mu_{f^*H}(\sE_i)=\mu_{f^*H}(f^{[*]}\sE)=0$ such that 
$f^{[*]}\sE\cong\oplus_{i \in I}\sE_i$.
Suppose that the number of direct summands is maximal.
Then, for any finite cover $g\colon \wh X \to \wt X$ that is \'etale in codimension one, 
the reflexive pull-back $g^{[*]}\sE_i$ is obviously stable with respect to $(f\circ g)^*H$.
Notice that $\wt X$ is still smooth in codimension two, since $f$ branches only over the singular set of $X$.
It follows from Proposition \ref{proposition:alg_int_reduction_strongly_stable_case} that if $\sE_i$ has algebraic leaves for some $i \in I$, then so does $\sE$.

\medskip

Suppose first that there exists $i_0 \in I$ such that 
$c_1(\sE_{i_0})^2\cdot (f^*H)^{n-2}\neq 0$ or
$c_2(\sE_{i_0})\cdot (f^*H)^{n-2}\neq 0$.
Applying Theorem \ref{theorem:stable_sheaf_psef} to $\sE_{i_0}^{\,*}$, we see that one of the following holds.
\begin{enumerate}
\item Either there exists $c>0$ such that
$h^0\big(\wt X,S^{[i]}\sE_{i_0}^{\,*}\otimes\sO_{\wt X}(jf^*H)\big)=0$
for any positive integer $j$ and any natural number $i$ satisfying 
$i>cj$,
\item or $r=3$, and there exists a finite morphism $g\colon \wh X \to \wt X$ that is \'etale in codimension one,  
and a rank $1$ reflexive sheaf $\sL$ on $\wh X$ with $\mu_{(f\circ g)^*H}(\sL)=0$
such that 
$h^0\big(\wh X,(S^{2}(f\circ g)^{*}\sE_{i_0}^{\,*})\boxtimes\sL\big)\neq 0$.
\end{enumerate}

If we are in case (1), apply Proposition \ref{proposition:alg_integrability} to conclude that $\sE_{i_0}$ has algebraic leaves.

Suppose that we are in case (2). Then we may assume that $\wt X = X$, and that $\sE_{i_0}=\sE$.
Taking pull-backs and double duals, we obtain a decomposition
$T_{\wh X}\cong g^{[*]}\sE \oplus g^{[*]}\sG$ into involutive subsheaves, where $g^{[*]}\sE$ is $g^*H$-stable
and $\det(g^{[*]}\sE)\cong\sO_{\wh X}$.
Applying Proposition \ref{proposition:criterion_flatness_holonomy} to $g^{[*]}\sE$, we see that
$c_1(\sE)^2\cdot H^{n-2}=\frac{1}{\deg(g)}c_1(g^{[*]}\sE)^2\cdot (g^*H)^{n-2}=0$
and $c_2(\sE)\cdot H^{n-2}=\frac{1}{\deg(g)}c_2(g^{[*]}\sE)\cdot (g^*H)^{n-2}=0$,
yielding a contradiction.

\medskip

Finally, suppose that $c_1(\sE_{i_0})^2\cdot (f^*H)^{n-2} = c_2(\sE_{i})\cdot (f^*H)^{n-2} = 0$ for each $i \in I$.
Let $S$ be a smooth two dimensional complete intersection of general elements in $|mH|$ for a sufficiently large positive integer $m$. Observe that $S$ is contained in the smooth locus $X_\textup{reg}$ of $X$, and that 
$\sE$ is locally free along $S$ (see \cite[Corollary 1.4]{hartshorne80}).
By the restriction theorem of Mehta and Ramanathan,
the locally free sheaf ${\sE_i}_{|S}$ is stable with respect to $H_{|S}$ with $\mu_{H_{|S}}({\sE_i}_{|S})=0$, and
$c_1({\sE_i}_{|S})^2=c_2({\sE_i}_{|S})=0.$ This implies that ${\sE_i}_{|S}$ is flat (see Remark \ref{remark:flatness}). A straightforward computation then shows that 
$c_2(\sE)\cdot H^{n-2}=c_2(\sE_{|S})=0$, yielding a contradiction.
This completes the proof of the theorem.
\end{proof}

We end the present section with a flatness criterion (see Remark \ref{remak:criterion_flatness_holonomy} below). We feel that it might be of independent interest. 

\begin{prop}\label{proposition:criterion_flatness_holonomy}
Let $X$ be a normal complex projective variety of dimension $n$, and let $H$ be an ample Cartier divisor.
Suppose that $X$ is smooth in codimension two. Let $T_X = \sE \oplus \sG$ be a decomposition 
into involutive subsheaves, where $\sE$ is $H$-stable and $\det(\sE)\cong\sO_X$.   
Suppose furthermore that $h^0\big(X,S^{2}(\sE^*)\boxtimes\sL\big)\neq 0$ for some
rank one reflexive sheaf $\sL$ with $c_1(\sL)\cdot H^{n-1}=0$.
Then $c_1(\sE)^2\cdot H^{n-2} = c_2(\sE)\cdot H^{n-2} = 0$.
\end{prop}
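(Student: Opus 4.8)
Since $\det(\sE)\cong\sO_X$ one has $c_1(\sE)=0$, so $c_1(\sE)^2\cdot H^{n-2}=0$ is automatic and only $c_2(\sE)\cdot H^{n-2}=0$ needs proof. My plan is to equip $\sE$ with a holomorphic metric after a finite cover \'etale in codimension one, and then to apply Proposition~\ref{proposition:holomorphic_connection} together with Remark~\ref{remark:atiyah}. Observe that $\sE_{|X_{\textup{reg}}}$ is a direct summand of the locally free sheaf $T_{X_{\textup{reg}}}=\sE_{|X_{\textup{reg}}}\oplus\sG_{|X_{\textup{reg}}}$, hence locally free, and that $X\setminus X_{\textup{reg}}$ has codimension at least three because $X$ is smooth in codimension two. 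Set $r:=\rank\sE$.

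First I would view the nonzero section $q\in H^0\big(X,S^2(\sE^*)\boxtimes\sL\big)$, through the inclusion $S^2(\sE^*)\into\sE^*\otimes\sE^*$, as a nonzero map $\bar q\colon\sE\to\sE^*\boxtimes\sL$, and argue it is injective: the target is $H$-semistable of slope $0$, since $\sE^*$ is $H$-stable with $\mu_H(\sE^*)=0$ and $\mu_H(\sL)=c_1(\sL)\cdot H^{n-1}=0$, so it has no subsheaf of positive slope; on the other hand a nonzero kernel of $\bar q$ would force $\mathrm{im}\,\bar q$ to be a torsion-free quotient of the $H$-stable sheaf $\sE$ of rank strictly less than $r$, hence of positive slope $-$ a contradiction. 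Taking determinants and using $\det(\sE)\cong\sO_X$, $\det\bar q$ is then a nonzero section of $\det(\sE^*\boxtimes\sL)\cong\sL^{[r]}$, so $\sL^{[r]}\cong\sO_X(D)$ for an effective Weil divisor $D$ with $D\cdot H^{n-1}=r\,c_1(\sL)\cdot H^{n-1}=0$; ampleness of $H$ gives $D=0$ and $\sL^{[r]}\cong\sO_X$. In particular $\det\bar q$ is a nowhere-vanishing constant, so $\bar q$ restricts to an isomorphism over $X_{\textup{reg}}$; equivalently, $q$ defines an everywhere non-degenerate $\sL$-valued symmetric bilinear form on the bundle $\sE_{|X_{\textup{reg}}}$. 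I expect this $-$ upgrading the easy generic non-degeneracy of $q$ to non-degeneracy on all of $X_{\textup{reg}}$ $-$ to be the main point, and it is precisely the identification $H^0(X,\sL^{[r]})=H^0(X,\sO_X)=\mathbb{C}$ that achieves it.

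Next, since $\sL$ is $r$-torsion in the divisor class group, I would pass to a finite cover $f\colon\wt X\to X$, \'etale in codimension one, with $\wt X$ normal and connected and $f^{[*]}\sL\cong\sO_{\wt X}$ (a cyclic cover attached to an isomorphism $\sL^{[r]}\cong\sO_X$). Then $\wt X$ is smooth in codimension two, its singular locus has codimension at least three, and $f$ is \'etale over $X_{\textup{reg}}$; hence over $\wt X_{\textup{reg}}$ we get $T_{\wt X}=f^{[*]}\sE\oplus f^{[*]}\sG$ into locally free involutive subsheaves, while $f^{[*]}q$, together with the trivialization of $f^{[*]}\sL$, becomes a genuine holomorphic metric on $f^{[*]}\sE_{|\wt X_{\textup{reg}}}$ (non-degenerate because $\det$ of the pulled-back $\bar q$ is a nonzero constant). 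By Proposition~\ref{proposition:holomorphic_connection} applied on the complex manifold $\wt X_{\textup{reg}}$, the bundle $f^{[*]}\sE_{|\wt X_{\textup{reg}}}$ carries a holomorphic connection.

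Finally I would choose $m\gg0$ with $mf^*H$ very ample and let $S\subset\wt X$ be a complete intersection of $n-2$ general members of $|mf^*H|$; since $\textup{Sing}(\wt X)$ has dimension at most $n-3$, such an $S$ is contained in $\wt X_{\textup{reg}}$. Then $f^{[*]}\sE_{|S}$ inherits a holomorphic connection, so all its Chern classes vanish by Remark~\ref{remark:atiyah}; in particular $c_2\big(f^{[*]}\sE\big)\cdot(f^*H)^{n-2}=c_2\big(f^{[*]}\sE_{|S}\big)=0$. As $f$ is \'etale over $X_{\textup{reg}}$, whose complement has codimension at least three, $c_2\big(f^{[*]}\sE\big)\cdot(f^*H)^{n-2}=\deg(f)\,c_2(\sE)\cdot H^{n-2}$, and therefore $c_2(\sE)\cdot H^{n-2}=0$, which finishes the proof.
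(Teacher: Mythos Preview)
Your proof is correct and follows essentially the same route as the paper's: use $H$-stability to show that the section $q$ induces an isomorphism $\sE\cong\sE^*\boxtimes\sL$, deduce $\sL^{[r]}\cong\sO_X$ from the determinant, pass to the associated cyclic cover to trivialise $\sL$, apply Proposition~\ref{proposition:holomorphic_connection} on the smooth locus to obtain a holomorphic connection, and conclude via Remark~\ref{remark:atiyah} on a general complete intersection surface. Your write-up is in fact more detailed than the paper's, which compresses the first two steps into the single sentence ``Since $\sE$ is $H$-stable, $\mathfrak{g}$ induces an isomorphism $\sE\cong\sE^*\boxtimes\sL$'' and leaves the choice of $S$ implicit.
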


\begin{proof}Set $r:=\rank\,\sE$.
Consider a non-zero section $\mathfrak{g} \in H^0\big(X,S^{2}(\sE^*)\boxtimes\sL\big)$. Since $\sE$ is 
$H$-stable, $\mathfrak{g}$ induces an isomorphism
$\sE \cong \sE^*\boxtimes\sL$. Taking determinants and double duals, we obtain
$\sL^{[\otimes r]}\cong \sO_{X}$. 
Let $f\colon \wt X \to X$ be the corresponding cyclic cover (see for instance \cite[Lemma 2.53]{kollar_mori}).
Then $f^{[*]}\sL\cong \sO_{\wt X}$, 
and $\mathfrak{g}$ induces a holomorphic metric on ${f^{[*]}(\sE^*)}_{|{\wt X}_\textup{reg}}$.
Applying Proposition \ref{proposition:holomorphic_connection}, we see that 
${f^{[*]}\sE}_{|{\wt X}_\textup{reg}}$ admits a holomorphic connection. Notice that 
$\wt X \setminus {\wt X}_\textup{reg}$ has codimension at least three. It follows
from Remark \ref{remark:atiyah} that 
$c_1(\sE)^2\cdot H^{n-2}=\frac{1}{\deg(f)}c_1(f^{[*]}\sE)^2\cdot (f^*H)^{n-2}=0$
and that $c_2(\sE)\cdot H^{n-2}=\frac{1}{\deg(f)}c_2(f^{[*]}\sE)\cdot (f^*H)^{n-2}=0$, proving the proposition.
\end{proof}

\begin{rem}\label{remak:criterion_flatness_holonomy}
In the setup of Proposition \ref{proposition:criterion_flatness_holonomy}, suppose furthermore that $X$ is a $\mathbb{Q}$-factorial variety with only canonical singularities.
Then it follows from \cite[Theorem 6.5]{gkp_movable} that there exists a finite surjective morphism
$f\colon \wt X \to X$, \'etale in codimension one, such that $f^{[*]}\sE$ is a locally free, flat sheaf on $\wt X$. 
\end{rem}

\section{Proof of Theorem \ref{thm:main_intro}}

We are now in position to prove our main result.

\begin{prop}\label{prop:main}
Let $X$ be a normal complex projective variety of dimension at most $5$,
with at worst klt singularities. Assume that $K_X \equiv 0$. Then there exists an abelian variety $A$
as well as a projective variety $\wt X$ with at worst canonical
singularities, a finite cover $f: A \times \wt X \to X$, \'etale in
codimension one, and a decomposition
$$\wt X \cong \prod_{j\in J} Y_j$$
such that the induced decomposition of $T_{\wt X}$
agree with the decomposition given by Theorem \ref{thm:infinitesimal_beauville_bogomolov}.
\end{prop}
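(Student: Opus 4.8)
The strategy is to promote the decomposition of $T_{\wt X}$ furnished by Theorem~\ref{thm:infinitesimal_beauville_bogomolov} into a product decomposition of the variety itself, the bridge being Proposition~\ref{proposition:alg_int_towards_dec}; the one point that needs the full strength of the earlier sections is the \emph{algebraic integrability} of the summands, which we will extract from the criteria of Theorems~\ref{theorem:alg_int_flat_case} and \ref{theorem:pereira_touzet_conjecture}. Throughout, write $n:=\dim X\le 5$.

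First I would reduce to the case where $X$ has canonical singularities. As $X$ is klt with $K_X\equiv 0$, the class $K_X$ is torsion in $\Pic(X)$; the associated index-one (canonical) cover $X'\to X$ is finite and étale in codimension one, $X'$ is again klt, and $K_{X'}\sim\sO_{X'}$ is Cartier, so the discrepancy of every exceptional divisor over $X'$ is an integer $>-1$, hence nonnegative, i.e.\ $X'$ is canonical. Replacing $X$ by $X'$ and composing covers at the end, we may assume $X$ is canonical with $K_X\sim\sO_X$. Now apply Theorem~\ref{thm:infinitesimal_beauville_bogomolov} and replace $X$ by the variety $\wt X$ it produces (keeping track of the Abelian factor, which will reappear in the final statement); this reduces us to: $\wt q(X)=0$, a decomposition $T_X=\bigoplus_{i\in I}\sE_i$ into involutive subsheaves with $\det(\sE_i)\cong\sO_X$, and --- by the ``Further'' part of that theorem --- each $\sE_i$ strongly stable, meaning $(g^*\sE_i)^{**}$ is stable for every finite cover $g$ étale in codimension one. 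If $|I|\le 1$ the statement is trivial, so from now on assume $|I|\ge 2$.

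The heart of the argument is to prove that every $\sE_i$ is algebraically integrable. Pass to a $\mathbb{Q}$-factorial terminalization $\beta\colon\wh X\to X$; by Lemma~\ref{lemma:reduction_terminalization} one gets $T_{\wh X}=\bigoplus_{i\in I}\wh\sE_i$ with $\det(\wh\sE_i)\cong\sO_{\wh X}$ and $\sE_i\cong(\beta_*\wh\sE_i)^{**}$, while $\wh X$ is now terminal --- hence smooth in codimension two, so that second Chern classes of reflexive sheaves are defined --- with $\wt q(\wh X)=0$ by Lemma~\ref{lemma:augmented_irregularity_birational_morphism}. Fix an ample Cartier divisor $\wh H$ on $\wh X$ for which every $\wh\sE_i$ is $\wh H$-stable. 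Then Corollary~\ref{corollary:augmented_irregularity_versus_second_chern_class} (a consequence of the flat-case criterion, Theorem~\ref{theorem:alg_int_flat_case}), combined with $\wt q(\wh X)=0$, forces $c_2(\wh\sE_i)\cdot\wh H^{n-2}\ne 0$ for every $i$; in particular $\rank\,\wh\sE_i\ge 2$. Since $\sum_{i\in I}\rank\,\wh\sE_i=n\le 5$, this already yields $|I|=2$ and $\{\rank\,\wh\sE_1,\rank\,\wh\sE_2\}\subseteq\{2,3\}$. Hence, for each $i$ in turn, Theorem~\ref{theorem:pereira_touzet_conjecture} applied on $\wh X$ to the decomposition $T_{\wh X}=\wh\sE_i\oplus\bigl(\bigoplus_{j\ne i}\wh\sE_j\bigr)$ shows that $\wh\sE_i$ has algebraic leaves; transporting general leaves along the birational morphism $\beta$ then shows that $\sE_i$ has algebraic leaves.

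With this in place, $X$ is a canonical variety with $\wt q(X)=0$ whose tangent sheaf is a direct sum of algebraically integrable involutive subsheaves $\sE_i$ with $\det(\sE_i)\cong\sO_X$, so Proposition~\ref{proposition:alg_int_towards_dec} produces a finite cover of $X$, étale in codimension one, which decomposes as $\prod_{i\in I}Y_i$ and induces the decomposition $\bigoplus_{i\in I}\sE_i$ of its tangent sheaf. Composing this cover with the Abelian factor coming from Theorem~\ref{thm:infinitesimal_beauville_bogomolov} and with the index-one cover from the first step completes the proof. The step I expect to be the main obstacle is the passage to the terminalization $\wh X$: one must produce a polarization $\wh H$ on $\wh X$ simultaneously adapted to all the $\wh\sE_i$ and check that the $\wh\sE_i$ keep enough stability on $\wh X$ to be inserted into Corollary~\ref{corollary:augmented_irregularity_versus_second_chern_class} and Theorem~\ref{theorem:pereira_touzet_conjecture}; the difficulty is that ample divisors on $\wh X$ need not push forward to ample divisors on $X$, so a priori one controls only the stability of $\wh\sE_i$ with respect to the pull-back $\beta^*H$ of an ample divisor $H$ from $X$, which is merely big and semiample on $\wh X$.
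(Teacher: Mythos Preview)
Your overall strategy is the same as the paper's, and you have correctly isolated the one genuine gap: on the terminalization $\wh X$ you need the summands $\wh\sE_i$ to be stable with respect to an \emph{ample} divisor (not just $\beta^*H$, which is only big and nef), and nothing you have written guarantees this. This is exactly the input that Corollary~\ref{corollary:augmented_irregularity_versus_second_chern_class} and Theorem~\ref{theorem:pereira_touzet_conjecture} require, so as it stands the argument does not close.

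The paper resolves this without ever proving that the $\wh\sE_i$ themselves are stable on $\wh X$. Instead it applies Theorem~\ref{thm:infinitesimal_beauville_bogomolov} a \emph{second time}, now to $\wh X$, producing a decomposition $T_{\wh X}=\bigoplus_{j\in J}\wh\sG_j$ into \emph{strongly stable} involutive summands with trivial determinant. Since $T_{\wh X}$ is then polystable with respect to any ample divisor, each $\wh\sE_i$ is polystable as well, and every $\wh\sG_j$ sits as a direct summand of some $\wh\sE_{i_j}$. Now Corollary~\ref{corollary:augmented_irregularity_versus_second_chern_class} and Theorem~\ref{theorem:pereira_touzet_conjecture} are applied to the $\wh\sG_j$ (for which strong stability and the rank bound $\le 3$ hold), giving algebraic leaves for each $\wh\sG_j$. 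Finally, algebraic integrability is transferred back down to $\sE_{i_j}$ on $X$: a general leaf of $\wh\sG_j$, pushed forward by $\beta$, is a positive--dimensional algebraic subvariety tangent to $\sE_{i_j}$, and since $\sE_{i_j}$ is stable on $X$ with $\mu_H(\sE_{i_j})=0$, Proposition~\ref{proposition:alg_int_reduction_strongly_stable_case} forces $\sE_{i_j}$ to have algebraic leaves. This last step is the point where stability on the original variety $X$ is used, neatly sidestepping the need for stability of the $\wh\sE_i$ on $\wh X$.

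A minor remark: your invocation of Corollary~\ref{corollary:augmented_irregularity_versus_second_chern_class} to get $c_2(\wh\sE_i)\cdot\wh H^{\,n-2}\ne 0$ for \emph{every} $i$ is slightly imprecise as stated, since the corollary already demands $c_2\ne 0$ for the ``$\sG$''-summands; one should collect all summands with vanishing $c_2$ into the ``$\sE$''-part and then read off $\rank\sE=\wt q(\wh X)=0$. This is easily fixed and is not the essential point.
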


\begin{proof}Notice first that $K_X \sim_\mathbb{Q} 0$ by \cite[Corollary V 4.9]{nakayama04}. Thus, there exists a 
finite cover $f_1 \colon X_1 \to X$, \'etale in codimension one, such that $K_{X_1}\sim_\mathbb{Z} 0$ (see 
\cite[Lemma 2.53]{kollar_mori}).
By \cite[Proposition 3.16]{kollar97}, $X_1$ has at worst klt singularities.
It follows that $X_1$ has canonical singularities since $K_{X_1}$ is a Cartier divisor. Applying Theorem \ref{thm:infinitesimal_beauville_bogomolov}, we conclude that
there exists an abelian variety $A$
as well as a projective variety $X_2$ with at worst canonical
singularities, a finite cover $f_2: A \times X_2 \to X_1$, \'etale in
codimension one, and a decomposition
$$
T_{X_2} = \oplus_{i\in I} \sE_i
$$
such that the following holds.
\begin{enumerate}
\item The $\sE_i$ are integrable subsheaves of $T_{X_2}$, with $\det(\sE_i)\cong\sO_{X_2}$.
\item The sheaves $\sE_i$ are strongly stable.
\item The augemented irregularity of $X_2$ is zero.
\end{enumerate}

To prove the theorem, we will show that the foliations $\sE_i$ are algebraically integrable.
Let $\beta \colon \wh X_2 \to X_2$ be a $\mathbb{Q}$-factorial terminalization of $X_2$.
By Lemma \ref{lemma:reduction_terminalization},
 there is a decomposition $$T_{\wh X_2} = \oplus_{i\in I} \wh\sE_i$$
of $T_{\wh X_2}$
into involutive subsheaves with $\det(\wh \sE_i)\cong \sO_{\wh X_2}$ such that 
$\sE_i \cong (\beta_*\wh\sE_i)^{**}$. Notice that $\wt q(\wh X_2)=0$ by Lemma \ref{lemma:augmented_irregularity_birational_morphism}. Let $$T_{\wh X_2} = \oplus_{i\in J} \wh\sG_j$$ be a decomposition of $T_{\wh X_2}$
into strongly stable involutive subsheaves with $\det(\sG_i)\cong \sO_{\wh X_2}$ 
whose existence is guarantee by Theorem \ref{thm:infinitesimal_beauville_bogomolov}. 
For any $j\in J$, $\wh \sG_j$ is a direct summand of  
$\sE_{i_j}$ for some $i_j\in I$. To prove the claim,
it suffices to prove that the foliations $\wh \sG_i$ are algebraically integrable.
By Corollary \ref{corollary:augmented_irregularity_versus_second_chern_class}, we must have 
$c_2(\wh\sG_j)\not\equiv 0$ for each $j\in J$. This implies in particular that $\wh\sG_j$ has rank at least $2$, and therefore, $\rank \, \wh\sG_j \in\{2,3\}$ since $\dim X \le 5$ by assumption.
Now, by Theorem \ref{theorem:pereira_touzet_conjecture}, we conclude that the sheaves $\sE_i$ are algebraically integrable, proving our claim.
The proposition then follows from Proposition \ref{proposition:alg_int_towards_dec}.
\end{proof}

\begin{proof}[Proof of Theorem \ref{thm:main_intro}]
Theorem \ref{thm:main_intro} is an immediate consequence of Proposition \ref{prop:main}
and of the characterisation \cite[Proposition 8.21]{gkp_bo_bo} of canonical varieties with
trivial canonical class and strongly stable tangent bundle as singular analogues
of Calabi-Yau or irreducible holomorphic symplectic manifolds.
\end{proof}


\begin{thebibliography}{BCHM10}

\bibitem[AD13]{fano_fols}
Carolina Araujo and St{\'e}phane Druel, \emph{On {F}ano foliations}, Adv. Math.
  \textbf{238} (2013), 70--118.

\bibitem[AD14]{codim_1_del_pezzo_fols}
\bysame, \emph{On codimension 1 del {P}ezzo foliations on varieties with mild
  singularities}, Math. Ann. \textbf{360} (2014), no.~3-4, 769--798.

\bibitem[Ati57]{atiyah57}
M.~F. Atiyah, \emph{Complex analytic connections in fibre bundles}, Trans.
  Amer. Math. Soc. \textbf{85} (1957), 181--207.

\bibitem[BB70]{baum_bott70}
Paul~F. Baum and Raoul Bott, \emph{On the zeros of meromorphic vector-fields},
  Essays on {T}opology and {R}elated {T}opics ({M}\'emoires d\'edi\'es \`a
  {G}eorges de {R}ham), Springer, New York, 1970, pp.~29--47.

\bibitem[BCHM10]{bchm}
Caucher Birkar, Paolo Cascini, Christopher~D. Hacon, and James McKernan,
  \emph{Existence of minimal models for varieties of log general type}, J.
  Amer. Math. Soc. \textbf{23} (2010), no.~2, 405--468.

\bibitem[BDPP13]{bdpp}
S{\'e}bastien Boucksom, Jean-Pierre Demailly, Mihai P{\u{a}}un, and Thomas
  Peternell, \emph{The pseudo-effective cone of a compact {K}\"ahler manifold
  and varieties of negative {K}odaira dimension}, J. Algebraic Geom.
  \textbf{22} (2013), no.~2, 201--248.

\bibitem[Bea83]{beauville83}
Arnaud Beauville, \emph{Vari\'et\'es {K}\"ahleriennes dont la premi\`ere classe
  de {C}hern est nulle}, J. Differential Geom. \textbf{18} (1983), no.~4,
  755--782 (1984).

\bibitem[BGS11]{bgs}
Samuel Boissi\`ere, Ofer Gabber, and Olivier Serman, \emph{Sur le produit de
  vari\'et\'es localement factorielles ou {Q}-factorielles}, Preprint {\tt
  arXiv:1104.1861v3}, 2011.

\bibitem[BK08]{balaji_kollar}
V.~{Balaji} and J\'anos {Koll\'ar}, \emph{{Holonomy groups of stable vector
  bundles}}, {Publ. Res. Inst. Math. Sci.} \textbf{44} (2008), no.~2, 183--211.

\bibitem[BM01]{bogomolov_mcquillan01}
F.~Bogomolov and M.~McQuillan, \emph{Rational curves on foliated varieties},
  IHES preprint, 2001.

\bibitem[Bos01]{bost}
Jean-Beno{\^{\i}}t Bost, \emph{Algebraic leaves of algebraic foliations over
  number fields}, Publ. Math. Inst. Hautes \'Etudes Sci. (2001), no.~93,
  161--221.

\bibitem[Bos04]{bost_germs}
\bysame, \emph{{Germs of analytic varieties in algebraic varieties: canonical
  metrics and arithmetic algebraization theorems}}, {Geometric aspects of Dwork
  theory. Vol. I, II}, Berlin: Walter de Gruyter, 2004, pp.~371--418.

\bibitem[Bri10]{brion_action}
Michel Brion, \emph{Some basic results on actions of nonaffine algebraic
  groups}, Symmetry and spaces, Progr. Math., vol. 278, Birkh\"auser Boston,
  Inc., Boston, MA, 2010, pp.~1--20.

\bibitem[CP15]{campana_paun15}
Fr{\'e}d{\'e}ric Campana and Mihai P\u{a}un, \emph{Foliations with positive
  slopes and birational stability of orbifold cotangent bundles}, Preprint {\tt
  arXiv:1508:0245}, 2015.

\bibitem[Dru15]{druel15}
St{\'e}phane Druel, \emph{On foliations with nef anti-canonical bundle}, Trans.
  Amer. Math. Soc. (2015), to appear.

\bibitem[{Eis}95]{eisenbud}
David {Eisenbud}, \emph{{Commutative algebra. With a view toward algebraic
  geometry}}, Berlin: Springer-Verlag, 1995.

\bibitem[Elk81]{elkik}
Ren{\'e}e Elkik, \emph{Rationalit\'e des singularit\'es canoniques}, Invent.
  Math. \textbf{64} (1981), no.~1, 1--6.

\bibitem[ELM{\etalchar{+}}06]{ELMNP_IF}
Lawrence {Ein}, Robert {Lazarsfeld}, Mircea {Musta\c{t}\u{a}}, Michael
  {Nakamaye}, and Mihnea {Popa}, \emph{{Asymptotic invariants of base loci.}},
  {Ann. Inst. Fourier} \textbf{56} (2006), no.~6, 1701--1734.

\bibitem[Ful98]{fulton}
William Fulton, \emph{Intersection theory}, second ed., Ergebnisse der
  Mathematik und ihrer Grenzgebiete. 3. Folge. A Series of Modern Surveys in
  Mathematics [Results in Mathematics and Related Areas. 3rd Series. A Series
  of Modern Surveys in Mathematics], vol.~2, Springer-Verlag, Berlin, 1998.

\bibitem[{Ful}11]{fulger}
Mihai {Fulger}, \emph{{The cones of effective cycles on projective bundles over
  curves}}, {Math. Z.} \textbf{269} (2011), no.~1-2, 449--459.

\bibitem[GKK10]{greb_kebekus_kovacs10}
Daniel Greb, Stefan Kebekus, and S{\'a}ndor~J. Kov{\'a}cs, \emph{Extension
  theorems for differential forms and {B}ogomolov-{S}ommese vanishing on log
  canonical varieties}, Compos. Math. \textbf{146} (2010), no.~1, 193--219.

\bibitem[GKKP11]{greb_kebekus_kovacs_peternell10}
Daniel Greb, Stefan Kebekus, S{\'a}ndor~J. Kov{\'a}cs, and Thomas Peternell,
  \emph{Differential forms on log canonical spaces}, Publ. Math. Inst. Hautes
  \'Etudes Sci. (2011), no.~114, 87--169.

\bibitem[GKP11]{gkp_bo_bo}
Daniel Greb, Stefan Kebekus, and Thomas Peternell, \emph{Singular spaces with
  trivial canonical class}, To appear in: Minimal models and extremal rays —
  proceedings of the conference in honor of Shigefumi Mori’s 60th birthday,
  Advanced Studies in Pure Mathematics, Kinokuniya Publishing House, Tokyo,
  2011.

\bibitem[GKP16a]{gkp_flat}
\bysame, \emph{\'{E}tale fundamental groups of kawamata log terminal spaces,
  flat sheaves, and quotients of abelian varieties}, Duke Math. J., to appear.
  DOI:10.1215/00127094-3450859, 2016.

\bibitem[GKP16b]{gkp_movable}
\bysame, \emph{Movable curves and semistable sheaves}, Internat. Math. Res.
  Notices \textbf{2016} (2016), no.~2, 536--570.

\bibitem[GP11]{sga3}
Philippe Gille and Patrick Polo (eds.), \emph{Sch\'emas en groupes ({SGA} 3).
  {T}ome {I}. {P}ropri\'et\'es g\'en\'erales des sch\'emas en groupes},
  Documents Math\'ematiques (Paris) [Mathematical Documents (Paris)], 7,
  Soci\'et\'e Math\'ematique de France, Paris, 2011, S{\'e}minaire de
  G{\'e}om{\'e}trie Alg{\'e}brique du Bois Marie 1962--64. [Algebraic Geometry
  Seminar of Bois Marie 1962--64], A seminar directed by M. Demazure and A.
  Grothendieck with the collaboration of M. Artin, J.-E. Bertin, P. Gabriel, M.
  Raynaud and J-P. Serre, Revised and annotated edition of the 1970 French
  original.

\bibitem[{Gro}63]{ega17}
A.~{Grothendieck}, \emph{{\'El\'ements de g\'eom\'etrie alg\'ebrique. {III}.
  \'Etude cohomologique des faisceaux coh\'erents. {II}.}}, Inst. Hautes
  \'Etudes Sci. Publ. Math. (1963), no.~17, 137--223.

\bibitem[Gro66]{ega28}
A.~Grothendieck, \emph{\'{E}l\'ements de g\'eom\'etrie alg\'ebrique. {IV}.
  \'{E}tude locale des sch\'emas et des morphismes de sch\'emas. {III}}, Inst.
  Hautes \'Etudes Sci. Publ. Math. (1966), no.~28, 255.

\bibitem[{Har}70]{hartshorne_ample_sub}
Robin {Hartshorne}, \emph{{Ample subvarieties of algebraic varieties. Notes
  written in collaboration with C. Musili.}}, {Lecture Notes in Mathematics.
  156. Berlin-Heidelberg-New York: Springer-Verlag. XIII, 256 p.}, 1970.

\bibitem[Har80]{hartshorne80}
Robin Hartshorne, \emph{Stable reflexive sheaves}, Math. Ann. \textbf{254}
  (1980), no.~2, 121--176.

\bibitem[HL97]{HuyLehn}
D.~Huybrechts and M.~Lehn, \emph{The geometry of moduli spaces of sheaves},
  Aspects of Mathematics, E31, Friedr. Vieweg \& Sohn, Braunschweig, 1997.

\bibitem[HV10]{hwang_viehweg}
Jun-Muk Hwang and Eckart Viehweg, \emph{Characteristic foliation on a
  hypersurface of general type in a projective symplectic manifold}, Compos.
  Math. \textbf{146} (2010), no.~2, 497--506.

\bibitem[IMP03]{IMP}
S.~{Ilangovan}, V.~B. {Mehta}, and A.~J. {Parameswaran}, \emph{{Semistability
  and semisimplicity in representations of low height in positive
  characteristic}}, {A tribute to C. S. Seshadri. A collection of articles on
  geometry and representation theory}, Basel: Birkh\"auser, 2003, pp.~271--282.

\bibitem[{Kaw}85]{kawamata85}
Yujiro {Kawamata}, \emph{{Minimal models and the Kodaira dimension of algebraic
  fiber spaces}}, {J. Reine Angew. Math.} \textbf{363} (1985), 1--46.

\bibitem[Kaw08]{kawamata_flops}
Yujiro Kawamata, \emph{Flops connect minimal models}, Publ. Res. Inst. Math.
  Sci. \textbf{44} (2008), no.~2, 419--423.

\bibitem[{Kem}92]{kempf}
George~R. {Kempf}, \emph{{Pulling back bundles.}}, {Pac. J. Math.} \textbf{152}
  (1992), no.~2, 319--322.

\bibitem[KM98]{kollar_mori}
J{\'a}nos Koll{\'a}r and Shigefumi Mori, \emph{Birational geometry of algebraic
  varieties}, Cambridge Tracts in Mathematics, vol. 134, Cambridge University
  Press, Cambridge, 1998, With the collaboration of C. H. Clemens and A. Corti,
  Translated from the 1998 Japanese original.

\bibitem[{Kob}80]{kobayashi78}
Shoshichi {Kobayashi}, \emph{{The first Chern class and holomorphic symmetric
  tensor fields.}}, {J. Math. Soc. Japan} \textbf{32} (1980), 325--329.

\bibitem[Kol97]{kollar97}
J{\'a}nos Koll{\'a}r, \emph{Singularities of pairs}, Algebraic geometry---Santa
  Cruz 1995, Proc. Sympos. Pure Math., vol.~62, AMS, 1997, pp.~221--287.

\bibitem[Kol07]{kollar07}
\bysame, \emph{Lectures on resolution of singularities}, Annals of Mathematics
  Studies, vol. 166, Princeton University Press, Princeton, NJ, 2007.

\bibitem[{Lan}04]{langer_ss_sheaves}
Adrian {Langer}, \emph{{Semistable sheaves in positive characteristic}}, {Ann.
  Math. (2)} \textbf{159} (2004), no.~1, 251--276.

\bibitem[{Lan}11]{langerAIF}
\bysame, \emph{On the {S}-fundamental group scheme}, {Ann. Inst. Fourier}
  \textbf{61} (2011), no.~5, 2077--2119.

\bibitem[LPT11]{loray_pereira_touzet}
Frank Loray, Jorge~Vit\'orio Pereira, and Fr\'ed\'eric Touzet, \emph{Singular
  foliations with trivial canonical class}, Preprint {\tt arXiv:1107.1538v1},
  2011.

\bibitem[MR83]{mehta_ramanathan}
V.~B. {Mehta} and A.~{Ramanathan}, \emph{{Homogeneous bundles in characteristic
  p}}, {Algebraic geometry - open problems, Proc. Conf., Ravello/Italy 1982,
  Lect. Notes Math. 997, 315-320}, 1983.

\bibitem[Nak04]{nakayama04}
Noboru Nakayama, \emph{Zariski-decomposition and abundance}, MSJ Memoirs,
  vol.~14, Mathematical Society of Japan, Tokyo, 2004.

\bibitem[NS65]{narasimhan_seshadri65}
M.~S. {Narasimhan} and C.~S. {Seshadri}, \emph{{Stable and unitary vector
  bundles on a compact Riemann surface}}, {Ann. Math. (2)} \textbf{82} (1965),
  540--567.

\bibitem[PT13]{pereira_touzet}
Jorge~Vit{\'o}rio Pereira and Fr{\'e}d{\'e}ric Touzet, \emph{Foliations with
  vanishing {C}hern classes}, Bull. Braz. Math. Soc. (N.S.) \textbf{44} (2013),
  no.~4, 731--754.

\bibitem[{She}98]{shepherd-barron_ss_reduction}
N.~I. {Shepherd-Barron}, \emph{{Semi-stability and reduction $\bmod \,p$}},
  {Topology} \textbf{37} (1998), no.~3, 659--664.

\bibitem[UY86]{uhlenbeck_yau}
K.~{Uhlenbeck} and S.~T. {Yau}, \emph{{On the existence of Hermitian-Yang-Mills
  connections in stable vector bundles}}, {Commun. Pure Appl. Math.}
  \textbf{39} (1986), 257--293.

\end{thebibliography}

\newcommand{\etalchar}[1]{$^{#1}$}
\providecommand{\bysame}{\leavevmode\hbox to3em{\hrulefill}\thinspace}
\providecommand{\MR}{\relax\ifhmode\unskip\space\fi MR }
\providecommand{\MRhref}[2]{%
  \href{http://www.ams.org/mathscinet-getitem?mr=#1}{#2}
}
\providecommand{\href}[2]{#2}

\end{document}